\documentclass{amsart}
\usepackage[T1]{fontenc}
\usepackage[utf8]{inputenc}
\usepackage{amsmath}
\usepackage{array}
\usepackage{amsthm}
\usepackage{amssymb}
\input xy
\xyoption{all}

\usepackage[cmtip,all]{xy}

\usepackage{stmaryrd}

\usepackage{tikz}
\usetikzlibrary{matrix,arrows,decorations.pathmorphing}
\usepackage{tikz-cd}

\usepackage{enumerate}
\usepackage{mathtools}

\usepackage[french, english]{babel}

\usepackage{xcolor}

\newtheorem{thmintro}{Theorem}

\newtheorem{thm}{Theorem}[section]
\newtheorem{pr}[thm]{Proposition}
\newtheorem{cor}[thm]{Corollary}
\newtheorem{lm}[thm]{Lemma}

\theoremstyle{definition}
\newtheorem{defi}[thm]{Definition}
\newtheorem{defi-prop}[thm]{Proposition-Definition}
\newtheorem{nota}[thm]{Notation}

\newtheorem{ex}[thm]{Example}
\newtheorem*{thmnn}{Theorem}

\newtheorem{exintro}[thmintro]{Example}

\theoremstyle{remark}
\newtheorem{rk}[thm]{Remark}

\newcommand{\A}{{\mathcal{A}}}

\newcommand{\C}{{\mathcal{C}}}
\newcommand{\D}{{\mathcal{D}}}

\newcommand{\Pp}{{\mathcal{P}}}

\newcommand{\M}{{\mathcal{M}}}

\newcommand{\G}{{\mathcal{G}}}

\newcommand{\Md}{\text{-}\mathbf{Mod}}

\newcommand{\Mdd}{\mathbf{Mod}\text{-}}
\newcommand{\Si}{\mathfrak{S}}
\newcommand{\id}{\mathrm{id}}
\newcommand{\FF}{\mathbb{F}}

\newcommand{\res}{{\mathrm{res}}}
\newcommand{\gen}{{\mathrm{gen}}}

\newcommand{\op}{{\mathrm{op}}}
\newcommand{\Fp}{{\mathbb{F}_p}}
\newcommand{\Fq}{{\mathbb{F}_q}}
\newcommand{\KK}{{\mathbb{K}}}

\newcommand{\colim}{\mathop{\mathrm{colim}}}
\newcommand{\Proj}{\mathbf{P}}
\newcommand{\Vect}{\mathbf{V}}

\newcommand{\stdeg}{\deg}   
\newcommand{\GL}{\operatorname{GL}}
\newcommand{\Sp}{\operatorname{Sp}}
\newcommand{\orth}{\operatorname{O}}

\newcommand{\HH}{\underline{\mathrm{H}}}
\newcommand{\rmH}{\mathrm{H}}
\newcommand{\EExt}{\underline{\mathrm{Ext}}}
\newcommand{\Ext}{\mathrm{Ext}}
\newcommand{\Tor}{\mathrm{Tor}}
\newcommand{\Hom}{\mathrm{Hom}}
\newcommand{\End}{\mathrm{End}}

\newcommand{\diag}{\mathfrak{diag}}
\newcommand{\summ}{\mathfrak{sum}}
\newcommand{\iso}{\mathfrak{iso}}

\begin{document}

\title[Homology of strict polynomial functors over $\Fp$-categories]{Homology of strict polynomial functors over $\Fp$-linear additive categories}

\author[A. Djament]{Aur\'elien Djament}
\address{Institut Galil\'ee, Universit\'e Sorbonne Paris Nord, 99 avenue Jean-Baptiste Cl\'ement, 93430 Villetaneuse, France.}
\email{djament@math.univ-paris13.fr}
\urladdr{https://djament.perso.math.cnrs.fr/}
\thanks{This author is partly supported by the projects ChroK (ANR-16-CE40-0003), AlMaRe (ANR-19-CE40-0001-01) and Labex CEMPI (ANR-11-LABX-0007-01)}

\author[A. Touz\'e]{Antoine Touz\'e}
\address{Univ. Lille, CNRS, UMR 8524 - Laboratoire Paul Painlev\'e, F-59000 Lille, France}
\email{antoine.touze@univ-lille.fr}
\urladdr{https://pro.univ-lille.fr/antoine-touze/}
\thanks{This author is partly supported by the project ChroK (ANR-16-CE40-0003) and Labex CEMPI (ANR-11-LABX-0007-01)}

\subjclass[2020]{Primary 18A25, 18G15; Secondary 18E05, 18G31, 20G10, 20G15, 20J06}

\keywords{Functor homology; polynomial functor; linear algebraic groups}

\begin{abstract}
We generalize the strong comparison theorem of Franjou Friedlander Scorichenko and Suslin to the setting of $\mathbb{F}_p$-linear additive categories. Our results have a strong impact in terms of explicit computations of functor homology, and they open the way to new applications to stable homology of groups or to $K$-theory. As an illustration, we prove comparison theorems between cohomologies of classical algebraic groups over infinite perfect fields, in the spirit of a celebrated result of Cline, Parshall, Scott et van der Kallen for finite fields.
\end{abstract}

\maketitle

\selectlanguage{french}
\renewcommand{\abstractname}{R\'esum\'e}
\begin{abstract}
Nous g\'en\'eralisons le th\'eor\`eme de comparaison forte de Franjou, Friedlander, Scorichenko et Suslin au contexte des cat\'egories additives $\Fp$-lin\'eaires. Nos r\'esultats ont un fort impact en termes de calculs explicites d'homologie des foncteurs, et ils ouvrent la voie \`a de nouvelles applications en homologie stable des groupes et en $K$-th\'eorie. Nous illustrons cela en d\'emontrant des th\'eor\`emes de comparaison entre cohomologies de groupes alg\'ebriques classiques sur des corps parfaits infinis, dans l'esprit d'un c\'el\`ebre th\'eor\`eme de Cline, Parshall, Scott et van der Kallen pour les corps finis.
\end{abstract}

\selectlanguage{english}

\section{Introduction}
Let $k$ be a perfect field of positive characteristic. The category $\Pp_k$ of strict polynomial functors over $k$ was originaly introduced by Friedlander and Suslin as a key ingredient in their study of the cohomology of finite group schemes \cite{FS,SFB,SFB2}. The objects of this category are functors from the category of finite dimensional $k$-vector spaces to the category of $k$-vector spaces, having a `base change structure'. Typical examples are the symmetric powers $S^d$, the exterior powers $\Lambda^d$, or the Schur functors of Akin Buchsbaum and Weyman \cite{ABW}. Strict polynomial functors are an avatar of classical representation theory; for example, computing $\Ext$ in $\mathcal{P}_k$ is equivalent to computing $\Ext$ in the category of modules over classical Schur algebras (as e.g. in \cite{Green}).

We denote by $\Vect_k$ the category of finitely generated $k$-vector spaces and, 
following a traditional notation for `rings with several objects', we denote by $k[\Vect_k]\Md$ the category of all functors from $\Vect_k$ to all vector spaces. Forgetting the base change structure of a strict polynomial functor provides a functor (which is an embedding if $k$ is infinite):
\[\Pp_k\to k[\Vect_k]\Md\;.\]
Given a pair of strict polynomial functors $F$ and $G$, a natural question is to compare the extensions $\Ext^*(F,G)$ computed in $\Pp_k$ with those computed in $k[\Vect_k]\Md$. The articles \cite{FFSS,Kuhn-compare} tackled this question when $k$ is a finite field. Let $F^{(r)}$ denote the composition of a strict polynomial functor $F$ with the $r$-th Frobenius twist functor $I^{(r)}$. Then there is a increasing sequence of $\Ext$-groups: 
\[ \cdots\hookrightarrow\Ext^*_{\Pp_{k}}(F^{(r)},G^{(r)})\hookrightarrow \Ext^*_{\Pp_k}(F^{(r+1)},G^{(r+1)}) \hookrightarrow\cdots\]
In analogy with the terminology used for representations of algebraic groups \cite{CPSVdK}, the colimit of this sequence is called the \emph{generic extensions between $F$ and $G$} and it is denoted by $\Ext^*_\gen(F,G)$. The following fundamental result is called the 
\emph{strong comparison theorem} in \cite{FFSS}.
\begin{thmnn}
If $k$ is a finite field of cardinal greater than the degrees of $F$ and $G$, there is a graded isomorphism:
\[\Ext^*_\gen(F,G)\simeq \Ext^*_{k[\Vect_k]}(F,G)\;.\]
\end{thmnn}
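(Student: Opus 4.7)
The plan is to build a natural comparison morphism at each finite level and prove it is an isomorphism in the colimit via reduction to explicit test cases. Since the forgetful functor $\Pp_k \to k[\Vect_k]\Md$ is exact, it induces for every $r$ a natural map
\[\psi_r : \Ext^*_{\Pp_k}(F^{(r)}, G^{(r)}) \to \Ext^*_{k[\Vect_k]}(F^{(r)}, G^{(r)}).\]
For $k = \Fq$ finite, the Frobenius is a field automorphism of $\Fq$, so the underlying ordinary functor $V \mapsto V^{(r)}$ is an auto-equivalence of $\Vect_{\Fq}$. Precomposition with this auto-equivalence yields a canonical identification of the target with $\Ext^*_{k[\Vect_k]}(F, G)$, and one checks that this is compatible with the transition maps $\Ext^*_{\Pp_k}(F^{(r)}, G^{(r)}) \hookrightarrow \Ext^*_{\Pp_k}(F^{(r+1)}, G^{(r+1)})$. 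Passing to the colimit, the $\psi_r$ assemble into the desired comparison morphism $\Psi : \Ext^*_\gen(F, G) \to \Ext^*_{k[\Vect_k]}(F, G)$.

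To prove that $\Psi$ is an isomorphism, I would reduce to explicit test functors. Both sides are contravariant-additive in $F$ and covariant-additive in $G$ and are homological, so by a standard d\'evissage it suffices to verify the statement when $F$ and $G$ run over a family of compact projective (resp.\ injective) generators of the subcategory $\Pp_k^{\leq d}$ of functors of degree $\leq d$ with $d < q$: for instance products of divided powers $\Gamma^\mu$ and symmetric powers $S^\nu$ of total degree less than $q$. For such test functors both sides admit known explicit computations: on the polynomial side, the FFSS computation gives $\Ext^*_\gen(\Gamma^\mu, S^\nu)$ as an exterior/polynomial-type expression built from Koszul resolutions together with the Frobenius twist classes; on the ordinary side, the corresponding Ext groups over $\Vect_{\Fq}$ are accessible via Pirashvili or Franjou--Lannes--Schwartz-type calculations through MacLane cohomology or equivalent devices.

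The matching of these two computations under $\Psi$ is the technical heart of the proof. The hypothesis $q > \deg F, \deg G$ is crucial here: it ensures the forgetful functor is faithful on generators of degree less than $q$, and, more importantly, that the Frobenius contributions in $\Ext^*_\gen$ (the classes coming from $\Ext^1_{\Pp_k}(I^{(r)}, I^{(r+1)})$ and accumulated in the colimit) correspond exactly to the stable cohomology contributions on the ordinary side. The main obstacle is therefore this explicit identification of generators: one has to produce compatible chain-level representatives of the comparison map on resolutions and verify the combinatorial matching of classes, after which standard homological algebra closes the argument.
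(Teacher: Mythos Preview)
Your outline is structurally sound and matches the strategy of the original FFSS proof: build the comparison map via the forgetful functor composed with the Frobenius untwisting, reduce by a spectral sequence (d\'evissage) argument to the case where $F$ is a standard projective $\Gamma^{d,s}$ and $G$ a standard injective $S^{e,s}$, and then match the two sides explicitly on these test functors. This is essentially correct as a plan.

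However, note that the paper does not give its own independent proof of this statement. The unnumbered theorem you are addressing is quoted in the introduction as the strong comparison theorem of \cite{FFSS}; the paper takes it as known. The closest the paper comes is Theorem~\ref{thm-strong-comparison}, which is a mild extension (allowing $k$ to be a perfect field strictly containing the finite field $\FF$, and allowing infinite-dimensional values). That proof has the same shape you describe: reduce to $F=\Gamma^{d,s}$, $G=S^{e,s}$ by a spectral sequence argument, then use base change isomorphisms to reduce to \cite[Thm~3.10]{FFSS} over $\FF$ itself, which is invoked as a black box.

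The one point where your proposal is not yet a proof is precisely the one you flag: the ``matching of the two computations under $\Psi$''. You describe this as a combinatorial verification, but in FFSS it is substantial --- it passes through the identification of $\Ext^*_{k[\Vect_k]}$ with stable $K$-theory / MacLane cohomology of $\Fq$ (Breen, Franjou--Lannes--Schwartz), and a separate direct computation of $\Ext^*_{\Pp_k}(\Gamma^{d\,(r)},S^{e\,(r)})$. Your sketch correctly locates the difficulty but does not resolve it; without that step the argument is a reduction, not a proof. If your intent was only to outline the architecture, that architecture is right and coincides with what the paper relies on.
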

The interest of this theorem lies in the fact that $\Ext^*_\gen(F,G)$ is better understood than $\Ext^*_{k[\Vect_k]}(F,G)$. In \cite{FFSS}, the strong comparison theorem was successfully used to calculate the latter when $F$ and $G$ are symmetric, exterior or divided powers and $k$ is finite. Later works  \cite{Chalupnik,TouzeUnivNew,Touze-Survey} have given formulas which show remarkable properties of generic $\Ext$ (over an arbitrary field $k$), allowing many further $\Ext$ computations in $\Fq[\Vect_\Fq]\Md$.

The structure of the category $k[\Vect_k]\Md$ becomes significantly more complicated when $k$ is an infinite field. As Sulin wrote in \cite[Appendix p.717]{FFSS} that the $\Ext$ in $k[\Vect_k]\Md$ `do not seem to be computable unless we are dealing with finite fields'. Our first main result bridges the gap between the familiar world of finite fields and the unknown world of infinite fields. Its simplicity may come as a surprise, given the much higher complexity of $k[\Vect_k]\Md$ over infinite fields $k$.

\begin{thmintro}\label{thm-intro-1}
Let $k$ is an infinite perfect field of positive characteristic. For all strict polynomial functors $F$ and $G$, there is a graded isomorphism:
\[\Ext^*_\gen(F,G)\simeq \Ext^*_{k[\Vect_k]}(F,G)\;.\]
\end{thmintro}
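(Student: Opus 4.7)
The strategy is to deduce the theorem from a general strong comparison theorem valid for strict polynomial functors over any $\Fp$-linear additive category $\A$, and then specialize to $\A = \Vect_k$ viewed with its $\Fp$-linear additive structure via the inclusion $\Fp \hookrightarrow k$. The crucial observation that makes this specialization possible is that the Frobenius twist $I^{(r)}$, which drives the definition of generic $\Ext$, depends only on the $\Fp$-linear structure, not on the full $k$-linearity.

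The central step is to formulate and prove the abstract strong comparison theorem: for any $\Fp$-linear additive category $\A$ and strict polynomial functors $F, G$ on $\A$, the generic $\Ext$ in strict polynomial functors coincides with the appropriate functor-category $\Ext$ on $\A$. Following the FFSS template, this should proceed via injective cogenerators built from exponential-type functors (e.g.\ divided or symmetric powers composed with suitable duals) whose generic $\Ext$ groups can be computed by an explicit spectral sequence. The combinatorics of $\Vect_{\Fq}$ that drives the FFSS proof should be replaced by axiomatic properties of $\Fp$-linear additive categories, and the finite-field hypothesis $|k| > \deg F,\deg G$ should be replaced by a stabilization-in-$r$ argument using the filtered colimit defining $\Ext^*_\gen$.

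Once this abstract theorem is in place, its specialization to $\A=\Vect_k$ yields the desired isomorphism, provided one verifies two compatibilities: (i) the classical category $\Pp_k$ identifies with strict polynomial functors on $\Vect_k$ qua $\Fp$-linear additive category, and its classical generic $\Ext$ agrees with the abstract one; (ii) the abstract functor-category $\Ext$ on the $\Fp$-linear category $\Vect_k$ recovers $\Ext^*_{k[\Vect_k]}(F,G)$ for $F,G\in \Pp_k$. The hypothesis that $k$ is infinite should enter (i), ensuring that $\Pp_k$ embeds faithfully and is fully recovered by the abstract construction, while the perfectness hypothesis should enter (ii), via the compatibility of the Frobenius twist with the $k$-linear structure and a restriction-of-scalars argument for functor homology.

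The main obstacle is clearly the general strong comparison theorem over an $\Fp$-linear additive category. The FFSS argument invokes the finite-field hypothesis at several points, both to make representable functors explicit and to obtain vanishing ranges for $\Ext$. Replacing these with purely categorical input on $\A$, while setting up the generic (colimit-over-Frobenius-twists) framework from scratch in this axiomatic context, is the essential new contribution: one needs injective cogenerators that are functorial in $\A$ and whose $\Ext$ with Frobenius twists admits a uniform computation across all $\Fp$-linear additive categories.
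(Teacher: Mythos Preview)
Your high-level architecture is right: the paper does prove a general comparison theorem for $\Fp$-linear additive categories and then specializes. But your proposed proof of the general theorem has a genuine gap. You suggest ``replacing the combinatorics of $\Vect_{\Fq}$ that drives the FFSS proof by axiomatic properties of $\Fp$-linear additive categories.'' The FFSS argument relies on finite-field-specific structure (restriction/extension of scalars adjunctions, trace maps, explicit Frobenius isomorphisms) that has no analogue in a general $\Fp$-linear $\A$; there is no visible way to axiomatize it. The paper does \emph{not} redo FFSS abstractly. Instead it uses FFSS as a black box over $\Vect_{\Fq}$ (in fact over the \emph{small} field $\Fp$, after first removing the ``big field'' hypothesis from FFSS by a base-change trick, Theorems~\ref{thm-interm-strong} and~\ref{thm-verystrong}), and then transports this to general $\A$ via the observation that standard projective additive functors $k\otimes_{\Fq}\A(a,-)$ factor through $\Vect_{\Fq}$ (Proposition~\ref{prop-base-case}). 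General $\pi,\rho$ are handled by simplicial resolutions, and this is where a homological hypothesis $\Tor_i^{k\otimes_{\mathbb{Z}}\A}(\pi,\rho)=0$ for $0<i<e$ appears---your outline has no counterpart for this condition.

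You also misplace the role of the hypotheses. The paper never introduces ``strict polynomial functors on $\A$ qua $\Fp$-linear category''; it works with classical $\Pp_k$ and compares $\Tor^{k[\A]}(\pi^*F,\rho^*G)$ with $\Tor^{\gen}$ over $\Pp_k$. Perfectness of $k$ is used neither for embedding $\Pp_k$ nor for ``compatibility of Frobenius twist with $k$-linearity,'' but precisely to verify the $\Tor$-vanishing hypothesis when $\A=\Vect_k$: one has $\Tor_*^{k\otimes_{\mathbb{Z}}\Vect_k}({}^{\vee}-,\mathrm{id})\simeq \mathrm{HH}_*(k)$, which vanishes in positive degrees for perfect $k$ by Hochschild--Kostant--Rosenberg (Corollary~\ref{cor-infinite-perfect-field}). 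Infiniteness of $k$ enters to satisfy the degree bound in the generalized comparison (Theorem~\ref{thm-F-lin-case}). So the specialization step is not a formal compatibility check but an honest homological computation that your plan does not anticipate.
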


Since generic $\Ext$ are well understood, theorem \ref{thm-intro-1} gives a quick access to the $\Ext$ computations in $k[\Vect_k]\Md$. As an illustration, we obtain in theorem \ref{thm-FFSS-infinite} the infinite field analogue of the $\Ext$-computations of \cite{FFSS} and we also give further new computations in theorem \ref{thm-calcul-kProjk}.

The proof of theorem \ref{thm-intro-1} is quite long and indirect. We deduce it from a more general comparison theorem, which holds when the category $\Vect_k$ of finite-dimensional $k$-vector spaces is replaced by a more general additive category $\A$.
In this general context, it is more convenient to work with $\Tor$-groups rather than with $\Ext$-groups. 
%
%
To be more specific, keep our field $k$ of positive characteristic, and assume that: 
\begin{itemize}
\item $\A$ is an additive $\FF$-linear category over a subfield $\FF\subset k$,
\item $\pi:\A^\op\to k\Md$ and $\rho:\A\to k\Md$ are $\FF$-linear functors,
\item $F,G:\Vect_k\to k\Md$ are strict polynomial functors over $k$.
\end{itemize}
We denote by $\rho^*G$ the composition of $G$ and $\rho$. If $\rho$ has infinite dimensional values then we make sense of this composition by replacing $G$ by its left Kan extension $\overline{G}$ to all vector spaces:
\[\rho^*G := \overline{G}\circ\rho\;.\] 
Functors of this form are important building blocks of the category $k[\A]\Md$ of all functors from $\A$ to $k$-vector spaces. For example, all the simple functors with finite dimensional values are tensor products of such functors, at least if $k$ is algebraically closed, see \cite[Thm~5.5]{DTV}. Similarly the composition $\pi^*F$ of $F$ and $\pi$ yields an object of the category $\Mdd k[\A]$ of all functors  from $\A^\op$ to $k$-vector spaces.
A natural question is to try to compute $\Tor_*^{k[\A]}(\pi^*F,\rho^*G)$. Let $D_{\pi,\rho}:\Vect_k^{\op}\to k\Md$ be the $k$-linear functor defined by 
\[D_{\pi,\rho}(v):= \Hom_k(v,\pi\otimes_{k[\A]}\rho)\;.\]
In the main body of the article, we construct a map for every degree $i$ 
\begin{align}\Tor_i^{k[\A]}(\pi^*F,\rho^*G)\to \Tor_i^{\gen}(D_{\pi,\rho}^*F,G)\label{eqn-comp-Tor-intro}
\end{align}
where $\Tor_*^{\gen}$ is the $\Tor$-analogue of the generic $\Ext$ of theorem \ref{thm-intro-1}. Note that the source of the comparison map \eqref{eqn-comp-Tor-intro} consists of mysterious $\Tor$-groups which are out of reach of computation, while its target consists of relatively well understood $\Tor$-groups, which can be computed in terms of $\Tor$ over classical Schur algebras.    

Our second result is a connectivity result for this comparison map, thereby giving a way to compute $\Tor_*^{k[\A]}(\pi^*F,\rho^*G)$. The $\Tor_*^{k\otimes_\mathbb{Z}\A}$ appearing in the homological vanishing condition refers to the $\Tor$-groups calculated in the category of additive functors, as opposed to $\Tor_*^{k[\A]}$ which are calculated in the category of all functors. The relation between the two is well understood by \cite{DT-add}.
\begin{thmintro}\label{thm-intro-2}
Assume that the cardinal of $\FF$ is greater than the degrees of $F$ and $G$, and that $\Tor^{k\otimes_\mathbb{Z}\A}_i(\pi,\rho)=0$ for $0<i<e$. 
Then the map \eqref{eqn-comp-Tor-intro} is an isomorphism if $0\le i<e$, and surjective if $i=e$.
\end{thmintro}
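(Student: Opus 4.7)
The plan is to reduce the general case to a universal one that can be attacked via the machinery of \cite{DT-add}, combining a standard polynomial functor reduction with a Künneth-style computation.

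\medskip

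\emph{Step 1: Reduction to tensor powers.} The cardinality hypothesis $|\FF|>\deg F,\deg G$ makes the group algebras $\FF[\mathfrak{S}_d]$ semisimple in the relevant range of homogeneous degrees $d$. Consequently, every strict polynomial functor of degree at most $d$ is a retract of a finite direct sum of tensor powers $\otimes^d$. Since the comparison map \eqref{eqn-comp-Tor-intro} is natural in $F$ and $G$ and additive in each variable, and since $\Tor$ vanishes between functors of distinct polynomial degrees on both sides, it suffices to establish the conclusion when $F = G = \otimes^d$ for an arbitrary $d\ge 0$.

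\medskip

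\emph{Step 2: Analysis of the tensor-power case.} For $F = \otimes^d$ we have $\pi^*F = \pi^{\otimes d}$ (evaluated pointwise), and similarly $\rho^*G = \rho^{\otimes d}$. The next step is to pass from the large category $k[\A]\Md$ to the additive functor category $k\otimes_\mathbb{Z}\A\Md$ via \cite{DT-add}: for a pair of polynomial inputs, their results give a convergent filtration (or spectral sequence) identifying $\Tor_*^{k[\A]}(\pi^{\otimes d},\rho^{\otimes d})$ with $\Tor_*^{k\otimes_\mathbb{Z}\A}$ applied to an external tensor product of $\pi$ and $\rho$, modulo cross-effect corrections. On the right-hand side of \eqref{eqn-comp-Tor-intro}, the target $\Tor_*^{\gen}(D_{\pi,\rho}^{\otimes d},\otimes^d)$ for tensor powers has a transparent description in terms of the single object $D_{\pi,\rho}(k) = \pi\otimes_{k[\A]}\rho$, using the generic Tor formulas in the spirit of \cite{FFSS}.

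\medskip

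\emph{Step 3: Conclusion via Künneth and the vanishing hypothesis.} A Künneth formula for additive Tor expresses $\Tor_*^{k\otimes_\mathbb{Z}\A}(\pi^{\boxtimes d},\rho^{\boxtimes d})$ as the $d$-fold tensor power of $\Tor_*^{k\otimes_\mathbb{Z}\A}(\pi,\rho)$. The vanishing hypothesis $\Tor_i^{k\otimes_\mathbb{Z}\A}(\pi,\rho)=0$ for $0<i<e$ therefore propagates to the tensor powers in the same range, and the first possible nontrivial correction sits in total degree $e$, contributing a single term of the form $\Tor_e^{k\otimes_\mathbb{Z}\A}(\pi,\rho)\otimes(\pi\otimes_{k[\A]}\rho)^{\otimes(d-1)}$. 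Tracing this through the identifications of Step 2 shows that \eqref{eqn-comp-Tor-intro} coincides, in the range $i\le e$, with the edge morphism of the DT-add spectral sequence, which is consequently an isomorphism for $i<e$ and a surjection at $i=e$.

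\medskip

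The chief obstacle lies in Step 2: one must verify that the DT-add identification on the source side is compatible, under the comparison map, with the tensor-power description of the generic side. In practice this requires exhibiting the comparison map \eqref{eqn-comp-Tor-intro} itself as the edge map of the DT-add spectral sequence, which in turn demands a careful tracking of the cross-effect decomposition of $\pi^{\otimes d}$ and $\rho^{\otimes d}$. The cardinality hypothesis on $\FF$ is used a second time here, to split off the $\mathfrak{S}_d$-isotypic components cleanly so that only the ``fully non-degenerate'' diagonal stratum survives, which is the piece that matches the generic Tor formula.
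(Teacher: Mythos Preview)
Your Step~1 contains a fatal error. The claim that the cardinality hypothesis $|\FF|>\deg F,\deg G$ forces the group algebras $\FF[\mathfrak{S}_d]$ to be semisimple is false: semisimplicity of $\FF[\mathfrak{S}_d]$ is governed by Maschke's theorem and holds if and only if $d<p$, where $p=\mathrm{char}\,\FF$. The size of $\FF$ is irrelevant here. For instance, take $\FF=\mathbb{F}_{p^2}$ and $d=p$: then $|\FF|=p^2>d$, yet $\FF[\mathfrak{S}_p]$ is not semisimple and $S^p$ is certainly not a retract of a sum of copies of $\otimes^p$. Since the whole point of the paper is to work over a perfect field $k$ of positive characteristic (with degrees $d\ge p$ allowed), your reduction collapses precisely in the cases of interest. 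The cardinality hypothesis in the theorem serves a completely different purpose: it guarantees that homogeneous functors of distinct degrees remain orthogonal after restriction (see lemma~\ref{lm-homogeneity-vanish}), not that the representation theory of $\mathfrak{S}_d$ becomes semisimple.

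Even setting Step~1 aside, Step~2 is more problematic than you acknowledge. The generic $\Tor$ between tensor powers is \emph{not} concentrated in degree zero: already $\Tor_*^{\gen}(I^\vee,I)$ is the graded dual of $\Ext^*_{\gen}(I,I)=\colim_r\Ext^*_{\Pp_k}(I^{(r)},I^{(r)})$, which is infinite-dimensional with contributions in all even degrees. Matching this against the DT-add formula for $\Tor_*^{k[\A]}(\pi,\rho)$ through the comparison map is exactly the heart of the theorem, and you have not done it.

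The paper's route is entirely different: rather than resolving $F$ and $G$, it resolves the \emph{additive} functors $\pi$ and $\rho$ by direct sums of representables $k\otimes_{\Fq}\A(-,a)$ and $k\otimes_{\Fq}\A(b,-)$. For such representables the comparison map factors through the case $\A=\Vect_{\Fq}$, where it reduces to the strong comparison theorem of \cite{FFSS} (corollary~\ref{cor-verystrong}). One then takes simplicial projective resolutions $\varpi\to\pi$, $\varrho\to\rho$ in the additive functor categories; lemma~\ref{lm-Whitehead-thm} shows that $\overline{F}\circ\varpi$ and $\overline{G}\circ\varrho$ are simplicial resolutions in $k[\A]\Md$, and the $\Tor$-vanishing hypothesis controls the connectivity of the induced map on $F^\dag$. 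This is where the hypothesis $\Tor_i^{k\otimes_{\mathbb{Z}}\A}(\pi,\rho)=0$ for $0<i<e$ actually enters.
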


The condition on the size of $\FF$ in theorem \ref{thm-intro-2} can be removed (i.e. we can take $\FF$ smaller than the degrees of $F$ and $G$), but the price to pay is that the comparison map \eqref{eqn-comp-Tor-intro} has to be modified in order to take into account the too small size of $\FF$, in particular $D_{\pi,\rho}$ has to be replaced by a similar, but more complicated, $k$-linear functor. This leads to our \emph{generalized comparison theorem}, which is the main comparison result of the paper. We refer the reader to theorem \ref{thm-gen-comp} for the exact statement. Let us only point out here the counter-intuitive fact that the case of the very small field $\FF=\Fp$ in our generalized comparison theorem is the key to prove theorem \ref{thm-intro-2} for infinite $\FF$, hence to finally prove theorem \ref{thm-intro-1}.

\subsection*{Consequences for the stable homology of general linear groups}
Our motivation to study homological algebra in functor categories is the close relations with the stable cohomology of $\GL_n(R)$. To be more specific, let $\Proj_R$ be the category of finitely generated projective right $R$-modules over a ring $R$. If $T: \Proj_R\to k\Md$ is a functor, the vector space $T(R^n)$ is naturally endowed with an action of $\GL_n(R)$, and taking the colimit on $n$ yields a left $k$-linear representation $T_\infty$ of the infinite general linear group $\GL_\infty(R)$. Similarly every functor $T':\Proj_R^\op\to k\Md$ induces a right $k$-linear representation $T'_\infty$, which may be viewed as a left $k$-linear representation by letting each element $g$ of $\GL_\infty(R)$ act as $g^{-1}$.

It is known that when $T'=\pi^*F$ and $T=\rho^*G$ with $\pi$, $\rho$, $F$ and $G$ as above, and if the ring $R$ has a finite stable rank, the canonical morphism:
\begin{align}
\rmH_*(\GL_{n}(R);T'(R^n)\otimes_k T(R^n))\to \rmH_*(\GL_{\infty}(R);T'_\infty\otimes_k T_\infty)\label{eq-stab}
\end{align}
is an isomorphism in low degrees (with an explicit isomorphism range, increasing linearly with $n$), see \cite{Dwyer}, \cite{vdK-stab} or \cite{RWW}. The right-hand side may be therefore referred to as the stable homology of $\GL_n(R)$.

Now it follows from \cite{DjaR} that when $T'=\pi^*F$ and $T=\rho^*G$, there is an isomorphism for all rings $R$ (even when no stable rank hypothesis is satisfied):
\begin{align}
\rmH_*(\GL_{\infty}(R);T'_\infty\otimes_k T_\infty)
\simeq \rmH_*(\GL_\infty(R);k)\otimes_k \Tor_*^{k[\Proj_R]}(T',T)\;.\label{eq-iso-motiv}
\end{align}
Thus our results actually give access to the homology of general linear groups with nontrivial coefficients (modulo the knowledge of the homology with trivial coefficients). 

We illustrate the computational applicability of our results with concrete homological computations in section \ref{subsec-ex-GL}. For instance, let $R_\infty$ denote the right $R$-module with countable basis $(e_i)_{i\ge 1}$ and with left action of $\GL_\infty(R)$ given by matrix multiplication
$[a_{ij}]\cdot e_j:=[a_{ij}] e_j = \sum_i a_{ij}e_i$, and let $R_\infty'$ be its stable dual, that is,  
the same $R$-module with action of $\GL_\infty(R)$ given by multiplication by transposed inverse matrices: $A\cdot v := (A^{-1})^{\mathrm{T}}v$. If $R$ is a $k$-algebra, then $R_\infty$ and $R_\infty'$ can be regarded as $k$-linear representations of $\GL_\infty(R)$ and we can consider the $d$-th exterior power (over $k$) on a direct sum of copies of these representations: 
\[\Lambda^d_{\ell m} = \Lambda^d(\underbrace{R_\infty' \oplus\cdots\oplus R_\infty'}_{\text{$\ell$ copies}}\oplus \underbrace{R_\infty\oplus\cdots\oplus R_\infty}_{\text{$m$ copies}})\;.\]
\begin{exintro}\label{ex-intro-1}
Assume that $k$ is an infinite perfect field of positive characteristic, and that $R$ is a $k$-algebra. Then for all nonnegative integers $d$, the graded $k$-vector space $\rmH_*(\GL_\infty(R);\Lambda^d_{\ell m})$ is zero if $d$ is odd, and if $d$ is even it is isomorphic to 
\[ \rmH_*(\GL_\infty(R);k)\otimes_k S^{d/2}(T_{\ell m}) \]
where $S^{d/2}(T_{\ell m})$ is $\frac{d}{2}$-th symmetric power of the graded $k$-vector space $T_{\ell m}$ which equals $\Hom_R(R^m,R^\ell)$ in degree $2i$ for all $i\ge 0$, and which is zero in the other degrees. 
\end{exintro}

\subsection*{Rational versus discrete cohomology of classical groups}
Besides their computational power, our results also have theoretical consequences. We single out an application to the cohomology of classical groups in section \ref{sec-rat-disc}.
Namely, if $G$ is an algebraic group over $k$, we denote by $\EExt_G^*(V,W)$ its algebraic group cohomology as in \cite{Jantzen}
and by $\Ext_{G}^*(V,W)$ the cohomology of its underlying discrete group of $k$-points as in \cite{Brown}. These two cohomologies are related by a comparison map:
\[\EExt^*_{G}(V,W)\to \Ext_{G}^*(V,W)\;.\]
Assume that $G$ is defined over the prime subfield $\Fp$ of $k$. Restricting a representation $V$ along the Frobenius group morphism $\phi:G\to G$
yields the \emph{twisted representation} $V^{[r]}$. If $k$ is perfect, $\phi$ is an isomorphism in the category of discrete groups, so the comparison map becomes:
\[\EExt^*_{G}(V^{[r]},W^{[r]})\to \Ext^*_{G}(V^{[r]},W^{[r]})\simeq \Ext^*_{G}(V,W)\;.\qquad(*)\]
If $G$ is reductive (e.g. $G=\GL_n(k)$), the left-hand side does not depend on $r$ in low degrees if $r$ is big enough, and it is called the \emph{generic extensions} of $G$. 
A celebrated theorem of Cline, Parshall, Scott and van der Kallen \cite[Main Thm (6.6)]{CPSVdK} shows that if $k$ is a big finite field, $r$ is big enough and $G$ is  split over $\Fp$, the map $(*)$ is an isomorphism in low degrees (the isomorphism range grows with the size of $k$).

As an application of theorem \ref{thm-intro-1}, we prove a similar result for infinite perfect fields $k$ when $G=\GL_n(k)$ in theorem \ref{thm-comp-GL}.
\begin{thmintro}\label{thm-intro-4}
Assume that the perfect field $k$ is infinite, and that $V$ and $W$ are two polynomial representations of $G=\GL_n(k)$ of degrees less or equal to $d$. Let $r$ be a nonnegative integer such that $n\ge \max\{dp^r,4p^r+2d+1\}$. Then the map $(*)$ is an isomorphism in degrees $i<2p^r$ and it is injective in degree $i=2p^r$.

\end{thmintro}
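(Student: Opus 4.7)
The plan is to chain together four natural isomorphisms that bracket the two sides of the comparison map $(*)$ by functor-cohomology computations, using Theorem~\ref{thm-intro-1} as the decisive bridge. Write $F,G\in\Pp_k$ for the strict polynomial functors of degree $\le d$ corresponding to the polynomial $\GL_n(k)$-representations $V$ and $W$ under the Friedlander--Suslin dictionary, so that $F^{(r)}$ and $G^{(r)}$ have degrees at most $dp^r\le n$.

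First I would use the classical equivalence between polynomial $\GL_n(k)$-representations of degree $\le n$ and strict polynomial functors of degree $\le n$ to identify
\[\EExt^*_{\GL_n(k)}(V^{[r]},W^{[r]}) \;\simeq\; \Ext^*_{\Pp_k}(F^{(r)},G^{(r)})\;.\]
Second, I would invoke the standard Frobenius stabilization inside $\Pp_k$: the canonical inclusion $\Ext^*_{\Pp_k}(F^{(r)},G^{(r)}) \hookrightarrow \Ext^*_\gen(F,G)$ is an isomorphism in degrees $<2p^r$ and injective in degree $2p^r$. This reflects the fact that the first new $\Ext$-class between successive Frobenius twists $I^{(r)}$ and $I^{(r+1)}$ appears in cohomological degree $2p^r$, and is precisely the source of the range $2p^r$ in the theorem. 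Third, Theorem~\ref{thm-intro-1} supplies the crucial identification $\Ext^*_\gen(F,G)\simeq \Ext^*_{k[\Vect_k]\Md}(F,G)$.

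Fourth, I would identify this functor cohomology with discrete group cohomology by combining two ingredients: homological stability for $\GL_n(k)$ with polynomial coefficients (Dwyer, van der Kallen, Randal-Williams--Wahl), which identifies $\Ext^*_{\GL_n(k)}(V,W)$ with the stable cohomology $\Ext^*_{\GL_\infty(k)}(V_\infty,W_\infty)$ in degrees $\le 2p^r$ as soon as $n\ge 4p^r+2d+1$; and the $\Ext$-analogue of the identification of stable group cohomology with functor cohomology coming from \cite{DjaR} (in the spirit of \eqref{eq-iso-motiv}), which equates the latter with $\Ext^*_{k[\Vect_k]\Md}(F,G)$. Composing the four isomorphisms will yield the isomorphism asserted in degrees $<2p^r$; the injectivity in degree $2p^r$ will be inherited from the second step, the other three steps being bona fide isomorphisms in that degree as well.

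The hardest part of this plan is not any single step but the bookkeeping needed to align the stability ranges: the degree bound $dp^r$ from the algebraic side, the Frobenius stability bound $2p^r$ in $\Pp_k$, and the linear bound from discrete homological stability must all be matched so that the combined statement is both sharp and clean. The conceptually decisive step, however, is the third: without Theorem~\ref{thm-intro-1} there is no available bridge from generic $\Ext$ to cohomology in $k[\Vect_k]\Md$ over an infinite perfect field, and hence no way to connect the algebraic and discrete sides at all.
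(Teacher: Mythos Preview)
Your approach is essentially the paper's own (see the proof of Theorem~\ref{thm-comp-GL}): it too factors the comparison through $\Ext^*_{\Pp_k}(F^{(r)},G^{(r)})$ and $\Ext^*_{k[\Vect_k]}(F^{(r)},G^{(r)})$, using Friedlander--Suslin for your step~1, Theorem~\ref{thm-infinite-perfect-field-Ext} together with the Frobenius stability bound of Proposition-Definition~\ref{pdef-gen-Ext} for your steps~2--3, and homological stability plus \cite{DjaR} for your step~4.

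There is, however, one genuine ingredient you have omitted in step~4. The isomorphism \eqref{eq-iso-motiv} from \cite{DjaR} does \emph{not} identify stable group (co)homology directly with functor (co)homology: it reads
\[\rmH_*(\GL_\infty(k);T'_\infty\otimes_k T_\infty)\;\simeq\;\rmH_*(\GL_\infty(k);k)\otimes_k \Tor_*^{k[\Vect_k]}(T',T)\;,\]
and to make the extra factor disappear you need $\rmH_i(\GL_\infty(k);k)=0$ for $i>0$. Over an infinite perfect field of characteristic~$p$ this is true but not a formality: the paper proves it as Lemma~\ref{lm-vanish-HGL}, using the unique $p$-divisibility of the homotopy groups of $B\GL(k)^+$ (a $K$-theoretic input). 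Without this vanishing your step~4 would only compute $\Ext^*_{\GL_n(k)}(V,W)$ up to an unknown tensor factor, and the argument would not close. A minor aside: with the stated bound on $n$, homological stability gives an isomorphism only in degrees $<2p^r$ and injectivity in degree $2p^r$, so step~4 is itself only $2p^r$-connected rather than a ``bona fide isomorphism'' there; this does not affect the conclusion, since the composite of two $2p^r$-connected maps is again $2p^r$-connected.
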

Analogous results for symplectic and orthogonal groups are given in theorem \ref{thm-comp-OSp}. 
Let us emphasize two points regarding these comparison results. 
\begin{enumerate}[1.]
\item 
As many perfect fields do not contain big finite subfields, there is no hope to recover our results from the results of \cite{CPSVdK} by taking colimits over finite subfields. Actually, \cite{CPSVdK} uses that the polynomials on a $k$-vector space $V$ are a good approximation of discrete maps from $V$ to $k$ over big finite fields $k$. Namely the natural forgetful map:
$k[V]\to \mathrm{Map}(V,k)$
is surjective, and it is an isomorphism in polynomial degree less than the cardinality of $k$. This fact obviously fails for infinite perfect fields, which makes our results rather unexpected.
\item 
Our cohomological comparison deals with \emph{stable} cohomology, in the sense that $n$ is large. One can wonder what happens when $n$ is small. This question goes beyond the current understanding of homology of groups, even in the simplest case $W=V=k$ and $G=\GL_n(k)$. In this case, it is known that the left-hand side of $(*)$ is zero in positive degrees \cite[II 4.11]{Jantzen}. In sharp contrast, the right hand-side of $(*)$ is mysterious for small values of $n$, and still under investigation, see e.g. \cite{Mirzaii,GKRW}. 
\end{enumerate}

\subsection*{Relations with other results and future work}

\subsubsection*{Computations in $\Fp[\Vect_\Fp]\Md$ and the Steenrod algebra}
Homological algebra in $\Fp[\Vect_\Fp]\Md$ was actually considered before the relation with the homology of $\GL_\infty(\Fp)$ was established. The initial motivation was the relation with the category of unstable modules over the Steenrod algebra, see \cite{HLS} and \cite{KI,KII,KIII}. Our results might have applications in this original context. Indeed, theorems \ref{thm-interm-strong} and \ref{thm-verystrong} remove the `big field assumption' from the strong comparison theorem, hence they could be a way to obtain a better understanding of the homological algebra in $\Fp[\Vect_\Fp]\Md$ from the computations of generic $\Ext$.

\subsubsection*{Comparison without homological vanishing assumption}
Theorem \ref{thm-intro-2}, and the generalized comparison theorem \ref{thm-gen-comp} compute $\Tor_*^{k[\A]}(\pi^*F,\rho^*G)$ under a homological vanishing condition on the additive functors $\pi$ and $\rho$. 
In \cite{DT-add} we give a computation of these $\Tor$, without assumption on $\pi$ and $\rho$, but assuming instead the strong hypothesis that $F$ and $G$ are direct summands of tensor powers. It is not clear to us if there is a closed formula computing these $\Tor$ without any of these assumptions, however there should be at least a formula in low degrees.
 
\subsubsection*{Comparison without $\Fp$-linearity}
Theorem \ref{thm-intro-2}, and the generalized comparison theorem \ref{thm-gen-comp} compute $\Tor_*^{k[\A]}(\pi^*F,\rho^*G)$ when the source category $\A$ is $\Fp$-linear. In view of the applications to group homology and K-theory, it would be interesting to remove this assumption. The few computations known in this context \cite{FPmaclane,PiraZnZ} show that new homological phenomena appear when $\Fp$-linearity is removed, which makes it a challenging problem.

\section{Recollections of functor categories}

\subsection{Ext and Tor in functor categories}\label{subsec-recoll-ord} If $\C$ is a svelte (= essentially small) category and $k$ is a commutative ring, we denote by $k[\C]\Md$ the category whose objects are the functors from $\C$ to $k$-modules and whose morphisms are the natural transformations (with the usual composition of natural natural transformations). To emphasize the analogy with modules over a ring, and to better distinguish between the source category $\C$ and the functor category $k[\C]\Md$, we denote by $\C(c,d)$ the $\Hom$-sets in $\C$ while we use the notation $\Hom_{k[\C]}(F,G)$ for the morphisms between two functors $F$ and $G$.   
We refer the reader to \cite{Mi72} for a detailed study of $k[\C]\Md$; we only recall here the salient facts which will be useful to us. 

\begin{rk}
Although the main focus of the article is the situation where $k$ is a field, the recollections are given over a commutative ring $k$ because this generality does not bring additional complexity. Moreover, the results of section \ref{sec-gen-prelim-comparison}, which rely on the material recalled here, are valid for an arbitrary commutative ring $k$.
\end{rk}

\subsubsection{Abelian structure}
The category $k[\C]\Md$ is abelian, bicomplete with enough projectives and injectives. To be more specific, limits and colimits are computed  objectwise, e.g. the direct sum of two functors is defined on all objects $c$ by $(F\oplus G)(c)=F(c)\bigoplus G(c)$. The \emph{standard projectives} are the functors $P^c =k[\C(c,-)]$ which map every object $d$ to the free $k$-module on $\C(c,d)$. The Yoneda lemma yields an isomorphism, natural with respect to $F$ and $c$:
\[\Hom_{k[\C]}(P^c,F)\simeq F(c)\;.\]
Every object of $k[\C]\Md$ has a projective resolution by direct sums of standard projectives. 

\subsubsection{Tensor product and $\Tor$}\label{subsubsec-tensTor}
Let $\Mdd k[\C]$ denote the category of functors from $\C^\op$ to $k$-modules (in other words $\Mdd k[\C]=k[\C^\op]\Md$). There is a tensor product over $\C$, which generalizes the tensor product of modules over a $k$-algebra:
\[ \otimes_{k[\C]}: \Mdd k[\C]\times k[\C]\Md\to k\Md\;.\]
To be more specific, $E\otimes_{k[\C]} F$ is the coend $\int^{c\in\C} E(c)\otimes_kF(c)$, which can be concretely computed as the quotient of the $k$-module $\bigoplus _c E(c)\otimes_k F(c)$ by the relations $E(f)(y)\otimes x= y\otimes F(f)(x)$ for all $x\in F(c)$, all $y\in E(d)$ and all $f:c\to d$.

For all $E$ in $\Mdd k[\C]$ and all $k$-modules $M$, let $D_M E$ in $k[\C]\Md$ denote the functor defined by $(D_ME)(c)=\Hom_k(E(c),M)$. Then the tensor product over $\C$ is characterized by the following adjunction isomorphism, natural with respect to $E$, $F$ and $M$:
\begin{align}
\Hom_k(E\otimes_{k[\C]} F,M)\simeq \Hom_{k[\C]}(F,D_M E)\;.\label{eqn-Formule-Cartan}
\end{align}

We denote by $\Tor^{k[C]}_*(E,F)$ the derived functors of the tensor product over $\C$. The previous adjunction isomorphism may be derived, to give a natural duality isomorphism, for all injective $k$-modules $M$ and for all degrees $i$:
\[\Hom_k(\Tor_i^{k[\C]}(E,F),M)\simeq \Ext^i_{k[\C]}(F,D_M E)\;.\]


\subsubsection{Restrictions and adjunctions in the source category}\label{subsubsec-res}
We denote by $\phi^*F$ the composition of a functor $\phi:\C\to \D$ with an object $F$ of $k[\D]\Md$. This induces an exact functor
\begin{align*}
\phi^*:k[\D]\Md\to k[\C]\Md\;,
\end{align*}
hence graded $k$-linear maps on the level of $\Ext$ and $\Tor$, that we call restriction maps, and that we denote by $\res^\phi$ and $\res_\phi$: 
\begin{align*}
&\res^\phi:\Ext^*_{k[\D]}(F,G)\to \Ext^*_{k[\C]}(\phi^*F,\phi^*G)\;,\\
&\res_\phi:\Tor_*^{k[\C]}(\phi^*E,\phi^*F)\to \Tor^{k[\D]}_*(E,F)\;.
\end{align*}
The natural duality isomorphism between $\Ext$ and $\Tor$ is natural with respect to restriction maps. That is, for all injective $k$-modules $M$ and for all degrees $i$ we have a commutative square:
\begin{equation}
\label{dgm-nat-duality1}
\begin{tikzcd}
\Hom_k(\Tor_i^{k[\C]}(\phi^*E,\phi^*F),M)\ar{rr}{\simeq}&&\Ext^i_{k[\C]}(\phi^*F,\phi^*D_ME)\\
\Hom_k(\Tor_i^{k[\D]}(E,F),M)\ar{u}{\Hom_k(\res_\phi,M)}\ar{rr}{\simeq}&&\Ext^i_{k[\D]}(F,D_ME)\ar{u}{\res^\phi}
\end{tikzcd}
\end{equation}

The next proposition is an important tool for computations. The $\Ext$-version can be found e.g. in \cite[Lm 1.3 and Lm 1.5]{Pira-Pan}. The proof is a straightforward check.
\begin{pr}\label{pr-iso-Ext-explicit}
Let $\phi:\C\leftrightarrows \D:\psi$ be an adjoint pair and let $u:\id\to \psi\circ\phi$ and $e:\phi\circ\psi\to\id$ denote the unit and the counit of an adjunction. Then the following composition is an isomorphism:
\[\Ext^*_{k[\D]}(\psi^*F,G)\xrightarrow[]{\res^\phi}\Ext^*_{k[\C]}(\phi^*\psi^*F,\phi^*G)\xrightarrow[]{\Ext^*_{k[\C]}(F(u),G)} \Ext^*_{k[\C]}(F,\phi^*G)\;,\]
whose inverse is given by the composition of $\res^\psi$ and $\Ext^*_{k[\D]}(F,G(e))$. Similarly, there is a $\Tor$-isomorphism given by the following composite map:
\[\Tor_*^{k[\C]}(\phi^*E,F)\xrightarrow[]{\Tor_*^{k[\C]}(\phi^*E,F(u))}\Tor_*^{k[\C]}(\phi^*E,\phi^*\psi^*F) \xrightarrow[]{\res^\phi}\Tor_*^{k[\D]}(E,\psi^*F)\;,\]
whose inverse is given by the composition of $\Tor_*^{k[\D]}(E(e),\psi^*F)$ and $\res_\psi$.
\end{pr}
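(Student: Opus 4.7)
The plan is to identify the statement as the derived-functor version of the adjunction between the precomposition functors
\[\psi^* : k[\C]\Md \rightleftarrows k[\D]\Md : \phi^*\]
coming from $\phi \dashv \psi$. First, I would check the degree-zero isomorphism directly: for each standard projective, $\psi^*P^c = k[\C(c,\psi(-))] \cong k[\D(\phi c,-)] = P^{\phi c}$ by the adjunction $\phi \dashv \psi$, whence
\[\Hom_{k[\D]}(\psi^*P^c, G) \cong G(\phi c) \cong \Hom_{k[\C]}(P^c, \phi^*G),\]
and this extends to all $F$ by writing $F$ as a cokernel of a map between sums of standard projectives. Moreover, the unit of the adjunction $\psi^* \dashv \phi^*$ at $F$ is exactly $F(u): F \to \phi^*\psi^*F$, and its counit at $G$ is $G(e): \psi^*\phi^*G \to G$, so the two composites in the statement are precisely the mate bijections; the triangle identities for $(u,e)$, transported by the naturality of $F$ and $G$, show they are mutually inverse.

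To promote this to the Ext-isomorphism, I would observe that since (co)limits in functor categories are computed objectwise, both $\phi^*$ and $\psi^*$ are exact. As a left adjoint to the exact functor $\phi^*$, the functor $\psi^*$ also preserves projectives (which is also visible from the identity $\psi^*P^c = P^{\phi c}$). Consequently, if $P_\bullet \to F$ is a projective resolution in $k[\C]\Md$, then $\psi^*P_\bullet \to \psi^*F$ is a projective resolution in $k[\D]\Md$. Applying the degree-zero adjunction levelwise produces an isomorphism of complexes $\Hom_{k[\D]}(\psi^*P_\bullet, G) \cong \Hom_{k[\C]}(P_\bullet, \phi^*G)$, implemented by the formulas of the statement; passing to cohomology yields the stated Ext-isomorphism, still implemented by those formulas.

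For the Tor analogue, I would first lift the adjunction to the level of tensor products, establishing a natural isomorphism $\phi^*E \otimes_{k[\C]} F \cong E \otimes_{k[\D]} \psi^*F$. This is transparent on standard projectives $F = P^c$, where both sides equal $E(\phi c)$; alternatively, one dualizes using the Cartan formula \eqref{eqn-Formule-Cartan} together with the Ext-isomorphism just proved,
\[\Hom_k(\phi^*E \otimes_{k[\C]} F,\, M) \cong \Hom_{k[\C]}(F, \phi^*D_ME) \cong \Hom_{k[\D]}(\psi^*F, D_ME) \cong \Hom_k(E \otimes_{k[\D]} \psi^*F,\, M),\]
for every $k$-module $M$, and then applies the Yoneda lemma. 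Choosing a projective resolution $P_\bullet \to F$ (whose image $\psi^*P_\bullet$ is still flat, being projective) and applying this natural isomorphism termwise yields an isomorphism of complexes whose homology is the stated Tor-isomorphism.

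There is no real obstacle in the argument; the only delicate point is bookkeeping, namely verifying that the two explicit composites displayed in the statement genuinely realize the abstract mate bijections and are mutually inverse. This reduces cleanly to the triangle identities for the adjoint pair $(\phi, \psi)$ and the functoriality of $F$ and $G$ applied to those identities, which is why the proof is, in the authors' words, \emph{a straightforward check}.
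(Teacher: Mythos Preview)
Your proposal is correct and is exactly the kind of argument the paper has in mind when it says ``the proof is a straightforward check'' (with a pointer to \cite[Lm~1.3 and Lm~1.5]{Pira-Pan}). You have simply unpacked that check: the adjunction $\phi\dashv\psi$ induces an adjunction $\psi^*\dashv\phi^*$ with unit $F(u)$ and counit $G(e)$, both functors are exact, $\psi^*$ preserves projectives, and the derived isomorphism follows; the $\Tor$ case is dual via \eqref{eqn-Formule-Cartan}.
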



\subsection{Strict polynomial functors}\label{subsec-str-pol}
Assume that $k$ is a field, and let $\Vect_k$ denote the category of finite-dimensional $k$-vector spaces. We recall here some basic facts that we will need regarding strict polynomial functors. As a small difference with \cite{FS}, we allow our strict polynomial functors to have infinite dimensional values -- this is needed for example in the statement of the generalized comparison theorem, in which $F^\dag$ may have infinite dimensional values. This difference does not affect the results of \cite[Section 2 and 3]{FS} in an essential way.
\subsubsection{The abelian structure}
We denote by $\Pp_k$ the category of strict polynomial functors of bounded degree over $k$ (possibly with infinite dimensional values), and natural transformations of strict polynomial functors. It is an abelian category, and there is a (faithful, exact) forgetful functor: 
\[\Pp_k\to k[\Vect_k]\Md\]
which allows to think of a strict polynomial functor as a functor $F:\Vect_k\to k\Md$ equiped with some additional structure. If $k$ is infinite, this forgetful functor is fully faithful. 

The category $\Pp_k$ has enough injectives and projectives. To be more specific, the standard injective objects have the form $S^d_V(-)=S^d(V\otimes_k-)$ and the standard projective objects have the form $\Gamma^{d\, V}(-)=\Gamma^d(\Hom_k(V,-))$, where $V$ denotes a vector space, and $S^d(U)=(U^{\otimes d})_{\Si_d}$ stands for the $d$-th symmetric power of a $k$-vector space $U$, while $\Gamma^d(U)=(U^{\otimes d})^{\Si_d}$ stands for its $d$-th divided power. Every strict polynomial functor has an injective resolution by products of standard injectives, and a projective resolution by direct sums of standard projectives.

\begin{rk}\label{rk-abuse}
We will usually make no notational distinction between a strict polynomial functor $F$ and the underlying ordinary functor, i.e. the image of $F$ by the forgetful functor $\Pp_k\to k[\Vect_k]\Md$. For example, we will say that the forgetful functor induces a graded morphism: $\Ext^*_{\Pp_k}(F,G)\to \Ext^*_{k[\Vect_k]}(F,G)$.
Pushing this (slight) abuse of notation further, if $\phi:\C\to \Vect_k$ is a functor, we will also denote by $\phi^*$  the composition of the map $\phi^*$ of section \ref{subsubsec-res} with the forgetful functor:
\[\Pp_k\to k[\Vect_k]\Md\xrightarrow[]{\phi^*}k[\A]\Md\;.\]
\end{rk}

\subsubsection{Homogeneous functors}
Let $\Pp_{d,k}$ denote the full subcategory of homogeneous strict polynomial functors of degree $d$; this is the smallest abelian subcategory of $\Pp_k$ which contains the functors $\Gamma^{d\,V}$ and $S^d_V$ for all $V$. There is a direct sum decomposition:
\[\Pp_k=\bigoplus_{d\ge 0}\Pp_{d,k}\;.\]
Moreover, each category $\Pp_{d,k}$ is equivalent to the category $S(n,d)\Md$ of modules over the Schur algebra $S(n,d)=\End_{\Si_d}((k^n)^{\otimes d})$ provided $n\ge d$. This equivalence of categories sends a functor $F$ to the $k$-vector space $F(k^n)$, equipped with the canonical action of $S(n,d)$.
\begin{rk}
Each subcategory $\Pp_{d,k}$ has all direct sums and products. The forgetful functor $\Pp_{d,k}\to k[\Vect_k]\Md$ preserves arbitrary direct sums and arbitrary products. 
\end{rk}
\begin{rk}
The subcategory $\Pp_{d,k}$ may be equivalently described \cite{Kuhn-compare, Pira-Pan} as the category of $k$-linear functors $\Gamma^d\Vect_k\to k\Md$ where $\Gamma^d\Vect_k$ denotes the category whose objects are the finite dimensional vector spaces and $\Hom_{\Gamma^d\Vect_k}(V,W)=\Hom_{\Si_d}(V^{\otimes d},W^{\otimes d})$. With this description, the forgetful functor is given by precomposition with the functor $\gamma^d:\Vect_k\to \Gamma^d\Vect_k$ which is the identity on objects and which satisfies $\gamma^d(f)=f^{\otimes d}$. Moreover, $S(n,d)=\End_{\Gamma^d\Vect_k}(k^n)$ and the equivalence of categories $\Pp_k\simeq S(n,d)\Md$ is simply obtained by restricting a functor to the full subcategory of $\Gamma^d\Vect_k$ supported by $k^n$.
\end{rk}

\subsubsection{Variants of the category $\Pp_k$}
The definition of strict polynomial functors in \cite{FS} has obvious variants, such as the category $\Pp_k'$ of contravariant strict polynomial functors. Let $^\vee v=\Hom_k(v,k)$ denote the dual of a vector space $v$. Then we denote by $F^\vee$ the composition of $F$ with the duality functor $^\vee-:\Vect_k^\op\to \Vect_k$. Thus $F^\vee(v)=F({}^\vee v)$. This construction induces an equivalence of categories
\[\Pp_k\xrightarrow[]{\simeq}\Pp_k'\;,\quad F\mapsto F^\vee\;.\]

\subsubsection{Tensor functors and $\Tor$}
There is a tensor product: 
\[-\otimes_{\Pp_k}-:\Pp_k'\times \Pp_k\to k\Md\]
which is compatible with the tensor product over $k[\Vect_k]$ in the sense that the forgetful functor induces a canonical morphism
\begin{align} E\otimes_{k[\Vect_k]}F\to  E\otimes_{\Pp_k}F\;.\label{eqn-cantens}
\end{align}
If $V$ is a vector space, we extend the notation of section \ref{subsubsec-tensTor} to strict polynomial functors by letting $D_V E$ denote the strict polynomial functor such that $(D_VE)(v)=\Hom_k(E(v),V)$. Then the tensor product is characterized by the isomorphism (natural with respect to $E$, $F$, $V$)
\begin{align}\Hom_k(E\otimes_{\Pp_k}F,V)\simeq \Hom_{\Pp_k}(F,D_V E)\;.\label{eqn-cartan-strict}
\end{align}
This isomorphism shows that the canonical map \eqref{eqn-cantens} is always surjective (because the forgetful functor is faithful), and it is an isomorphism if $k$ is infinite. Since there are no nonzero morphisms between two homogeneous strict polynomial functors of different degrees, this isomorphism also shows that $E\otimes_{\Pp_k}F=0$ if $E$ and $F$ are homogeneous of different degrees.

We denote by $\Tor_*^{\Pp_k}(E,F)$ the derived functors of the tensor product. Then the isomorphism \eqref{eqn-cartan-strict} can be derived, it yields an isomorphism
\[\Hom_k(\Tor_i^{\Pp_k}(E,F),V)\simeq \Ext^i_{\Pp_k}(F,D_VE)\]

\begin{rk}
The equivalence $\Pp_{d,k}\simeq S(n,d)\Md$ is strongly monoidal, thus the tensor product is just the usual tensor product over Schur algebras in disguise:\[\Tor_*^{\Pp_k}(E,F)\simeq \Tor_*^{S(n,d)}(E(k^n),F(k^{n}))\;.\] 
In the description of $\Pp_{d,k}$ as the category of $k$-linear functors from $\Gamma^d\Vect_k$ to $k\Md$, the tensor product identifies with the tensor product over $\Gamma^d\Vect_k$.
\end{rk}

\subsubsection{Composition and restriction} The composition of a strict polynomial functor $F$ of degree $d$ with a strict polynomial functor $G$ of degree $e$ yields a strict polynomial functor $F\circ G$ of degree $de$. Precomposition by $G$ yields an exact functor $-\circ G:\Pp_k\to \Pp_k$ and $-\circ G:\Pp_k'\to \Pp_k'$, hence graded maps:
\begin{align*}
&\Ext^*_{\Pp_k}(F,F')\to \Ext^*_{\Pp_k}(F\circ G,F'\circ G)\;,\\
&\Tor_*^{\Pp_k}(E\circ G,F\circ G)\to \Tor_*^{\Pp_k}(E\circ G,F\circ G)\;.
\end{align*}
which fit into a commutative diagrams:
\begin{equation}
\label{dgm-nat-duality2}
\begin{tikzcd}
\Hom_k(\Tor_i^{\Pp_k}(E\circ G,F\circ G),V)\ar{rr}{\simeq}&&\Ext^i_{\Pp_k}(F\circ G,D_V(E\circ G))\\
\Hom_k(\Tor_i^{\Pp_k}(E,F),V)\ar{u}{}\ar{rr}{\simeq}&&\Ext^i_{\Pp_k}(F,D_VE)\ar{u}{}
\end{tikzcd}
\end{equation}

\subsection{Frobenius twists and generic homology}\label{subsec-Frob-twist}

Throughout this section, $k$ is a perfect field, of positive characteristic $p$.

\subsubsection{Frobenius twists}\label{subsubsec-Frob-discrete}
Let $\FF$ be a perfect field of positive characteristic $p$. For all integers $r$ and for all $\FF$-vector spaces $v$ we denote by $^{(r)}v$ the $\FF$-vector space which equals $v$ as an abelian group, with action of $\FF$ given by 
\[\lambda\cdot x:= \lambda^{p^{-r}}x\;.\]
We note that $^{(r)}-$ is an additive endofunctor of $\FF$-vector spaces which preserves dimension. 
Moreover $^{(0)}v= v$ and $^{(s)}({}^{(r)}v)={}^{(s+r)}v$, hence  $^{(r)}-$ is a self-equivalence of the category of $\FF$-vector spaces, with inverse $^{(-r)}-$.

\begin{nota}\label{nota-twist-ord}
For all (contravariant or covariant) functors $F:\Vect_\FF\to k\Md$, we denote by $F^{(r)}: \Vect_\FF\to k\Md$ the functor such that 
$F^{(r)}(v) := F({}^{(r)}v)$.
\end{nota}

Assume now that $\FF=k$. Then if $r\ge 0$, the functor $^{(r)}-: \Vect_k\to k\Md$ is the underlying ordinary functor of a certain $p^r$-homogeneous strict polynomial functor, called the \emph{$r$-th Frobenius twist functor} and denoted by $I^{(r)}$. One has $(I^{(r)})^{(s)}= I^{(r+s)}$ for all nonnegative integers $r$ and $s$.
The following notation is the analogue of notation \ref{nota-twist-ord} for strict polynomial functors.
\begin{nota}\label{nota-twist}
For all strict polynomial functors $F$ of degree $d$, we denote by $F^{(r)}$ the strict polynomial functor of degree $dp^r$ defined as $F^{(r)}:= F\circ I^{(r)}$.
\end{nota}

\subsubsection{Generic $\Ext$ and generic $\Tor$}
For all positive integers $r$, precomposition by the equivalence of categories ${}^{(r)}-:\Vect_k\to \Vect_k$ yields a graded isomorphism:
\[\Ext^*_{k[\Vect_k]}(F,G)\xrightarrow[]{\simeq} \Ext^*_{k[\Vect_k]}(F^{(r)},G^{(r)})\]
with inverse given by precomposition by ${}^{(-r)}-$. 
The situation is quite different in the realm of strict polynomial functors. Indeed, precomposition by the strict polynomial functor $I^{(r)}$ induces a graded map  
\begin{align*}\Ext^*_{\Pp_k}(F,G)\xrightarrow[]{} \Ext^*_{\Pp_k}(F^{(r)},G^{(r)})
\end{align*}
but since $I^{(r)}$ has no `strict polynomial inverse' (i.e. there is no strict polynomial functor $I^{(-r)}$!), there is no reason why this graded map should be an isomorphism. 
The next result was first established in \cite[Cor 1.3 and Cor 4.6]{FFSS}. 
\begin{defi-prop}\label{pdef-gen-Ext}
Let $F$ and $G$ be two strict polynomial functors. The maps given by precomposition by $I^{(1)}$:
\[\Ext^i_{\Pp_k}(F^{(r)},G^{(r)})\to \Ext^i_{\Pp_k}(F^{(r+1)},G^{(r+1)})\]
are always injective, and they are isomorphisms if $i<2p^r$. The stable value is called the \emph{generic extensions of degree $i$} and denoted by $\Ext^i_\gen(F,G)$:
\begin{align*}
\Ext^i_\gen(F,G)&:=\colim_{r}\Ext^i_{\Pp_k}(F^{(r)},G^{(r)})= \Ext^i_{\Pp_k}(F^{(r)},G^{(r)})\text{ if $r\gg 0$.}
\end{align*}
\end{defi-prop}

We refer the reader to \cite{Touze-Survey} for a survey of generic extensions and formulas computing them (which simplify and generalize the computations of \cite{FFSS}).
Using the commutative diagram \eqref{dgm-nat-duality2}, we can dualize the $\Ext$ situation to define generic $\Tor$.

\begin{defi-prop}\label{pdef-gen-Tor}
Let $E$ and $G$ be two strict polynomial functors, with $E$ contravariant. The maps given by precomposition by $I^{(1)}$
\[\Tor_i^{\Pp_k}(E^{(r+1)},G^{(r+1)})\to \Tor_i^{\Pp_k}(E^{(r)},G^{(r)})\]
are always surjective, and they are isomorphisms if $i<2p^r$. The stable value is called the \emph{generic torsion of degree $i$}
and denoted by $\Tor^\gen_i(E,G)$:
\begin{align*}\Tor_i^\gen(E,G)&:=\lim_{r}\Tor_i^{\Pp_k}(E^{(r)},G^{(r)})= \Tor_i^{\Pp_k}(E^{(r)},G^{(r)})\text{ for $r\gg 0$.}
\end{align*}
\end{defi-prop}

\section{Homology of strict polynomial functors over $\Vect_\Fq$.}
Throughout the section, $k$ is a perfect field of positive characteristic $p$, and $\Fq$ is a finite field of cardinal $q=p^r$.
We will elaborate on the results of \cite{FFSS} to prove the generalized comparison theorem over $\A=\Vect_\Fq$ (the category of finite-dimensional $\Fq$-vector spaces). The proof is done in two steps: we first establish the theorem when the field $\Fq$ is big enough in section \ref{subsec-strong-comparison-Fq}, and then we extend the theorem to arbitrary finite fields in section \ref{subsec-generalized-comparison-Fq}.

\subsection{The strong comparison map}\label{subsec-strong-comparison-Fq}
We assume that $k$ contains a perfect subfield $\FF$, and we let $t:\Vect_\FF\to \Vect_k$ denote the extension of scalars: $t(v)=k\otimes_\FF v$. As explained in remark \ref{rk-abuse}, we slightly abuse notations and we denote by $t^*$ the exact functor defined as the composition:
\[t^*: \Pp_k\to k[\Vect_k]\Md\to k[\Vect_\FF]\Md\]
where the first functor is the forgetful functor and the second functor is precomposition by $t$.
\begin{defi}\label{defi-strong-comp-map}
The \emph{strong comparison map} (associated to the extension of perfect fields $\FF\subset k$) is the graded $k$-linear map $\Phi_\FF$ defined for $n\gg 0$ by restriction along $t^*$ and along the equivalence of categories $^{(-n)}-:\Vect_\FF\to \Vect_\FF$ given by the Frobenius twist:
\begin{align*}
\Phi_\FF:\begin{array}[t]{c}
\Ext^i_{\Pp_k}(F^{(n)},G^{(n)})\\
= \Ext^i_{\gen}(F,G)
\end{array}\xrightarrow[]{} \Ext^i_{k[\Vect_\FF]}(t^*F^{(n)},t^*G^{(n)})\xrightarrow[]{\simeq} \Ext^i_{k[\Vect_\FF]}(t^*F,t^*G)\;. 
\end{align*}
\end{defi}
\begin{rk}
The strong comparison map is independent of $n$ (provided $n$ is big enough). This follows from the commutative squares (in which the horizontal maps are induced by $t^*$ and the vertical ones by restriction along $I^{(1)}$ and $^{(1)}-$, together with the natural isomorphisms $k\otimes_\FF {}^{(1)}v \simeq {}^{(1)}(k\otimes_\FF v)$):
\[
\begin{tikzcd}
\Ext^i_{\Pp_k}(F^{(n+1)},G^{(n+1)})\ar{r}&\Ext^i_{k[\Vect_\FF]}(t^*F^{(n+1)},t^*G^{(n+1)})\\
\Ext^i_{\Pp_k}(F^{(n)},G^{(n)})\ar{r}\ar{u}{\simeq}&\Ext^i_{k[\Vect_\FF]}(t^*F^{(n)},t^*G^{(n)})\ar{u}{\simeq}
\end{tikzcd}\;.
\]
\end{rk}

The next result follows from the strong comparison theorem \cite[Thm 3.10]{FFSS}. It will be extended to all perfect infinite fields $\FF$ in section \ref{sec-special-cases}.
\begin{thm}\label{thm-strong-comparison}
If $\FF$ is a finite field with $q$ elements, if $F$ and $G$ are two strict polynomial functors of degrees less than $q$, the strong comparison map $\Phi_\FF$ is an isomorphism in all degrees $i$.
\end{thm}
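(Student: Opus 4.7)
The plan is to deduce Theorem \ref{thm-strong-comparison} from the classical strong comparison theorem \cite[Thm 3.10]{FFSS} by a base-change argument along the inclusion $\FF\subset k$.

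First, by construction $\Phi_\FF$ factors as
\[
\Ext^i_{\Pp_k}(F^{(n)},G^{(n)}) \xrightarrow{\alpha_n} \Ext^i_{k[\Vect_\FF]}(t^*F^{(n)},t^*G^{(n)}) \xrightarrow{\simeq} \Ext^i_{k[\Vect_\FF]}(t^*F,t^*G),
\]
where the second arrow is the iso coming from the self-equivalence ${}^{(-n)}-$ of $\Vect_\FF$ (via the natural isomorphism $k\otimes_\FF{}^{(1)}v\simeq{}^{(1)}(k\otimes_\FF v)$ noted just above the definition of $\Phi_\FF$). So it suffices to show that $\alpha_n$ is an isomorphism for $n$ large enough.

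To analyse $\alpha_n$, I would pass through the intermediate category $\Pp_{d,\FF}^k$ of strict polynomial functors of degree $d$ over $\FF$ with values in $k$-modules. Using $\Pp_{d,k}\simeq S_k(n,d)\Md$ for $n\ge d$ and the isomorphism $S_k(n,d)\cong S_\FF(n,d)\otimes_\FF k$, one obtains an equivalence $\Pp_{d,k}\simeq\Pp_{d,\FF}^k$ under which $t^*$ becomes the natural forgetful functor $\Pp_{d,\FF}^k\to k[\Vect_\FF]\Md$. The classical FFSS strong comparison theorem, applied to strict polynomial functors over $\FF=\Fq$ of degree $<q$ with $\FF$-values, gives the $\FF$-valued analog of $\alpha_n$. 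To transfer the statement from $\FF$-values to $k$-values, I would use flatness of $k$ over $\FF$: the standard projective generators on both sides (the $\Gamma^{d,V}$ on the strict-polynomial side and the representables $k[\Vect_\FF(v,-)]$ on the functor-category side) are obtained from their $\FF$-analogs by base change $k\otimes_\FF-$, the resulting Ext groups are compatible with this base change, and the comparison map respects it.

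The hardest part will be making this last reduction rigorous, since a general $F\in\Pp_{d,k}$ is not of the form $k\otimes_\FF F_0$ for an $F_0\in\Pp_{d,\FF}$. Rather than arguing object-wise, the right thing to do is to work at the level of the whole categories: resolve $F,G$ by the standard projectives $\Gamma^{d,V}$, which themselves sit compatibly in the base-change picture, and likewise on the functor-category side, then deduce the comparison from \cite[Thm 3.10]{FFSS} termwise. Alternatively one may formulate the whole argument as an extension-of-scalars of the classical FFSS comparison map of functor categories, bypassing the case-by-case analysis.
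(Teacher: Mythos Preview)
Your proposal is correct and follows essentially the same strategy as the paper: reduce via resolutions to the case of standard generators, use that these commute with base change along $\FF\subset k$, and then invoke \cite[Thm 3.10]{FFSS} (together with \cite[Lm 3.11]{FFSS} for the case of unequal degrees). The paper streamlines your outline by taking a projective resolution of $F$ by $\Gamma^{d,s}$'s and an injective resolution of $G$ by $S^{e,s}$'s---rather than resolving both projectively---so that the base-change square and the spectral sequence argument become immediate, and it bypasses your Schur-algebra detour by working directly with the isomorphisms $t^*\Gamma^{d,s}\simeq \Gamma^{d,s}_\FF\otimes_\FF k$ and $t^*S^{e,s}\simeq S^{e,s}_\FF\otimes_\FF k$.
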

\begin{proof}
Theorem \ref{thm-strong-comparison} slightly generalizes the strong comparison theorem of \cite{FFSS} in two ways. Firstly, contrarily to \cite{FFSS}, we do not assume that $k=\Fq$. Secondly, as we recall it in section \ref{subsec-str-pol}, we allow our strict polynomial functors to have infinite-dimensional values.

We overcome these two technical points as follows. The standard projective objects of $\Pp_k$ are the divided power functors $\Gamma^{d,s}=\Gamma^d(\Hom_k(k^s,-))$ and the standard injectives are the symmetric power functors $S^{e,s}=S^e(k^s\otimes -)$. These two kinds of functors commute with base change in the following sense: there are canonical isomorphisms
\[t^*\Gamma^{d,s}(v)\simeq\Gamma^{d,s}_\FF(v)\otimes_\FF k \text{ and } t^*S^{e,s}(v)\simeq S^{e,s}_\FF(v)\otimes_\FF k\]
where the indices $\FF$ indicate their counterparts in the category $\Pp_\FF$ of strict polynomial functors over $\FF$. There is a commutative square (compare \cite[Prop 3.8]{FFSS})
\[
\begin{tikzcd}
\Ext^i_{\Pp_\FF}(\Gamma^{d,s\,(n)}_\FF, S^{e,s\,(n)}_\FF )\otimes_\FF k\ar{r}{\simeq}\ar{d}& \Ext^i_{\Pp_k}(\Gamma^{d,s\,(n)}, S^{e,s\,(n)})\ar{d}{\Phi_\FF}\\
\Ext^i_{\FF[\Vect_\FF]}(\Gamma^{d,s}_\FF, S^{e,s}_\FF )\otimes_\FF k\ar{r}{\simeq}& \Ext^i_{k[\Vect_\FF]}(t^*\Gamma^{d,s}, t^*S^{e,s})
\end{tikzcd}
\]
in which the horizontal morphisms are the base change isomorphisms and the vertical morphism on the left is induced by the forgetful functor $\Pp_\FF\to \FF[\Vect_\FF]\Md$ and restriction along $^{(-n)}-$. The vertical map on the left is an isomorphism if $n\gg 0$ by \cite[Thm 3.10]{FFSS} if $d=e$ and by \cite[Lm 3.11]{FFSS} if $d\ne e$. Thus $\Phi_\FF$ is an isomorphism when $F$ is a standard projective and $G$ is a standard injective.

For arbitrary $F$ and $G$, we consider a projective resolution of $F$ and an injective resolution of $G$, and a standard spectral sequence argument shows that $\Phi_\FF$ is an isomorphism.
\end{proof}

Theorem \ref{thm-strong-comparison} can be dualized. 
If $E$ and $F$ are two strict polynomial functors of bounded degrees, with $E$ contravariant, there is a \emph{strong comparison map for $\Tor$}  (still denoted by $\Phi_\FF$) defined for $n$ big enough as the composition:
\begin{align*}
\Phi_\FF: \Tor_i^{k[\Vect_\FF]}(t^*E,t^*F)\simeq \Tor_i^{k[\Vect_\FF]}(t^*E^{(n)},t^*F^{(n)})\xrightarrow[]{} \begin{array}[t]{c}
\Tor_i^{\Pp_k}(E^{(n)},F^{(n)})\\
= \Tor_i^\gen(E,F)
\end{array}
\label{eqn-forget-comparison-Tor}
\end{align*}
induced by restriction along the Frobenius twist $^{(-n)}-:\Vect_\FF\to \Vect_\FF$ and along $t^*$.
The commutativity of diagrams \eqref{dgm-nat-duality1} and \eqref{dgm-nat-duality2} involving the $\Ext$-$\Tor$ duality isomorphisms yields a commutative square:
\begin{equation}\label{cd-phiphi}
\begin{tikzcd}
\Hom_k(\Tor_i^\gen(E,F),k)\ar{rr}{\simeq}\ar{d}{\Hom_k(\Phi_\FF,k)}&& \Ext^i_\gen(F,D_kE)\ar{d}{\Phi_\FF}\\
\Hom_k(\Tor_i^{k[\Vect_\FF]}(t^*E,t^*F),k)\ar{rr}{\simeq}&& \Ext^i_{k[\Vect_\FF]}(t^*F,t^*(D_kE))
\end{tikzcd}\;.
\end{equation}
Thus, theorem \ref{thm-strong-comparison} has the following consequence.
\begin{cor}\label{cor-strong-comparison-Tor}
If $E$ and $F$ be two strict polynomial functors of degree $d$, with $E$ contravariant, and if $\FF$ a finite field with $q>d$ elements, then the strong comparison map $\Phi_\FF$ is an isomorphism of $\Tor$ groups in all degrees $i$.
\end{cor}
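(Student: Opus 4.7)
The plan is to deduce the $\Tor$ statement from the $\Ext$ statement (Theorem \ref{thm-strong-comparison}) via the commutative square \eqref{cd-phiphi}, using that $k$-linear duality $\Hom_k(-,k)$ reflects isomorphisms of $k$-vector spaces.

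First, I apply Theorem \ref{thm-strong-comparison} to the pair $(F, D_k E)$. Since $E$ is contravariant of degree $d$, the functor $D_k E$ is a (covariant) strict polynomial functor of degree $d$, with possibly infinite-dimensional values. Theorem \ref{thm-strong-comparison} is designed to allow this, so under the hypothesis $q > d$ we conclude that the strong comparison map $\Phi_\FF \colon \Ext^i_\gen(F, D_k E) \to \Ext^i_{k[\Vect_\FF]}(t^*F, t^*(D_k E))$ is an isomorphism in every degree $i$.

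Next, I invoke the commutative square \eqref{cd-phiphi}, whose horizontal arrows are the $\Ext$-$\Tor$ duality isomorphisms and whose right-hand vertical arrow is precisely the $\Ext$-comparison map just shown to be an isomorphism. It follows that the left-hand vertical arrow $\Hom_k(\Phi_\FF, k)$ is an isomorphism of $k$-vector spaces in every degree.

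Finally, I observe that the functor $\Hom_k(-, k)$ on $k\Md$ reflects isomorphisms: it is exact (because $k$ is injective over itself) and kernel-free (if $\Hom_k(V, k) = 0$ then $V = 0$, as seen by picking a basis). Applying $\Hom_k(-,k)$ to the exact sequence $0 \to \ker\Phi_\FF \to \Tor_i^{k[\Vect_\FF]}(t^*E, t^*F) \xrightarrow{\Phi_\FF} \Tor_i^\gen(E, F) \to \mathrm{coker}\,\Phi_\FF \to 0$ and using that $\Hom_k(\Phi_\FF, k)$ is an iso forces $\ker \Phi_\FF = 0 = \mathrm{coker}\,\Phi_\FF$, so $\Phi_\FF$ is itself an isomorphism. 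No serious obstacle is expected here; the whole argument is a formal duality transfer, and the real content has already been packaged into Theorem \ref{thm-strong-comparison} and the naturality square \eqref{cd-phiphi}.
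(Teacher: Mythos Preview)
Your proof is correct and follows exactly the paper's approach: deduce the $\Tor$ isomorphism from Theorem~\ref{thm-strong-comparison} applied to $(F,D_kE)$ via the commutative square~\eqref{cd-phiphi}, then use that $\Hom_k(-,k)$ reflects isomorphisms over a field. The paper leaves the last step implicit, but your spelling-out of it is accurate.
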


\subsection{The generalized comparison theorem over $\Vect_\Fq$}\label{subsec-generalized-comparison-Fq} 
Theorem \ref{thm-strong-comparison} and corollary \ref{cor-strong-comparison-Tor}
require that $\FF$ is a big finite field. This hypothesis is necessary: it is not hard to see that $\Hom_{\Pp_k}(S^1,S^q)$ has dimension zero, while $\Hom_{k[\Vect_\Fq]}(t^*S^1,t^*S^q)$ has dimension one.
The purpose of this section is to show that this assumption on the size of the finite field $\FF$ can nonetheless be removed, provided the strong comparison map is replaced by a slightly more complicated map, which takes into account the size of $\FF$ in its definition. 

Our approach, in particular lemma \ref{lm-chgt-base-finitefield} and theorem \ref{thm-interm-strong}, is inspired by the proof of \cite[Thm 6.1]{FFSS}. The key idea is that when the field $\FF$ is too small, the extensions $\Ext^*_{k[\Vect_\FF]}(F,G)$ can be computed in terms of extensions in $k[\Vect_L]\Md$ where $L$ is a bigger finite field, provided $F$ and $G$ are restrictions to $\Vect_\FF$ of functors over $\Vect_L$. This is proved in lemma \ref{lm-chgt-base-finitefield}, and, combined with theorem \ref{thm-strong-comparison}, this leads to the generalized comparison theorem over finite fields, for which we give two formulations, namely theorems \ref{thm-interm-strong} and \ref{thm-verystrong}.

\subsubsection{Skew diagonal maps and skew sum maps}\label{subsubsec-skew}
We introduce here some notations which will be used throughout the article.
Assume that $L$ is a perfect field. Then for all integers $a\ge 0$ and $s\ge 1$ and for all $L$-vector spaces $v$, we set 
\begin{align*}^{(a|s)}v:={}^{(0)}v\oplus {}^{(a)}v\oplus\cdots \oplus {}^{(\,(s-1)a\,)}v\;.
\end{align*} 
Assume now that $L$ contains a finite field $\Fq$, with $q=p^r$ elements, and let $\tau:\Fq\Md\to L\Md$ be the associated extension of scalars: $\tau(u)=L\otimes_\Fq u$. Then for all $\Fq$-vector spaces $u$, there is a canonical isomorphism of $L$-vector spaces:
\[
\begin{array}[t]{cccc}
\iso: &\tau(u) & \xrightarrow[]{\simeq} & {}^{(nr)}\tau(u)\\
&\lambda\otimes x & \mapsto & \lambda^{p^{-nr}}\otimes x
\end{array}\;.
\]
The \emph{skew diagonal map} is the $L$-linear map, natural with respect to $u$:
\[
\begin{array}[t]{cccc}
\diag:& \tau(u) & \to & {}^{(ar|s)}\tau(u)\\
&  \lambda \otimes x & \mapsto & (\lambda\otimes x, \lambda^{p^{-ar}}\otimes x,\dots,\lambda^{p^{-ar(s-1)}}\otimes x)
\end{array}\;.
\]
\begin{rk}
If $a=0$ then $^{(a|s)}\tau(u) = \tau(u)^{\oplus s}$ and in that case, the skew diagonal map equals the usual diagonal map: $\lambda\otimes x \mapsto (\lambda\otimes x,\cdots,\lambda\otimes x)$. In general, the skew-diagonal map can be written as the composition of the diagonal map $\tau(u)\to \tau(u)^{\oplus s}$ with the isomorphisms $\iso:\tau(u)\simeq {}^{(nar)}\tau(u)$, $0\le n <s$.
\end{rk}
Similarly, the \emph{skew sum map} is the $L$-linear map, natural with respect to $u$:
\[
\begin{array}[t]{cccc}
\summ:& {}^{(ar|s)}\tau(u) & \to & \tau(u)\\
& (\lambda_0\otimes x_0, \dots,\lambda_{s-1}\otimes x_{s-1})  & \mapsto & \displaystyle \sum_{0\le n<s} \lambda_n^{p^{nar}}\otimes x_n
\end{array}\;.
\]
\subsubsection{A base change isomorphism} We keep our perfect field $L$. We extend notation \ref{nota-twist-ord} for precomposition by Frobenius twists to the case of multiple twists. Namely, for all (covariant or contravariant) functors $F:\Vect_L\to k\Md$ and for all integers $a\ge 0$ and $s\ge 1$, we denote by $F^{(a|s)}:\Vect_L\to k\Md$ the functor defined by
\begin{align}F^{(a|s)}(v):= F\left({}^{(0)}v\oplus {}^{(a)}v\oplus\cdots \oplus {}^{(\,(s-1)a\,)}v\right)\;.\label{eqn-nota-precomp-multiple-twists-discrete}
\end{align}

Assume now that $q=p^r$ and that $\Fq\subset L$ is an extension of fields of degree $s^2$, and let $\tau:\Vect_\Fq\to \Vect_L$ denote the extension of scalars. We consider the composition, in which the second map is induced by $F(\diag)$ and $G(\summ)$ as in section \ref{subsubsec-skew}:
\begin{equation}
\Ext^*_{k[\Vect_L]}(F^{(r|s)},G^{(rs|s)})\xrightarrow[]{\res^\tau} \Ext^*_{k[\Vect_\Fq]}(\tau^*F^{(r|s)},\tau^*G^{(rs|s)})\to \Ext^*_{k[\Vect_\Fq]}(\tau^*F,\tau^*G)\;.\label{eqn-interm-iso}
\end{equation}
\begin{lm}\label{lm-chgt-base-finitefield}
If $\Fq\subset L$ is an extension of fields of degree $s^2$ and $q=p^r$, then for all $F$ and $G$ in $k[\Vect_L]\Md$, the map \eqref{eqn-interm-iso} is an isomorphism.
\end{lm}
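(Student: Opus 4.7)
I would deduce Lemma \ref{lm-chgt-base-finitefield} by combining two applications of Proposition \ref{pr-iso-Ext-explicit}, relying on the fact that any finite extension of fields is a Frobenius extension, so that the restriction-of-scalars functor $\sigma\colon\Vect_L\to\Vect_\Fq$ admits the extension of scalars $\tau$ as both left and right adjoint. The same bi-adjointness holds at each step of the factorization $\tau = \tau_2\circ\tau_1$ through the unique intermediate field $M$ with $[M:\Fq]=[L:M]=s$. A standard Galois computation yields natural $L$-linear isomorphisms $\tau\sigma v \cong {}^{(r|s^2)}v$ and $\tau_2\sigma_2 v \cong {}^{(rs|s)}v$ for $v\in\Vect_L$, together with the associativity identity ${}^{(r|s^2)}v = {}^{(rs|s)}\bigl({}^{(r|s)}v\bigr)$.

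Applying Proposition \ref{pr-iso-Ext-explicit} to $\tau\dashv\sigma$ with $F_{\mathrm{prop}}=\tau^*F$ produces the base change isomorphism
\[\Ext^*_{k[\Vect_L]}(F^{(r|s^2)}, G) \xrightarrow{\ \simeq\ } \Ext^*_{k[\Vect_\Fq]}(\tau^* F, \tau^* G),\]
since $\sigma^*\tau^*F \cong F^{(r|s^2)}$. Combining Proposition \ref{pr-iso-Ext-explicit} for $\tau_2\dashv\sigma_2$ (with $F_{\mathrm{prop}}=\tau_2^*H$) with the same proposition for $\sigma_2\dashv\tau_2$ (with $G_{\mathrm{prop}}=\tau_2^*G$) identifies $\Ext^*_{k[\Vect_M]}(\tau_2^*H, \tau_2^*G)$ with both $\Ext^*_{k[\Vect_L]}(H^{(rs|s)},G)$ and $\Ext^*_{k[\Vect_L]}(H,G^{(rs|s)})$, providing a ``Frobenius swap'' isomorphism $\Ext^*_{k[\Vect_L]}(H^{(rs|s)},G)\simeq\Ext^*_{k[\Vect_L]}(H,G^{(rs|s)})$ for all $H,G\in k[\Vect_L]\Md$. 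Specializing to $H=F^{(r|s)}$ and using $(F^{(r|s)})^{(rs|s)}=F^{(r|s^2)}$, this reads
\[\Ext^*_{k[\Vect_L]}(F^{(r|s^2)},G)\simeq\Ext^*_{k[\Vect_L]}(F^{(r|s)},G^{(rs|s)}).\]
Composing the inverse of this swap with the base change yields an isomorphism having the correct source and target.

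The main obstacle is to verify that this abstract composite coincides with the explicit map \eqref{eqn-interm-iso}. This reduces to computing the units and counits of the Frobenius adjunctions at hand, expressed using a dual basis for the trace form on $L$ over the relevant subfield, and identifying them summand by summand under the Galois decomposition with the skew diagonal $\diag$ and skew sum $\summ$ of section \ref{subsubsec-skew}. Concretely, the $L$-linear isomorphisms $\iso(\lambda\otimes x) = \lambda^{p^{-nr}}\otimes x$ entering the definitions of $\diag$ and $\summ$ are exactly the Galois-theoretic comparisons needed to make the units and counits of the Frobenius adjunctions fully explicit.
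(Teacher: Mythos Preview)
Your approach is correct and uses the same ingredients as the paper (Proposition~\ref{pr-iso-Ext-explicit}, the Frobenius bi-adjunction, the Galois decomposition $\tau\sigma v\cong{}^{(r|s^2)}v$), but the decomposition is organised differently. The paper also factors through the intermediate field, writing $\tau=\tau'\circ\tau''$ with $[L:K]=[K:\Fq]=s$, but it uses \emph{opposite} adjunction directions at the two stages: $\rho'\dashv\tau'$ for $L/K$ (the counit, computed via the trace, yields exactly $G(\summ)$ with $s$ summands) and $\tau''\dashv\rho''$ for $K/\Fq$ (the unit yields exactly $F(\diag)$ with $s$ summands). Thus each of the paper's two steps directly produces one of the two explicit maps appearing in \eqref{eqn-interm-iso}, and the identification with that map is immediate once the counit and unit are made explicit.

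By contrast, your first step (using $\tau\dashv\sigma$ for the full extension $L/\Fq$) produces the \emph{full} skew diagonal $\tau(u)\to{}^{(r|s^2)}\tau(u)$ with $s^2$ summands, and your Frobenius swap through $\Ext^*_{k[\Vect_M]}$ must then redistribute half of these twists to the $G$-side. This works, but the verification you defer to the final paragraph is more substantial than you suggest: one must track how the unit and counit of the $L/M$ bi-adjunction compose with the unit of the full $L/\Fq$ adjunction, and check that the resulting composite on the $F$-side collapses from the $s^2$-fold $\diag$ to the $s$-fold one while the $s$-fold $\summ$ appears on the $G$-side. The paper's arrangement avoids this extra cancellation by never introducing the $s^2$-fold diagonal in the first place.
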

\begin{proof}
We can find an intermediate field $K$ such that $\Fq\subset K\subset L$ is a sequence of extensions of fields of degree $s$. We are going to convert $\Ext$ over $k[\Vect_L]$ into $\Ext$ over $k[\Vect_\Fq]$ in two steps, by using the restrictions of scalars $\Vect_L\to \Vect_K$ and $\Vect_K\to \Vect_\Fq$ and their adjoints, and we are going to check that the two-steps isomorphism obtained coincides with the explicit map \eqref{eqn-interm-iso}.

In the first step, we express extensions over $k[\Vect_L]$ in terms of extensions over $k[\Vect_K]$. For this purpose, we consider the adjoint pair \cite[Prop 3.1]{FFSS}
$$\rho':\Vect_L\leftrightarrows \Vect_K:\tau'$$
where $\tau'$ is the extension of scalars and $\rho'$ the restriction of scalars associated to the extension $K\subset L$. Note that $\tau'=L\otimes_K-$ is considered here as the \emph{right} adjoint of $\rho'$, which is possible because the extension $K\subset L$ has finite degree. By proposition \ref{pr-iso-Ext-explicit} the adjoint pair $(\rho',\tau')$ induces an isomorphism for all $H$ in $k[\Vect_L]\Md$
\begin{align}
\Ext^*_{k[\Vect_L]}(H,{\rho'}^*{\tau'}^*G)\simeq \Ext^*_{k[\Vect_K]}({\tau'}^*H,{\tau'}^*G)\;.\label{eqn-isoproof2}
\end{align}
Moreover, there is an isomorphism of $L$-vector spaces $\phi:L\otimes_K v\simeq \bigoplus_{0\le i<s} {}^{(irs)}v$ natural with respect to the $L$-vector space $v$. This isomorphism is given by sending $\lambda\otimes x$ to $\sum_{0\le i<s}\lambda^{p^{-rsi}}x$. Therefore we have an isomorphism in $k[\Vect_L]\Md$:
\begin{align}
G(\phi^{-1}): G^{(rs|s)}\simeq {\rho'}^*{\tau'}^*G\;.
\label{eqn-isoproof1}
\end{align}
By combining the isomorphisms \eqref{eqn-isoproof2} and \eqref{eqn-isoproof1} we obtain an isomorphism
\begin{align}
\Ext^*_{k[\Vect_L]}(H,G^{(rs|s)})\xrightarrow[]{\simeq}
\Ext^*_{k[\Vect_K]}({\tau'}^*H,{\tau'}^*G)\;.
\label{eqn-isoproof12}
\end{align}
To finish this first step, we give a more explicit expression of the isomorphism \eqref{eqn-isoproof12}. Recall from proposition \ref{pr-iso-Ext-explicit} that the isomorphism \eqref{eqn-isoproof2} is induced by restriction along $\tau'$ and by the map $({\tau'}^*G)(\epsilon)$ where $\epsilon$ is the counit of the adjunction $\rho'\dashv\tau'$. By \cite[Prop 3.1]{FFSS}, this counit of adjunction $\epsilon_u: L\otimes_K u\to u$ is given by $\epsilon_u(\lambda\otimes x)= T(\lambda)x$, where $T(\lambda)=\sum_{0\le i<s}\lambda^{p^{rsi}}$ is the trace of $\lambda$. Thus for all $K$-vector spaces $u$ we have a commutative square of $L$-vector spaces, in which the upper horizontal arrow is induced by the canonical isomorphism $\iso$ from section \ref{subsubsec-skew}:
\[
\begin{tikzcd}
\bigoplus_{0\le i<s}{}^{(-rsi)}\tau'(u)\ar{r}{\simeq} &
\bigoplus_{0\le i<s}\tau'(u)\ar{d}{\summ}\\
\tau'(\rho'(\tau'(u)))\ar{u}{\phi_{\tau'(u)}}\ar{r}{\tau'(\epsilon_u)}&
\tau'(u)
\end{tikzcd}\;.
\]
It follows that the isomorphism \eqref{eqn-isoproof12} equals the following composition:
\[\Ext^*_{k[\Vect_L]}(H,G^{(rs|s)})\xrightarrow[]{\res^{\tau'}} \Ext^*_{k[\Vect_K]}(\tau'{}^*H,\tau'{}^*(G^{(rs|s)}))\to
\Ext^*_{k[\Vect_K]}(\tau'{}^*H,\tau'{}^*G)\]
where the last map is induced by the morphism ${\tau'}^*(G^{(rs|s)})\xrightarrow{G(\summ)}{\tau'}^*G$.

In the second step, we express extensions over $k[\Vect_K]$ in terms of extensions over $k[\Vect_\Fq]$. For this purpose, we consider a pair of adjoints, in which $\tau''$ and $\rho''$ are the extension of scalars and the restriction of scalars associated to the extension $\Fq\subset K$, and $\tau''$ is this time seen as a \textit{left} adjoint:
$$\tau'':\Vect_\Fq\leftrightarrows \Vect_K:\rho''\;.$$
Since $\rho''$ is right adjoint to $\tau''$, proposition \ref{pr-iso-Ext-explicit} yields an isomorphism:
\begin{align}
\Ext^*_{k[\Vect_K]}({\rho''}^* {\tau}^*F,E)\simeq \Ext^*_{k[\Vect_\Fq]}(\tau^*F,{\tau''}^*E)\;.
\label{eqn-isoproof4}
\end{align}
Moreover, the isomorphism of $L$-vector spaces $\psi:L\otimes_\Fq v\simeq \bigoplus_{0\le i<s} {}^{(ri)}(L\otimes_K v)$ given by $\psi(\lambda \otimes x)=\sum_{0\le i<s}\lambda^{p^{-ri}}x$ is natural with respect to the $K$-vector space $v$, hence it induces an isomorphism in $k[\Vect_K]\Md$:
\begin{align}
F(\psi): {\rho''}^* {\tau}^*F \simeq {\tau'}^*(F^{(r|s)}) \;.
\label{eqn-isoproof3}
\end{align}
Combining the isomorphisms \eqref{eqn-isoproof4} and \eqref{eqn-isoproof3}, we obtain an isomorphism:
\begin{align}
\Ext^*_{k[\Vect_K]}({\tau'}^*(F^{(r|s)}),E)\xrightarrow[]{\simeq}\Ext^*_{k[\Vect_\Fq]}(\tau^*F,{\tau''}^*E)\;.
\label{eqn-isoproof34}
\end{align}
Following the same reasoning as in the first step, one sees that 
the isomorphism \eqref{eqn-isoproof34} is equal to the composition
\begin{align*}
\Ext^*_{k[\Vect_K]}({\tau'}^*(F^{(r|s)}),E)\xrightarrow[]{\res^{\tau''}} \Ext^*_{k[\Vect_\Fq]}({\tau}^*(F^{(r|s)}),\tau''{}^*E)\to \Ext^*_{k[\Vect_\Fq]}(\tau^*F,{\tau''}^*E)
\end{align*}
with last map induced by the morphism $\tau^*F\xrightarrow[]{F(\diag)}{\tau''}^*{\tau'}^*(F^{(r|s)})$.

Now, the graded morphism \eqref{eqn-interm-iso} is the composition of the maps \eqref{eqn-isoproof12} (with $H=F^{(r|s)}$) and  \eqref{eqn-isoproof34} (with $E={\tau'}^*G$), hence it is an isomorphism.
\end{proof}

\subsubsection{The generalized comparison theorem - first form}
We extend notation \ref{nota-twist} for precomposition by Frobenius twists to the case of multiple twists. Namely, for all (covariant or contravariant) strict polynomial functors $F$ over $k$, and for all integers $a\ge 0$ and $s\ge 1$, we set 
\begin{align*}
I^{(a|s)}:=I^{(0)}\oplus I^{(r)}\oplus \cdots\oplus I^{(\,(s-1)a\,)}.
\end{align*}
and we let $F^{(a|s)}=F\circ I^{(a|s)}$. If $\stdeg F=d$ then $\stdeg(F^{(a|s)})=p^{(s-1)a}d$.
%

Fix a positive integer $s$. The strong comparison map of definition \ref{defi-strong-comp-map} and the morphisms $F(\diag)$ and $G(\summ)$ (where $\diag$ and $\summ$ are the skew maps from section \ref{subsubsec-skew}) yield a composite map:
\begin{align}
\Ext^i_{\gen}(F^{(r|s)},G^{(rs|s)})\xrightarrow[]{\Phi_\Fq} 
\Ext^i_{k[\Vect_\Fq]}(t^*F^{(r|s)},t^*G^{(rs|s)})\to \Ext^i_{k[\Vect_\Fq]}(t^*F,t^*G)\;. 
\label{eq-comparison-q-petit}
\end{align}
The next result extends theorem \ref{thm-strong-comparison} to the case of an arbitrary finite field $\Fq$, see remark \ref{rk-extend-strong}.

\begin{thm}\label{thm-interm-strong}
Assume that $k$ contains a finite field $\Fq$ of cardinal $q=p^r$, and let $s$ be a positive integer. Then for all strict polynomial functors $F$ and $G$ of degrees less than $q^s$, the map \eqref{eq-comparison-q-petit} is an isomorphism in all degrees $i$.
\end{thm}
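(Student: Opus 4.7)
The plan is to combine the strong comparison Theorem \ref{thm-strong-comparison} applied over the auxiliary field $L = \FF_{q^{s^2}}$ with the base-change isomorphism of Lemma \ref{lm-chgt-base-finitefield} for the extension $\Fq \subset L$ of degree $s^2$. The first concern is that Theorem \ref{thm-strong-comparison} requires $L$ to be a subfield of $k$, which is not part of our hypotheses; I would handle this by extending scalars to the algebraic closure $k' = \bar{k}$, which is a perfect field faithfully flat over $k$ and contains $L \subset \overline{\Fq} \subset k'$. Both sides of \eqref{eq-comparison-q-petit} commute with base change along $-\otimes_k k'$ (since this operation sends standard projectives of $\Pp_k$ and of $k[\Vect_\Fq]\Md$ to the corresponding standard projectives of $\Pp_{k'}$ and $k'[\Vect_\Fq]\Md$), and the comparison map is itself base-change-equivariant, so we may reduce to the case $L \subset k$.

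Assuming $L \subset k$, let $u \colon \Vect_L \to \Vect_k$ and $\tau \colon \Vect_\Fq \to \Vect_L$ denote the respective extensions of scalars, so $t = u \circ \tau$. The hypothesis $\deg F, \deg G < q^s$ gives $\deg F^{(r|s)} = p^{(s-1)r} \deg F < q^{2s-1} \leq q^{s^2}$ and $\deg G^{(rs|s)} = p^{(s-1)rs} \deg G < q^{s^2} = |L|$, so Theorem \ref{thm-strong-comparison} applies over $L$ and provides an isomorphism
\[\Phi_L \colon \Ext^i_\gen(F^{(r|s)}, G^{(rs|s)}) \xrightarrow{\simeq} \Ext^i_{k[\Vect_L]}(u^* F^{(r|s)}, u^* G^{(rs|s)}).\]
The natural isomorphism $k \otimes_L {}^{(i)} v \simeq {}^{(i)}(k \otimes_L v)$ (well-defined because $k$ is perfect) identifies $u^* F^{(r|s)}$ with $(u^* F)^{(r|s)}$ and similarly for $G$, so Lemma \ref{lm-chgt-base-finitefield} applied with $H_1 = u^* F$ and $H_2 = u^* G$ yields a second isomorphism
\[\Ext^i_{k[\Vect_L]}\bigl((u^* F)^{(r|s)}, (u^* G)^{(rs|s)}\bigr) \xrightarrow{\simeq} \Ext^i_{k[\Vect_\Fq]}(\tau^* u^* F, \tau^* u^* G) = \Ext^i_{k[\Vect_\Fq]}(t^* F, t^* G).\]
Composing these two isomorphisms produces a bijection between the source and target of \eqref{eq-comparison-q-petit}.

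The main obstacle — and the technical heart of the proof — is to verify that this composite actually equals the map \eqref{eq-comparison-q-petit}, rather than only being isomorphic to it. This is a diagram chase: one expands $\Phi_L$ as $\res^u$ composed with the twist equivalence induced by ${}^{(-n)}-$ on $\Vect_L$, and expands the map of Lemma \ref{lm-chgt-base-finitefield} as $\res^\tau$ composed with precomposition by $F(\diag)$ and $G(\summ)$, then exploits $t = u \circ \tau$ and the compatibility of the Frobenius twist equivalences on $\Vect_L$ and $\Vect_\Fq$ to recognise the composite as $\Phi_\Fq$ followed by precomposition with the skew-diagonal and skew-sum morphisms, which is precisely the definition of \eqref{eq-comparison-q-petit}.
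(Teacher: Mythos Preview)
Your approach is essentially the paper's: reduce to the case where $k$ contains $L=\FF_{q^{s^2}}$ by a base-change argument, then compose $\Phi_L$ (Theorem~\ref{thm-strong-comparison}) with the isomorphism of Lemma~\ref{lm-chgt-base-finitefield}, and check that the composite agrees with \eqref{eq-comparison-q-petit}.

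The one place where the paper is more careful is the base-change step. You pass to the algebraic closure $\bar k$, while the paper uses a \emph{finite} extension $K/k$ containing $L$. For strict polynomial functors with possibly infinite-dimensional values (which the paper explicitly allows), the base-change isomorphism on $\Ext$ is not automatic over an infinite extension: your justification ``sends standard projectives to standard projectives'' gives a well-defined map on $\Ext$, but not obviously an isomorphism, since resolving an arbitrary functor may require infinite direct sums of standard projectives, and $\bar k\otimes_k-$ does not commute with the resulting infinite products appearing in $\Hom$. Over a finite extension this issue disappears (the paper invokes \cite[Cor.~2.7]{SFB} and notes that the extension to infinite-dimensional functors works precisely because $k\to K$ is finite). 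Replacing $\bar k$ by a suitable finite $K$ fixes your argument with no other changes.
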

\begin{proof}
We first claim that it suffices to prove the result when $k$ contains a subfield $L$ with $q^{s^2}$ elements. Indeed, let $k\to K$ be a finite extension of fields and let $\tau:k\Md\to K\Md$ be the extension of scalars. By \cite[Section 2]{SFB} there is an exact $k$-linear base change functor
\[-_K:\Pp_k \to \Pp_K\]
such that for all strict polynomial functors $F'$ over $k$ there are canonical isomorphisms of functors $\tau^*F'_K\simeq \tau\circ F'$. Moreover this base change functor induces an isomorphism on the level of $\Ext$. (See \cite[cor 2.7]{SFB} for the case of functors with finite-dimensional values. The proof extends to arbitrary functors when $k\to K$ is a finite extension of fields.)
There is also an exact base change functor:
\[K\otimes_k-:k[\Vect_\Fq]\Md\to K[\Vect_\Fq]\Md\]
which sends $F$ to the functor $K\otimes_k F$ such that $(K\otimes_k F)(c)=K\otimes_k F(c)$. Since $k\to K$ is a finite extension of fields, this functor also induces an isomorphism on the level of $\Ext$.
Let us denote by $\Xi_k$ the map \eqref{eq-comparison-q-petit} and by $\Xi_K$ its counterpart when $k$ is replaced by the bigger field $K$.
There is a commutative square
\[
\begin{tikzcd}[column sep = large]
\Ext^*_\gen(F^{(r|s)}_K,G^{(rs|s)}_K)\ar{r}{\Xi_K}&\Ext^*_{K[\Vect_\Fq]}(t^*\tau^*F_K,t^*\tau^*G_K)\\
K\otimes_k\Ext^*_\gen(F^{(r|s)},G^{(rs|s)})\ar{r}{K\otimes\Xi_k}\ar{u}{\simeq}&K\otimes_k \Ext^*_{K[\Vect_\Fq]}(t^*F,t^*G)\ar{u}{\simeq}
\end{tikzcd}
\]
in which the vertical isomorphisms are induced by the base change functors $-_K$ and $K\otimes_k-$ (and by the isomorphisms $H_K\circ \tau\circ t\simeq \tau\circ H\circ t$, for $H=F$ or $G$). Therefore $\Xi_k$ is an isomorphism if and only if $\Xi_K$ is an isomorphism. Thus, up to replacing $k$ by a suitable finite extension $K$, we may assume that our field $k$ contains a subfield $L$ with $q^{s^2}$ elements.

We denote by $t':\Vect_L\to \Vect_k$ the extension of scalars associated to the extension of fields $L\subset k$. Then $\Xi_k$ is an isomorphism because we can rewrite it as the composition of three isomorphisms:
\[
\begin{tikzcd}[column sep = large]
\Ext^*_\gen(F^{(r|s)},G^{(rs|s)})\ar{r}{\Xi_k}\ar{d}{\simeq}[swap]{\Phi_L}& \Ext^*_{k[\Vect_\Fq]}(t^*F,t^*G)\\
\Ext^*_{k[\Vect_L]}({t'}^*(F^{(r|s)}),{t'}^*(G^{(rs|s)}))\ar{r}{\simeq}&\Ext^*_{k[\Vect_L]}(({t'}^*F)^{(r|s)},({t'}^*G)^{(rs|s)})\ar{u}{\simeq}
\end{tikzcd}\;.
\]
To be more specific, our assumptions on $s$ imply that the degrees of $F^{(r|s)}$ and $G^{(rs|s)}$ are less than the cardinal of $L$, hence $\Phi_L$ is an isomorphism by theorem \ref{thm-interm-strong}. To define the lower horizontal map, we first observe that for all integers $i$ there is a canonical isomorphism $^{(ir)}t'(v)\simeq t'({}^{(ir)}v)$ which sends an element $\lambda\otimes x \in {}^{(ir)}(k\otimes_L v)$ to the element $\lambda^{p^{ir}}\otimes x\in k\otimes_L {}^{(ir)}v$. These canonical isomorphisms induce isomorphisms of functors ${t'}^*(F^{(r|s)})\simeq ({t'}^*F)^{(r|s)}$ and ${t'}^*(G^{(rs|s)})\simeq ({t'}^*G)^{(rs|s)}$, and the lower horizontal map is induced by these isomorphisms. Finally, the vertical map on the right hand side is the isomorphism provided by lemma \ref{lm-chgt-base-finitefield}.
\end{proof}

\begin{rk}\label{rk-extend-strong}
If $s=1$, then $F^{(r|s)}=F$, $G^{(rs|s)}=G$ and the maps $F(\diag)$ and $G(\summ)$ are equal to the identity. Thus, for $s=1$ the map \eqref{eq-comparison-q-petit} is equal to $\Phi_\FF$, and the statement of theorem \ref{thm-interm-strong} is exactly the statement of theorem \ref{thm-strong-comparison}.
\end{rk}

\subsubsection{The generalized comparison theorem - second form}
Functors precomposed by multiple twists, i.e. functors of the form $F^{(a|s)}$ are quite complicated. We now prove a reformulation of the generalized comparison theorem, in which only one of the two functors appearing in the $\Ext$ is precomposed by multiple twists. 
This second form of the theorem will be better adapted to generalization to more general additive categories $\A$.
So we now consider the following composite map, where the second map is induced by the morphisms $F(\iso)$ and $G(\summ)$, where $\iso$ and $\summ$ are defined in section \ref{subsubsec-skew}:
\begin{align}
\Ext^i_{\gen}(F^{(rs-r)},G^{(r|s^2)})\xrightarrow[]{\Phi_\Fq} 
\Ext^i_{k[\Vect_\Fq]}(t^*F^{(rs-r)},t^*G^{(r|s^2)})\to \Ext^i_{k[\Vect_\Fq]}(t^*F,t^*G)\;. 
\label{eqn-verystrong}
\end{align}

\begin{thm}\label{thm-verystrong}
Let $k$ be a perfect field containing a finite subfield with $q=p^r$ elements, and let $s$ be a positive integer. Assume that $F$ and $G$ are two strict polynomial functors with degrees less than $q^s$. Then the map \eqref{eqn-verystrong} is a graded isomorphism.
\end{thm}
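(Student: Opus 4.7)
I would proceed in close parallel to the proof of Theorem \ref{thm-interm-strong}. Using the same finite base-change argument (via the exact base-change functor of \cite{SFB}), I first reduce to the case where $k$ contains a subfield $L$ with $|L|=q^{s^2}$; denote the extension of scalars by $t':\Vect_L\to\Vect_k$, so that $t=t'\circ\tau$ with $\tau:\Vect_\Fq\to\Vect_L$.

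The technical core of the argument is a one-step analog of Lemma \ref{lm-chgt-base-finitefield}: for every pair $F_L,G_L\in k[\Vect_L]\Md$, the composition
\[
\Ext^*_{k[\Vect_L]}(F_L^{(rs-r)},G_L^{(r|s^2)})\xrightarrow{\res^\tau}\Ext^*_{k[\Vect_\Fq]}(\tau^*F_L^{(rs-r)},\tau^*G_L^{(r|s^2)})\xrightarrow{F(\iso),\,G(\summ)}\Ext^*_{k[\Vect_\Fq]}(\tau^*F_L,\tau^*G_L)
\]
should be an isomorphism. I would prove it via a single application of Proposition \ref{pr-iso-Ext-explicit} to the adjoint pair $\rho\dashv\tau$ (with $\tau$ considered as the right adjoint, which is legitimate since $[L:\Fq]=s^2$ is finite), combined with the canonical identification $\rho^*\tau^*G_L\simeq G_L^{(r|s^2)}$ (whose trace through the counit produces $G(\summ)$), and with the canonical isomorphism $F(\iso):\tau^*F_L\simeq\tau^*F_L^{(rs-r)}$ (valid precisely because $\Fq$ is fixed by all iterates of the Frobenius, so $\iso$ becomes an isomorphism after restriction to $\tau$). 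Only one adjunction step is needed here---in contrast to the two of Lemma \ref{lm-chgt-base-finitefield}---because the $F$-side twist $(rs-r)$ is a single Frobenius twist rather than a sum.

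The remaining task is to combine the one-step lemma with Theorem \ref{thm-interm-strong} to conclude that the map \eqref{eqn-verystrong} is an isomorphism. The hardest part of the proof, I expect, is the following degree obstruction: the naive strategy of factoring \eqref{eqn-verystrong} as $\Phi_L$ followed by the one-step lemma would require $\deg G^{(r|s^2)}=p^{r(s^2-1)}d_G<q^{s^2}$, i.e.\ $d_G<q$, which is significantly more restrictive than the hypothesis $d_G<q^s$. My plan to circumvent this obstruction is to invoke Theorem \ref{thm-interm-strong} applied at parameter $s^2$ to $(F,G)$ itself (whose degree hypothesis $d_F,d_G<q^{s^2}$ is implied by $d_F,d_G<q^s$), yielding an isomorphism $\Ext^*_\gen(F^{(r|s^2)},G^{(rs^2|s^2)})\simeq\Ext^*_{k[\Vect_\Fq]}(t^*F,t^*G)$. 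One then checks that the map \eqref{eqn-verystrong} is compatible with this isomorphism via natural transformations coming from the cross-effect decompositions: namely, $F^{(rs-r)}$ is the ``pure twist $r(s-1)$'' direct summand of $F^{(r|s^2)}$, and the component of the skew-diagonal $F(\diag_{s^2})$ landing in that summand is exactly $F(\iso)$; a compatible matching on the $G$-side uses the identity $G^{(r|s^2)}\simeq(G^{(r|s)})^{(rs|s)}$ to relate $G(\summ_{s^2})$ with $G(\summ)$. Tracking these compatibilities through a commutative diagram then identifies \eqref{eqn-verystrong} with the first-form map of Theorem \ref{thm-interm-strong} (up to canonical isomorphisms on the source), which completes the proof.
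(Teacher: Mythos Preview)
Your one-step variant of Lemma~\ref{lm-chgt-base-finitefield} is correct and is a pleasant simplification: a single application of the adjunction $\rho\dashv\tau$ for the degree-$s^2$ extension $\Fq\subset L$ does yield
\[
\Ext^*_{k[\Vect_L]}\bigl(F_L^{(rs-r)},G_L^{(r|s^2)}\bigr)\;\simeq\;\Ext^*_{k[\Vect_\Fq]}(\tau^*F_L,\tau^*G_L),
\]
the $F$-side being handled by $F(\iso)$ rather than a second adjunction. You also correctly identify the degree obstruction to composing this with $\Phi_L$: one would need $d_G<q$ rather than $d_G<q^s$.

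The gap is in your proposed workaround. Invoking Theorem~\ref{thm-interm-strong} at parameter $s^2$ produces an isomorphism whose source is $\Ext^*_\gen\bigl(F^{(r|s^2)},G^{(rs^2|s^2)}\bigr)$, but the source of \eqref{eqn-verystrong} is $\Ext^*_\gen\bigl(F^{(rs-r)},G^{(r|s^2)}\bigr)$. On the $F$-side your cross-effect observation is fine: $F^{(rs-r)}$ is indeed the pure summand of $F^{(r|s^2)}$ at the index $s{-}1$. But on the $G$-side there is no analogous relation: the twists appearing in $G^{(rs^2|s^2)}$ are $\{0,rs^2,2rs^2,\dots\}$, which have nothing to do with those in $G^{(r|s^2)}=\{0,r,2r,\dots\}$. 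The identity $G^{(r|s^2)}\simeq (G^{(r|s)})^{(rs|s)}$ is correct, but it does not connect either functor to $G^{(rs^2|s^2)}$, so no commutative diagram linking \eqref{eqn-verystrong} to the first-form map can be built along these lines. The problem is structural: what is needed is an isomorphism, \emph{on the generic side}, between $\Ext^*_\gen(F^{(rs-r)},G^{(r|s^2)})$ and $\Ext^*_\gen(F^{(r|s)},G^{(rs|s)})$, and cross-effects of a single variable cannot manufacture this because they cannot move Frobenius twists between the $F$-argument and the $G$-argument.

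The paper's proof supplies exactly this missing manoeuvre, but by passing to strict polynomial \emph{multifunctors} of $s^2$ variables. One writes down multifunctors $F',G',F'',G''$ so that the sum--diagonal adjunction identifies $\Ext^*_{\Pp_k(s^2)}(F',G')$ with the source of \eqref{eqn-verystrong} and $\Ext^*_{\Pp_k(s^2)}(F'',G'')$ with the source of \eqref{eq-comparison-q-petit}; the point is that $F'=F''\circ I^{(\underline{m})}$ and $G'=G''\circ I^{(\underline{m})}$ for a suitable tuple $\underline{m}$, and precomposition by $I^{(\underline{m})}$ is an isomorphism on $\Ext$ in the generic range (checked by K\"unneth on standard projectives/injectives). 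This ``redistribution of twists among variables'' is the device that replaces your degree-obstructed use of $\Phi_L$; it stays entirely within $\Pp_k$ and then appeals to the already-established Theorem~\ref{thm-interm-strong}.
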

\begin{proof}
The idea it to show that the map \eqref{eq-comparison-q-petit} of theorem \ref{thm-interm-strong} and the map \eqref{eqn-verystrong} coincide up to an isomorphism. For this purpose, we use strict polynomial multifunctors, as in \cite[Section 3]{SFB} or \cite{FFSS}. 

Let $\Pp_k(n)$ denote the category of strict polynomial multifunctors of $n$ variables. Precomposition by Frobenius twist extends to the multivariable setting, namely given a strict polynomial multifunctor $F$ and an $n$-tuple of non-negative integers $\underline{r}=(r_1,\dots,r_n)$ we denote by $F^{(\underline{r})}$ the strict polynomial multifunctor 
$$F^{(\underline{r})}(v_1,\dots,v_n)=F({}^{(r_1)}v_1,\dots,{}^{(r_n)}v_n)\;.$$
Precomposition by Frobenius twists yields a morphism on $\Ext$:
$$-\circ I^{(\underline{m})}:\Ext^i_{\Pp_k(n)}(F^{(\underline{r})}, G^{(\underline{r})})\to
\Ext^i_{\Pp_k(n)}(F^{(\underline{r}+\underline{m})}, G^{(\underline{r}+\underline{m})})\;.$$

This morphism is an isomorphism if that all the integers $r_i$ are big enough (with respect to $i$, $F$ and $G$). Indeed, by a spectral sequence argument, it suffices to check this when $F$ is a standard projective of $\Pp_k(n)$ and $G$ is a standard injective of $\Pp_k(n)$. In this case,
$F(v_1,\dots,v_n)=F_1(v_1)\otimes_k\cdots\otimes_k F_n(v_n)$ for some standard projective strict polynomial functors $F_i$, and $G(v_1,\dots,v_n)=G_1(v_1)\otimes_k\cdots\otimes_k G_n(v_n)$ for some standard injective strict polynomial functors $G_i$, hence the isomorphism follows from the $\Ext$-isomorphism for functors with one variable from  proposition-definition \ref{pdef-gen-Ext} and the K\"unneth formula \cite[(1.7.2) p. 673]{FFSS}.

The following functors are adjoint to each other on both sides:
\[ 
\begin{array}[t]{cccc}
\Sigma:&\Vect_k^{\times n}&\to &\Vect_k\\
& (v_1,\dots,v_{n})&\mapsto & \bigoplus_{1< i\le n}v_i
\end{array}
\;,\quad 
\begin{array}[t]{cccc}
\Delta:&\Vect_k&\to &\Vect_k^{\times n}\\
& v & \mapsto & (v,\dots,v)
\end{array}\;.
\]
The $\Ext$-isomorphism induced by adjoint functors, stated in proposition \ref{pr-Eadjoint}, is valid in the context of strict polynomial functors.

We are now ready to prove theorem \ref{thm-verystrong}. We observe that we may choose strict polynomial multifunctors $F'$, $G'$, $F''$ and $G''$ such that there is a commutative diagram, with $n\gg 0$:
\[
\begin{tikzcd}
\Ext^{*}_{\Pp_{k}(s^2)}(F',G')\ar{r}{\simeq}[swap]{(*)}& \Ext^*_{\Pp_k}\Big((F^{(rs-r)})^{(nr)},(G^{(r|s^2)})^{(nr)}\Big)\ar{dd}{\eqref{eqn-verystrong}}\\
\Ext^{*}_{\Pp_k(s^2)} (F'',G'')\ar{d}{\simeq}[swap]{(**)}\ar{u}[swap]{\simeq}{-\circ I^{(\underline{m})}}&\\
\Ext^*_{\Pp_k}\Big((F^{(r|s)})^{(nr)},(G^{(rs|s)})^{(nr)}\Big)\ar{r}{\eqref{eq-comparison-q-petit}} & \Ext^*_{k[\Vect_\Fq]}(t^*F,t^*G)
\end{tikzcd}
\]
To be more specific, the strict polynomial multifunctors $F'$, $G'$, $F''$ and $G''$ of the $s^2$ variables $v_{ij}$, $0\le i,j< s$, are respectively given by
\begin{align*}
&F'(\dots,v_{ij},\dots)=F\Big(\bigoplus_{0\le i,j< s} {}^{(nr+rs-r)}v_{ij}\Big),\\
&G'(\dots,v_{ij},\dots)=G\Big(\bigoplus_{0\le i,j< s} {}^{(nr+r(s-1-i)+rsj)}v_{ij}\Big),\\
&F''(\dots,v_{ij},\dots)=F\Big(\bigoplus_{0\le i,j< s} {}^{(nr+ri)}v_{ij}\Big),\\
&G''(\dots,v_{ij},\dots)=G\Big(\bigoplus_{0\le i,j< s} {}^{(nr+rsj)}v_{ij}\Big).
\end{align*}
The $s^2$-tuple $\underline{m}$ is given by $m_{ij}=rs-ri-r$ and $-\circ I^{(\underline{m})}$ is an isomorphism because $n$ is big enough. The maps $(*)$ and $(**)$ are induced by the adjoints $\Sigma$ and $\Delta$. To be more explicit, the map $(*)$ is given by setting $v_{ij}=v$ for all $i$ and $j$, and by pulling back the resulting extensions of strict polynomial functors of the variable $v$ by $F(\diag)$, where $\diag: {}^{(rs-r+nr)}v^{\oplus s^2}\to {}^{(rs-r+nr)}v$ denotes the map $\diag(v)=(v,\dots,v)$. Similarly, the map $(**)$ is given by setting $v_{ij}=v$ for all $i$ and $j$, and by pulling back and pushing out the resulting extensions of strict polynomial functors of the variable $v$ by $F(\diag')$ and $G(\summ')$, where the maps 
\begin{align*}
&\diag':\bigoplus_{0\le i<s}{}^{(ri+nr)}v\to\bigoplus_{0\le i<s}{}^{(ri+nr)}v^{\oplus s}   \\
&\summ':\bigoplus_{0\le j<s}{}^{(rsj+nr)}v^{\oplus s}\to  \bigoplus_{0\le j<s}{}^{(rsj+nr)}v
\end{align*}
restricts to identity morphisms between any two summands with the same number of Frobenius twists.

Under the hypotheses of theorem \ref{thm-verystrong} the map \eqref{eq-comparison-q-petit} is an isomorphism by theorem \ref{thm-interm-strong}, hence the map \eqref{eqn-verystrong} is an isomorphism by commutativity of the above diagram.
\end{proof}

Theorem \ref{thm-verystrong} can be dualized. Let $E$ and $F$ be two strict polynomial functors over $k$, with $E$ contravariant. The morphisms $E(\summ): t^*E\to t^*E^{(r|s^2)}$ and $F(\iso):t^*F\simeq t^*(F^{(rs-r)})$ together with the strong comparison map $\Phi_{\Fq}$ for $\Tor$
induce a graded $k$-linear map:
\begin{align}
\Tor_*^{k[\Vect_\Fq]}(t^*E,t^*F)\to \Tor_*^{k[\Vect_\Fq]}(t^*E^{(r|s^2)},t^*F^{(rs-r)}) \xrightarrow[]{\Phi_\Fq} \Tor^\gen_*(E^{(r|s^2)},F^{(rs-r)})\;.\label{eqn-verystrong-Tor}
\end{align}
The next corollary follows from theorem \ref{thm-verystrong} together with together with the commutativity of diagrams \eqref{dgm-nat-duality1} and \eqref{dgm-nat-duality2} involving the $\Ext$-$\Tor$ duality isomorphisms.
\begin{cor}\label{cor-verystrong}Let $k$ be a perfect field containing a finite subfield with $q=p^r$ elements, and let $s$ be a positive integer. Assume that $E$ and $F$ are two strict polynomial functors (respectively contravariant and covariant) with degrees less than $q^s$. Then the map \eqref{eqn-verystrong-Tor} is a graded isomorphism.
\end{cor}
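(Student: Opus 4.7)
The corollary will be deduced from Theorem \ref{thm-verystrong} by dualization, following exactly the pattern by which Corollary \ref{cor-strong-comparison-Tor} is derived from Theorem \ref{thm-strong-comparison}. Since $k$ is a field, the contravariant functor $\Hom_k(-,k)$ is exact and faithful on the category of $k$-vector spaces, so it suffices to show that applying $\Hom_k(-,k)$ to the map \eqref{eqn-verystrong-Tor} yields a graded isomorphism.

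The natural duality isomorphisms between $\Tor$ and $\Ext$ recalled in Sections \ref{subsec-recoll-ord} and \ref{subsec-str-pol}, combined with the canonical identification $D_k(E^{(r|s^2)})\simeq (D_k E)^{(r|s^2)}$ (which holds because $I^{(r|s^2)}$ is covariant), convert the dualized map into a graded $k$-linear map
\[\Ext^*_\gen\bigl(F^{(rs-r)},(D_k E)^{(r|s^2)}\bigr)\to \Ext^*_{k[\Vect_\Fq]}(t^*F,\, t^*D_k E).\]
The main step is then to check that this converted map coincides with the Ext comparison map \eqref{eqn-verystrong} of Theorem \ref{thm-verystrong} applied to the pair $(F,\,D_k E)$. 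For this, the commutative square \eqref{cd-phiphi} translates the strong comparison map $\Phi_\Fq$ on the $\Tor$ side into the strong comparison map $\Phi_\Fq$ on the $\Ext$ side, while the general duality squares \eqref{dgm-nat-duality1} and \eqref{dgm-nat-duality2} translate the restriction-type factors induced by $E(\summ)$ and $F(\iso)$ in \eqref{eqn-verystrong-Tor} into the factors induced by $(D_k E)(\summ)$ and $F(\iso)$ in \eqref{eqn-verystrong}.

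Once this identification is made, Theorem \ref{thm-verystrong} applies since $\deg D_k E=\deg E<q^s$ and $\deg F<q^s$, and it gives that the Ext comparison map is a graded isomorphism; exactness and faithfulness of $\Hom_k(-,k)$ then force \eqref{eqn-verystrong-Tor} itself to be an isomorphism. There is no significant obstacle here beyond the bookkeeping of assembling the three compatibility squares \eqref{dgm-nat-duality1}, \eqref{dgm-nat-duality2}, and \eqref{cd-phiphi} into a single commutative diagram identifying the dualized \eqref{eqn-verystrong-Tor} with \eqref{eqn-verystrong}; each individual step is an immediate consequence of one of the cited squares together with the functoriality of $D_k$ with respect to morphisms in the source category $\Vect_\Fq$.
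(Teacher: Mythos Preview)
Your proposal is correct and follows essentially the same approach as the paper, which simply states that the corollary follows from Theorem~\ref{thm-verystrong} together with the commutativity of the $\Ext$--$\Tor$ duality diagrams \eqref{dgm-nat-duality1} and \eqref{dgm-nat-duality2}. You have spelled out the details more explicitly---in particular the identification $D_k(E^{(r|s^2)})\simeq (D_kE)^{(r|s^2)}$ and the role of the square \eqref{cd-phiphi}---but the underlying argument is identical.
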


\section{Recollections of linear functors and Kan extensions}
In order to extend the results of the previous section to more general additive source categories $\A$, we need to recall additional facts and notations relative to linear functors and Kan extensions. 

\subsection{Categories of linear functors}
Let $\FF$ be a commutative ring, let $\A$ be a svelte additive $\FF$-linear category, and let $k$ be a commutative $\FF$-algebra. We denote by $k\otimes_\mathbb{\FF}\A\Md$ the full subcategory of $k[\A]\Md$ on the $\FF$-linear functors. 
It is an abelian subcategory of $k[\A]\Md$, stable under arbitrary limits and colimits, with enough projectives and injectives. The \emph{standard projectives} of $k\otimes_\FF\A\Md$ are the functors of the form $k\otimes_\FF\A(a,-)$. The Yoneda lemma yields isomorphisms:
\[\Hom_{k\otimes_{\FF}\A}(k\otimes_\FF\A(a,-),\pi)\simeq \pi(a)\]
natural with respect to $a$ and $\pi$,  
and every $\FF$-linear functor admits a resolution by direct sums of standard projectives. The notation $k\otimes_\FF\A\Md$ for the category of $\FF$-linear functors recalls the form of the standard projectives.

Similarly, we may consider the full subcategory $\Mdd k\otimes_\FF\A = k\otimes_\FF\A^\op\Md$ on the $\FF$-linear functors of  $\Mdd k[\A]=k[\A^\op]\Md$.
By restriction, the tensor product over $\A$ yields a functor
\[-\otimes_{k[\A]}-:(\Mdd k\otimes_\FF\A)\times (k\otimes_\FF\A\Md)\to k\Md\]
which can be derived using projective resolutions in the categories of additive functors. The resulting derived functors are denoted by $\Tor^{k\otimes_\FF\A}_*(\pi,\rho)$.

\begin{rk}
Since the inclusion $k\otimes_{\FF}\A\Md\hookrightarrow k[\A]\Md$ is exact, it induces graded $k$-linear morphisms:
\begin{align*}
&\Ext^*_{k\otimes_\FF\A}(\pi',\rho)\to \Ext^*_{k[\A]}(\pi',\rho)\;,
& \Tor_*^{k[\A]}(\pi,\rho)\to \Tor_*^{k\otimes_{\FF}\A}(\pi,\rho)\;.
\end{align*}
Although these graded maps are isomorphisms in degree zero, they are usually very far from being isomorphisms in positive degrees, see \cite{DT-add}.
\end{rk}

\subsection{Left Kan extensions and generalized compositions.}\label{subsec-gen-comp}
We now assume that $\FF$ is a field. We denote by $\overline{F}:\FF\Md\to k\Md$ the left Kan extension to all vector spaces of a functor $F:\Vect_\FF\to k\Md$. That is, $\overline{F}(v)$ is the colimit of the vector spaces $F(u)$ taken over the filtered poset of finite-dimensional subspaces $u\subset v$ ordered by inclusion. 
We use left Kan extensions to extend the composition $\phi^*F=F\circ\phi$ to the cases when $\phi$ has infinite dimensional values.
\begin{defi}\label{defi-comp}
For all functors $F$ in $k[\Vect_\FF]\Md$ and for all functors $\phi:\C\to \FF\Md$ we define $\phi^*F$ as the composition: 
$\phi^*F:=\overline{F}\circ \phi$.
\end{defi}

\begin{rk}\label{rk-ambiguous}
If $\psi:k\Md\to k\Md$ is an additive functor, we usually denote by the same letter its restriction  $\psi:\Vect_k\to k\Md$. As a consequence, the formula $\phi^*(\psi^* F)$ is ambiguous: it may be interpreted as $\overline{F}\circ \psi\circ \phi$ or as $\overline{\overline{F}\circ \psi}\circ \phi$. For this reason, we shall cautiously avoid iterating the notation of definition \ref{defi-comp} and we turn back to notations with compositions whenever there is a risk of ambiguity.
\end{rk}

If $F$ and $G$ are strict polynomial functors, $\overline{F}\circ G$ can be viewed as a strict polynomial functor in a canonical way. Indeed, $G$ is the filtered colimit of its subfunctors with finite-dimensional values (this local finiteness comes from the fact that the category $\Pp_{d,k}$ of $d$-homogenous functors is equivalent to modules over a Schur algebra, which is a finite-dimensional algebra). Thus we can make the following definition.
%
\begin{defi}\label{defi-comp-strict}
If $F$ and $G$ are two strict polynomial functors, the strict polynomial functor $\overline{F}\circ G$ is defined as the colimit of the strict polynomial functors $F\circ G'$ taken over the poset of subfunctors $G'\subset G$ having finite dimensional values. If $\deg F=d$ and $\deg G=e$ then $\deg (\overline{F}\circ G)=de$. 
\end{defi}

\subsection{Homological properties of left Kan extensions}
Let $\A$ be a svelte additive category and let $\aleph$ be a regular cardinal. There is a smallest svelte additive category $\A^\aleph$ which contains $\A$ as a full subcategory and which has all direct sums   
of cardinality less than $\aleph$. 

For all functors $F:\A\to k\Md$, we denote by $F^\aleph:\A^\aleph\to k\Md$ the left Kan extension of $F$ along the inclusion $\iota^\aleph:\A\hookrightarrow \A^\aleph$. If $F$ is additive then $F^\aleph$ is additive and preserves direct sums of cardinality less than $\aleph$. 
One can compute $F^\aleph(a)$ as the filtered colimit of $F(b)$ where $b$ runs over the poset of direct summands of $a$ which belong to $\A$. We refer the reader to \cite[section 3]{DT-add} for additional details. We will need the following three examples.
\begin{ex}
\begin{enumerate}[(1)]
\item If $\A=\Vect_\FF$, then $\A^\aleph = \Vect_\FF^\aleph$ is the category of $\FF$-vector spaces of dimension less than $\aleph$. The functor $F^\aleph$ is the restriction of the functor $\overline{F}:\FF\Md\to \FF\Md$ defined in section \ref{subsec-gen-comp} to the category $\Vect_\FF^\aleph$.
\item For all objects $a$ of $\A$, the left Kan extension of $\A(a,-):\A\to \FF\Md$ is the functor $\A^\aleph(a,-):\A^\aleph\to \FF\Md$.
\item The left Kan extension of a generalized composition $\pi^*F=\overline{F}\circ \pi$ of a functor $F:\Vect_\FF\to k\Md$ with a functor $\pi:\A\to \FF\Md$, is the composition $\overline{F}\circ \pi^\aleph:\A^\aleph\to k\Md$.
\end{enumerate}
\end{ex}
An advantage of working with $\A^\aleph$ rather than $\A$  is the existence of certain adjoints, which will play a key role in the computations of proposition \ref{pr-iso-rep-Theta}.
\begin{pr}\label{pr-Eadjoint}
Assume that $\A(a,b)$ belongs to $\Vect_\FF^\aleph$ for all $a$ and $b$. Then for all objects $a$ in $\A$ the functor $\A^\aleph(a,-):\A^\aleph\to \Vect_\FF^\aleph$ has a left adjoint $-\otimes a:\Vect_\FF^\aleph\to \A^\aleph$. 
\end{pr}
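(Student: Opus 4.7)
My plan is to construct the left adjoint explicitly on a choice of basis, verify the adjunction by direct inspection, and then promote it to a functor using representability.

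First, I would check that the functor $\A^\aleph(a,-)$ really takes values in $\Vect_\FF^\aleph$. By construction of $\A^\aleph$, every object $b$ is isomorphic to a direct sum $\bigoplus_{j\in J}a_j$ with the $a_j$ in $\A$ and $|J|<\aleph$, so the universal property of direct sums gives $\A^\aleph(a,b)\simeq\bigoplus_{j\in J}\A(a,a_j)$. By hypothesis each $\A(a,a_j)$ has dimension strictly less than $\aleph$, and the regularity of $\aleph$ then ensures that a sum of fewer than $\aleph$ vector spaces each of dimension less than $\aleph$ still has dimension less than $\aleph$.

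Next I would build the left adjoint. For each $v$ in $\Vect_\FF^\aleph$, fix a basis $(e_i)_{i\in I}$ with $|I|<\aleph$, and set $v\otimes a:=\bigoplus_{i\in I}a$, which exists in $\A^\aleph$ by construction. Then both sides of the putative adjunction identify canonically with the product $\prod_{i\in I}\A^\aleph(a,b)$: the left side $\A^\aleph(\bigoplus_I a,b)$ by the universal property of direct sum in an additive category, and the right side $\Vect_\FF^\aleph(v,\A^\aleph(a,b))$ by extending an $\FF$-linear map from its values on the basis. Naturality in $b$ is immediate from these descriptions.

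Finally I would promote $-\otimes a$ to a functor by invoking representability: the object $v\otimes a$ represents the functor $b\mapsto \Vect_\FF^\aleph(v,\A^\aleph(a,b))$, so any morphism $f\colon v\to v'$ in $\Vect_\FF^\aleph$ induces a natural transformation between the represented functors, hence by Yoneda a unique morphism $f\otimes a\colon v\otimes a\to v'\otimes a$. This makes $v\otimes a$ into a functor independent of the choice of basis and automatically left adjoint to $\A^\aleph(a,-)$. The only real point of care is the first step, where both the hypothesis on the $\Hom$-sets of $\A$ and the regularity of $\aleph$ are used essentially; everything afterwards is a formal consequence of the universal properties of direct sums and of free $\FF$-vector spaces.
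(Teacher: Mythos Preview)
Your argument is correct. The paper states this proposition without proof, treating it as a standard fact about the completion $\A^\aleph$ (with a pointer to \cite[section 3]{DT-add} for background), so there is no proof to compare against; your construction is the expected one and each step is sound.
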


Being computed with filtered colimits of $k$-modules, the left Kan extension induces an \textit{exact} functor $-^\aleph : k[\A]\Md\to k[\A^\aleph]\Md$, which is left adjoint to the exact functor given by restriction along $\iota^\aleph$. Standard homological algebras gives:
\begin{pr}\label{pr-Kexact}Restriction along $\iota^\aleph:\A\hookrightarrow \A^\aleph$ yields isomorphisms
\begin{align*}
& \Ext^*_{k[\A]}(F,\iota^{\aleph\, *} G)\simeq \Ext^*_{k[\A^\aleph]}(F^\aleph,G)\;,
& \Tor_*^{k[\A]}(\iota^{\aleph\, *} E,F)\simeq \Tor_*^{k[\A^\aleph]}(E,F^\aleph)\;.
\end{align*}
\end{pr}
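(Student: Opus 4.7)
The plan is to derive both isomorphisms from the adjunction $-^\aleph \dashv \iota^{\aleph\,*}$ stated just before the proposition, together with exactness of both functors, by the standard procedure for turning a Hom-Hom (resp. tensor-tensor) adjunction into an isomorphism of derived functors.

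First I would gather the three basic facts needed. (i) $-^\aleph$ is exact, as already noted in the excerpt (the left Kan extension along $\iota^\aleph$ is computed by filtered colimits of $k$-modules). (ii) $\iota^{\aleph\,*}$ is exact, since limits and colimits in $k[\A^\aleph]\Md$ are computed objectwise and restriction along any functor preserves pointwise (co)kernels. (iii) Since $-^\aleph$ is left adjoint to the exact functor $\iota^{\aleph\,*}$, it preserves projectives; in particular, applied to a standard projective $k[\A(a,-)]$ it yields the standard projective $k[\A^\aleph(a,-)]$, consistent with the description recalled earlier.

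For the $\Ext$ isomorphism, choose a resolution $P_\bullet \twoheadrightarrow F$ of $F$ by direct sums of standard projectives in $k[\A]\Md$. By (i) and (iii), $P_\bullet^\aleph \twoheadrightarrow F^\aleph$ is a projective resolution in $k[\A^\aleph]\Md$. The Hom-level adjunction gives a natural isomorphism of cochain complexes
\[
\Hom_{k[\A^\aleph]}(P_\bullet^\aleph, G) \;\simeq\; \Hom_{k[\A]}(P_\bullet, \iota^{\aleph\,*}G),
\]
and taking cohomology yields $\Ext^*_{k[\A^\aleph]}(F^\aleph, G) \simeq \Ext^*_{k[\A]}(F, \iota^{\aleph\,*}G)$.

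For the $\Tor$ isomorphism, I would first promote the Hom-adjunction to a tensor-level isomorphism
\[
(\iota^{\aleph\,*} E) \otimes_{k[\A]} F \;\simeq\; E \otimes_{k[\A^\aleph]} F^\aleph,
\]
natural in $E \in \Mdd k[\A^\aleph]$ and $F \in k[\A]\Md$. This is obtained by testing against an arbitrary injective $k$-module $M$ via the adjunction \eqref{eqn-Formule-Cartan}: both sides map to $\Hom_{k[\A]}(F, \iota^{\aleph\,*}D_M E) \simeq \Hom_{k[\A^\aleph]}(F^\aleph, D_M E)$, using the trivial identity $D_M \iota^{\aleph\,*} = \iota^{\aleph\,*} D_M$ and the adjunction $-^\aleph \dashv \iota^{\aleph\,*}$. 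Once this objectwise isomorphism is in hand, applying it degreewise to a projective resolution $P_\bullet \twoheadrightarrow F$ (whose image $P_\bullet^\aleph \twoheadrightarrow F^\aleph$ is again projective by (iii)) produces an isomorphism of chain complexes $(\iota^{\aleph\,*} E) \otimes_{k[\A]} P_\bullet \simeq E \otimes_{k[\A^\aleph]} P_\bullet^\aleph$, and taking homology gives the $\Tor$ statement.

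There is no real obstacle here: the content is entirely formal once (i)–(iii) are in place. The only point that requires a moment's thought is the tensor-level identity, but it is essentially forced by the uniqueness of adjoints applied to the contravariant functor $D_M(-)$ for varying injective $M$, so I would present it briefly without belaboring the verification.
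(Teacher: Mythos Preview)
Your proposal is correct and is precisely the ``standard homological algebra'' argument the paper invokes in lieu of a proof: both functors in the adjunction $-^\aleph \dashv \iota^{\aleph\,*}$ are exact, so the left adjoint preserves projectives, and the Hom-level adjunction (and its tensor counterpart) passes to $\Ext$ and $\Tor$ by computing with a projective resolution of $F$. There is no difference in approach to comment on.
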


\section{An auxiliary comparison map}\label{sec-gen-prelim-comparison}
Throughout this section $k$ is a commutative ring, $\FF$ is a field, $\A$ is a small additive category, and we consider functors 
\[F,G:\Vect_\FF\to k\Md \;,\quad  \pi:\A^{\op}\to \FF\Md \;,\quad \rho:\A\to \FF\Md\;,\]
with $\rho$ and $\pi$ additive.  We define a `dual' vector space $D_{\pi,\rho}(v)$ of an $\FF$-vector space $v$ by the formula
\begin{align*}
D_{\pi,\rho}(v):= \Hom_\FF(v, \pi\otimes_{\FF[\A]} \rho)\;.
\end{align*}
The purpose of this section is to construct a certain comparison map:
\[\Theta_\FF: \Tor_*^{k[\A]}(\pi^*F,\rho^*G) \to \Tor_*^{k[\Vect_\FF]}(D_{\pi,\rho}^*F,G)\;.\]
and to establish its main properties. This comparison map will be a key ingredient in the proof of our generalized comparison theorem, but it may be interesting in its own right and useful in other contexts. In particular, we prove in theorem \ref{thm-Theta} that this comparison map is an isomorphism under a certain homological condition on $\pi$ and $\rho$, a result which may also be useful in the `cross characteristic context' (i.e. when $\FF\otimes_\mathbb{Z}k=0$) that is not the focus of this article.

\begin{rk} We will use the results of this section in the proof of proposition \ref{prop-base-case}. This requires that $\pi$, $\rho$ or $D_{\pi,\rho}$ are allowed to have infinite dimensional values. Thus, a notation like $\pi^*F$ is defined using the left Kan extension of $F$ if needed: $\pi^*F=\overline{F}\circ\pi$, as explained in section \ref{subsec-gen-comp}.
\end{rk}

\subsection{Construction and first properties of $\Theta_\FF$.}\label{subsec-defTheta}
It follows from the isomorphism \eqref{eqn-Formule-Cartan} of section \ref{subsec-recoll-ord} that we have an adjoint pair of functors
$-\otimes_{\FF[\A]}\rho:\Mdd \FF[\A]\leftrightarrows \FF\Md: \Hom_\FF(\rho,-).$
We denote by $\theta_\FF$ the unit of adjunction: 
\begin{equation}
\theta_\FF:\pi\to \Hom_\FF(\rho,\pi\otimes_{\FF[A]}\rho)=D_{\pi,\rho}\circ\rho\;.
\label{eqn-def-theta}
\end{equation}
Thus $\theta_\FF$ is a morphism in $\Mdd \FF[\A]$, whose component at $a$ is the $\FF$-linear map 
\[(\theta_\FF)_a: \pi(a)\to \Hom_\FF(\rho(a), \pi\otimes_{\FF[A]}\rho)\]
which sends $x\in\pi(a)$ to the $\FF$-linear map $y\mapsto \llbracket x\otimes y\rrbracket$ where the brackets denote the class of $x\otimes y$ in $\rho\otimes_{\FF[A]}\pi$.
Next, we choose a regular cardinal $\aleph$ such that the images of $\rho$ and $\pi$ are contained in the category $\Vect_\FF^{\aleph}$ of vector spaces of dimension less than $\aleph$, and we let $\iota^\aleph:\Vect_\FF^{\aleph}\hookrightarrow \FF\Md$ be the inclusion of categories. We define $\Theta_\FF$ as the unique graded $k$-linear map fitting into the commutative square (note that $\res_{\iota^\aleph}$ is an isomorphism by proposition \ref{pr-Kexact}):
\begin{equation}
\begin{tikzcd}[column sep=large]
\Tor_*^{k[\A]}(\overline{F}\circ\pi,\overline{G}\circ\rho)\ar[dashed]{r}{\Theta_\FF}
\ar{d}[swap]{\Tor_*^{k[\A]}(\overline{F}(\theta_\FF),\overline{G}\circ\rho)}
 & \Tor_*^{k[\Vect_\FF]}(\overline{F}\circ D_{\pi,\rho},G)\ar{d}{\res_{\iota^\aleph}}[swap]{\simeq}\\
\Tor_*^{k[\A]}(\overline{F}\circ D_{\pi,\rho}\circ\rho,\overline{G}\circ\rho)\ar{r}{\res_\rho} & \Tor_*^{k[\Vect_\FF^\aleph]}(\overline{F}\circ D_{\pi,\rho},\overline{G})
\end{tikzcd}.\label{eqn-def-Theta}
\end{equation}

\begin{lm}\label{lm-independant-card}
The map $\Theta_\FF$ does not depend on the choice of $\aleph$. 
\end{lm}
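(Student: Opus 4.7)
My plan is to show that any two admissible regular cardinals $\aleph\le\aleph'$ produce the same map via the defining diagram \eqref{eqn-def-Theta}; this suffices because any two admissible cardinals are dominated by a common larger one, so one can chain equalities through it.

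First I would introduce the inclusion $j\colon \Vect_\FF^\aleph\hookrightarrow \Vect_\FF^{\aleph'}$ and the corestrictions $\rho\colon\A\to\Vect_\FF^\aleph$ and $\rho'\colon\A\to\Vect_\FF^{\aleph'}$ of the functor $\rho$. One has the tautological factorizations $\iota^{\aleph'}=j\circ\iota^\aleph$ and $\rho'=j\circ\rho$. By functoriality of the $\Tor$-restriction maps recalled in section~\ref{subsubsec-res}, these translate into the identities $\res_{\iota^{\aleph'}}=\res_j\circ\res_{\iota^\aleph}$ and $\res_{\rho'}=\res_j\circ\res_\rho$. Because $\res_{\iota^\aleph}$ and $\res_{\iota^{\aleph'}}$ are both isomorphisms by Proposition~\ref{pr-Kexact}, the first identity forces $\res_j$ to be an isomorphism as well.

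Next I would unfold the definition \eqref{eqn-def-Theta} as
\[\Theta_\FF^\aleph=\res_{\iota^\aleph}^{-1}\circ\res_\rho\circ\Tor_*^{k[\A]}(\overline{F}(\theta_\FF),\overline{G}\circ\rho),\]
and write the analogous formula for $\Theta_\FF^{\aleph'}$. Plugging in the two factorizations above yields
\[\Theta_\FF^{\aleph'}=\res_{\iota^\aleph}^{-1}\circ\res_j^{-1}\circ\res_j\circ\res_\rho\circ\Tor_*^{k[\A]}(\overline{F}(\theta_\FF),\overline{G}\circ\rho)=\Theta_\FF^\aleph,\]
giving the desired independence.

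The only point that requires genuine care is the bookkeeping: one must check that the arguments "$\overline{F}\circ D_{\pi,\rho}$" and "$\overline{G}$" appearing in the bottom-right corners of the diagrams for $\aleph$ and $\aleph'$ are indeed compatible under restriction along $j$, so that the functoriality identities for $\res_j$, $\res_{\iota^\aleph}$, $\res_{\iota^{\aleph'}}$, $\res_\rho$, $\res_{\rho'}$ are all applied to the same pair of objects. This is immediate from the universal property of left Kan extensions (restricting $G^{\aleph'}$ along $j$ recovers $G^\aleph$, and similarly for the restriction of $\overline{F}\circ D_{\pi,\rho}$). Beyond this sanity check, the argument is purely formal.
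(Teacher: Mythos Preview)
Your proof is correct and follows essentially the same approach as the paper: both compare the definitions for two comparable cardinals by exploiting the factorizations $\iota^{\aleph'}=j\circ\iota^\aleph$ and $\rho'=j\circ\rho$ and the resulting compatibility $\res_{\iota^{\aleph'}}=\res_j\circ\res_{\iota^\aleph}$, $\res_{\rho'}=\res_j\circ\res_\rho$. The paper packages this as a single commutative diagram while you spell it out as an equality of composites, but the content is identical.
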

\begin{proof}
This is a consequence of the fact that for a cardinal $\beth$ greater than $\aleph$ we have a commutative diagram (where $\iota^{\aleph,\beth}$ is the inclusion of $\Vect_\FF^\aleph$ into $\Vect_\FF^\beth$):
\begin{equation*}
\begin{tikzcd}[column sep=large]
&  
 \Tor_*^{k[\Vect_\FF]}(\overline{F}\circ D_{\pi,\rho}F,G)\ar{d}[swap]{\simeq}{\res_{\iota^\aleph}}\ar[bend left, shift left=10ex]{dd}[swap, near start]{\simeq}[near start]{\res_{\iota^\beth}}\\
\Tor_*^{k[\A]}(\overline{F}\circ D_{\pi,\rho}\circ \pi,\overline{G}\circ \rho)\ar{r}{\res_\rho}\ar{rd}{\res_\rho} & \Tor_*^{k[\Vect_\FF^\aleph]}(\overline{F}\circ D_{\pi,\rho},\overline{G})\ar{d}{\res_{\iota^{\aleph,\beth}}}\\
& \Tor_*^{k[\Vect_\FF^\beth]}(\overline{F}\circ D_{\pi,\rho},\overline{G})
\end{tikzcd}.
\end{equation*}
\end{proof}

\begin{lm}\label{lm-nat-Theta}
The map $\Theta_\FF$ is natural with respect to $F$, $G$, $\pi$ and $\rho$.
\end{lm}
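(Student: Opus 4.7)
The plan is to unwind the defining commutative square \eqref{eqn-def-Theta} of $\Theta_\FF$ and verify that each of the three arrows appearing in it is natural in $F$, $G$, $\pi$ and $\rho$; naturality of $\Theta_\FF$ then follows by a diagram chase, since $\Theta_\FF$ is determined by the other three arrows (the vertical arrow on the right being invertible).

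First I would handle naturality in $F$ and $G$. A morphism $f\colon F\to F'$ in $k[\Vect_\FF]\Md$ induces a natural transformation $\overline{f}\colon\overline{F}\to\overline{F'}$ by the universal property of left Kan extensions, and this whiskers compatibly with $\pi$, $\rho$, $D_{\pi,\rho}$ and $\iota^\aleph$, producing morphisms between the four Tor-groups of diagram \eqref{eqn-def-Theta}. Since the maps $\Tor_*^{k[\A]}(\overline{F}(\theta_\FF),\overline{G}\circ\rho)$, $\res_\rho$ and $\res_{\iota^\aleph}$ are all bifunctorial in the Tor-arguments, and since $\theta_\FF$ itself does not depend on $F$ or $G$, every subsquare of the resulting three-dimensional diagram commutes. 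An analogous argument with $\overline{g}\colon\overline{G}\to\overline{G'}$ for a morphism $g\colon G\to G'$ yields naturality in $G$.

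Second I would handle naturality in $\pi$ and $\rho$. The key point is that the transformation $\theta_\FF\colon\pi\to D_{\pi,\rho}\circ\rho$ defined in \eqref{eqn-def-theta} is natural in $\pi$ and $\rho$, because it is the unit of the adjunction $-\otimes_{\FF[\A]}\rho\dashv\Hom_\FF(\rho,-)$. Concretely, given $\alpha\colon\pi\to\pi'$, the morphism $\alpha\otimes_{\FF[\A]}\rho$ induces a natural transformation $D_\alpha\colon D_{\pi,\rho}\to D_{\pi',\rho}$, and the unit axiom gives $(D_\alpha\circ\rho)\circ\theta_{\FF,\pi}=\theta_{\FF,\pi'}\circ\alpha$ as morphisms $\pi\to D_{\pi',\rho}\circ\rho$ in $\Mdd\FF[\A]$. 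The parallel statement for $\beta\colon\rho\to\rho'$, together with the evident functoriality of $\overline{F}$ applied to the $D$-transformations and the naturality of $\res_\rho$ and $\res_{\iota^\aleph}$ (they are induced by exact functors and are therefore compatible with any morphism in their Tor-arguments), assembles into the required naturality squares.

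The main bookkeeping obstacle is to keep straight the variance of $D_{\pi,\rho}$ in each of its three slots. A morphism $\alpha\colon\pi\to\pi'$ induces $\pi^*F\to\pi'^*F$ at the source of $\Theta_\FF$, while on the target it induces $D_{\pi,\rho}^*F\to D_{\pi',\rho}^*F$ in the \emph{same} direction via $D_\alpha$; similarly for $\beta\colon\rho\to\rho'$ via $D_{\pi,\rho}\to D_{\pi,\rho'}$ coming from $\pi\otimes_{\FF[\A]}\beta$. One should also verify that a single regular cardinal $\aleph$ can be chosen so as to contain the images of $\pi,\pi',\rho,\rho'$ simultaneously, which is permitted by lemma \ref{lm-independant-card}. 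Beyond these variance and cardinal conventions, no substantive difficulty arises; the argument is a routine functoriality check once the adjunction unit is recognized as the source of naturality.
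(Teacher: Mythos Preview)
Your treatment of naturality in $F$, $G$ and $\pi$ is fine and agrees with the paper. The gap is in your handling of $\rho$. You write that ``the transformation $\theta_\FF\colon\pi\to D_{\pi,\rho}\circ\rho$ \ldots\ is natural in $\pi$ and $\rho$, because it is the unit of the adjunction $-\otimes_{\FF[\A]}\rho\dashv\Hom_\FF(\rho,-)$'', and then treat the case of $\beta\colon\rho\to\rho'$ as parallel to that of $\alpha\colon\pi\to\pi'$. But changing $\rho$ changes the \emph{adjunction} itself, so the unit is \emph{not} natural in $\rho$ in the ordinary sense: given $f\colon\rho\to\rho'$, there is no single map $D_{\pi,\rho}\circ\rho\to D_{\pi,\rho'}\circ\rho'$ making a naturality square with $\theta_{\FF,\rho}$ and $\theta_{\FF,\rho'}$. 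What one does have is the \emph{dinaturality} square
\[
\begin{tikzcd}[column sep=large]
\pi \ar{rr}{\theta_{\FF,\rho'}}\ar{d}[swap]{\theta_{\FF,\rho}} && \Hom_\FF(\rho', \pi\otimes_{\FF[\A]}\rho')\ar{d}{\Hom_\FF(f,\,\id)}\\
\Hom_\FF(\rho, \pi\otimes_{\FF[\A]}\rho)\ar{rr}{\Hom_\FF(\rho,\,\pi\otimes f)} && \Hom_\FF(\rho, \pi\otimes_{\FF[\A]}\rho')
\end{tikzcd}
\]
with both composites landing in $D_{\pi,\rho'}\circ\rho$. The paper makes exactly this point (``$\theta_\FF$ is not natural with respect to $\rho$'') and then carries out a diagram chase in which this dinaturality square, together with the analogous \emph{dinaturality} of the restriction maps $\res_\rho$ and $\res_{\rho'}$ (which also change when $\rho$ changes, so your phrase ``naturality of $\res_\rho$'' is again not the right notion), is shown to yield commutativity of the outer naturality square for $\Theta_\FF$.

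In short: your plan works for $F$, $G$, $\pi$, but for $\rho$ you need to replace the nonexistent ``naturality of $\theta_\FF$ in $\rho$'' by the dinaturality above, and similarly use dinaturality of restriction along $\rho$; the resulting verification is the nontrivial part of the proof and cannot be dismissed as parallel to the $\pi$ case.
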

\begin{proof}
It is equivalent to prove the naturality of $\res_{\iota^\aleph}\circ\Theta_k$ with respect to $F$, $G$, $\pi$ and $\rho$.
Naturality with respect to $F$, $G$ and $\pi$ is a straightforward verification. We check naturality with respect to $\rho$, which is less straightforward since $\theta_\FF$ is not natural with respect to $\rho$. Let $f:\rho\to \rho'$ be a natural transformation, and let $D_f:D:=D_{\pi,\rho}\to D':=D_{\pi,\rho'}$ be the natural transformation induced by $f$. We consider the following diagram of graded $k$-modules, in which the composition operator for functors is omitted, e.g. `$\overline{F}\pi$' means $\overline{F}\circ \pi$, and the arrows are labelled by the natural transformations which induce them.
\[
\begin{tikzcd}
\Tor_*^{k[\A]}(\overline{F}\pi,\overline{G}\rho)\ar{rr}{\overline{G}f}\ar{d}{\overline{F}\theta_\FF}\ar{dr}{\overline{F}\theta_\FF}&&\Tor_*^{k[\A]}(\overline{F}\pi,G\rho')\ar{dd}{\overline{F}\theta_\FF}\\
\Tor_*^{k[\A]}(\overline{F}D\rho,\overline{G}\rho)\ar{dd}{\res_\rho}\ar{dr}[swap]{\overline{F}D_f\rho}&\Tor_*^{k[\A]}(\overline{F}D'\rho',\overline{G}\rho)\ar{dr}{\overline{G}f}\ar{d}{\overline{F}D'f}&\\
& \Tor_*^{k[\A]}(\overline{F}D'\rho,\overline{G}\rho) \ar{dr}[swap]{\res_\rho}& \Tor_*^{k[\A]}(\overline{F}D'\rho',\overline{G}\rho')\ar{d}{\res_{\rho'}}\\
\Tor_*^{k[\Vect_\FF^\aleph]}(\overline{F}  D,\overline{G})\ar{rr}{\overline{F}D_f}&& \Tor_*^{k[\Vect_\FF^\aleph]}(\overline{F} D',\overline{G})
\end{tikzcd}
\]
The upper right triangle and the lower left triangle of the diagram are obviously commutative. The upper left parallelogram is commutative because of the dinaturality of $\theta_\FF$, i.e. because the following square commutes:
\[
\begin{tikzcd}[column sep=large]
\pi \ar{rr}{\theta_\FF}\ar{d}{\theta_\FF} &&D'\rho'=\Hom_\FF(\rho', \pi\otimes_{\FF[\A]}\rho')\ar{d}{\Hom_\FF(f, \pi\otimes_{\FF[\A]}\rho')}\\
D\rho=\Hom_\FF(\rho, \pi\otimes_{\FF[\A]}\rho)\ar{rr}{\Hom_\FF(\rho, \pi\otimes_{\FF[\A]}f)} && D'\rho=\Hom_\FF(\rho, \pi\otimes_{\FF[\A]}\rho')
\end{tikzcd}\;.
\] 
Finally, the lower right parallelogram commutes by dinaturality of restriction maps between $\Tor$-modules. Thus the outer square is commutative, which shows that $\res_{\iota^\aleph}\circ\Theta_\FF$, hence $\Theta_\FF$, is natural with respect to $\pi$.
\end{proof}

\subsection{Base change}\label{subsec-base-change-theta}
Let $\FF\to \KK$ be a morphism of fields. We now establish a relation between the maps $\Theta_\FF$ and $\Theta_\KK$, which will be needed in the proof of the generalized comparison theorem in section \ref{subsec-proof-gencomp}.  
Let $t:\FF\Md\to \KK\Md$ denote the extension of scalars: $t(v)=\KK\otimes_\FF v$. There is a canonical isomorphism
\[t(\pi\otimes_{\FF[\A]}\rho)\simeq (t\circ\pi)\otimes_{\KK[\A]}(t\circ\rho)\]
which sends $\lambda\otimes\llbracket x\otimes y\rrbracket$ to $\llbracket (\lambda\otimes x) \otimes (1\otimes y)\rrbracket$. Together with the canonical map base change map 
$t(\Hom_\FF(v,w))\to \Hom_\KK(t(v), t(w))$, they induce a canonical $\KK$-linear map, natural with respect to the $\FF$-vector space $v$ and the additive functors $\pi$ and $\rho$:
\begin{align}t(\Hom_\FF(v,\pi\otimes_{\FF[A]}\rho)) \xrightarrow[]{}
&\Hom_\KK(t(v), (t\circ \pi)\otimes_{\KK[A]} (t\circ \rho))\;.\label{eqn-can}
\end{align}
which is an isomorphism when $v$ has finite dimension.  If we let $D_{\pi,\rho}$ and $D_{t\circ\pi,t\circ\rho}$ be the duality functors respectively defined by: 
\begin{align*}
&D_{\pi,\rho}(v)=\Hom_{\FF}(v,\pi\otimes_{\FF[A]}\rho)\\
& D_{t\circ\pi,t\circ\rho}(w)=\Hom_\KK(w,(t\circ\pi)\otimes_{\KK[A]}(t\circ\rho))
\end{align*}
then the morphism \eqref{eqn-can} can be written as a morphism of functors
\begin{equation}t\circ D_{\pi,\rho}\xrightarrow[]{\mathrm{can}} D_{t\circ\pi,t\circ\rho}\circ t
\label{eqn-can-fct}
\end{equation}
whose component at every finite-dimensional vector space $v$ is an isomorphism. 

\begin{pr}\label{prop-chgt-base}
Let $\FF\to \KK$ be a field morphism. 
For all additive functors $\pi:\A^\op\to \FF\Md$ and $\rho:\A\to \FF\Md$, and for all objects $F$ and $G$ in $k[\Vect_\KK]\Md$, we have a commutative diagram in which the lower horizontal isomorphism is induced by the isomorphism $\overline{F}(\mathrm{can})$:
\[
\begin{tikzcd}[column sep=large]
\Tor_*^{k[\A]}(\overline{F}\circ t\circ \pi,\overline{G}\circ t\circ \rho)\ar{r}{\Theta_\KK}\ar{d}{\Theta_\FF}& 
\Tor_*^{k[\Vect_\KK]}(\overline{F}\circ D_{t\circ \pi,t\circ \rho},G)\\
\Tor_*^{k[\Vect_\FF]}(\overline{F}\circ t\circ D_{\pi,\rho},G\circ t)\ar{r}[swap]{\simeq} &
\Tor_*^{k[\Vect_\FF]}(\overline{F}\circ D_{t\circ \pi,t\circ \rho}\circ t,G\circ t) \ar{u}{\res_t}
\end{tikzcd}.
\]
\end{pr}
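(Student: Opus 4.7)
The plan is to verify the commutativity by decomposing both paths in the square using the construction \eqref{eqn-def-Theta} of $\Theta_\FF$ and $\Theta_\KK$, and then reducing everything to a single compatibility of the units of adjunction $\theta_\FF$ and $\theta_\KK$ together with the naturality of restriction maps.

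First I would identify the left Kan extension $\overline{F\circ t}$ of $F\circ t:\Vect_\FF\to k\Md$ with $\overline{F}\circ t$, and similarly $\overline{G\circ t}=\overline{G}\circ t$. This is because $t=\KK\otimes_\FF-$ preserves filtered colimits (being a left adjoint), so $\overline{F}\circ t$ preserves filtered colimits and restricts to $F\circ t$ on $\Vect_\FF$; the universal property of left Kan extensions does the rest. This identification is what makes sense of $\Theta_\FF$ applied to the data $(F\circ t,G\circ t,\pi,\rho)$ as the left vertical arrow of the diagram.

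The heart of the argument is the following compatibility of units: the morphism $\theta_\KK:t\pi\to D_{t\pi,t\rho}\circ t\rho$ equals the composite
\[
t\pi\xrightarrow{\;t(\theta_\FF)\;} t\circ D_{\pi,\rho}\circ\rho\xrightarrow{\;\mathrm{can}\circ\rho\;} D_{t\pi,t\rho}\circ t\circ\rho,
\]
where the second arrow is \eqref{eqn-can-fct} evaluated at $\rho$. This is a direct unwinding of definitions: both morphisms send an element $\lambda\otimes x\in t\pi(a)=\KK\otimes_\FF\pi(a)$ to the $\KK$-linear map on $t\rho(a)$ which extends $\KK$-linearly the formula $y\mapsto\lambda\otimes\llbracket x\otimes y\rrbracket$ in $t(\pi\otimes_{\FF[\A]}\rho)\cong t\pi\otimes_{\KK[\A]}t\rho$. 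Applying $\overline{F}$ and using the identification from the previous paragraph yields $\overline{F}(\theta_\KK)=\overline{F}(\mathrm{can}\circ\rho)\circ\overline{F\circ t}(\theta_\FF)$.

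Now I would choose a regular cardinal $\aleph$ large enough that $\pi$, $\rho$, $t\circ\pi$, $t\circ\rho$, $D_{\pi,\rho}$ and $D_{t\pi,t\rho}$ all take values in $\Vect_\FF^\aleph$ or $\Vect_\KK^\aleph$, and moreover $t$ maps $\Vect_\FF^\aleph$ into $\Vect_\KK^\aleph$; lemma \ref{lm-independant-card} guarantees the map $\Theta$ is independent of this choice. Plugging the factorization of $\overline{F}(\theta_\KK)$ into diagram \eqref{eqn-def-Theta} for $\Theta_\KK$, it remains to verify the commutativity of the rectangle
\[
\begin{tikzcd}[column sep=large]
\Tor_*^{k[\A]}(\overline{F}\circ tD_{\pi,\rho}\circ\rho,\overline{G}\circ t\circ\rho)
\ar{r}{\overline{F}(\mathrm{can}\circ\rho)}
\ar{d}[swap]{\res_\rho}
& \Tor_*^{k[\A]}(\overline{F}\circ D_{t\pi,t\rho}\circ t\rho,\overline{G}\circ t\circ\rho)
\ar{d}{\res_{t\rho}}\\
\Tor_*^{k[\Vect_\FF^\aleph]}(\overline{F}\circ tD_{\pi,\rho},\overline{G}\circ t)
\ar{r}
& \Tor_*^{k[\Vect_\KK^\aleph]}(\overline{F}\circ D_{t\pi,t\rho},\overline{G})
\end{tikzcd}
\]
where the lower horizontal arrow is the composition of the map induced by $\overline{F}(\mathrm{can})$ with the restriction along the inclusion $t^\aleph:\Vect_\FF^\aleph\hookrightarrow\Vect_\KK^\aleph$, and to then check that $\res_{\iota^\aleph}\circ\res_{t^\aleph}=\res_t\circ\res_{\iota^\aleph}$ over the corresponding identifications. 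The rectangle commutes because $\res_{t\rho}=\res_{t^\aleph}\circ\res_\rho$ (restriction along a composite is the composite of restrictions) and because restriction maps are natural with respect to morphisms of functors, so they intertwine the morphism of source functors induced by $\overline{F}(\mathrm{can}\circ\rho)$ with the one induced by $\overline{F}(\mathrm{can})$.

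The main obstacle is purely bookkeeping: one must keep careful track of which ambient category (over $\A$, $\Vect_\FF^\aleph$, $\FF\Md$, $\Vect_\KK^\aleph$, or $\KK\Md$) each functor lives in, and organize the intermediate cardinals so that a single $\aleph$ controls everything simultaneously; the actual mathematical content reduces to the one-line unit compatibility and the functoriality of $\res$.
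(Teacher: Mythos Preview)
Your proof is correct and follows essentially the same route as the paper's: both hinge on the unit compatibility $\theta_\KK=(\mathrm{can}\circ\rho)\circ(t\theta_\FF)$, verified by direct inspection of the formulas, and then reduce the rest to naturality of restriction maps organized into a single commuting diagram. The paper packages this into one large diagram whose cells are declared to commute, whereas you unpack the same content into the factorization of $\overline{F}(\theta_\KK)$ followed by a residual rectangle; your explicit mention of the identification $\overline{F\circ t}\simeq\overline{F}\circ t$ and of the cardinal bookkeeping is slightly more detailed than the paper's treatment, but the mathematical content is identical.
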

\begin{proof}
Let us denote $D=D_{t\circ\pi,t\circ\rho}$ and $D'=D_{\pi,\rho}$ for short and let $\aleph$ be a big enough regular cardinal. 
We have a diagram of graded $k$-modules, in which the composition symbol for functors has been omitted and the arrows are labelled by the name of the morphisms which induce them. 
\[
\begin{tikzcd}
\Tor_*^{k[\A]}(\overline{F}t\pi,\overline{G}t\rho) \ar{r}{\overline{F}\theta_\KK}\ar{d}{\overline{F}t\theta_\FF}
& \Tor_*^{k[\A]}(\overline{F}Dt\rho,\overline{G}t\rho)\ar{d}{\res_\rho}\ar{r}{\res_{t\rho}}
& \Tor_*^{k[\Vect_\KK^\aleph]}(\overline{F}D,\overline{G}) \\
\Tor_*^{k[\A]}(\overline{F}tD'\rho,\overline{G}t\rho) \ar{ru}{\overline{F}\mathrm{can}\rho}\ar{d}{\res_\rho}
& \Tor_*^{k[\Vect_\FF^\aleph]}(\overline{F}Dt,\overline{G}t) \ar{ru}{\res_t}
& \Tor_*^{k[\Vect_\KK]}(\overline{F}D,\overline{G}) \ar{u}[swap]{\res_{\iota^\aleph}}{\simeq}\\
\Tor_*^{k[\Vect_\FF^\aleph]}(\overline{F}tD',\overline{G}t) \ar{ru}{\overline{F}\mathrm{can}}
& \Tor_*^{k[\Vect_\FF]}(\overline{F}tD',\overline{G}t) \ar{l}{\res_{\iota^\aleph}}[swap]{\simeq}\ar{r}{\simeq}[swap]{\overline{F}\mathrm{can}}
 & \Tor_*^{k[\Vect_\FF]}(\overline{F}Dt,\overline{G}t) \ar{u}{\res_t}\ar{ul}[swap]{\res_{\iota^\aleph}}{\simeq}\\
\end{tikzcd}
\]
One readily checks from the explicit expressions of $\theta_\KK$, $\theta_\FF$ and of the canonical morphism $\mathrm{can}:tD'\to Dt$ that $\theta_\KK=(\mathrm{can}\,\rho)\circ (t\theta_\FF)$, hence the upper left triangle of the diagram commutes. The other cells of the diagram obviously commute. The commutativity of the outer square proves proposition \ref{prop-chgt-base}.
\end{proof}

\subsection{A first isomorphism result} The next proposition will be used in the proof of the generalized comparison theorem in section \ref{subsec-proof-gencomp}.

\begin{pr}\label{pr-iso-rep-Theta}
If $\A$ is $\FF$-linear and if $\pi=\A(-,a)$ and $\rho=\A(b,-)$, then $\Theta_\FF$ is an isomorphism.
\end{pr}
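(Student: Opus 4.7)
The plan is to apply proposition \ref{pr-iso-Ext-explicit} to the adjoint pair provided by proposition \ref{pr-Eadjoint}, then identify the resulting isomorphism with $\Theta_\FF^{-1}$.

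First, a standard Yoneda (coend) computation gives
\[\pi\otimes_{\FF[\A]}\rho \;=\; \A(-,a)\otimes_{\FF[\A]}\A(b,-) \;\cong\; \A(b,a) \;=:\; V,\]
so $D_{\pi,\rho}(v)=\Hom_\FF(v,V)$. Fix a regular cardinal $\aleph$ large enough that every hom-space of $\A$ lies in $\Vect_\FF^\aleph$. Proposition \ref{pr-Eadjoint} then supplies the adjoint pair $L_b := -\otimes b : \Vect_\FF^\aleph \leftrightarrows \A^\aleph : \rho^\aleph$ with $L_b$ left adjoint. Its defining isomorphism, evaluated at the object $a\in\A^\aleph$, reads
\[\A^\aleph(v\otimes b,\,a) \;\cong\; \Hom_\FF\bigl(v,\,\A^\aleph(b,a)\bigr) \;=\; D_{\pi,\rho}(v),\]
which, after post-composing with $\overline{F}$, identifies $L_b^*(\overline{F}\circ\pi^\aleph)$ with $D_{\pi,\rho}^*F$. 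Applying proposition \ref{pr-iso-Ext-explicit} to this adjoint pair with $E=\overline{F}\pi^\aleph$ and $F=\overline{G}$, then composing with the Kan-extension isomorphisms of proposition \ref{pr-Kexact} on both sides, I obtain an isomorphism
\[\Phi \,:\, \Tor_*^{k[\Vect_\FF]}(D_{\pi,\rho}^*F,\,G) \;\xrightarrow{\sim}\; \Tor_*^{k[\A]}(\pi^*F,\,\rho^*G)\]
going in the direction opposite to $\Theta_\FF$.

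It remains to check that $\Theta_\FF\circ\Phi = \id$, which is the hard part. This is a diagram chase comparing the unit $\theta_\FF:\pi\to D_{\pi,\rho}\circ\rho$ of the adjunction $-\otimes_{\FF[\A]}\rho \dashv \Hom_\FF(\rho,-)$, used in the definition of $\Theta_\FF$, with the unit $u:\id\to\rho^\aleph\circ L_b$ of $L_b\dashv\rho^\aleph$, which drives $\Phi$. The key compatibility is that at an object of the form $c=v\otimes b$, the component $\theta_\FF(c):\A(c,a)\to\Hom_\FF(\A(b,c),V)$ is the composition map $x\mapsto(y\mapsto x\circ y)$, and pre-composition with $u_v:v\to\A^\aleph(b,v\otimes b)$ recovers exactly the adjunction isomorphism $\A^\aleph(v\otimes b,a)\cong\Hom_\FF(v,V)$ used above. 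Propagating this identification through the defining diagram of $\Theta_\FF$ (which uses $\overline{F}(\theta_\FF)_*$ and $\res_\rho$) and through the composite formula of proposition \ref{pr-iso-Ext-explicit} (which uses $\overline{G}(u)_*$ and $\res_{L_b}$), together with the naturality of restriction maps, yields the required equality. This final chase is the main obstacle; the preceding steps are routine applications of the cited propositions.
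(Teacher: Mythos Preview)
Your proposal is correct and uses the same core ingredients as the paper: the adjunction $-\otimes b\dashv \A^\aleph(b,-)$ from proposition \ref{pr-Eadjoint}, the identification $\pi^\aleph\circ(-\otimes b)\cong D_{\pi,\rho}$, and the $\Tor$-isomorphism of proposition \ref{pr-iso-Ext-explicit}.

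The one difference is organizational, and it is worth pointing out because it makes the ``main obstacle'' you flag essentially disappear. You build the inverse $\Phi$ using the \emph{unit} of $-\otimes b\dashv\rho^\aleph$ and then plan to verify $\Theta_\FF\circ\Phi=\id$; this requires tracking a long composite through both defining diagrams. The paper instead uses the \emph{counit} formulation of the same adjunction isomorphism, because the defining square of $\Theta_\FF$ already has the shape $\overline{F}(\theta_\FF)$ followed by $\res_\rho$. Concretely, the paper checks a small commuting triangle showing that (after the identification $\chi:D_{\pi,\rho}\simeq \pi^\aleph\circ(-\otimes b)$) the map $\theta_\FF$ equals $\pi^\aleph(\epsilon)$, where $\epsilon$ is the counit; this is exactly your ``key compatibility'' read in the opposite direction. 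Once that triangle is in hand, $\res_{\iota^\aleph}\circ\Theta_\FF$ is literally the counit side of the isomorphism in proposition \ref{pr-iso-Ext-explicit} composed with $\overline{F}(\chi)$, so it is an isomorphism on the nose---no separate inverse needs to be constructed or checked. You may find it cleaner to rewrite your argument along these lines.
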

\begin{proof}
By lemma \ref{lm-independant-card}, we may assume $\aleph$  as big as we want in the definition of $\Theta_\FF$, in particular we may assume that the functor $\rho^\aleph=\A^\aleph(b,-):\A^\aleph\to \Vect_\FF^\aleph$ has a left adjoint $\tau:=b\otimes_{\FF}-$ by proposition \ref{pr-Eadjoint}. We also let $\pi^\aleph=\A^\aleph(-,a):\A^\aleph\to \FF\Md$.

We first reinterpret $\theta_\FF$ in the situation of proposition \ref{pr-iso-rep-Theta}.
We have an isomorphism
\[\phi:\pi\otimes_{\FF[\A]}\rho \xrightarrow[]{\simeq}\A(b,a)\]
which sends the class of $f\otimes g\in \A(x,a)\otimes_\FF \A(b,x)$ to  $f\circ g\in \A(b,a)$. (The inverse of $\phi$ sends an element $f\in\A(a,b)$ to the class of $\id_a\otimes f\in\A(a,a)\otimes_\FF \A(b,a)$.) From the explicit expressions of $\theta_\FF$ and $\phi$, one sees that the lower left triangle of the following diagram commutes.
\begin{equation}
\begin{tikzcd}[column sep=large]
\A(x,a)\ar{rr}{\A(\epsilon_x,a)}\ar{d}[swap]{\theta_{\FF}}\ar{rrd}[swap]{\A(b,-)} && \A^\aleph(b\otimes_\FF \A(b,x),a)\ar{d}{\alpha}[swap]{\simeq}\\
\Hom_\FF(\A(b,x), \pi\otimes_{\FF[\A]}\rho)\ar{rr}[swap]{\Hom_\FF(\A(b,x),\phi)}{\simeq}&& \Hom_\FF(\A(b,x),\A(b,a))
\end{tikzcd}
\label{eqn-diagr-com}
\end{equation}
The upper right triangle of diagram \eqref{eqn-diagr-com} also commutes: here $\alpha$ is an adjunction isomorphism for the adjunction between $\tau$ and $\rho^\aleph$, and $\epsilon_x$ is the associated counit of adjunction. Diagram \eqref{eqn-diagr-com} is our new interpretation of $\theta_\FF$.

Next we prove that $\res^{\iota^\aleph}\circ\Theta_\FF$ is an isomorphism. We let $D:=D_{\pi,\rho}$ for short, and we let $\chi:D\simeq \pi^\aleph\circ \tau$ be the isomorphism whose component at $v$ is given by the composition:
\[\Hom_\FF(v,\pi\otimes_{\FF[\A]}\rho)\xrightarrow[\simeq]{\Hom_\FF(v,\phi)} \Hom_\FF(v, \A(a,b))\xrightarrow[\simeq]{\alpha^{-1}} \A^\aleph(b\otimes_\FF v,a)\;.\]
We consider the following diagram of graded $k$-modules, in which the composition operator for functors is omitted and the arrows are labelled by the natural transformations which induce them. The vertical maps $\res_{\iota^\aleph}$ are induced by restriction along the inclusion $\iota^\aleph:\A\hookrightarrow \A^\aleph$, and they are isomorphisms by proposition \ref{pr-Kexact}.
\[
\begin{tikzcd}
\Tor_*^{k[\A^\aleph]}(\overline{F}\pi^\aleph,\overline{G}\rho^\aleph)\ar{r}{\overline{F}\pi^\aleph\epsilon}&
\Tor_*^{k[\A^\aleph]}(\overline{F}\pi^\aleph \tau \rho^\aleph,\overline{G}\rho^\aleph)\ar{r}{\res_{\rho}} &
\Tor_*^{k[\Vect_\FF^\aleph]}(\overline{F}\pi^\aleph \tau,\overline{G})\ar[equal]{d}
\\
\Tor_*^{k[\A]}(\overline{F}\pi^\aleph,\overline{G}\rho^\aleph)
\ar{u}[swap]{\res_{\iota^\aleph}}{\simeq}
\ar{r}{\overline{F}\pi^\aleph\epsilon}\ar[equal]{d}&
\Tor_*^{k[\A]}(\overline{F}\pi^\aleph \tau \rho,\overline{G}\rho)
\ar{u}[swap]{\res{\iota^\aleph}}{\simeq}
\ar{r}{\res_{\rho}} &
\Tor_*^{k[\Vect_\FF^\aleph]}(\overline{F}\pi^\aleph \tau,\overline{G})\\
\Tor_*^{k[\A]}(\overline{F}\pi,\overline{G}\rho)
\ar{r}{\overline{F}\theta_\FF}
&\Tor_*^{k[\A]}(\overline{F}D\rho,\overline{G}\rho)
\ar{r}{\res_{\rho}}
\ar{u}[swap]{\overline{F}\chi \rho}{\simeq}&
\Tor_*^{k[\Vect_\FF^\aleph]}(\overline{F}D,\overline{G})\ar{u}[swap]{\overline{F}\chi}{\simeq}
\end{tikzcd}
\]
All the squares of the diagram are obviously commutative, except the lower left square which commutes by commutativity of diagram \eqref{eqn-diagr-com}. The composite in the top row is the $\Tor$-map induced by the adjunction between $\tau$ and $\rho^\aleph$, hence it is an isomorphism by proposition \ref{pr-iso-Ext-explicit}. We deduce that the composite in the bottom row, which is nothing but $\res^{\iota^\aleph}\circ\Theta_\FF$, is an isomorphism. Hence $\Theta_\FF$ is an isomorphism.
\end{proof}

\begin{cor}\label{cor-direct-sum-rep-Theta}
If $\A$ is $\FF$-linear, and if 
$$\pi=\bigoplus_{i\in I}\A(-,a_i)\;,\qquad \rho=\bigoplus_{j\in J}\A(b_j,-)$$
for some possibly infinite indexing sets $I$ and $J$, then $\Theta_\FF$ is an isomorphism.
\end{cor}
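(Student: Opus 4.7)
The plan is to reduce the corollary to Proposition \ref{pr-iso-rep-Theta} in two steps. First I would handle the case where the index sets $I$ and $J$ are finite, using the additivity of $\A$; then I would pass to the general case by a filtered colimit argument, exploiting the naturality of $\Theta_\FF$ in $\pi$ and $\rho$ established in Lemma \ref{lm-nat-Theta}.

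For the finite case, since $\A$ is additive there are canonical $\FF$-linear isomorphisms $\pi_{I'}:=\bigoplus_{i\in I'}\A(-,a_i)\simeq \A(-,\bigoplus_{i\in I'}a_i)$ and $\rho_{J'}:=\bigoplus_{j\in J'}\A(b_j,-)\simeq \A(\bigoplus_{j\in J'}b_j,-)$ for all finite subsets $I'\subset I$ and $J'\subset J$. Hence $\pi_{I'}$ and $\rho_{J'}$ are representable, and Proposition \ref{pr-iso-rep-Theta} applies directly to show that the corresponding $\Theta_\FF$ is an isomorphism.

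For arbitrary $I$ and $J$, I would write $\pi=\colim_{I'}\pi_{I'}$ and $\rho=\colim_{J'}\rho_{J'}$ as filtered colimits over the posets of finite subsets of $I$ and $J$, and use Lemma \ref{lm-nat-Theta} to exhibit $\Theta_\FF$ for $(\pi,\rho)$ as the filtered colimit of the $\Theta_\FF$ associated to the pairs $(\pi_{I'},\rho_{J'})$. The argument then rests on the following filtered-colimit preservation properties: the Kan extension $\overline{F}$ preserves filtered colimits of $\FF$-vector spaces, as is visible from its defining formula as a filtered colimit over finite-dimensional subspaces, whence $\overline{F}\circ\pi=\colim_{I'}\overline{F}\circ\pi_{I'}$ and $\overline{G}\circ\rho=\colim_{J'}\overline{G}\circ\rho_{J'}$; the coend $\pi\otimes_{\FF[\A]}\rho$ is cocontinuous in each variable, and for any finite-dimensional vector space $v$ the functor $\Hom_\FF(v,-)$ preserves filtered colimits, so that $D_{\pi,\rho}(v)=\colim_{I',J'}D_{\pi_{I'},\rho_{J'}}(v)$ and therefore $\overline{F}\circ D_{\pi,\rho}=\colim_{I',J'}\overline{F}\circ D_{\pi_{I'},\rho_{J'}}$ as functors $\Vect_\FF^{\op}\to k\Md$; finally $\Tor_*^{k[\A]}$ and $\Tor_*^{k[\Vect_\FF]}$ commute with filtered colimits in each variable. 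Combining these, each $\Tor$-term appearing in diagram \eqref{eqn-def-Theta} for $(\pi,\rho)$ is the filtered colimit of its analogues for $(\pi_{I'},\rho_{J'})$, and the corollary follows from the finite case.

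The main step to verify carefully is the identification $\overline{F}\circ D_{\pi,\rho}=\colim_{I',J'}\overline{F}\circ D_{\pi_{I'},\rho_{J'}}$: it passes through $\Hom_\FF(v,-)$, which only commutes with filtered colimits when $v$ is finite-dimensional. However, in the construction of $\Theta_\FF$ in diagram \eqref{eqn-def-Theta} the functor $D_{\pi,\rho}$ is only evaluated on finite-dimensional vector spaces (the infinite-dimensional passage being absorbed by $\overline{F}$ and by the restriction along $\iota^\aleph$), so this is precisely the regime in which it is needed, and no difficulty arises.
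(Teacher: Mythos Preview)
Your proposal is correct and follows essentially the same two-step strategy as the paper: reduce to the representable case via additivity of $\A$ when $I,J$ are finite, then pass to the general case by writing $\pi$ and $\rho$ as filtered colimits over finite subsets and using naturality of $\Theta_\FF$. The only minor difference is in justifying that $\overline{F}\circ D_{\pi,\rho}$ commutes with the filtered colimit in $(\pi,\rho)$: you invoke compactness of finite-dimensional $v$ so that $\Hom_\FF(v,-)$ preserves filtered colimits, whereas the paper uses the equivalent observation that $D_{\pi,\rho}(v)\simeq \Hom_\FF(v,\FF)\otimes_\FF(\pi\otimes_{\FF[\A]}\rho)$ for $v\in\Vect_\FF$.
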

\begin{proof}
If $I$ and $J$ are finite, then $\pi\simeq \A(-,a)$ and $\rho\simeq \A(b,-)$ for $a=\bigoplus a_i$ and $b=\bigoplus b_j$, hence $\Theta_\FF$ is an isomorphism by proposition \ref{pr-iso-rep-Theta}. For arbitrary $I$ and $J$, the functors $\pi$ and $\rho$ are filtered colimits of monomorphisms of functors of the form $\A(-,a)$ and $\A(b,-)$. So the result follows from the fact that the target and the source of $\Theta_\FF$ both preserve filtered colimits of monomorphisms of functors, when viewed as functors of the variables $\pi$ and $\rho$. (Indeed $\Tor_*$, $\overline{F}\circ\pi$, $\overline{G}\circ\rho$ and $\overline{F}\circ D_{\pi,\rho}$ preserve filtered colimits of monomorphisms -- for $\overline{F}\circ\pi$, $\overline{G}\circ\rho$, this follows from the fact that $\overline{F}$ and $\overline{G}$ are left Kan extensions of $F$ and $G$, and for $\overline{F}\circ D_{\pi,\rho}$, one uses in addition the isomorphism $D_{\pi,\rho}(v)\simeq \Hom_\FF(v,\FF)\otimes_\FF (\pi\otimes_{\FF[\A]}\rho)$, which holds because we view $D_{\pi,\rho}$ as a functor from $\Vect_\FF$ to $\FF\Md$.)
\end{proof}

\subsection{Simplicial resolutions}\label{subsec-simplicial}
In order to generalize the result of corollary \ref{cor-direct-sum-rep-Theta} to more general functors $\pi$ and $\rho$, we will rely on some simplicial techniques that we explain now. We refer the reader to \cite{Weibel,GoerssJardine} for further details on simplicial objects. 

Recall that the homotopy groups of a simplicial object in an abelian category $\M$ may be defined as the homology groups of the associated normalized chain complex; if $\M=R\Md$, this coincides with the usual homotopy groups of the underying simplicial set. 
A morphism of $f:X\to Y$ of simplicial objects is called \emph{$e$-connected} if the map $\pi_j(f):\pi_jX\to \pi_j(Y)$ induced on the level of homotopy groups is an isomorphism if $0\le j<e$ and an epimorphism if $j=e$. A morphism $f:X\to Y$ is called a \emph{simplicial resolution of $Y$} if $\pi_i(f)$ is an isomorphism for all $i$. This terminology also applies to objects $Y$ of $\M$ by considering them as constant simplicial objects. If $\M$ has enough projectives, then it follows from the Dold-Kan correspondence that every simplicial object $Y$ of $\M$ admits a \emph{projective simplicial resolution}, i.e. a simplicial resolution $X\to Y$ in which $X$ is degreewise projective.

If $\KK$ is a field, $F$ is a simplicial object in $k[\Vect_\KK]\Md$, and $\mu$ is a simplicial object in the category of additive functors $\A\to \KK\Md$, we let $\mu^*F$ be the diagonal simplicial object $\overline{F}_n\circ \mu_n$. Thus $\mu^*F$ is a simplicial object in $k[\A]\Md$ natural with respect to $\mu$ and $X$. The next two lemmas are our main tools to contruct convenient simplicial resolutions. The first one is a variant of the Whitehead theorem (and ultimately relies on it):
\begin{lm}\label{lm-Whitehead-thm}
If $f:F\to G$ and  $g:\mu\to \nu$ are $e$-connected morphisms, the induced morphism $\mu^*F\to \nu^*G$ is $e$-connected. 
\end{lm}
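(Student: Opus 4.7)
The plan is to reduce the claim to an objectwise statement and then apply a bisimplicial diagonal argument. I would fix $a\in\A$, and observe that the simplicial $k$-module $(\mu^*F)(a)=\overline{F}_n(\mu_n(a))$ is the diagonal of the bisimplicial $k$-module $B_{m,n}:=\overline{F}_m(\mu_n(a))$, and similarly $(\nu^*G)(a)$ is the diagonal of $C_{m,n}:=\overline{G}_m(\nu_n(a))$; the induced map is then the diagonal of a bisimplicial morphism $B\to C$. The key technical tool is the standard principle (a consequence of the Eilenberg--Zilber theorem together with the convergence of the spectral sequence of the associated double normalized complex) that if a bisimplicial morphism has uniformly $e$-connected columns, its diagonal is $e$-connected. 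To use this I would factor $B\to C$ through the intermediate bisimplicial object $I_{m,n}:=\overline{G}_m(\mu_n(a))$, obtaining on diagonals a factorization $\mu^*F\to\mu^*G\to\nu^*G$ that separates the hypotheses on $f$ and $g$.

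For the first factor $\mu^*F\to\mu^*G$: for each fixed $n$, the $m$-simplicial map $\overline{F}_\bullet(\mu_n(a))\to\overline{G}_\bullet(\mu_n(a))$ is the evaluation of the morphism of simplicial functors $\overline{F}_\bullet\to\overline{G}_\bullet$ at the $\KK$-module $W:=\mu_n(a)$. Since $\overline{F}_\bullet$ and $\overline{G}_\bullet$ are left Kan extensions, $\overline{F}_\bullet(W)=\colim F_\bullet(W')$ over the filtered poset of finite-dimensional subspaces $W'\subset W$, and similarly for $\overline{G}_\bullet$. The hypothesis on $f$ makes $F_\bullet(W')\to G_\bullet(W')$ an $e$-connected morphism for every such $W'$; since filtered colimits preserve $e$-connectivity, this yields the required column-wise $e$-connectivity, and the bisimplicial principle then gives the $e$-connectivity of the diagonal.

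For the second factor $\mu^*G\to\nu^*G$: for each fixed $m$, the $n$-simplicial map $\overline{G}_m(\mu_\bullet(a))\to\overline{G}_m(\nu_\bullet(a))$ is the image under the (possibly non-additive) functor $\overline{G}_m\colon\KK\Md\to k\Md$ of the $e$-connected simplicial $\KK$-module morphism $\mu(a)\to\nu(a)$. I expect this to be the main obstacle: an arbitrary functor does not preserve $e$-connectivity of simplicial maps in general. The classical Whitehead theorem, on which the lemma is advertised to rely, enters here: simplicial $\KK$-modules are Kan complexes, so a weak equivalence between them is a simplicial homotopy equivalence, and simplicial homotopy equivalences are preserved by any functor. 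For finite $e$, I would combine this with a truncation step (passing to suitable coskeletal or skeletal approximations and using a mapping cylinder to concentrate the defect above degree $e$) to conclude that $\overline{G}_m$ still preserves $e$-connectivity in the relevant range. Once column-wise $e$-connectivity is established for every $m$, the same bisimplicial diagonal principle yields the $e$-connectivity of $\mu^*G\to\nu^*G$, and composing with the first factor completes the proof.
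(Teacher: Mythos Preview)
Your overall architecture matches the paper's: work objectwise, factor through $\mu^*G$, and treat the two factors by a bisimplicial/spectral-sequence argument. Your treatment of the first factor $\mu^*F\to\mu^*G$ is essentially the paper's (filtered colimit of $e$-connected maps, then the bisimplicial spectral sequence).

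The gap is in the second factor $\mu^*G\to\nu^*G$. You correctly identify the problem: for a fixed $m$, you need the arbitrary functor $\overline{G}_m:\KK\Md\to k\Md$ to send the $e$-connected map $\mu(a)\to\nu(a)$ to an $e$-connected map. Your proposed fix---``truncation step, coskeletal/skeletal approximations, mapping cylinder to concentrate the defect above degree $e$''---is not an argument. The Whitehead observation you make handles only $e=\infty$; for finite $e$ there is no obvious truncation of simplicial $\KK$-modules that stays inside $\KK\Md$, is compatible with an arbitrary non-additive $\overline{G}_m$, and controls cross-effects well enough to propagate connectivity. Even after factoring $g$ as a cofibration followed by a trivial fibration, you are left with showing that $\overline{G}_m$ preserves $e$-connectivity of a degreewise split monomorphism with $(e-1)$-connected cokernel, and nothing you wrote addresses that.

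The paper does not attempt to prove such a statement for arbitrary $\overline{G}_m$. Instead it \emph{changes the functor}: take a simplicial resolution $P\to G$ in $k[\Vect_\KK]\Md$ by direct sums of standard projectives. Applying the already-proved first part (with $P\to G$ playing the role of $f$) shows that the vertical arrows in
\[
\begin{tikzcd}
\overline{P}(\mu(a))\ar{r}{\overline{P}(g)}\ar{d} & \overline{P}(\nu(a))\ar{d}\\
\overline{G}(\mu(a))\ar{r}{\overline{G}(g)} & \overline{G}(\nu(a))
\end{tikzcd}
\]
are weak equivalences, so it suffices to treat $\overline{P}(g)$. A second spectral sequence reduces to a single standard projective $Q$, whose left Kan extension is $k[-^{\oplus n}]$. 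Then $\overline{Q}(g)\simeq k[g^{\oplus n}]$, and the Whitehead theorem (an $e$-connected map of Kan simplicial sets induces an $e$-connected map on free $k$-modules) finishes the job. This projective-resolution step is exactly the missing idea in your proposal.
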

\begin{proof}
It suffices to show that for all object $a$ of $\A$, the following two morphisms of simplicial $k$-modules are $e$-connected:
\begin{align*}
\overline{f}(\mu(a)):\overline{F}(\mu(a))\to \overline{G}(\mu(a))\;, &&
\overline{G}(g):\overline{G}(\mu(a))\to \overline{G}(\nu(a))\;.
\end{align*}
We first observe that for all $\KK$-vector spaces $v$, the simplicial map $\overline{f}:\overline{F}(v)\to \overline{G}(v)$ is $e$-connected because $f$ is $e$-connected and homotopy groups preserve filtered colimits. Hence the $e$-connectedness of $\overline{f}(\mu(a))$ follows from the spectral sequence \cite[IV, section 2.2]{GoerssJardine}, which is natural with respect to $\overline{F}$:
$$E_{m,n}^1(\overline{F})= \pi_m (\overline{F}(\mu_n(a)))\Rightarrow \pi_{m+n}(\overline{F}(\mu(a)))\;.$$

Let us prove the $e$-connectedness of $\overline{G}(g)$. Let $P\to G$ be a simplicial resolution of $G$ in $k[\Vect_\KK]\Md$ by direct sums of standard projectives. Since taking left Kan extensions is exact, $\overline{P}\to \overline{G}$ is also a simplicial resolution. Therefore we have a commutative diagram of simplicial $k$-modules
$$\begin{tikzcd}
\overline{P}(\mu(a))\ar{r}{\overline{P}(g)}\ar{d}{}& \overline{P}(\nu(a))\ar{d}{}\\
\overline{G}(\mu(a))\ar{r}{\overline{G}(g)}& \overline{G}(\nu(a))
\end{tikzcd}$$
whose vertical arrows induce isomorphisms on homotopy groups by the preceding paragraph. Thus, it suffices to check that $\overline{P}(g)$ is $e$-connected.
By using the spectral sequence natural with respect to the simplicial vector space $v$:  
$$'E_{m,n}^1(v)= \pi_n(\overline{P_m}(v))\Rightarrow \pi_{m+n}(\overline{P}(v))\;,$$
the proof reduces further to showing that if $Q$ is a standard projective of $k[\Vect_\KK]\Md$, the morphism of simplicial $k$-modules $\overline{Q}(g)$ is $e$-connected. Now the left Kan extension of a standard projective $Q$ has the form:
\[k[\Hom_\KK(\KK^n,-)]\simeq k[-^{\oplus n}]:\KK\Md\to\KK\Md\;.\]
Hence $\overline{Q}(g)\simeq k[g^{\oplus n}]$, and $g^{\oplus n}$ is $e$-connected. Thus the $e$-connectedness of $\overline{Q}(g)$ follows from the Whitehead theorem, namely if $h:X\to Y$ is an $e$-connected morphism of cofibrant (= Kan) simplicial sets, then $k[h]:k[X]\to k[Y]$ is an $e$-connected morphism of simplicial $k$-modules.
\end{proof}

\begin{lm}\label{lm-flat}Let $\FF\to \KK$ be a field extension, and let $\A$ be a svelte additive $\FF$-linear category. If $\varrho\to \rho$ and $P\to F$ are projective simplicial  resolutions in $\KK\otimes_\FF\A\Md$ and $k[\Vect_\KK]\Md$ respectively, then the induced morphism $\varrho^*P\to \rho^*F$ is a projective simplicial resolution in $k[\A]\Md$.
\end{lm}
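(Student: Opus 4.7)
The result splits into showing that $\varrho^{*}P\to\rho^{*}F$ is a simplicial resolution and that $\varrho^{*}P$ is degreewise projective in $k[\A]\Md$. The first is immediate from Lemma~\ref{lm-Whitehead-thm}: since $\varrho\to\rho$ and $P\to F$ are simplicial resolutions they are $e$-connected for every $e$, and the lemma transports this to the composite $\varrho^{*}P\to\rho^{*}F$.

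For degreewise projectivity, the plan is to reduce the statement to an explicit filtered colimit of standard projectives of $k[\A]\Md$ whose transitions are split monomorphisms. Since $\overline{(-)}$ is an exact left adjoint, it preserves direct sums and direct summands; moreover, a $\KK$-module summand $\varrho'\subset\varrho_{n}$ becomes a natural set-theoretic summand after composition with $\overline{P_{n}}$ (the retraction being obtained from the underlying set of the $\KK$-linear projection applied inside $k[(-)^{m}]$), and direct sums of projectives of $k[\A]\Md$ are projective. These observations let us assume first that $P_{n}$ and $\varrho_{n}$ are direct sums of standard projectives, and then (using the identity $\Hom_{\KK}(\KK^{m},\KK\otimes_{\FF}\A(a,b))=\KK\otimes_{\FF}\A(a^{\oplus m},b)$) to focus on the basic building block $P_{n}=k[\Hom_{\KK}(\KK,-)]$, $\varrho_{n}=\KK\otimes_{\FF}\A(a,-)$; the claim then becomes that $b\mapsto k[\KK\otimes_{\FF}\A(a,b)]$ is projective in $k[\A]\Md$.

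Fix an $\FF$-basis $(e_{\lambda})_{\lambda\in\Lambda}$ of $\KK$. Then the underlying set of $\KK\otimes_{\FF}\A(a,b)\simeq\bigoplus_{\lambda\in\Lambda}\A(a,b)$ is the filtered colimit, over finite subsets $S\subset\Lambda$ ordered by inclusion, of the sets $\A(a,b)^{S}\simeq\A(a^{\oplus|S|},b)$ (using the finite direct sums of $\A$), with transitions given by the extension-by-zero maps; the latter are split injections natural in $b$, the natural retraction being the projection onto the old coordinates. Since $k[-]:\mathrm{Set}\to k\Md$ is a left adjoint and colimits in $k[\A]\Md$ are computed objectwise, this yields the presentation
\[k[\KK\otimes_{\FF}\A(a,-)]\;\simeq\;\colim_{S\subset\Lambda,\,|S|<\infty}\,k[\A(a^{\oplus|S|},-)]\]
in $k[\A]\Md$ as a filtered colimit of standard projectives whose transitions are split monomorphisms.

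The proof then concludes via the general categorical fact that, in a cocomplete abelian category, a filtered colimit of projective objects whose transitions are split monomorphisms is itself projective. I expect this last step to be the main obstacle, because the natural individual retractions do not canonically assemble into a section of the colimit itself. The standard device is a transfinite induction along a cofinal well-ordering of the indexing set (here, a well-ordering of $\Lambda$ and the associated initial segments): at successor stages the colimit splits as the previous stage plus a projective complement, at limit stages one obtains a transfinite direct sum of these complements, and so the total colimit appears as a direct sum of projectives, hence is projective.
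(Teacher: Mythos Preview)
Your reduction is essentially the paper's: use Lemma~\ref{lm-Whitehead-thm} for the resolution statement, then reduce degreewise projectivity to the claim that $k[\bigoplus_{i\in E}\A(a_i,-)]$ is projective in $k[\A]\Md$. (Your focus on a single $a$ is a minor oversimplification---an infinite direct sum in $\varrho_n$ does not collapse to one representable since $\A$ need not have infinite sums---but the same argument goes through with a family $(a_i)_{i\in E}$.)

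The genuine gap is the last step. The ``general categorical fact'' you invoke is not standard, and your transfinite sketch does not close it. There are two concrete problems. First, the poset $\mathcal{P}_f(\Lambda)$ has no cofinal chain when $\Lambda$ is uncountable, so you pass to initial segments of a well-ordering of $\Lambda$; but then the intermediate terms $k[\bigoplus_{\beta<\alpha}\A(a,-)]$ are no longer objects of your original diagram of standard projectives, so at a successor step the complement is not visibly a summand of a \emph{known} projective---you need an independent reason for its projectivity, which you do not supply. Second, the limit step requires the successive splittings to be coherent, which you flag but do not verify.

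The paper bypasses both issues with a single explicit decomposition. Writing $k[\A(a_i,-)]=k\oplus K_{a_i}$, where $K_{a_i}=\ker\big(k[\A(a_i,-)]\to k[0]\big)$, and using $k[U\oplus V]\simeq k[U]\otimes_k k[V]$, one gets
\[
k\Big[\bigoplus_{i\in E}\A(a_i,-)\Big]\;\simeq\;\bigoplus_{I\in\mathcal{P}_f(E)}\;\bigotimes_{i\in I}K_{a_i}
\]
(first for finite $E$ by expanding the tensor product, then for general $E$ by passing to the filtered colimit over finite subsets). Each $\bigotimes_{i\in I}K_{a_i}$ is a summand of the standard projective $k[\A(\bigoplus_{i\in I}a_i,-)]$, hence projective. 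This decomposition is precisely the coherent system of splittings your argument would need, together with the identification of the successor complements as projective, delivered in one line with no transfinite induction.
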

\begin{proof}
We already know that $\varrho^*P\to \rho^*F$ is a simplicial resolution by lemma \ref{lm-Whitehead-thm}. 
It remains to prove that $\varrho^*P$ is degreewise projective. By considering standard projectives and using that $\KK\otimes_\FF\A(a,-)\simeq \A(a,-)^{\oplus |\KK:\FF|}$ as $\FF$-modules (hence as sets), this reduces to proving that $k[\bigoplus_{i\in E}\A(a_i,-)]$ is projective in $k[\A]\Md$ for every family $(a_i)_{i\in E}$ of objects of $\A$.

For all $a_i$ there is a canonical direct sum decomposition $k[\A(a_i,-)]\simeq K_{a_i}\oplus k$
where the constant functor $k=k[0]$ is seen as a subfunctor of $k[\A(a_i,-)]$ via the inclusion induced by the unique map $0\to \A(a_i,-)$, and $K_{a_i}$ is the kernel of the map $k[\A(a_i,-)]\to k[0]=k$ induced by the unique map $\A(a_i,-)\to 0$. With this notation, we have an isomorphism 
\[\bigoplus_{I\in\mathcal{P}_f(E)}\bigotimes_{i\in I}K_{a_i}\xrightarrow[]{\simeq}k[\bigoplus_{i\in E}\A(a_i,-)]\] 
where $\Pp_f(E)$ is the set of finite subsets of $E$. Indeed, such an isomorphism is easily seen to hold if $E$ is finite, and if $E$ is infinite it may be obtained as the filtered colimit of the isomorphisms on the finite subsets $E'\subset E$. Every tensor product $\bigotimes_{i\in I}K_{a_i}$ is projective since it is a direct summand of $\bigotimes_{i\in I}k[\A(a_i,-)]\simeq k[\A(\bigoplus_{i\in I}a_i,-)]$. Therefore the functor $k[\bigoplus_{i\in E}\A(a_i,-)]$ is projective.
\end{proof}

\subsection{A general isomorphism result}

\begin{thm}\label{thm-Theta}
Let $k$ be a commutative ring and let $\A$ be a svelte additive $\FF$-linear category over a field $\FF$. Assume that $\pi$ and $\rho$ are $\FF$-linear, and that  $e$ is a positive integer such that $\Tor_i^{\FF\otimes_\FF\A}(\pi,\rho)=0$ for $0<i<e$.
Then for all objects $F$, $G$ of $k[\Vect_\FF]\Md$, the map
$$\Theta_\FF:\Tor^{k[\A]}_j(\pi^*F,\rho^*G)\to \Tor_j^{k[\Vect_\FF]}(D^*_{\pi,\rho}F,G)$$
is an isomorphism if $0\le j<e$ and is surjective if $j=e$.
\end{thm}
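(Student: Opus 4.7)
The plan is to reduce, via projective simplicial resolutions of $\pi$ and $\rho$ in the additive functor categories, to the representable case already settled by Corollary~\ref{cor-direct-sum-rep-Theta}. First I would choose projective simplicial resolutions $\varpi_\bullet\to\pi$ in $\Mdd\FF\otimes_\FF\A$ and $\varrho_\bullet\to\rho$ in $\FF\otimes_\FF\A\Md$, so that each $\varpi_n$ (respectively $\varrho_n$) is a direct sum of standard projectives $\A(-,a_i)$ (respectively $\A(b_j,-)$). Applying $\Theta_\FF$ at each bidegree $(m,n)$ and using its naturality (Lemma~\ref{lm-nat-Theta}) together with Corollary~\ref{cor-direct-sum-rep-Theta} yields a levelwise isomorphism of bisimplicial graded $k$-modules
\[\Theta_\FF^{m,n}\colon \Tor^{k[\A]}_j(\varpi_m^*F,\varrho_n^*G)\xrightarrow{\simeq}\Tor^{k[\Vect_\FF]}_j(D_{\varpi_m,\varrho_n}^*F,G).\]
What remains is to identify the two abutments and match them with the source and target of the theorem.

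On the left, Lemma~\ref{lm-Whitehead-thm} (applied with the identity on $F$, respectively $G$) ensures that $\varpi_\bullet^*F\to\pi^*F$ and $\varrho_\bullet^*G\to\rho^*G$ are $\infty$-connected simplicial resolutions in $\Mdd k[\A]$ and $k[\A]\Md$; standard bisimplicial hyperhomology then identifies the totalization of the left bisimplicial object with $\Tor^{k[\A]}_*(\pi^*F,\rho^*G)$ in all degrees, invoking Lemma~\ref{lm-flat} and projective simplicial resolutions of $F$ and $G$ to obtain honest projective resolutions in $\Mdd k[\A]$ and $k[\A]\Md$ if needed. On the right, I would invoke the hypothesis as follows: by the Eilenberg--Zilber theorem, the diagonal of the bisimplicial $\FF$-module $(m,n)\mapsto\varpi_m\otimes_{\FF[\A]}\varrho_n$ has homotopy groups $\Tor^{\FF\otimes_\FF\A}_i(\pi,\rho)$, which vanish for $0<i<e$. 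Exactness of $\Hom_\FF(v,-)$ pointwise in $v$ transfers this into $e$-connectedness of $D_{\varpi_\bullet,\varrho_\bullet}\to D_{\pi,\rho}$ regarded as simplicial additive functors $\Vect_\FF^{\op}\to\FF\Md$. Applying Lemma~\ref{lm-Whitehead-thm} in its evident contravariant form (i.e. taking $\Vect_\FF^{\op}$ as source category, with the identity on $F$) then yields $e$-connectedness of $D_{\varpi_\bullet,\varrho_\bullet}^*F\to D_{\pi,\rho}^*F$ in $\Mdd k[\Vect_\FF]$, so that the parallel bisimplicial hyperhomology abuts to $\Tor^{k[\Vect_\FF]}_*(D_{\pi,\rho}^*F,G)$ isomorphically in degrees $<e$ and surjectively in degree~$e$.

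Combining these facts with the levelwise isomorphism $\Theta_\FF^{m,n}$ and the naturality of $\Theta_\FF$ (Lemma~\ref{lm-nat-Theta}), the induced map on abutments is precisely the $\Theta_\FF$ of the theorem, and the claimed isomorphism range and degree-$e$ surjectivity follow. The hardest step will be the right-hand-side analysis: converting the hypothesis $\Tor^{\FF\otimes_\FF\A}_i(\pi,\rho)=0$ for $0<i<e$ into $e$-connectedness of $D_{\varpi_\bullet,\varrho_\bullet}^*F\to D_{\pi,\rho}^*F$ via Eilenberg--Zilber and Lemma~\ref{lm-Whitehead-thm}, and then propagating this connectivity cleanly through the bisimplicial hyperhomology to pinpoint exactly the stated isomorphism/surjectivity range.
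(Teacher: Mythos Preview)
Your proposal is correct and follows essentially the same route as the paper: resolve $\pi$ and $\rho$ by sums of representables, invoke Corollary~\ref{cor-direct-sum-rep-Theta} levelwise, use Lemma~\ref{lm-flat} and Lemma~\ref{lm-Whitehead-thm} to identify the left-hand abutment, and use the $\Tor$-vanishing hypothesis plus Lemma~\ref{lm-Whitehead-thm} to get $e$-connectedness of $D_{\varpi,\varrho}^*F\to D_{\pi,\rho}^*F$ on the right. The paper packages this with diagonal simplicial objects (also resolving $G$ by $\mathcal{G}$ and working directly with the simplicial tensor products $\overline{F}\varpi\otimes_{k[\A]}\overline{\mathcal{G}}\varrho$ and $\overline{F}D_{\varpi,\varrho}\otimes_{k[\Vect_\FF]}\mathcal{G}$) rather than your bisimplicial-$\Tor$ setup, and it draws an explicit commutative diagram to verify that the induced map really is $\Theta_\FF$---a point you gesture at via naturality but which deserves the diagram chase.
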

\begin{proof}
Let $\mathcal{G}\to G$, $\varpi\to \pi$ and $\varrho\to \rho$ be simplicial resolutions by direct sums of standard projectives in the categories $k[\Vect_\FF]\Md$, $\Mdd\FF\otimes_\FF\A$ and $\FF\otimes_\FF\A\Md$ respectively. 
We have a commutative diagram of simplicial $k$-modules, in which the maps $(\dag)$ are induced by the morphisms $\varpi\to \pi$ and $\varrho\to \rho$, the maps $\widetilde{\Theta_\FF}$ are degreewise equal to the degree zero component of $\Theta_\FF$, and composition operators for functors are omitted (e.g. $\overline{F}\varpi$ stands for $\overline{F}\circ\varpi$):
\[
\begin{tikzcd}[column sep=large]
\overline{F}\varpi\otimes_{k[\A]}\overline{\G}\varrho\ar{dd}[swap]{\widetilde{\Theta_\FF}}\ar{rd}[swap]{(\dag)}\ar{r}{F(\theta_\FF)\otimes\id}
&\overline{F}D_{\varpi,\rho}\rho\otimes_{k[\A]}\overline{\G}\varrho\ar{r}{(\dag)} & 
\overline{F}D_{\pi,\rho}\rho\otimes_{k[\A]}\overline{\G}\rho
\ar{d}{\res_\rho} \\
& \overline{F}\pi\otimes_{k[\A]}\overline{\G}\rho
\ar{rd}{\widetilde{\Theta_\FF}}\ar{ru}{F(\theta_\FF)\otimes\id} &
\overline{F}D_{\pi,\rho}\otimes_{k[\Vect_\FF^\aleph]}\overline{\G}
\\
\overline{F}D_{\varpi,\varrho}\otimes_{k[\Vect_\FF]}\G\ar{rr}{(\dag)} &&  \overline{F}D_{\pi,\rho}\otimes_{k[\Vect_\FF]}\G\ar{u}[swap]{\res_{\iota^\aleph}}{\simeq}
\end{tikzcd}\;.
\]
The homotopy groups of $\overline{F}D_{\pi,\rho}\otimes_{k[\Vect_\FF]}\overline{\G}$ and $\overline{F}\varpi\otimes_{k[\A]}\overline{\G}\varrho$ are respectively equal to 
 $\Tor_*^{k[\Vect_\FF]}(D_{\pi,\rho}^*F,G)$ and $\Tor_*^{k[\A]}(\pi^*F,\rho^*G)$ because $\overline{G}\to G$ and $\overline{\G}\varrho\to \overline{G}\rho$ are simplicial projective resolutions (see lemma \ref{lm-flat}). And the map induced on the level of homotopy groups by the top right corner of the diagram is $\Theta_\FF$. 
Moreover, by corollary \ref{cor-direct-sum-rep-Theta}, the vertical map $\widetilde{\Theta_\FF}$ is an isomorphism. Thus, to prove the theorem, it remains to prove that the bottom horizontal map $(\dag)$ is $e$-connected.

The $\Tor$-condition in the theorem ensures that $\varpi\otimes_{k[\A]}\varrho\to \pi\otimes_{k[\A]}\rho$ is $e$-connected. Thus $D_{\varpi,\varrho}\to D_{\pi,\rho}$ is $e$-connected, hence $\overline{F}D_{\varpi,\varrho}\to \overline{F}D_{\pi,\rho}$ is $e$-connected by lemma \ref{lm-Whitehead-thm}. This implies that the bottom horizontal map is $e$-connected by a standard spectral sequence argument (use the spectral sequence of a bisimplicial $k$-module as in \cite[IV section 2.2]{GoerssJardine}).
\end{proof}

\begin{rk} Although $\Theta_\FF$ can be constructed without requiring that $\A$, $\pi$ and $\rho$ are $\FF$-linear, one cannot hope that it is an isomorphism without $\FF$-linearity. Indeed, assume that $\FF$ is a field of prime characteristic $p$ of cardinal greater than $p$, contained in a field $k$. Consider $\pi(u)=\Hom_\Fp(u,\FF)$ and $\rho(u)=\FF\otimes_\Fp u$, then $D_{\pi,\rho}(v)=\Hom_\FF(v,\FF)$.  
If $G(v)=k\otimes_\FF v$ and $F(v)=S^p(k\otimes_\FF v)$ then one has: 
\begin{align*}
\pi^*F\otimes_{k[\A]}\rho^*G\simeq k &&&\text{  and  } & D_{\pi,\rho}^*F\otimes_{k[\Vect_\FF]}G =0\;.
\end{align*} 
\end{rk}

\section{The generalized comparison theorem}\label{sec-generalized}
The purpose of this section is to prove an analogue of corollary \ref{cor-verystrong} in which the category $\Vect_\Fq$ is replaced by an $\Fq$-linear category $\A$. Throughout the section:
\begin{itemize}
\item $k$ is a perfect field of characteristic $p$ containing a subfield $\Fq$ with $q=p^r$ elements, 
\item $\A$ is a svelte additive $\Fq$-linear category,
\item $\pi:\A^\op\to k\Md$ and $\rho:\A\to k\Md$ are $\Fq$-linear functors,
\item $F$ and $G$ are two strict polynomial functors over $k$.
\end{itemize}
We will prove that under a certain homological condition on $\pi$ and $\rho$, the graded vector space $\Tor^{k[\A]}_*(\pi^*F,\rho^*G)$ is isomorphic to $\Tor^\gen_*(F^\dag,G)$ for some strict polynomial functor $F^\dag$ constructed from $F$, $\pi$ and $\rho$. This result is the content of the generalized comparison theorem \ref{thm-gen-comp}, which is proved in section \ref{subsec-proof-gencomp}.


\subsection{Statement of the generalized comparison theorem}
Fix a positive integer $s$. For all integers $i$, we define additive functors:
\begin{align*}
\pi_i:={}^{(-ri)}\pi&&& \sigma:={}^{(r-rs)}\rho
\end{align*}
where a notation such as $^{(-ri)}\pi$ refers to the additive functor obtained as the composition of $\pi$ with the Frobenius twist $^{(-ri)}- :k\Md\to k\Md$. Then every $k$-vector space $v$ has a dual $k$-vector space $D_{\pi_i,\sigma}(v)$ defined by:
\[D_{\pi_i,\sigma}(v):= \Hom_k(v,\pi_i\otimes_{k[\A]}\sigma)\;.\]
We consider each $D_{\pi_i,\sigma}$ as a contravariant strict polynomial functor of degree $1$, hence the composition $^{(ri)}D_{\pi_i,\sigma}$ as a contravariant strict polynomial functor of degree $q^i$.
If $F$ is a strict polynomial functor of degree $d$ over $k$, we define a contravariant strict polynomial functor $F^\dag$ of degree $dq^{s^2-1}$ by (see definition \ref{defi-comp-strict}):
\begin{align}
F^\dag:= \overline{F}\circ \Big(\bigoplus_{0\le i<s^2} {}^{(ri)}D_{\pi_i,\sigma}\Big)\;.\label{eq-Fdag}
\end{align}

In section \ref{subsec-gen-comp-map}, we will construct an explicit morphism of graded vector spaces, natural with respect to $F$, $G$, $\pi$ and $\rho$:
\begin{align}
\Tor_j^{k[\A]}(\pi^*F,\rho^*G)\to \Tor_j^{\gen}(F^\dag,G^{(rs-r)})\;.\label{eqn-gencompmap}
\end{align}
The additive functors are allowed to have infinite-dimensional values, hence $\pi^*F$ and $\rho^*G$ refer to generalized compositions as in definition \ref{defi-comp}.
In the remainder of the article, we will refer to morphism \eqref{eqn-gencompmap} as the \emph{generalized comparison map}, and we will refer to the following theorem as the \emph{generalized comparison theorem}.

\begin{thm}\label{thm-gen-comp}
Let $k$ be a perfect field of prime characteristic $p$ containing a subfield $\Fq$ with $q=p^r$ elements, let $\A$ be a svelte additive $\Fq$-linear category, let $\pi:\A^\op\to k\Md$ and $\rho:\A\to k\Md$ be two $\Fq$-linear functors. Assume that $s$ and $e$ are positive integers such that 
\[\Tor_j^{k\otimes_\mathbb{Z} \A}\Big(\bigoplus_{0\le i<s^2}{}^{(-ri)}\pi,{}^{(r-rs)}\rho\Big)=0\]
for $0< j<e$. 
Then for all strict polynomial functors $F$ and $G$ of degrees less than $q^s$, the generalized comparison map \eqref{eqn-gencompmap} is an isomorphism for $0\le j<e$, and surjective for $j=e$.
\end{thm}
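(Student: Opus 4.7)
The plan is to factor the generalized comparison map \eqref{eqn-gencompmap} as a composition of three maps, each controlled by a result already established in the paper. This parallels the two-step construction of the strong comparison for $\Tor$ in corollary \ref{cor-verystrong}. Concretely, I would construct a factorization
\[
\Tor_j^{k[\A]}(\pi^*F,\rho^*G)
\xrightarrow{(1)}
\Tor_j^{k[\A]}((\widetilde\pi)^*\widetilde F,\sigma^*G^{(rs-r)})
\xrightarrow{(2)}
\Tor_j^{k[\Vect_\Fq]}(D_{\widetilde\pi,\sigma}^*\widetilde F,G^{(rs-r)})
\xrightarrow{(3)}
\Tor_j^{\gen}(F^\dag,G^{(rs-r)})
\]
where $\widetilde\pi := \bigoplus_{0\le i<s^2}{}^{(-ri)}\pi$, $\sigma := {}^{(r-rs)}\rho$, and $\widetilde F$ is a contravariant strict polynomial functor built from $F$ by a diagonal-and-Frobenius-twist construction (analogous to the interplay between $F^{(r|s)}$ and $F^{(rs|s)}$ used in section \ref{subsec-generalized-comparison-Fq}), and then identify this composition with the generalized comparison map of section \ref{subsec-gen-comp-map}.

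Step (1) is an isomorphism in all degrees built from the skew-diagonal and skew-sum morphisms of section \ref{subsubsec-skew} together with the fact that the Frobenius twist ${}^{(r)}-$ is the identity on $\Fq$-vector spaces. It is the functor-category analogue of the Frobenius reshuffling that appears in the proof of theorem \ref{thm-verystrong}, with $\pi$ and $\rho$ now carrying the role previously played by the extension of scalars $t:\Vect_\Fq\to\Vect_k$. Step (2) is the auxiliary comparison map $\Theta_\Fq$ of theorem \ref{thm-Theta}; it applies because the $\Tor$-vanishing hypothesis on $(\widetilde\pi,\sigma)$ assumed in theorem \ref{thm-gen-comp} is exactly the hypothesis of theorem \ref{thm-Theta}, and so it is an isomorphism for $j<e$ and a surjection for $j=e$. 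Step (3) is the strong comparison map for $\Tor$ of corollary \ref{cor-verystrong}; it is an isomorphism in all degrees because $\deg F,\deg G<q^s$ (and consequently $\widetilde F$ has degree within the range allowed by the corollary, once the $s^2$-fold direct sum is absorbed into the $(r|s^2)$-form).

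The main obstacle will be verifying that the composite of these three maps coincides with the generalized comparison map \eqref{eqn-gencompmap}. This amounts to identifying $D_{\widetilde\pi,\sigma}^*\widetilde F$ after Step (2) with $F^\dag$ pulled back along $t:\Vect_\Fq\to\Vect_k$, so that corollary \ref{cor-verystrong} indeed outputs $F^\dag$ as the target of Step (3). The Frobenius twists ${}^{(ri)}$ appearing outside each summand $D_{\pi_i,\sigma}$ of $F^\dag$ arise through the Frobenius isomorphism $\iso$ between $t(v)$ and ${}^{(ri)}t(v)$: the summand $D_{{}^{(-ri)}\pi,\sigma}=D_{\pi_i,\sigma}$ of $D_{\widetilde\pi,\sigma}$ becomes, upon applying $\iso$ and pulling back along $t$, isomorphic to ${}^{(ri)}D_{\pi_i,\sigma}\circ t$. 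The combinatorics of this identification is essentially the same as that underlying the passage between the $(r|s)$- and $(rs|s)$-twisted forms in the proof of theorem \ref{thm-verystrong}, transplanted to the functor-category setting and enriched by the functorial data of $\pi$ and $\rho$; the naturality of $\Theta_\Fq$ in $\pi$, $\rho$, $F$, $G$ (lemma \ref{lm-nat-Theta}) and the base-change compatibility (proposition \ref{prop-chgt-base}) should suffice to make the diagram commute.
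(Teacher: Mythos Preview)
Your factorization has a genuine gap: Step~(2) cannot be justified by theorem~\ref{thm-Theta}. That theorem requires an $\FF$-linear category $\A$ together with additive functors with values in $\FF\Md$, and the $\Tor$-vanishing condition is stated over $\FF\otimes_\FF\A$. Here $\A$ is only $\Fq$-linear while $\widetilde\pi,\sigma$ take values in $k\Md$, so neither choice $\FF=\Fq$ (wrong target for $\widetilde\pi,\sigma$) nor $\FF=k$ (wrong linearity for $\A$) fits; and the hypothesis of theorem~\ref{thm-gen-comp} is a $\Tor^{k\otimes_{\mathbb{Z}}\A}$-vanishing, not a $\Tor^{\Fq\otimes_{\Fq}\A}$-vanishing. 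The same problem already bites in Step~(1): your argument that ``${}^{(r)}-$ is the identity on $\Fq$-vector spaces'' gives ${}^{(-ri)}\pi\simeq\pi$ only when $\pi$ factors through $\Fq\Md$, which a general $\Fq$-linear functor $\pi:\A^\op\to k\Md$ need not do. So Step~(1) is not an isomorphism in general, and the map $\Lambda$ induced by the diagonal $\pi\to\pi^{\oplus s^2}$ in the definition of \eqref{eqn-gencompmap} has no reason to be invertible.

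What the paper does is exactly to isolate the situation where your argument \emph{does} work. Proposition~\ref{prop-base-case} treats the case $\pi=\bigoplus k\otimes_{\Fq}\A(-,a_i)$, $\rho=\bigoplus k\otimes_{\Fq}\A(b_j,-)$: such functors are of the form $t\circ\mu$, $t\circ\nu$ with $\mu,\nu$ valued in $\Fq\Md$, so one has the Frobenius isomorphisms $\pi\simeq{}^{(-ri)}\pi$, lemma~\ref{lm-ThetaprimeTheta} rewrites $\Theta_k^\dag\circ\Lambda$ as $\Theta_k$, the base-change square of proposition~\ref{prop-chgt-base} reduces to $\Theta_{\Fq}$ applied to $(\mu,\nu)$ (an isomorphism by corollary~\ref{cor-direct-sum-rep-Theta}, not by theorem~\ref{thm-Theta}), and finally corollary~\ref{cor-verystrong} closes the computation. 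For arbitrary $\pi,\rho$ the paper then re-runs the simplicial machinery of section~\ref{subsec-simplicial}: take projective resolutions $\varpi\to\pi$, $\varsigma\to\sigma$ in $k\otimes_{\mathbb{Z}}\A\Md$, realize all the maps simplicially, and use bisimplicial spectral sequences together with proposition~\ref{prop-base-case} on each page. The $\Tor^{k\otimes_{\mathbb{Z}}\A}$-vanishing hypothesis enters only at the very end, to show that $F^\dag_{\varpi,\varsigma}\to F^\dag_{\pi,\sigma}$ is $e$-connected (via lemma~\ref{lm-Whitehead-thm}). In short: your three-step decomposition is the right picture for projective $\pi,\rho$, but passing to general $\pi,\rho$ requires a separate simplicial argument rather than a direct appeal to theorem~\ref{thm-Theta}.
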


%
%
%
%


\subsection{Construction of the generalized comparison map}\label{subsec-gen-comp-map}

We first define a map $\Theta_k^\dag$ which is a variant of the auxiliary map $\Theta_k$ from section \ref{sec-gen-prelim-comparison}. Namely, we denote by $\theta_k^\dag$ the natural transformation:
\begin{align}
\theta_k^\dag:\pi^{\oplus s^2}=\bigoplus_{0\le i<s^2} {}^{(ri)}\pi_i \xrightarrow[]{\bigoplus {}^{(ri)}\theta_k} \bigoplus_{0\le i<s^2} {}^{(ri)}D_{\pi_i,\sigma}\circ\sigma\;.\label{eqn-def-thetaprime}
\end{align}
where $\theta_k$ refers to the natural transformation introduced in section \ref{subsec-defTheta}. Then we define $\Theta_k^\dag$ as the unique map making the following diagram commute:
\begin{equation}
\begin{tikzcd}[column sep=large]
\Tor_*^{k[\A]}(\overline{F}\circ (\pi^{\oplus s^2}),\overline{G}\circ\rho)\ar[dashed]{r}{\Theta^\dag_k}
\ar{d}[swap]{\Tor_*^{k[\A]}(\overline{F}(\theta_k^\dag),\overline{G}\circ\rho )}
 & \Tor_*^{k[\Vect_k]}(F^\dag,G^{(rs-r)})\ar{d}{\res_{\iota^\aleph}}[swap]{\simeq}\\
\Tor_*^{k[\A]}(F^\dag\circ \sigma,\overline{G}^{(rs-r)}\circ\sigma)\ar{r}{\res_\sigma} & \Tor_*^{k[\Vect_k^\aleph]}(F^\dag,\overline{G}^{(rs-r)})
\end{tikzcd}.\label{eqn-def-Thetaprime}
\end{equation}
The next lemma is proved exactly in the same way as lemmas \ref{lm-independant-card} and \ref{lm-nat-Theta}.
\begin{lm}
The map $\Theta^\dag_k$ does not depend on the cardinal $\aleph$. Moreover, it is natural with respect to $F$, $G$, $\pi$ and $\rho$.
\end{lm}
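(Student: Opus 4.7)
The plan is to mimic the proofs of lemmas~\ref{lm-independant-card} and~\ref{lm-nat-Theta} essentially verbatim, the key observation being that the extra structure in $\theta_k^\dag$ (a direct sum of Frobenius twists of $\theta_k$) interferes with neither the $\aleph$-independence argument nor the dinaturality argument.

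\textbf{Independence of $\aleph$.} Given regular cardinals $\aleph \le \beth$ both large enough that the image of $\sigma = {}^{(r-rs)}\rho$ lies in $\Vect_k^\aleph$, I would draw the analogue of the triangle from the proof of lemma~\ref{lm-independant-card}, with $\overline{F}\circ D_{\pi,\rho}$ replaced by $F^\dag$, $\rho$ replaced by $\sigma$, and the inclusion $\iota^{\aleph,\beth}:\Vect_k^\aleph\hookrightarrow\Vect_k^\beth$ mediating between the arrows $\res_{\iota^\aleph}$ and $\res_{\iota^\beth}$. Transitivity of restriction maps (i.e.\ $\res_{\iota^\beth} = \res_{\iota^{\aleph,\beth}}\circ\res_{\iota^\aleph}$) together with the evident commutativity of $\res_\sigma$ with $\res_{\iota^{\aleph,\beth}}$ forces the two candidate definitions of $\Theta_k^\dag$ to coincide.

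\textbf{Naturality.} Naturality in $F$, $G$, and $\pi$ is immediate from the explicit form of diagram~\eqref{eqn-def-Thetaprime}: each summand $^{(ri)}\theta_k$ of $\theta_k^\dag$ is functorial in $\pi_i = {}^{(-ri)}\pi$, and Frobenius twisting is both exact and additive, so functoriality is preserved when assembling the summands. The only nontrivial case is naturality in $\rho$. A morphism $f:\rho\to\rho'$ induces $^{(r-rs)}f : \sigma\to\sigma'$ and, for each $i$, a morphism $D_{\pi_i,f}: D_{\pi_i,\sigma}\to D_{\pi_i,\sigma'}$. For each $i$, applying the $ri$-th Frobenius twist to the dinaturality square used in the proof of lemma~\ref{lm-nat-Theta} produces a dinaturality square for $^{(ri)}\theta_k$ with respect to $^{(r-rs)}f$, and taking direct sums over $0\le i<s^2$ produces the dinaturality square for $\theta_k^\dag$. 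Plugging this square into the same hexagon-type diagram as in the proof of lemma~\ref{lm-nat-Theta}, with $\res_\sigma$ playing the role formerly played by $\res_\rho$, yields the desired commutativity by a routine diagram chase.

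I expect no real obstacle: the entire argument is formal, and none of the steps use the internal structure of $\theta_k^\dag$ beyond its dinaturality in $\sigma$ and the matching of its $i$-th summand with the $i$-th summand of $F^\dag\circ\sigma$ after applying $\overline{F}$. The only delicate point is bookkeeping the various Frobenius twists on both sides of the diagram so that they align correctly.
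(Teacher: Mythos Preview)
Your proposal is correct and matches the paper's approach exactly: the paper simply states that this lemma ``is proved exactly in the same way as lemmas~\ref{lm-independant-card} and~\ref{lm-nat-Theta}'', and your outline faithfully carries out that plan, correctly identifying that the only nontrivial point is the dinaturality of $\theta_k^\dag$ in $\sigma$ (handled summand-by-summand via Frobenius twists of the original dinaturality square).
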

The map $\Theta_k^\dag$ is closely related to the map $\Theta_k$ from section \ref{sec-gen-prelim-comparison}.
Indeed, assume that we are given isomorphisms of functors $\pi\simeq{}^{(-ri)}\pi$ and $\rho\simeq{}^{(r-rs)}\rho$. Then these isomorphisms induce isomorphisms (for the notation $F^{(r|s^2)}$, see \eqref{eqn-nota-precomp-multiple-twists-discrete}, page \pageref{eqn-nota-precomp-multiple-twists-discrete}): 
\begin{align*}
&(\pi^{\oplus s^2})^*F\simeq\pi^*(F^{(r|s^2)})\;,&&\rho^*G\simeq \rho^*(G^{(rs-r)})\;,
&&F^\dag\simeq D_{\pi,\rho}^*(F^{(r|s^2)})\;,
\end{align*} 
and the next lemma is a straightforward verification.
\begin{lm}\label{lm-ThetaprimeTheta}
There is a commutative square, whose vertical isomorphisms are induced by the above isomorphisms of functors:
\[
\begin{tikzcd}
\Tor_*^{k[\A]}((\pi^{\oplus s^2})^*F,\rho^* G)\ar{r}{\Theta^\dag_k}\ar{d}{\simeq}
 & \Tor_*^{k[\Vect_k]}(F^\dag,G^{(rs-r)})\ar{d}{\simeq}\\
\Tor_*^{k[\A]}(\pi^*(F^{(r|s^2)}),\rho^*G^{(rs-r)})\ar{r}{\Theta_k}
 & \Tor_*^{k[\Vect_k]}(D_{\pi,\rho}^*(F^{(r|s^2)}),G^{(rs-r)})
\end{tikzcd}\;.
\]
\end{lm}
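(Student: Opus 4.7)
The plan is to prove the commutativity by unpacking both defining diagrams \eqref{eqn-def-Theta} (specialized to the test functors $F^{(r|s^2)}$, $G^{(rs-r)}$ with input additive functors $\pi$, $\rho$) and \eqref{eqn-def-Thetaprime} (with test functors $F$, $G$ and input $\pi$, $\rho$), and then identifying them via the three isomorphisms supplied by $\pi\simeq{}^{(-ri)}\pi$ and $\rho\simeq{}^{(r-rs)}\rho$. Since the whole statement is that two diagrams coincide under a transport of structure, no homological input is used; it is purely a naturality check.

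First, I would make the three vertical isomorphisms in the lemma explicit. The hypothesis $\rho\simeq\sigma={}^{(r-rs)}\rho$ gives an iso $G\circ\rho\simeq G\circ{}^{(rs-r)}\sigma=(G^{(rs-r)})\circ\sigma\simeq(G^{(rs-r)})\circ\rho$ upon applying $\overline{G}$ to the left Kan extensions, yielding the middle iso $\rho^*G\simeq\rho^*(G^{(rs-r)})$. The chosen iso $\pi\simeq\pi_i$ twisted by $^{(ri)}$ yields $\pi\simeq{}^{(ri)}\pi_i$ for every $i$, so summing produces an iso $\pi^{\oplus s^2}\simeq\bigoplus_i{}^{(ri)}\pi_i$; applying $\overline{F}$ and using $\overline{F}\circ\bigoplus_i{}^{(ri)}(-)=\overline{F^{(r|s^2)}}\circ(-)$ yields the left vertical iso. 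Finally, the isos $\pi\simeq\pi_i$ and $\rho\simeq\sigma$ induce an iso $D_{\pi,\rho}\simeq D_{\pi_i,\sigma}$ for each $i$, and assembling them produces the iso $D_{\pi,\rho}^*(F^{(r|s^2)})\simeq F^\dag$ (i.e.\ the right vertical iso).

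Next, I would verify that the natural transformation $\theta_k^\dag$ defined by \eqref{eqn-def-thetaprime} is transported, through the above isomorphisms, to $\overline{F^{(r|s^2)}}(\theta_k)$, where the latter is the transformation appearing in the left column of \eqref{eqn-def-Theta} applied to $F^{(r|s^2)}$. This is the key step; it reduces to the observation that $\overline{F^{(r|s^2)}}(\theta_k)=\overline{F}\bigl(\bigoplus_i{}^{(ri)}\theta_k\bigr)$ by definition of $F^{(r|s^2)}$, and that each factor $^{(ri)}\theta_k\colon{}^{(ri)}\pi\to{}^{(ri)}(D_{\pi,\rho}\circ\rho)$ is identified, through the isos $\pi\simeq\pi_i$, $\rho\simeq\sigma$, $D_{\pi,\rho}\simeq D_{\pi_i,\sigma}$, with the $i$-th summand of $\theta_k^\dag$ in \eqref{eqn-def-thetaprime}. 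This is a diagram chase in the category of additive functors: it follows from the naturality of the unit $\theta$ with respect to $\pi$ and $\rho$ (established in lemma \ref{lm-nat-Theta}) together with the fact that Frobenius twist is an exact additive endofunctor.

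Finally, I would observe that the bottom row of \eqref{eqn-def-Thetaprime}, namely the restriction map $\res_\sigma$ together with the isomorphism $\res_{\iota^\aleph}$, is transported to the bottom row of \eqref{eqn-def-Theta} (applied to $F^{(r|s^2)}$ and $G^{(rs-r)}$) via the iso $\rho\simeq\sigma$, simply because restriction along isomorphic functors produces identical $\Tor$-morphisms up to the induced isos. Combining the three steps, the two defining squares of $\Theta_k^\dag$ and $\Theta_k$ are identified, which yields the commutativity of the square stated in the lemma. The only nontrivial verification is the identification of the two units in the preceding paragraph; everything else is bookkeeping involving naturality of $\res$ and of the Yoneda-type identifications used for $D_{\pi,\rho}$ and $F^{(r|s^2)}$.
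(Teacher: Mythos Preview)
Your proposal is correct and is essentially the detailed unpacking of what the paper leaves implicit: the paper states just before the lemma that it ``is a straightforward verification'' and gives no proof. Your plan of transporting the defining diagram \eqref{eqn-def-Thetaprime} to the defining diagram \eqref{eqn-def-Theta} (specialized to $F^{(r|s^2)}$, $G^{(rs-r)}$) via the given isomorphisms, with the key step being the identification of $\theta_k^\dag$ with $\bigoplus_i{}^{(ri)}\theta_k$ under these isomorphisms, is exactly the intended verification. One minor remark: the naturality you invoke is really that of the adjunction unit $\theta_k$ itself (which is immediate), not of the map $\Theta_\FF$ treated in lemma \ref{lm-nat-Theta}; but this does not affect the argument.
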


\begin{defi}
The \emph{generalized comparison map} \eqref{eqn-gencompmap} is the following composition of three maps: 
\[
\begin{tikzcd}
\Tor^{k[\A]}_*(\pi^*F,\rho^*G)\ar[dashed]{d}{\text{\eqref{eqn-gencompmap}}}\ar{rr}{\Lambda} && \Tor_*^{k[\A]}((\pi^{\oplus s^2})^*F,\rho^*G)\ar{d}{\Theta^\dag_k}\\
\Tor_*^{\gen}(F^\dag,G^{(rs-r)})&&\ar{ll}{\Phi_k}\Tor_*^{k[\Vect_k]}(F^\dag,G^{(rs-r)})
\end{tikzcd}\;.
\]
where $\Lambda$ is induced by $\overline{F}(\diag)$, where $\diag:\pi\to \pi^{\oplus s^2}$ is the diagonal map, and $\Phi_k$ is the strong comparison map from section \ref{subsec-strong-comparison-Fq}.
\end{defi}

\subsection{Proof of the generalized comparison theorem \ref{thm-gen-comp}}\label{subsec-proof-gencomp}

The strategy of the proof can be summarized as follows. In a first step, we prove theorem \ref{thm-gen-comp} when $\pi$ and $\rho$ are projective (as additive functors). This step relies on the fact that such projective functors factor through $\Fp$-vector spaces, and because of this factorization, the generalized comparison map can be rewritten as the composition of the auxiliary comparison map map $\Theta_\Fp$ of section \ref{sec-gen-prelim-comparison} and the generalized comparison map of section \ref{subsec-generalized-comparison-Fq}, both of which are isomorphisms. Then in a second step, one extends the isomorphism to all additive functors $\pi$ and $\rho$ by taking simplicial resolutions. The proof is similar in spirit (though slightly more complicated) to the proof of theorem \ref{thm-Theta}, in particular we rely on the simplicial results of section \ref{subsec-simplicial}. The homological condition of theorem \ref{thm-gen-comp} is used in this second step, to ensure that the tensor product of a projective simplicial resolution of $^{(-ri)}\pi$ with a projective simplicial resolution of $^{(r-rs)}\rho$ is, up to degree $e$, a projective simplicial resolution of $^{(-ri)}\pi\otimes_{k[\A]}{}^{(r-rs)}\rho$. 

\begin{pr}\label{prop-base-case}
Assume that
$$\pi=\bigoplus_{i\in I}k\otimes_\Fq\A(-,a_i)\;,\qquad \rho=\bigoplus_{j\in J}k\otimes_\Fq\A(b_j,-)$$
for some possibly infinite indexing sets $I$ and $J$. Then the generalized comparison map \eqref{eqn-gencompmap} is an isomorphism in all degrees $i$. 
\end{pr}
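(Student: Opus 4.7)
The strategy is to factor the generalized comparison map through the auxiliary comparison map $\Theta_\Fq$ of Section~\ref{sec-gen-prelim-comparison} and the generalized comparison map over $\Vect_\Fq$ from Corollary~\ref{cor-verystrong}, both of which are already isomorphisms in the relevant situation.

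\emph{Setup.} Write $\widetilde{\pi}:=\bigoplus_{i\in I}\A(-,a_i):\A^\op\to\Vect_\Fq$ and $\widetilde{\rho}:=\bigoplus_{j\in J}\A(b_j,-):\A\to\Vect_\Fq$, so that $\pi=t\circ\widetilde{\pi}$ and $\rho=t\circ\widetilde{\rho}$ with $t:\Vect_\Fq\to\Vect_k$ the extension of scalars. Since $k$ is perfect and its Frobenius automorphism fixes $\Fq$, there are natural $k$-linear isomorphisms ${}^{(-ri)}\pi\simeq\pi$ and ${}^{(r-rs)}\rho\simeq\rho$. Using that $D_{\pi_i,\sigma}$ is $k$-linear of degree one, these produce strict polynomial identifications ${}^{(ri)}D_{\pi_i,\sigma}\simeq D_{\pi,\rho}\circ I^{(ri)}$; summing over $0\le i<s^2$ yields $F^\dag\simeq (D_{\pi,\rho}^*F)^{(r|s^2)}$.

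I will show that the generalized comparison map factors as a composition of three isomorphisms
\begin{multline*}
\Tor^{k[\A]}_*(\pi^*F,\rho^*G)\xrightarrow{\Theta_\Fq}\Tor^{k[\Vect_\Fq]}_*(D^*_{\widetilde{\pi},\widetilde{\rho}}(t^*F),t^*G)\\
\xrightarrow{\mathrm{can}}\Tor^{k[\Vect_\Fq]}_*(t^*(D_{\pi,\rho}^*F),t^*G)\xrightarrow{\eqref{eqn-verystrong-Tor}}\Tor^\gen_*(F^\dag,G^{(rs-r)})
\end{multline*}
where $\Theta_\Fq$ is applied to the $\Fq$-valued $\widetilde{\pi},\widetilde{\rho}$ with coefficients $t^*F,t^*G\in k[\Vect_\Fq]\Md$. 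The first arrow is an isomorphism by Corollary~\ref{cor-direct-sum-rep-Theta}. The second is induced by the canonical morphism $t\circ D_{\widetilde{\pi},\widetilde{\rho}}\xrightarrow{\sim}D_{\pi,\rho}\circ t$ of \eqref{eqn-can-fct}, which is an isomorphism at every finite-dimensional $\Fq$-vector space. The third is an isomorphism by Corollary~\ref{cor-verystrong} applied to the contravariant functor $E:=D_{\pi,\rho}^*F$ of degree $\deg F<q^s$ and the covariant functor $G$ of degree $<q^s$; its target coincides with $\Tor^\gen_*(F^\dag,G^{(rs-r)})$ via the identification of the setup.

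It remains to verify that this composite coincides with the generalized comparison map \eqref{eqn-gencompmap}. This is a diagram chase resting on three previously established compatibilities: Proposition~\ref{prop-chgt-base} applied with $\FF=\Fq$, $\KK=k$ gives the factorization $\Theta_k=\res_t\circ\mathrm{can}\circ\Theta_\Fq$ for the $\Tor$ groups of $\pi^*F^{(r|s^2)}$ and $\rho^*G^{(rs-r)}$; Lemma~\ref{lm-ThetaprimeTheta} identifies $\Theta^\dag_k\circ\Lambda$ with $\Theta_k$ on those same $\Tor$ groups, converting the ordinary diagonal $\Lambda$ into the skew-diagonal $F(\diag)$ of Section~\ref{subsubsec-skew} via the Frobenius isomorphisms; and the strong comparison maps satisfy $\Phi_k\circ\res_t=\Phi_\Fq$, up to the skew maps $F(\iso)$ and $G(\summ)$, because the forgetful functor $\Pp_k\to k[\Vect_\Fq]\Md$ factors as $\Pp_k\to k[\Vect_k]\Md\to k[\Vect_\Fq]\Md$. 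The expected main obstacle is not conceptual but organizational: the numerous Frobenius twist identifications, skew-sum and skew-diagonal maps, and canonical base-change isomorphisms must be juggled carefully to check commutativity of the outer diagram.
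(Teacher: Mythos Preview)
Your proposal is correct and follows essentially the same approach as the paper's proof. The paper writes $\mu,\nu$ for your $\widetilde{\pi},\widetilde{\rho}$ and carries out explicitly the diagram chase you sketch in your last paragraph---in particular constructing the isomorphism $\psi:{}^{(r|s^2)}D_{\pi,\rho}\simeq D_{\pi,\rho}\circ I^{(r|s^2)}$ and verifying the commutative square that matches the skew-diagonal with the skew-sum---but the architecture (Lemma~\ref{lm-ThetaprimeTheta}, then Proposition~\ref{prop-chgt-base}, then Corollary~\ref{cor-direct-sum-rep-Theta} and Corollary~\ref{cor-verystrong}) is identical.
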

\begin{proof}
If $t(v)=k\otimes_\Fq v$ denotes the extension of scalars from $\Fq$ to $k$, then we have $\pi\simeq t\circ\mu$ and $\rho\simeq t\circ \nu$ for some additive functors $\mu:\A^\op\to \Fq\Md$ and $\nu:\A\to \Fq\Md$. 
The canonical isomorphisms of functors $t\simeq {}^{(ri)}t$ induce isomorphisms $\pi\simeq {}^{(ri)}\pi$ and $\rho\simeq {}^{(r-rs)}\rho$. So lemma \ref{lm-ThetaprimeTheta} and naturality of $\Phi_k$ with respect to the isomorphism $F^\dag\xrightarrow[]{\simeq} \overline{F}{}^{(r|s^2)}\circ D_{\pi,\rho}$, yield a commutative square (in which the composition operator of functors is omitted in the arguments of $\Tor$):
\[
\begin{tikzcd}[column sep=large]
\Tor_*^{k[\A]}(\overline{F}\pi,\overline{G}\rho)\ar{dd}{\text{\eqref{eqn-gencompmap}}}
\ar{r}{\Lambda'}
& 
\Tor_*^{k[\A]}(\overline{F}^{(r|s^2)}\pi,\overline{G}^{(rs-r)}\rho)\ar{d}{\Theta_k}
\\
&
\Tor_*^{k[\Vect_k]}(\overline{F}^{(r|s^2)}D_{\pi,\rho}, G^{(rs-r)})\ar{d}{\Phi_k}
\\
\Tor_*^{\gen}(F^\dag,G^{(rs-r)})\ar{r}{\simeq}
&
\Tor_*^{\gen}(\overline{F}^{(r|s^2)}D_{\pi,\rho},G^{(rs-r)})
\end{tikzcd}
\]
in which the map $\Lambda'$ is induced by the canonical isomorphism $t\simeq {}^{(rs-r)}t$ and by the skew diagonal $\diag: t\to \bigoplus_{0\le i<s^2}{}^{(ri)}t$.
Therefore in order to prove proposition \ref{prop-base-case} it suffices to prove that the composition $\Phi_k\circ\Theta_k\circ\Lambda'$ in top right corner of the diagram is an isomorphism. 

Next, since $\pi=t\circ\mu$ and $\rho=t\circ\nu$, the base change property of proposition \ref{prop-chgt-base} gives a commutative square:
\[
\begin{tikzcd}
\Tor_*^{k[\A]}(\overline{F}^{(r|s^2)}\pi,\overline{G}^{(rs-r)}\rho)\ar{d}{\Theta_k}\ar{r}{\Theta_\Fq}
&
\Tor_*^{k[\Vect_\Fq]}(\overline{F}^{(r|s^2)}t D_{\mu,\nu},\overline{G}^{(rs-r)} t)
\ar{d}{\simeq}
\\
\Tor_*^{k[\Vect_k]}(\overline{F}^{(r|s^2)}D_{\pi,\rho}, G^{(rs-r)})
&
\Tor_*^{k[\Vect_\Fq]}(\overline{F}^{(r|s^2)}D_{\pi,\rho}t,\overline{G}^{(rs-r)} t)
\ar{l}{\res_t}
\end{tikzcd}\;.
\]
Hence, by naturality of $\Theta_{\Fq}$ with respect to the isomorphism $\overline{G}(\iso):\overline{G}t\simeq \overline{G}{}^{(rs-r)}t$ and the morphism $F(\diag): \overline{F}t\to \overline{F}{}^{(r|s^2)}$, we obtain a commutative square:
\[
\begin{tikzcd}
\Tor_*^{k[\A]}(\overline{F}\pi,\overline{G}\rho)\ar{d}{\Theta_k\circ\Lambda'}
\ar{r}{\Theta_\Fq}
&
\Tor_*^{k[\Vect_\Fp]}(\overline{F}t D_{\mu,\nu},\overline{G}t)\ar{d}{\Lambda''}
\\
\Tor_*^{k[\Vect_k]}(\overline{F}^{(r|s^2)}D_{\pi,\rho}, G^{(rs-r)})
&
\Tor_*^{k[\Vect_\Fp]}(\overline{F}^{(r|s^2)}D_{\pi,\rho}t,\overline{G}^{(rs-r)} t)
\ar{l}{\res_t}
\end{tikzcd}
\]
where $\Lambda''$ is induced by the skew diagonal $\diag: t\to {}^{(r|s^2)}t$, by the isomorphism $\iso:t\simeq {}^{(rs-r)}t$ and by the canonical isomorphism $\mathrm{can}: t\circ D_{\mu,\nu}\simeq D_{\pi,\rho}\circ t$.
The map $\Theta_\Fq$ on the top row of this square is an isomorphism by corollary \ref{cor-direct-sum-rep-Theta}. Hence, to prove proposition \ref{prop-base-case} it remains to prove that the composition $\Phi_k\circ\res_t\circ \Lambda''$ is an isomorphism.

For this purpose, we are going to rewrite the composition $\Phi_k\circ\res_t\circ \Lambda''$ into yet another form.
We claim that there is a $k$-linear isomorphism, natural with respect to $v$, $\mu$ and $\nu$:
\[\psi: {}^{(r|s^2)}D_{t\mu,t\nu}(v)\to{D_{t\mu,t\nu}}({}^{(r|s^2)}v)\;, \]
Indeed, we have isomorphisms of vector spaces, natural with respect to $\mu$ and $\nu$:
\[\phi_i:{}^{(ri)}\left(t\mu\otimes_{k[\A]}t\nu\right)\xrightarrow[]{\simeq} t\mu\otimes_{k[\A]}t\nu\]
which send the class  $\llbracket(\alpha\otimes x)\otimes (\beta\otimes y)\rrbracket$ where $\alpha,\beta\in k$, $x\in \mu(a)$ and $y\in \nu(a)$ to the class $\llbracket(\alpha^{q^{i}}\otimes x)\otimes (\beta^{q^{i}}\otimes y)\rrbracket$. We define $\psi$ as the following composition, where $T$ stands for $t\mu\otimes_{k[\A]}t\nu$, the first and last isomorphisms are the canonical ones and the second morphism is induced by the $\phi_i$:
\begin{align*}
{}^{(r|s^2)}D_{t\mu,t\nu}(v)=\bigoplus_{0\le i<s^2} {}^{(ri)}\Hom_k(v,T) &\xrightarrow[]{\simeq} \bigoplus_{0\le i<s^2} \Hom_k({}^{(ri)}v,{}^{(ri)}T)\\
&\xrightarrow[]{\simeq} \bigoplus_{0\le i<s^2} \Hom_k({}^{(ri)}v,T)\\
&\xrightarrow[]{\simeq} \Hom_k(\bigoplus_{0\le i<s^2} {}^{(ri)}v,T)=D_{t\mu,t\nu}({}^{(r|s^2)}v)\;.
\end{align*}
Moreover, one readily checks that $\psi$ fits into a commutative diagram in $\Mdd k[\Fp]$:
\begin{equation}
\begin{tikzcd}[column sep=large]
tD_{\mu,\nu}\ar{d}{\mathrm{can}}[swap]{\simeq}\ar{r}{\diag (D_{\mu,\nu})}& {}^{(r|s^2)}tD_{\mu,\nu}\ar{r}{{}^{(r|s^2)}\mathrm{can}}[swap]{\simeq}&{}^{(r|s^2)}D_{\pi,\rho}t\ar{d}{\psi(t)}[swap]{\simeq}\\
D_{\pi,\rho}t\ar{rr}{D_{\pi,\rho}(\summ)}&& D_{\pi,\rho}{}^{(r|s^2)}t
\end{tikzcd}\label{eqn-diagram-fin}
\end{equation}
where $\summ:{}^{(r|s^2)}t\to t$ is the skew sum map. 
Diagram \eqref{eqn-diagram-fin} and naturality of $\Phi_k$ and $\res_t$ with respect to the map $\overline{F}(\psi)$ yield a commutative square, in which the horizontal isomorphisms are induced by the isomorphisms $\overline{F}(\mathrm{can})$ and $\overline{F}(\psi)$ and the map $\Lambda''$ is induced by $\overline{F}(D_{\pi,\rho}(\summ))$:
\[
\begin{tikzcd}
\Tor_*^{k[\Vect_\Fq]}(\overline{F}t D_{\mu,\nu},\overline{G}t)\ar{r}{\simeq}\ar{d}{\Phi_k\circ\res_t\circ \Lambda''}& \Tor_*^{k[\Vect_\Fq]}(\overline{F}D_{\pi,\rho}t,\overline{G}t)\ar{d}{\Phi_k\circ\res_t\circ \Lambda'''}\\
\Tor_*^{\gen}(\overline{F}{}^{(r|s^2)}D_{\pi,\rho}, G^{(rs-r)})\ar{r}{\simeq}& \Tor_*^{\gen}(\overline{F}{D_{\pi,\rho}}^{(r|s^2)}, G^{(rs-r)})
\end{tikzcd}\;.
\]
Thus, to prove proposition \ref{prop-base-case}, it suffices to prove that $\Phi_k\circ\res_t\circ \Lambda'''$ is an isomorphism. But $\Phi_k\circ\res_t\circ \Lambda'''$ equals the comparison map of equation \eqref{eqn-verystrong-Tor}, hence it is an isomorphism by corollary \ref{cor-verystrong}. This concludes the proof.
\end{proof}
%

\begin{proof}[Proof of theorem  \ref{thm-gen-comp}]
For concision, we set $\sigma={}^{(r-rs)}\rho$ and $H=G^{(rs-r)}$. Thus, $\rho^*G=\sigma^*H$. To emphasize the dependence of $F^\dag$ on $\pi$ and $\sigma$ we set:
\[F^\dag_{\pi,\sigma}(v):=\overline{F}\Big(\bigoplus_{0\le i< s^2}{}^{(ri)}\Hom_k\big(v,({}^{(-ri)}\pi)\otimes_{k[\A]}\sigma\big)\,\Big)\;.\]
We fix an integer $n\gg 0$ ($n> \log_p(e/2)$ suffices) such that the canonical map
\begin{align}\Tor_*^{\gen}(F^\dag_{\pi,\sigma},H)\to \Tor_*^{\Pp_k}((F^\dag_{\pi,\sigma})^{(n)},H^{(n)})
\label{eqn-cancomp}
\end{align}
is $e$-connected. Let 
\[\Psi_k:  \Tor_*^{k[\Vect_k]}(F^\dag_{\pi,\sigma},H)\to    \Tor_*^{\Pp_k}((F^\dag_{\pi,\sigma})^{(n)},H^{(n)})\]
be the morphism given by restriction along $^{(-n)}-$ and by the restriction from ordinary functors to strict polynomial functors. Then $\Psi_k$ equals the composition of the canonical map \eqref{eqn-cancomp} with the strong comparison map $\Phi_k$. Thus, in order to prove theorem \ref{thm-gen-comp}, it suffices to prove that the composition $\Psi_k\circ\Theta^\dag_k\circ\Lambda$ is $e$-connected.

For this purpose, we are going to realize the $\Tor$ vector spaces in play as homotopy groups of some simplicial vector spaces, and the maps $\Psi_k$, $\Theta^\dag_k$ and $\Lambda$ as the morphisms induced on the level of homotopy groups by some morphisms of simplicial vector spaces. Let
\begin{align*}
\mathcal{H}\to H\;, && \mathcal{H}'\to H^{(n)}\;, && \varpi\to \pi\;, && \varsigma\to \sigma\;,
\end{align*}
be projective simplicial resolutions in the categories 
\begin{align*}
k[\Vect_k]\Md\;, && \Pp_k\;, && \Mdd (k\otimes_\mathbb{Z}\A)\;, && (k\otimes_\mathbb{Z}\A)\Md
\end{align*}
respectively.
It follows from lemmas \ref{lm-Whitehead-thm} and  \ref{lm-flat} that $\varsigma^*\mathcal{H}$ is a simplicial projective resolution of $\sigma^*H$ in $k[\A]\Md$, and that $(\varpi^{\oplus i})^*F$ is a simplicial (not projective) resolution of $(\pi^{\oplus i})^*F$ in $\Mdd k[\A]$ for all positive integers $i$. Therefore we have identifications of homotopy groups:
\begin{align*}
&\pi_*\left( \varpi^*F\otimes_{k[\A]}\varsigma^*\mathcal{H}\right)=\Tor_*^{k[\A]}(\pi^*F,\sigma^*H)\;,\\
&\pi_*\left( (\varpi^{\oplus s^2})^*F\otimes_{k[\A]}\varsigma^*\mathcal{H}\right)=\Tor_*^{k[\A]}((\pi^{\oplus s^2})^*F,\sigma^*H)\;,\\
&\pi_*\left( F_{\pi,\sigma}^\dag\otimes_{k[\A]}\mathcal{H}\right)=\Tor_*^{k[\Vect_k]}(F_{\pi,\sigma}^\dag,H)\;,\\
&\pi_*\left( (F_{\pi,\sigma}^\dag)^{(n)}\otimes_{\Pp_k}\mathcal{H}'\right)=\Tor_*^{\Pp_k}((F_{\pi,\sigma}^\dag)^{(n)},H^{(n)})\;.
\end{align*}
Moreover, let $f:\mathcal{H}\to {\mathcal{H}'}^{(-n)}$ be a simplicial morphism in $k[\Vect_k]\Md$ lifting the identity morphism of $H$. Then the maps $\Lambda$, $\Theta_k^\dag$ and $\Psi_k$ are respectively induced by the morphisms of simplicial $k$-vector spaces:
\begin{align*}
&\varpi^*F\otimes_{k[\A]}\varsigma^*\mathcal{H}\xrightarrow[]{\widetilde{\Lambda}}  (\varpi^{\oplus s^2})^*F\otimes_{k[\A]}\varsigma^*\mathcal{H}\;,\\
&(\varpi^{\oplus s^2})^*F\otimes_{k[\A]}\varsigma^*\mathcal{H}\xrightarrow[]{\widetilde{\Theta_k^\dag}}F_{\varpi,\varsigma}^\dag\otimes_{k[\Vect_k]}\mathcal{H}\xrightarrow[]{(\dag)} F_{\pi,\sigma}^\dag\otimes_{k[\Vect_k]}\mathcal{H}\;,\\
&F_{\pi,\sigma}^\dag\otimes_{k[\Vect_k]}\mathcal{H}\xrightarrow[]{\id\otimes f} (F_{\pi,\sigma}^\dag)^{(n)(-n)}\otimes_{k[\Vect_k]}{\mathcal{H}'}^{(-n)}\xrightarrow[]{\widetilde{\Psi_k}} (F_{\pi,\sigma}^\dag)^{(n)}\otimes_{\Pp_k}\mathcal{H}'\;.
\end{align*} 
Here, the simplicial morphism $(\dag)$ is induced by the simplicial morphisms $\varpi\to \pi$ and $\varsigma\to \sigma$ and $\widetilde{\Lambda}$, $\widetilde{\Theta'_k}$ and $\widetilde{\Psi_k}$ are degreewise equal to the degree zero component of $\Lambda$, $\Theta'_k$ and $\Psi_k$, e.g. the component of $\widetilde{\Lambda_k}$ in degree $j$  is equal to 
$$\Tor_0^{k[\A]}(\varpi^*_jF,\varsigma^*_j\mathcal{H}_j)\xrightarrow[]{\Lambda} \Tor_0^{k[\A]}((\varpi_j^{\oplus s^2})^*F,\varsigma^*_j\mathcal{H}_j)\;.$$
(That these simplicial morphisms induce our maps $\Lambda$, $\Theta_k^\dag$ and $\Psi_k$ is obvious for the first one and the last one. For $\Theta^\dag_k$, this follows by the same reasoning as in the proof of theorem \ref{thm-Theta}). 
We deduce that the comparison map $\Psi_k\circ\Theta^\dag_k\circ\Lambda$ equals the map induced on homotopy groups by the composition of the simplicial morphism:
\begin{align}
\left(\widetilde{\Psi_k}\circ (\id\otimes f)\circ \widetilde{\Theta^\dag_k}\circ \widetilde{\Lambda}\right)\;: \;\varpi^*F\otimes_{k[\A]}\varsigma^*\mathcal{H}\to (F_{\varpi,\varsigma}^\dag)^{(n)}\otimes_{\Pp_k}\mathcal{H}'\label{eqn-morph1}
\end{align}
followed by the simplicial morphism induced by $\varpi\to \pi$ and $\varsigma\to \sigma$:
\begin{align}
(F_{\varpi,\varsigma}^\dag)^{(n)}\otimes_{\Pp_k}\mathcal{H}'\to (F_{\pi,\sigma}^\dag)^{(n)}\otimes_{\Pp_k}\mathcal{H}'
\label{eqn-morph2}
\end{align}

Now we are going to show that the simplicial morphisms \eqref{eqn-morph1} and \eqref{eqn-morph2} are $e$-connected. 
Let us regard the source and the target of \eqref{eqn-morph1} as the diagonal of bisimplicial objects $\varpi_i^*F\otimes_{k[\A]}\varsigma^*_i\mathcal{H}_j$ and 
$(F_{\varpi_i,\varsigma_i}^\dag)^{(n)}\otimes_{\Pp_k}\mathcal{H}_j'$ with bisimplicial degrees $(i,j)$. Then the simplicial morphism \eqref{eqn-morph1} actually comes from a bisimplicial morphism. Spectral sequences of bisimplicial $k$-modules as in \cite[IV section 2.2]{GoerssJardine} yield two spectral sequences:
\begin{align*}
& \mathrm{I}^1_{ij}=\Tor_j^{k[\A]}(\varpi_i^*F,\varsigma^*_i H)\Longrightarrow \pi_{i+j}\left(\varpi^*F\otimes_{k[\A]}\varsigma^*\mathcal{H} \right)\;,\\
& \mathrm{II}^1_{ij}=\Tor_j^{\Pp_k}(F^\dag_{\varpi_i,\varsigma_i},H)\Longrightarrow \pi_{i+j}\left((F_{\varpi,\varsigma}^\dag)^{(n)}\otimes_{\Pp_k}\mathcal{H}'\right)\;,
\end{align*}
And there is a morphism of spectral sequences $\mathrm{I}\to \mathrm{II}$ which coincides with the morphism \eqref{eqn-morph1} on the abutment, and with the map $\Psi_k\circ\Theta^\dag_k\circ\Lambda$ on the first page. 
By proposition \ref{prop-base-case}, the map $\Phi_k\circ\Theta^\dag_k\circ\Lambda$ is an isomorphism when $\pi$ and $\rho$ are direct sums of standard projectives, hence the morphism of spectral sequences is an isomorphism on the first page. Hence the simplicial morphism \eqref{eqn-morph1} is an isomorphism on the level of homotopy groups (hence $e$-connected).

Thus, it remains to prove that the simplicial morphism \eqref{eqn-morph2} is $e$-connected. The $\Tor$-vanishing hypothesis of theorem \ref{thm-gen-comp} implies that the maps $({}^{(-ri)}\varpi)\otimes_{k[\A]}\varsigma \to ({}^{(-ri)}\pi)\otimes_{k[\A]}\sigma$ are $e$-connected for all integers $i$. Hence the map $F^\dag_{\varpi,\varsigma}\to F^\dag_{\pi,\sigma}$ is $e$-connected by lemma \ref{lm-Whitehead-thm}, hence the simplicial morphism \eqref{eqn-morph2} is $e$-connected by the usual bisimplicial spectral sequence argument.
\end{proof}

\section{Special cases of the generalized comparison theorem}\label{sec-special-cases}
In this section we spell out explicitly some special cases and direct consequences of the generalized comparison theorem.

\subsection{Linearity over a big subfield $\FF$} The statement of the generalized comparison theorem can be simplified when the functors $\pi$ and $\rho$ are linear over a big subfield $\FF$. 
For all strict polynomial functors $F$ and $G$ and all additive functors $\pi$ and $\rho$ we consider the composition:
\begin{align}\Tor^{k[\A]}_i(\pi^*F, \rho^*G)\xrightarrow[]{\Theta_k}  \Tor_i^{k[\Vect_k]}(D_{\pi,\rho}^*F,G) \xrightarrow[]{\Phi_k}
\Tor_i^{\gen}(D_{\pi,\rho}^*F,G)\;,\label{eqn-comp-simplified}
\end{align}
where $\Theta_k$ is the auxiliary comparison map of section \ref{sec-gen-prelim-comparison}, and $\Phi_k$ is the strong comparison map for $\FF=k$ (note that when $\FF=k$, the base change functor $t$ is the identity functor, hence it can be removed from the notation).

\begin{thm}\label{thm-F-lin-case}
Let $k$ be an infinite perfect field of positive characteristic, containing a subfield $\FF$ and let $\A$ be an additive $\FF$-linear category. Let $\pi$ and $\rho$ be two $\FF$-linear functors from $\A$ to $k$-vector spaces, respectively contravariant and covariant, and let $F$ and $G$ be two strict polynomial functors with degrees less than the cardinal of $\FF$.
Assume furthermore that 
\[\Tor_j^{k\otimes_{\mathbb{Z}}\A}(\pi,\rho)=0\quad \text{for $0<j<e$.}\]
Then the map \eqref{eqn-comp-simplified} is an isomorphism if $j< e$, and it is surjective if $j=e$.
\end{thm}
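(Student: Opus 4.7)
The plan is to deduce theorem \ref{thm-F-lin-case} from the generalized comparison theorem \ref{thm-gen-comp}. The key observation is that, for $s=1$, the generalized comparison map \eqref{eqn-gencompmap} coincides with the map \eqref{eqn-comp-simplified}: indeed, the diagonal $\diag\colon \pi\to\pi^{\oplus 1}$ is the identity, so $\Lambda$ is the identity; moreover $\Theta_k^{\dag}$ reduces to $\Theta_k$, and one has $F^{\dag}=\overline{F}\circ D_{\pi,\rho}$ and $G^{(rs-r)}=G$. The $\Tor$-vanishing hypothesis of theorem \ref{thm-gen-comp} likewise collapses to exactly $\Tor_j^{k\otimes_\mathbb{Z}\A}(\pi,\rho)=0$ for $0<j<e$, which is the hypothesis of theorem \ref{thm-F-lin-case}.

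When $\FF$ contains a finite subfield $\Fq$ with $q>\max(\deg F,\deg G)$ --- which covers all finite $\FF$ by the degree hypothesis $\deg F,\deg G<|\FF|$, as well as many infinite $\FF$ (for instance any $\FF$ containing $\overline{\Fp}$) --- one simply applies theorem \ref{thm-gen-comp} with this $\Fq$ and $s=1$, and the conclusion is immediate.

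The delicate case is that of an infinite $\FF$ having only small finite subfields, such as the perfect closure of $\Fp(t)$. Here I would instead apply theorem \ref{thm-gen-comp} with $\Fq=\Fp$ and $s$ large enough that $p^s>\max(\deg F,\deg G)$, and then use the $\FF$-linearity of $\pi$ and $\rho$ to bridge the gap between the resulting statement and the conclusion of theorem \ref{thm-F-lin-case}. Specifically, $\FF$-linearity should (i) allow the Frobenius twists ${}^{(-ri)}\pi$ and ${}^{(r-rs)}\rho$ appearing in the hypothesis of theorem \ref{thm-gen-comp} to be naturally identified with $\pi$ and $\rho$ within $k\otimes_\mathbb{Z}\A\Md$, so that the required stronger $\Tor$-vanishing follows from our single hypothesis; and (ii) collapse the output $\Tor_j^\gen(F^{\dag},G^{(rs-r)})$ to $\Tor_j^\gen(D_{\pi,\rho}^{*}F,G)$ via the skew-diagonal and skew-sum adjunctions of section \ref{subsubsec-skew}. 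This last collapse is the main obstacle; it is exactly the hypothesis $|\FF|>\deg F,\deg G$ (as opposed to merely $q>\deg F,\deg G$) that makes the required natural identifications possible, and reconciles the fact that the generalized comparison map was built using the small field $\Fp$ while the simplified form \eqref{eqn-comp-simplified} reflects the larger linearity provided by $\FF$.
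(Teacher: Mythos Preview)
Your handling of the case where $\FF$ contains a finite subfield $\Fq$ with $q>\max(\deg F,\deg G)$ is correct and matches the paper's argument for finite $\FF$.

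For the remaining infinite case, step (i) rests on a false premise: when $\FF$ is infinite and $\pi$ is $\FF$-linear, the functor ${}^{(-i)}\pi$ is \emph{not} naturally isomorphic to $\pi$ in $k\otimes_{\mathbb{Z}}\A\Md$ for $i\ne 0$; its $\FF$-homogeneity is different (after a common Frobenius shift to make exponents nonnegative, ${}^{(a)}\pi$ satisfies $({}^{(a)}\pi)(\lambda f)=\lambda^{p^a}\,({}^{(a)}\pi)(f)$). What the paper actually uses is the \emph{opposite} phenomenon, a homogeneity-vanishing lemma: if $\alpha$ is $d$-homogeneous and $\beta$ is $e$-homogeneous with $d\ne e$ and $d,e<|\FF|$, then $\Tor_*^{k\otimes_{\mathbb{Z}}\A}(\alpha,\beta)=0$ in \emph{all} degrees (an elementary argument comparing the two $\FF$-actions on $\Ext$). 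With $r=1$, this kills every summand ${}^{(-i)}\pi$ against ${}^{(1-s)}\rho$ except $i=s-1$; the one surviving term is then identified with $\Tor_*^{k\otimes_{\mathbb{Z}}\A}(\pi,\rho)$ by twisting both arguments by ${}^{(s-1)}-$. This vanishing lemma is the missing key idea.

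Step (ii) is likewise misdirected: the skew maps of section~\ref{subsubsec-skew} pertain to finite extensions $\Fq\subset L$ and play no role here. The collapse of $F^\dag$ again comes from the homogeneity vanishing in degree $0$: since $({}^{(-i)}\pi)\otimes_{k[\A]}({}^{(1-s)}\rho)=0$ for $i\ne s-1$, one gets $F^\dag\simeq \overline{F}^{(s-1)}\circ D_{\alpha,\beta}$ with $\alpha={}^{(1-s)}\pi$, $\beta={}^{(1-s)}\rho$, and $\Theta_k^\dag\circ\Lambda$ reduces to $\Theta_k$ for the pair $(\alpha,\beta)$. The passage back to $\Phi_k\circ\Theta_k$ for $(\pi,\rho)$ is then obtained from the base-change square of proposition~\ref{prop-chgt-base} (applied to the field automorphism $\lambda\mapsto\lambda^{p^{s-1}}$ of $k$) together with naturality of $\Phi_k$.
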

\begin{proof}
If $\FF$ is a finite field, we can apply theorem \ref{thm-gen-comp} with $s=1$. Then the generalized comparison map \eqref{eqn-gencompmap} is equal to the map \eqref{eqn-comp-simplified}, which proves the result.

Now assume that $\FF$ is infinite. Then $\Tor_*^{k\otimes_\mathbb{Z}\A}({}^{(i)}\pi,{}^{(j)}\rho)=0$ for all $i\ne j$ by lemma \ref{lm-homogeneity-vanish} below. Moreover, composition with the equivalence of categories ${}^{(j)}:k\Md\to k\Md$ induces an isomorphism $\Tor_*^{k\otimes_\mathbb{Z}\A}(\pi,\rho)\simeq \Tor_*^{k\otimes_\mathbb{Z}\A}({}^{(j)}\pi,{}^{(j)}\rho)$ for all $j$. Thus we may apply theorem \ref{thm-gen-comp} for $r=1$ ($\Fq$ is the prime field) and $s$ big enough, in order that $p^s$ is greater or equal to the degrees of $F$ and $G$. In this situation, the expression of $F^\dag$ simplifies because $({}^{(i)}\pi\otimes_{k[\A]}{}^{(j)}\rho=0$ for all $i\ne j$. Namely if we let $\alpha={}^{(1-s)}\pi$ and $\beta={}^{(1-s)}\rho$ then we have: 
\[F^\dag=\overline{F}^{(s-1)}\circ D_{\alpha,\beta}\;.\]
Moreover one readily checks that the composite map 
\[\Theta^\dag_k\circ \Lambda_k\;:\; \Tor_*^{k[\A]}(\pi^*F,\rho^*G)\to \Tor_*^{k[\Vect_k]}(D_{\alpha,\beta}^*(F^{(s-1)}), G^{(s-1)})\]
is equal to the map $\Theta_k$ relative to $\alpha={}^{(1-s)}\pi$ and $\beta={}^{(1-s)}\rho$. 

Now we consider the following diagram, in which the composition of functors is omitted, $T_*$ stands for $\Tor_*$, and we use the following notations $x:=s-1$, $D=D_{\pi,\rho}$, $D'=D_{\alpha,\beta}$. The Frobenius twist functor $^{(x)}-:k\Md\to k\Md$ is isomorphic to the extension of scalars along the morphism of fields $k\to k$, $\lambda\mapsto \lambda^{p^x}$, hence we have a canonical isomorphism $\mathrm{can}:{}^{(x)}D'\simeq D^{(x)}$, and the isomorphisms $(*)$ in this diagram are induced by this canonical isomorphism.
\[
\begin{tikzcd}
T_*^{k[\A]}(\overline{F}\pi,\overline{G}\rho)\ar{r}{\Theta_k} \ar{d}{\Theta_k}
& T_*^{k[\Vect_k]}(\overline{F}^{(x)}D',G^{(x)})\ar{r}{\Phi_k}\ar{d}{\res^{^{(x)}-}}[swap]{\simeq}
& T_*^{\gen}(\overline{F}{}^{(x)}D',G^{(x)})\ar{d}{\res^{^{(x)}-}}[swap]{\simeq}\\
T_*^{k[\Vect_k]}(\overline{F}D,G) \ar{r}{\simeq}[swap]{(*)}\ar{d}{\Phi_k}
& T_*^{k[\Vect_k]}(\overline{F}{}^{(x)}D'{}^{(-x)},G)\ar{r}{\Phi_k}
& T_*^{\gen}(\overline{F}{}^{(x)}D'{}^{(-x)},G)\\
T_*^{\gen}(\overline{F}D,G)\ar{rru}{\simeq}[swap]{(*)}& &
\end{tikzcd}
\]
The diagram is commutative: the upper left square commutes by the base change property of proposition \ref{prop-chgt-base}, the upper right square and the triangle commute by naturality of $\Phi_k$. As explained above, the composite map corresponding to the upper row is $e$-connected by theorem \ref{thm-gen-comp}. Therefore, the composite given by the first column is also $e$-connected. But this composite is nothing but the map \eqref{eqn-comp-simplified}. This finishes the proof of the theorem.
\end{proof}

The next vanishing lemma is used in the proof of theorem \ref{thm-F-lin-case}. In this lemma, $\A$ is a $\FF$-linear category, and we say that an additive functor $\alpha:\A\to k\Md$ is \emph{$d$-homogeneous} if $\alpha(\lambda f)=\lambda^d\alpha(f)$ for all morphisms $f$ in $\A$ and all $\lambda\in\FF$. 

\begin{lm}\label{lm-homogeneity-vanish}
Let $\FF$ be a subfield of $k$, and let $d\ne e$ be two non-negative integers less than the cardinal of $\FF$. Let $\pi$ and $\rho$ are two additive functors from $\A$ to $k\Md$ which are respectively contravariant and covariant. If $\pi$ is $d$-homogeneous and $\rho$ is $e$-homogeneous, then 
$\Tor_*^{k\otimes_\mathbb{Z}\A}(\pi,\rho)=0$.
\end{lm}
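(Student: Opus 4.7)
The approach is to exploit the natural scalar action on the identity functor of $\A$ coming from the $\FF$-linear structure. For each $\lambda \in \FF^\times$, the family $\{\lambda\cdot\id_a\}_{a\in\A}$ assembles into a natural endomorphism $m_\lambda \colon \id_\A \to \id_\A$. Applying any additive functor $H \colon \A \to k\Md$ componentwise produces a natural endomorphism $Hm_\lambda$ whose value at $a$ is $H(\lambda\id_a)$; when $H$ is $n$-homogeneous, this endomorphism is simply multiplication by $\lambda^n \in \FF^\times \subseteq k^\times$. In particular $\pi m_\lambda = \lambda^d\cdot\id_\pi$ and $\rho m_\lambda = \lambda^e\cdot\id_\rho$.

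The heart of the argument is to show that the two endomorphisms of $\Tor_*^{k\otimes_\mathbb{Z}\A}(\pi,\rho)$ respectively induced by functoriality from $\pi m_\lambda$ and $\rho m_\lambda$ coincide. Picking a projective resolution $Q_\bullet \to \rho$ in $k\otimes_\mathbb{Z}\A\Md$, one realizes $\Tor_*^{k\otimes_\mathbb{Z}\A}(\pi,\rho)$ as the homology of $\pi\otimes_{k[\A]}Q_\bullet$; the two chain endomorphisms $(\pi m_\lambda)\otimes\id$ and $\id\otimes(Q_\bullet m_\lambda)$ coincide degreewise by the defining coend relation $\pi(f)(y)\otimes x = y\otimes Q_n(f)(x)$ applied to $f = \lambda\id_a$. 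The first chain map induces multiplication by $\lambda^d$ on homology; the second, by $\rho$-functoriality of $\Tor$ applied to $\rho m_\lambda = \lambda^e\cdot\id_\rho$, induces multiplication by $\lambda^e$. Consequently $(\lambda^d - \lambda^e)\cdot \Tor_*^{k\otimes_\mathbb{Z}\A}(\pi,\rho) = 0$ for every $\lambda \in \FF^\times$.

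It remains to find one $\lambda \in \FF^\times$ for which $\lambda^d - \lambda^e$ is a unit in $k$, which forces the $\Tor$ to vanish. Assume without loss of generality that $d > e$ and factor $\lambda^d - \lambda^e = \lambda^e(\lambda^{d-e} - 1)$. The degenerate cases $d = 0$ or $e = 0$ make $\pi$ or $\rho$ vanish by comparing additivity with the homogeneity formula at $\lambda = 0$, so we may assume $d, e \geq 1$; then $d - e \leq \max(d,e) - 1 < |\FF| - 1 = |\FF^\times|$ in the finite case (the infinite case being immediate). The polynomial $X^{d-e} - 1 \in \FF[X]$ therefore has strictly fewer than $|\FF^\times|$ roots in $\FF^\times$, so some $\lambda \in \FF^\times$ satisfies $\lambda^{d-e} \ne 1$, and for such a $\lambda$ the element $\lambda^d - \lambda^e$ is a nonzero element of the subfield $\FF \subseteq k$, hence a unit. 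The main subtlety is the coend identity at the heart of the middle paragraph; the rest amounts to polynomial counting.
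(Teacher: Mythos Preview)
Your proof is correct and follows essentially the same strategy as the paper: exploit the natural $\FF$-action coming from the central scalars $\lambda\id_a$, show that the two induced actions on the (co)homological invariant agree, and then use homogeneity to conclude that $(\lambda^d-\lambda^e)$ annihilates everything. The only cosmetic difference is that the paper first dualizes to $\Ext^*_{k\otimes_\mathbb{Z}\A}(\rho,D_k\pi)$ and phrases the equality of the two actions as a bimodule identity, whereas you work directly with $\Tor$ via the coend relation; your separate treatment of the case $d=0$ or $e=0$ is also unnecessary (the polynomial $X^d-X^e$ has at most $\max(d,e)<|\FF|$ roots in $\FF$, so a non-root always exists), but it does no harm.
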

\begin{proof}
Let $D_k\pi$ denote the dual of $\pi$, i.e. $D_k\pi(v)=\Hom_k(\pi(v),k)$.
Then the $k$-linear dual of $\Tor_j^{k\otimes_\mathbb{Z}\A}(\pi,\rho)$ is isomorphic to the $\Ext^j_{k\otimes_{\mathbb{Z}}\A}(\rho,D_k\pi)$, hence it suffices to prove that the latter is zero for all $j$.

Let $\alpha$ and $\beta$ be two arbitrary objects of $k\otimes_\mathbb{Z}\A\Md$.
Since $\A$ is $\FF$-linear, every $\lambda\in\FF$ yields a natural transformation $\lambda_\alpha\in \End_{k\otimes_{\mathbb{Z}}\A}(\alpha)$ whose component at $x$ equals $\alpha(\lambda \id_x)$. Thus $\Ext^*_{k\otimes_{\mathbb{Z}}\A}(\alpha,\beta)$ has an $\FF$-$\FF$-bimodule structure given by $\lambda\cdot [\xi]\cdot \mu = [\mu_\beta\circ \xi\circ \lambda_\alpha]$, where $-\circ\lambda_\alpha$ is the pullback of an extension along $\lambda_\alpha$ and $\mu_\beta\circ-$ is the pushout of an extension along $\mu_\beta$. Moreover, for all morphisms $f:\gamma\to \delta$ in $k\otimes_\mathbb{Z}\A\Md$ we have $f\circ \lambda_\gamma=\mu_\delta\circ f$, which implies that the two $\FF$-module structures on $\Ext^*_{k\otimes_{\mathbb{Z}}\A}(\alpha,\beta)$ coincide: $\lambda\cdot [\xi]=[\xi]\cdot \lambda$.
Assume now that $\alpha$ is $d$-homogeneous and $\beta$ is $e$-homogeneous. Then $\lambda_\alpha=\lambda^d\id_\alpha$ and $\lambda_\beta=\lambda^e\id_\beta$. Thus for all extensions $[\xi]$ we have $\lambda^d[\xi]=\lambda_\alpha\cdot [\xi]=[\xi]\cdot\lambda_\beta=\lambda^e[\xi]$.
Since the cardinal of $\FF$ is greater than $d$ and $e$, this implies that $[\xi]=0$ for all $[\xi]\in\Ext^*_{k\otimes_{\mathbb{Z}}}(\alpha,\beta)$.
The result now follows by taking $\alpha=\rho$ and $\beta=D_k\pi$.
\end{proof}

Recall that each strict polynomial functor $F$ has an associated contravariant strict polynomial functor $F^{\vee}(-)=F(\Hom_k(-,k))$.  

\begin{cor}\label{cor-infinite-perfect-field}
Let $k$ be an infinite perfect field of positive characteristic, the strong comparison map 
\begin{align*}
&\Phi_k:\Tor_*^{k[\Vect_k]}(F^\vee,G)\to \Tor^\gen_*(F^\vee,G)
\end{align*}
is an isomorphism for all strict polynomial functors $F$ and $G$.
\end{cor}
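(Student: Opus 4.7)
The plan is to deduce this corollary by specializing Theorem \ref{thm-F-lin-case} to $\A=\Vect_k$, $\FF=k$, with $\pi(v)=\Hom_k(v,k)$ and $\rho(v)=v$; both are $k$-linear. I first compute $\pi\otimes_{k[\Vect_k]}\rho=\int^{v}v^{*}\otimes_k v\cong k$ via the trace (the universal dinatural transformation $v^{*}\otimes_k v\to k$). This yields $D_{\pi,\rho}(v)=v^{*}$, so $D_{\pi,\rho}^{*}F=F^{\vee}$, $\pi^{*}F=F^{\vee}$, and $\rho^{*}G=G$; therefore the source and target of the map \eqref{eqn-comp-simplified} in Theorem \ref{thm-F-lin-case} specialize to $\Tor_i^{k[\Vect_k]}(F^{\vee},G)$ and $\Tor_i^\gen(F^{\vee},G)$.

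Next, I would identify the specialized map with $\Phi_k$ itself. Since $\pi=\Vect_k(-,k)$ and $\rho=\Vect_k(k,-)$ are representable, Proposition \ref{pr-iso-rep-Theta} guarantees that $\Theta_k$ is an isomorphism; tracing the defining diagram \eqref{eqn-def-Theta}, the unit $\theta_k:\pi\to D_{\pi,\rho}\circ\rho$ reduces to the identity on $(-)^{*}$, so $\Theta_k$ is canonically the identity (up to the isomorphism $\res_{\iota^\aleph}$ of Proposition \ref{pr-Kexact}). Hence the composition $\Phi_k\circ\Theta_k$ coincides with $\Phi_k$, so it suffices to prove that the map \eqref{eqn-comp-simplified} is an isomorphism in every degree.

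The remaining task is to verify the hypotheses of Theorem \ref{thm-F-lin-case}: the cardinality condition $|\FF|>\deg F,\deg G$ is immediate since $\FF=k$ is infinite, so the only real content is the Tor-vanishing $\Tor_j^{k\otimes_\mathbb{Z}\Vect_k}(\pi,\rho)=0$ for all $j>0$. The hard part will be this Tor-vanishing: while $\pi$ and $\rho$ are projective in the $k$-linear functor category, the standard projective in the $\mathbb{Z}$-linear category $k\otimes_\mathbb{Z}\Vect_k\Md$ is $k\otimes_\mathbb{Z}(-)$, which strictly dominates $\mathrm{id}$ when $k\ne\Fp$, so representability does not immediately give the vanishing. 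My plan is to resolve $\pi$ and $\rho$ by projective simplicial resolutions in $k\otimes_\mathbb{Z}\Vect_k\Md$ (each degree a direct sum of representables), apply Theorem \ref{thm-F-lin-case} termwise where the Tor-vanishing is automatic, and assemble via a bisimplicial spectral sequence argument patterned on the proof of Theorem \ref{thm-F-lin-case} itself; with this in hand, taking $e=\infty$ yields the desired isomorphism in every degree.
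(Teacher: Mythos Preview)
Your setup is correct and matches the paper's: specialize Theorem~\ref{thm-F-lin-case} with $\A=\Vect_k$, $\FF=k$, $\pi={}^\vee(-)$, $\rho=\mathrm{id}$, observe that $D_{\pi,\rho}\simeq{}^\vee(-)$, and use Proposition~\ref{pr-iso-rep-Theta} to see that $\Theta_k$ is an isomorphism so that the composite \eqref{eqn-comp-simplified} reduces to $\Phi_k$. The crux, as you correctly identify, is the vanishing of $\Tor_j^{k\otimes_{\mathbb Z}\Vect_k}(\pi,\rho)$ for $j>0$.

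However, your proposed bypass of this vanishing does not work. First, the standard projectives of $k\otimes_{\mathbb Z}\Vect_k\Md$ are the functors $k\otimes_{\Fp}\Hom_k(a,-)$, which are only $\Fp$-linear, not $k$-linear; hence Theorem~\ref{thm-F-lin-case} with $\FF=k$ does not apply to the terms of your resolution (and taking $\FF=\Fp$ instead destroys the cardinality bound for large $\deg F,\deg G$). Second, and more fundamentally, even granting a termwise isomorphism, the spectral sequence on the target side computes the total homotopy of $n\mapsto \Tor^{\gen}_*(D_{\varpi_n,\varrho_n}^*F,G)$, and identifying this with $\Tor^{\gen}_*(F^\vee,G)$ requires that the simplicial object $\varpi\otimes_{k[\Vect_k]}\varrho$ be equivalent to the constant object $\pi\otimes_{k[\Vect_k]}\rho$, i.e.\ that $\Tor_j^{k\otimes_{\mathbb Z}\Vect_k}(\pi,\rho)=0$ for $j>0$. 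So the argument is circular: this is exactly the mechanism in the proof of Theorem~\ref{thm-Theta}, where the $\Tor$-vanishing is the hypothesis, not the conclusion.

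The paper proves the vanishing directly: by the Eilenberg--Watts theorem one has
\[
\Tor_*^{k\otimes_{\mathbb Z}\Vect_k}({}^\vee(-),\mathrm{id})\simeq \Tor_*^{k\otimes_{\mathbb Z}k}(k,k)=\mathrm{HH}_*(k)\;,
\]
and since $k$ is perfect, Hochschild--Kostant--Rosenberg (or simply $\Omega^1_{k/\Fp}=0$) gives $\mathrm{HH}_j(k)=0$ for $j>0$. This is precisely where the perfectness hypothesis enters, and it cannot be replaced by a formal resolution argument.
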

\begin{proof}
We wish to apply theorem \ref{thm-F-lin-case} with $\pi(v)={}^{\vee}v$, $\rho(v)=v$ and $\FF=k$. 
As explained in the proof of \cite[Lm 6.1]{DT-add}, the Eilenberg-Watts theorem yields an isomorphism with the Hochschild homology of $k$:
\[\Tor_*^{k\otimes_{\mathbb{Z}}\Vect_k}(\pi,\rho)\simeq \Tor_*^{k\otimes_\mathbb{Z}k}(k,k)=\mathrm{HH}_*(k)\;,\]
and the Hochschild-Kostant-Rosenberg theorem together with the perfectness of $k$ imply that this Hochschild homology is zero in positive degrees. Hence $\Phi_k\circ \Theta_k$ is an isomorphism by theorem \ref{thm-F-lin-case}. 
Moreover, $\pi$ and $\rho$ are standard projectives of $\Mdd k\otimes_{k}\Vect_k$ and $k\otimes_{k}\Vect_k\Md$ respectively, hence $\Theta_k$ is an isomorphism by proposition \ref{pr-iso-rep-Theta}. Thus, $\Phi_k: \Tor_*^{k[\Vect_k]}(D_{\pi,\rho}^*F,G)\to \Tor^\gen_*(D_{\pi,\rho}^*F,G)$ is an isomorphism. 
To finish the proof, it suffices to observe that $\mathrm{HH}_0(k)=k$, so that $D_{\pi,\rho}\simeq {}^\vee-$.
\end{proof}

\subsection{The generalized comparison theorem for $\Ext$}\label{subsec-Ext-comp}

We now dualize the generalized comparison theorem and its special cases. We start with the simplest case of corollary~\ref{cor-infinite-perfect-field}. Recall that the strong comparison map $\Phi_k$ from definition \ref{defi-strong-comp-map}.
\begin{thm}\label{thm-infinite-perfect-field-Ext}
Let $k$ be an infinite perfect field of positive characteristic. For all strict polynomial functors $F$ and $G$ the strong comparison map 
$$\Phi_k:\Ext^i_{\gen}(F,G)\to 
\Ext^i_{k[\Vect_k]}(F,G)$$
is a graded isomorphism.
\end{thm}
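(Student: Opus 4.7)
The plan is to deduce Theorem~\ref{thm-infinite-perfect-field-Ext} from the $\Tor$-statement of Corollary~\ref{cor-infinite-perfect-field} via the $\Ext$-$\Tor$ duality recorded in the commutative square~\eqref{cd-phiphi}. Specializing~\eqref{cd-phiphi} to $\FF=k$ (so that $t^*=\id$) produces, for every contravariant strict polynomial functor $E$ and every covariant strict polynomial functor $F$, a commutative square whose horizontal arrows are the natural duality isomorphisms
\[\Hom_k(\Tor_i^{\gen}(E,F),k)\simeq\Ext^i_\gen(F,D_kE)\quad\text{and}\quad \Hom_k(\Tor_i^{k[\Vect_k]}(E,F),k)\simeq\Ext^i_{k[\Vect_k]}(F,D_kE),\]
and whose left and right vertical arrows are $\Hom_k(\Phi_k,k)$ and $\Phi_k$, respectively. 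Since $\Hom_k(-,k)$ reflects isomorphisms between $k$-vector spaces (a $k$-linear map is iso iff its $k$-linear dual is), the two vertical arrows are simultaneously isomorphisms. Writing any contravariant $E$ as $F_0^\vee$ via the equivalence $\Pp_k\simeq\Pp_k'$, Corollary~\ref{cor-infinite-perfect-field} gives that $\Phi_k$ on $\Tor_i^{k[\Vect_k]}(E,F)\to\Tor_i^\gen(E,F)$ is an isomorphism for all such $E$ and $F$; hence $\Phi_k$ on $\Ext^i_\gen(F,D_kE)\to\Ext^i_{k[\Vect_k]}(F,D_kE)$ is also an isomorphism.

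When the target $G$ has finite-dimensional values, the canonical monomorphism $G\to D_kD_kG$ is an isomorphism, so setting $E:=D_kG$ gives $D_kE=G$ and establishes the theorem for such $G$.

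For arbitrary $G$ (possibly with infinite-dimensional values), I would reduce to the finite-dimensional case via a resolution argument. Every strict polynomial functor $G$ of degree $d$ admits, in $\Pp_{d,k}\simeq S(n,d)\Md$ (for $n\geq d$), an injective resolution $G\to I^\bullet$ whose terms are direct sums of finite-dimensional standard injectives, since the Schur algebra $S(n,d)$ is a finite-dimensional algebra. The short exact sequences of this resolution remain exact in $k[\Vect_k]\Md$ because the forgetful functor $\Pp_k\to k[\Vect_k]\Md$ is exact, producing compatible long exact sequences for $\Ext^*_\gen(F,-)$ and $\Ext^*_{k[\Vect_k]}(F,-)$ that are related by the natural transformation $\Phi_k$. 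Combined with the finite-dimensional case established above and a spectral sequence comparison, this extends the isomorphism to all $G$.

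The main obstacle is precisely this last reduction. The difficulty is that the resolution $G\to I^\bullet$ is injective in $\Pp_k$ but typically not in $k[\Vect_k]\Md$, so the two sides of the strong comparison map are not computed from the same resolution; their spectral sequences must be compared directly via the naturality of $\Phi_k$ and the exactness of the forgetful functor. Additional care is required when $F$ has infinite-dimensional values, because the interaction of $\Ext^*_\gen(F,-)$ and $\Ext^*_{k[\Vect_k]}(F,-)$ with the possibly infinite direct sums appearing in the terms $I^j$ is delicate; this uses the stabilization of the filtered colimit defining generic $\Ext$ in each degree (Proposition-Definition~\ref{pdef-gen-Ext}).
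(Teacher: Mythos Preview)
Your proposal is correct and takes essentially the same approach as the paper: reduce to the case where $G$ has finite-dimensional values (equivalently $G=D_kE$ for some contravariant $E$), then apply the duality square~\eqref{cd-phiphi} together with Corollary~\ref{cor-infinite-perfect-field}. The paper performs the reduction first (to $G$ a standard injective, via a ``standard spectral sequence argument'') and the duality second, whereas you reverse the order; your residual concern about infinite direct sums in the injective resolution is dissolved most cleanly by also reducing $F$ to a standard projective, as in the proof of Theorem~\ref{thm-strong-comparison}.
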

\begin{proof}
By a standard spectral sequence argument, the proof reduces to the case where $G$ is a standard injective, hence when $G=\Hom_k(E,k)$, where $E$ is a standard projective in $\Pp_k$ precomposed by duality $^{\vee}-$. Then $\Phi_k$ is an isomorphism as a consequence of corollary \ref{cor-infinite-perfect-field} and the duality diagram \eqref{cd-phiphi} (page~\pageref{cd-phiphi}).
\end{proof}

Similarly, one can dualize theorem \ref{thm-F-lin-case}. To be more specific, given two additive functors $\rho,\sigma:\A\to k\Md$ and a $k$-vector space $v$, we let 
$$T_{\rho,\sigma}(v):= \Hom_{k[\A]}(\rho,\sigma)\otimes v\;.$$
Then for all strict polynomial functors $G$ and $K$ we have a map 
$$\Theta_k:\Ext^*_{k[\Vect_k]}(F,T^*_{\rho,\sigma}G)\to \Ext^*_{k[\A]}(\rho^*F,\sigma^*G)$$
induced by restriction along $\rho$ and by the canonical evaluation morphism $\mathrm{ev}:\Hom_{k[\A]}(\rho,\sigma)\otimes \rho\to \sigma$. 
\begin{thm}\label{thm-thm-F-lin}
Let $k$ be an infinite perfect field of positive characteristic, containing a subfield $\FF$ and let $\A$ be a svelte additive $\FF$-linear category. Let $\rho,\sigma:\A\to \Vect_k$ be two $\FF$-linear functors such that $\sigma$ takes finite-dimensional values and $\Hom_{k[\A]}(\rho,\sigma)$ is finite-dimensional. 
Let $e$ be a positive integer such that  
\[\Ext^j_{k\otimes_{\mathbb{Z}}\A}(\rho,\sigma)=0\quad \text{for $0<j<e$.}\]
Then for all strict polynomial functors $F$ and $G$ with degrees less than the cardinal of $\FF$, the map
$$\Theta_k\circ\Phi_k: \Ext^j_{\gen}(F,T^*_{\rho,\sigma}G)\to \Ext^j_{k[\A]}(\rho^*F,\sigma^*G)$$
is an isomorphism if $j<e$ and it is injective if $j=e$.
\end{thm}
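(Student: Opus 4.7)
The plan is to deduce Theorem \ref{thm-thm-F-lin} from Theorem \ref{thm-F-lin-case} via $\Ext$-$\Tor$ duality, following the pattern by which Theorem \ref{thm-infinite-perfect-field-Ext} was obtained from Corollary \ref{cor-infinite-perfect-field}. First, I would reduce to the case where $G$ is a standard injective of $\Pp_k$: both the source and the target of the comparison map $\Theta_k\circ\Phi_k$ are cohomological $\delta$-functors in $G$ (using that $T^*_{\rho,\sigma}G = G\circ T_{\rho,\sigma}$ is exact in $G$, since $T_{\rho,\sigma}$ has the form $v\mapsto H\otimes v$ for the fixed finite-dimensional vector space $H = \Hom_{k[\A]}(\rho,\sigma)$), so embedding an arbitrary $G$ of degree less than $|\FF|$ into a standard injective of the same degree and applying the five lemma reduces the statement to this case.

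In this reduced setting, write $G = D_kE$ for some contravariant standard projective $E$ in $\Pp_k'$ of the same degree, and set $\pi := D_k\sigma : \A^\op\to k\Md$, which is a well-defined contravariant additive $\FF$-linear functor because $\sigma$ has finite-dimensional values. There is a canonical isomorphism $\sigma^*E \simeq \pi^*(E^\vee)$ in $\Mdd k[\A]$, hence $\sigma^*G = D_k(\sigma^*E) \simeq D_k(\pi^*(E^\vee))$. The Ext-Tor duality of Section \ref{subsubsec-tensTor} then gives
\[
\Ext^j_{k[\A]}(\rho^*F, \sigma^*G) \simeq \Hom_k\bigl(\Tor^{k[\A]}_j(\pi^*(E^\vee), \rho^*F),\, k\bigr),
\]
and the analogous duality in $k\otimes_{\mathbb{Z}}\A\Md$ converts the Ext-vanishing hypothesis into the Tor-vanishing hypothesis required by Theorem \ref{thm-F-lin-case}, namely $\Tor^{k\otimes_\mathbb{Z}\A}_j(\pi,\rho) = 0$ for $0<j<e$ (using that $D_kD_k\sigma \simeq \sigma$). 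Applying Theorem \ref{thm-F-lin-case} to the quadruple $(\pi, \rho, E^\vee, F)$ and dualizing turns the ``isomorphism for $j<e$, surjective for $j=e$'' statement into an ``isomorphism for $j<e$, injective for $j=e$'' statement for a map
\[
\Ext^j_\gen(F, D_k(D_{\pi,\rho}^*E^\vee)) \to \Ext^j_{k[\A]}(\rho^*F, \sigma^*G).
\]

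It then remains to identify $D_k(D_{\pi,\rho}^*E^\vee)$ with $T^*_{\rho,\sigma}G$: by the adjunction \eqref{eqn-Formule-Cartan} one has $D_k(\pi\otimes_{k[\A]}\rho) \simeq \Hom_{k[\A]}(\rho, D_kD_k\sigma) \simeq \Hom_{k[\A]}(\rho,\sigma)$, so the finite-dimensionality of $\Hom_{k[\A]}(\rho,\sigma)$ yields $\pi\otimes_{k[\A]}\rho \simeq D_k(\Hom_{k[\A]}(\rho,\sigma))$, hence $D_k(D_{\pi,\rho}(v)) \simeq v\otimes \Hom_{k[\A]}(\rho,\sigma) = T_{\rho,\sigma}(v)$ for finite-dimensional $v$; a direct calculation using $G = D_kE$ then gives $D_k(D_{\pi,\rho}^*E^\vee) \simeq T^*_{\rho,\sigma}G$.

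The main obstacle I anticipate is the last step: verifying that the map produced by this chain of dualities actually coincides with $\Theta_k\circ\Phi_k$ as defined in the statement. This will be a diagram chase relying on the naturality of the Ext-Tor duality with respect to restriction maps, in the spirit of the commutative square \eqref{cd-phiphi}. The key point to check is that the Ext-side $\Theta_k$ (built from restriction along $\rho$ and the evaluation $\mathrm{ev}:\Hom_{k[\A]}(\rho,\sigma)\otimes\rho\to \sigma$) is dual to the Tor-side $\Theta_k$ (built from the unit $\theta_k:\pi\to D_{\pi,\rho}\circ\rho$) under the identifications above, and similarly for the two incarnations of $\Phi_k$.
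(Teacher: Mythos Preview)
Your proposal is correct and follows essentially the same approach as the paper: both reduce to $G$ with finite-dimensional values (you via standard injectives, the paper via functors of the form ${}^\vee K^\vee$), set $\pi={}^\vee\sigma$, dualize Theorem~\ref{thm-F-lin-case}, and identify the resulting functor with $T^*_{\rho,\sigma}G$ using the finite-dimensionality hypotheses exactly as you describe. The diagram chase you anticipate as the ``main obstacle'' is carried out in the paper via an explicit isomorphism $\xi: T_{\rho,\sigma}\to {}^\vee D_{\pi,\rho}$ together with the identity $\mathrm{ev}=({}^\vee\theta_k)\circ(\xi\,\rho)$, which is precisely the compatibility between the $\Ext$-side evaluation and the $\Tor$-side unit that you correctly singled out.
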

\begin{proof}
In this proof, we let $^\vee-=\Hom_k(-,k)$ and we omit the composition operator for functors, e.g. if $K$ is a strict polynomial functor, $^\vee K^\vee$ stands for $({}^\vee-)\circ K\circ ({}^\vee-)$. 

We first prove the result when $G={}^\vee K^\vee$ for some $K$ in $\Pp_k$. We let $\pi:={}^\vee\sigma$ and we let  
$\xi: T_{\sigma,\rho}\to {}^\vee D_{\pi,\rho}$ be
the morphism of functors whose component at a vector space $v$ is given by the composition
$$\xi\;:\;v\otimes\Hom_{k[\A]}(\rho,{}^\vee\pi)\xrightarrow[]{\simeq} v\otimes {}^{\vee}(\pi\otimes_{k[\A]}\rho)\to {}^\vee\Hom_k(v, \pi\otimes_{k[\A]}\rho)$$
where the first map is the usual adjunction isomorphism and the second map is the canonical map $\mathrm{can}:v\otimes {}^\vee w\to {}^\vee\Hom_k(v,w)$ such that $\mathrm{can}(x\otimes f)(\phi)=f(\phi(x))$, and which is an isomorphism if $v$ is finite dimensional.
One readily checks that the composition 
$$ T_{\rho,\sigma}\rho\xrightarrow[]{\xi\rho} {}^\vee D_{\pi,\rho}\rho\xrightarrow[]{{}^\vee\theta_\FF} {}^{\vee}\pi=\sigma$$
equals the canonical evaluation map $\mathrm{ev}$. The finite dimensionality hypotheses on the values of $\sigma$ and on $\Hom_{k[\A]}(\rho,\sigma)$ imply that: 
\begin{enumerate}
\item[i)] 
$\xi:T_{\sigma,\rho}\to {}^\vee D_{\pi,\rho}$ is an isomorphism,
\item[ii)] $\sigma^*(G^\vee)=G^{\vee}\sigma=G\pi=\pi^*G$,
\item[iii)]$^{\vee}{}^\vee D_{\pi,\rho}$ identifies with $D_{\pi,\rho}$.
\end{enumerate}
Hence we have a commutative diagram
\[
\begin{tikzcd}
\Ext^*_{k[\Vect_k]}(F, {}^\vee K^\vee T_{\rho,\sigma})\ar{r}{\res^\rho}&
\Ext^*_{k[\A]}(F\rho, {}^\vee K^\vee T_{\rho,\sigma}\rho)\ar{r}{{}^\vee K^\vee(\mathrm{ev})}&
\Ext^*_{k[\A]}(F\rho, {}^\vee K^\vee \sigma)\\
{}^\vee\Tor_*^{k[\Vect_k]}(K^\vee T_{\rho,\sigma},F)\ar{u}[swap]{}{\simeq}\ar{r}{{}^\vee\res_\rho}\ar{d}{K^\vee(\xi)}[swap]{\simeq}&
{}^\vee\Tor_*^{k[\A]}(K^\vee T_{\rho,\sigma}\rho,F\rho)
\ar{u}[swap]{}{\simeq}\ar{r}{K^\vee(\mathrm{ev})}\ar{d}{K^\vee(\xi\rho)}[swap]{\simeq}
&{}^\vee\Tor_*^{k[\A]}(K^\vee\sigma,F\rho)\ar{u}[swap]{}{\simeq}
\\
{}^\vee\Tor_*^{k[\Vect_k]}(KD_{\pi,\rho},F)\ar{r}{{}^\vee\res_\rho}&
{}^\vee\Tor_*^{k[\A]}(KD_{\pi,\rho}\rho,F\rho) \ar{r}{K(\theta_\FF)}
& {}^\vee\Tor_*^{k[\A]}(K\pi,F\rho)\ar[equal]{u}
\end{tikzcd}
\]
from which we deduce that the graded map $\Theta_k\circ\Phi_k$ fits into a commutative square 
\begin{equation}
\begin{tikzcd}[column sep=large]
\Ext^*_{\gen}(F, T_{\rho,\sigma}^*({}^\vee K^\vee) )\ar{r}{\Theta_k\circ\Phi_k}\ar{d}{\simeq}&
\Ext^*_{k[\A]}(\rho^*F, \sigma^*({}^\vee K^\vee))\ar{d}{\simeq}\\
{}^\vee\Tor_*^{\gen}(D_{\pi,\rho}^*K,F)\ar{r}{{}^\vee(\Theta_k\circ\Phi_k)}&{}^\vee\Tor_*^{k[\A]}(\pi^*K,\rho^*F)
\end{tikzcd}\label{cd-dual}
\end{equation}
where the bottom arrow is dual to the map \eqref{eqn-comp-simplified} of theorem \ref{thm-F-lin-case}. Since $\Ext^*_{k\otimes_\mathbb{Z}\A}(\rho,\sigma)$ is isomorphic to ${}^\vee\Tor_*^{k\otimes_\mathbb{Z}\A}(\pi,\rho)$, this bottom arrow is an isomorphism by theorem \ref{thm-F-lin-case}, hence theorem \ref{thm-thm-F-lin} holds when $G={}^\vee K^\vee$.

This proves in particular that theorem \ref{thm-thm-F-lin} holds for all strict polynomial functors $G$ with finite dimensional values, and hence for the standard injectives. For an arbitrary $G$ one can consider an injective resolution and the result follows by a standard spectral sequence argument.
\end{proof}

With the same strategy, one can also dualize theorem \ref{thm-gen-comp}. Given a strict polynomial functor $G$, we denote by $G^\ddag$ the strict polynomial functor such that 
$$G^\ddag(v)=\overline{G}\left(\bigoplus_{0\le i<s^2} {}^{(ri)}\big(v\otimes \Hom_{k[\A]}({}^{(r-rs)}\rho, {}^{(-ri)}\sigma)\big)\right)\;.$$
One defines a comparison map in the same fashion as the map of theorem \ref{thm-thm-F-lin}:
\begin{align}
\Ext^j_\gen(F^{(rs-r)},G^\ddag)\to \Ext^j_{k[\A]}(\rho^*F,\sigma^*G)\;.
\label{eq-gen-comp-dual}
\end{align}
The proof of the following corollary is similar to the proof of theorem \ref{thm-thm-F-lin} and is left to the reader.
\begin{thm}\label{thm-thm-gencomp}
Let $k$ be an infinite perfect field of characteristic $p$, containing a finite field $\Fq$ of cardinal $q=p^r$. 
Let $\A$ be a svelte additive $\Fq$-linear category, let $\rho,\sigma:\A\to \Vect_k$ be two $\Fq$-linear functors such that $\sigma$ has finite-dimensional values and $\Hom_{k[\A]}(\rho,\sigma)$ is finite-dimensional. Assume that $s$ and $e$ are positive integers such that 
\[\Ext^j_{k\otimes_\mathbb{Z}\A} \left(   {}^{(r-rs)}\rho,  {}^{(ri)}\sigma  \right)=0\]
for $0< j<e$ and $0\le i<s^2$. Then for all strict polynomial functors $F$ and $G$ of degrees less than $q^s$, the map \eqref{eq-gen-comp-dual} is an isomorphism if $j<e$ and injective if $j=e$.
\end{thm}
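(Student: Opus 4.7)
The plan is to dualize theorem \ref{thm-gen-comp} along the lines of the proof of theorem \ref{thm-thm-F-lin}: reduce to the case where $G = {}^\vee K^\vee$ for some $K \in \Pp_k$ with finite-dimensional values, convert the resulting statement via $\Ext$--$\Tor$ duality into an assertion about $\Tor$, and then invoke the already-established generalized comparison theorem \ref{thm-gen-comp}. A standard spectral sequence argument applied to an injective resolution of $G$ in $\Pp_k$ by products of standard injectives (which are of the form ${}^\vee K^\vee$) handles this reduction, so from now on we may assume $G = {}^\vee K^\vee$ with $K$ of finite-dimensional values.

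I would next set $\pi := {}^\vee\sigma : \A^{\op} \to k\Md$, which is $\Fq$-linear and has finite-dimensional values, and note that $\sigma^*G = {}^\vee K^\vee \circ \sigma = K \circ \pi = \pi^*K$. The heart of the argument is to construct a natural isomorphism
\[\xi\colon G^\ddag \xrightarrow{\simeq} {}^\vee(K^\dag)^\vee,\]
where $K^\dag$ is the dagger construction from section \ref{subsec-gen-comp-map} formed from $K$, $\pi$, and $\rho$. The construction of $\xi$ uses: (i) the tensor--hom adjunction together with the finite-dimensionality of $\sigma$ to identify $\Hom_{k[\A]}({}^{(r-rs)}\rho, {}^{(-ri)}\sigma)$ with ${}^\vee\!\big({}^{(-ri)}\pi \otimes_{k[\A]} {}^{(r-rs)}\rho\big)$; (ii) the canonical map $v \otimes {}^\vee W \to {}^\vee \Hom_k(v, W)$, which is an isomorphism whenever $v$ is finite-dimensional (and this suffices since strict polynomial functors are evaluated on finite-dimensional vector spaces before applying left Kan extension); (iii) the compatibility of Frobenius twists with $k$-duality, available because $k$ is perfect.

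With $\xi$ in hand, I would assemble a commutative square analogous to \eqref{cd-dual}:
\[\begin{tikzcd}[column sep=large]
\Ext^j_\gen(F^{(rs-r)}, G^\ddag) \ar{r}\ar{d}{\simeq} & \Ext^j_{k[\A]}(\rho^*F, \sigma^*G) \ar{d}{\simeq}\\
{}^\vee\Tor_j^{\gen}(K^\dag, F^{(rs-r)}) \ar{r} & {}^\vee\Tor_j^{k[\A]}(\pi^*K, \rho^*F)
\end{tikzcd}\]
whose top arrow is the comparison map \eqref{eq-gen-comp-dual}, whose bottom arrow is the $k$-dual of the generalized comparison map \eqref{eqn-gencompmap}, and whose vertical isomorphisms come from the $\Ext$--$\Tor$ duality diagrams \eqref{dgm-nat-duality1}--\eqref{dgm-nat-duality2} combined with $\xi$ and the identification $\sigma^*G = \pi^*K$. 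Commutativity is verified by a direct diagram chase on the explicit defining formulas of both comparison maps, mirroring the corresponding verification in the proof of theorem \ref{thm-thm-F-lin}. The Ext-vanishing hypothesis of theorem \ref{thm-thm-gencomp} then converts, via the duality $(\Tor^{k\otimes_\mathbb{Z}\A}_j(E,F))^\vee \simeq \Ext^j_{k\otimes_\mathbb{Z}\A}(F, {}^\vee\! E)$ together with the identification ${}^\vee({}^{(-ri)}\pi) \simeq {}^{(ri)}\sigma$ (valid since $\sigma$ has finite-dimensional values and $k$ is perfect), into precisely the Tor-vanishing hypothesis required by theorem \ref{thm-gen-comp} applied with $\pi = {}^\vee\sigma$. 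Theorem \ref{thm-gen-comp} then gives that the bottom arrow is an isomorphism for $j < e$ and surjective for $j = e$; dualizing, the top arrow is an isomorphism for $j < e$ and injective for $j = e$, as required.

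The main obstacle will be the construction and verification of $\xi$: threading multiple Frobenius twists through $k$-duals, tensor--hom adjunctions, and left Kan extensions is delicate, and at each step one must check that the finite-dimensionality assumptions on $\sigma$ and on $\Hom_{k[\A]}(\rho,\sigma)$ turn the relevant canonical maps into isomorphisms. A secondary technical point is the commutativity of the comparison square on the level of the explicit formulas defining the two comparison maps, which amounts to a diagram chase analogous to the one carried out for $T_{\sigma,\rho} \to {}^\vee D_{\pi,\rho}$ in the proof of theorem \ref{thm-thm-F-lin}.
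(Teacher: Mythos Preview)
Your proposal is correct and follows exactly the route the paper itself indicates: the paper states that the proof ``is similar to the proof of theorem \ref{thm-thm-F-lin} and is left to the reader,'' and you have carried out precisely that dualization, reducing to $G={}^\vee K^\vee$, setting $\pi={}^\vee\sigma$, building the isomorphism $G^\ddag\simeq {}^\vee(K^\dag)^\vee$, and assembling the commutative square that reduces everything to theorem \ref{thm-gen-comp}. One small point to double-check when you write it out: the identification ${}^\vee({}^{(-ri)}\pi)\simeq {}^{(-ri)}\sigma$ (not ${}^{(ri)}\sigma$), since Frobenius twist commutes with $k$-duality without changing sign---this is consistent with the ${}^{(-ri)}\sigma$ appearing in the definition of $G^\ddag$.
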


\begin{rk}\label{rk-limit-pb}
The finite dimensionality hypotheses on the values of $\sigma$ and on $\Hom_{k[\A]}(\rho,\sigma)$ are necessary in the proof of theorem \ref{thm-thm-F-lin} in order that i), ii) and iii) are satisfied. Without them, we would not obtain a commutative square \eqref{cd-dual} with vertical \emph{isomorphisms}. Similarly, the finite dimensionality hypotheses are needed for the proof of theorem \ref{thm-thm-gencomp}. Instead of dualizing, one could try to prove theorems \ref{thm-thm-F-lin} and \ref{thm-thm-gencomp} by a direct approach, following the same strategy as the proofs of theorems \ref{thm-gen-comp} and \ref{thm-F-lin-case}. However, such a direct approach seems to raise inextricable problems with (co)limits.
\end{rk}

\section{Some concrete calculations}
In this section, we apply the generalized comparison theorem (or the special cases given in section \ref{sec-special-cases}) to obtain new explicit homological computations.
\subsection{A sample of new functor homology calculations over $k[\Vect_k]$}\label{subsec-Pk}
Many concrete computations of generic $\Ext$ can be found in the literature (or quickly deduced from existing results). By contrast, no\footnote{Appart from the recent results of \cite{DT-add}, which are only valid for $\Ext$ between direct summands of tensor powers.} $\Ext$-computation in $k[\Vect_k]\Md$ is known when $k$ is an infinite perfect field of positive characteristic $p$. 
Thus, theorem \ref{thm-infinite-perfect-field-Ext} can be viewed as an efficient tool to obtain concrete $\Ext$-computations in $k[\Vect_k]\Md$ over an infinite field. We illustrate this idea here.

\begin{nota}
We will write $\Ext^*$ indifferently for $\Ext_\gen^*$ or for $\Ext^*_{k[\Vect_k]}$, since the two are isomorphic. If one interprets the statements of the section with $\Ext^*_\gen$, then these statements simply gather some well-known computations. But if one interprets the statements of the section with $\Ext^*_{k[\Vect_k]}$, then these statements give new homological computations which were previously out of reach. 
\end{nota}

We first generalize  the computations of \cite{FFSS} of the $\Ext$-algebras between symmetric, exterior and divided powers to an infinite perfect field $k$. To be more specific, let $C^*$ be a graded coalgebra in $\Pp_k$ and let $A^*$ be a graded algebra in $\Pp_k$. We consider the trigraded vector space
$$\mathrm{E}^*(C^*,A^*):=\bigoplus_{i,d,e\ge 0}\Ext^i(C^d,A^e)$$
equipped with the algebra structure given by convolution: 
$$\mathrm{E}^i(C^d,A^e)\otimes \mathrm{E}^j(C^f,A^g)\xrightarrow[]{\cup}\mathrm{E}^{i+j}(C^d\otimes C^f,A^e\otimes A^g)\to \mathrm{E}^{i+j}(C^{d+f},A^{e+g})\;.$$
The symmetric powers $S^*$, the exterior powers $\Lambda^*$ and the divided powers $\Gamma^*$ all have a canonical Hopf algebra structure, hence $\Ext$ between them have an algebra structure. 

For all vector spaces $V$ and all strict polynomial  functors $G$ over $k$, we denote by $G_V$ the 'functor with parameter $V$' defined by $G_V(-):=G(V\otimes_k-)$. If $A^*$ is a graded algebra, then $A^*_V$ is also a graded algebra.

\begin{thm}\label{thm-FFSS-infinite}
Let $k$ be an infinite perfect field of positive characteristic $p$, let $V$ be a $k$-vector space of finite dimension, and let $r$ be a nonnegative integer. Let 
\[V_{s,r}=\bigoplus_{i\ge 0}{}^{(r)}V[2ip^r+s(p^r-1),1,p^r]\]
be the trigraded $k$-vector space where each $^{(r)}V[\ell,m,n]$ denotes a copy of $^{(r)}V$ placed in tridegree $(\ell,m,n)$. Then we have isomorphisms of trigraded algebras, natural with respect to $V$:
\begin{align*}
&\mathrm{E}^*(\Gamma^{*(r)},S^*_V)\simeq S(V_{0,r})\;,
&&
\mathrm{E}^*(\Gamma^{*(r)},\Lambda^*_V)\simeq \Lambda(V_{1,r})
\;, \\
&\mathrm{E}^*(\Lambda^{*(r)},S^*_V)\simeq \Lambda(V_{0,r})\;,
&&
\mathrm{E}^*(\Lambda^{*(r)},\Lambda^*_V)\simeq \Gamma(V_{1,r})
\;, \\
&\mathrm{E}^*(S^{*(r)},S^*_V)\simeq \Gamma(V_{0,r})\;,
&&
\mathrm{E}^*(\Gamma^{*(r)},\Gamma^*_V)\simeq \Gamma(V_{2,r})
\;. 
\end{align*}
\end{thm}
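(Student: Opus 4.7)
The plan is to reduce the six computations to the corresponding formulas for generic Ext in $\Pp_k$, which are classical.

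First I would apply Theorem \ref{thm-infinite-perfect-field-Ext} to each pair $(C^d, A^e)$ appearing in the statement, replacing $\Ext^*_{k[\Vect_k]}(C^d, A^e)$ by $\Ext^*_{\gen}(C^d, A^e)$. The strong comparison map $\Phi_k$ is built from restriction along the forgetful functor $\Pp_k \to k[\Vect_k]\Md$ and along the Frobenius twist ${}^{(-n)}-$, both of which are compatible with external tensor products. Since the Hopf algebra structures on $\Gamma^{*(r)}, \Lambda^{*(r)}, S^{*(r)}$ and on $S^*_V, \Lambda^*_V, \Gamma^*_V$ are all strict polynomial, naturality of $\Phi_k$ ensures that it is multiplicative for the convolution product. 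Consequently the six claims are equivalent to the corresponding isomorphisms of trigraded algebras in the generic $\Ext$ world:
$$\mathrm{E}^*(C^*,A^*)\simeq \bigoplus_{i,d,e\ge 0}\Ext^i_\gen(C^d,A^e).$$

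Next I would invoke the known generic-Ext tables. For $V=k$, these are essentially the computations of \cite[\S 4, \S 5]{FFSS}: although stated in \emph{loc.\ cit.} for $\Ext$ in $\Fq[\Vect_\Fq]\Md$, the proofs proceed entirely inside $\Pp_k$, via Koszul-type resolutions of $S^{*(r)}$, $\Lambda^{*(r)}$, $\Gamma^{*(r)}$ combined with the stability property of Proposition-Definition \ref{pdef-gen-Ext}, hence they apply over any perfect field $k$. Alternative derivations valid over arbitrary base rings are available in \cite{Chalupnik, TouzeUnivNew, Touze-Survey}. The parameter $V$ is handled by base change along the exact strict polynomial endofunctor $V\otimes_k-$ of $\Vect_k$: each Ext generator acquires a tensor factor $V$, which because the generators are detected on $I^{(r)}$-twisted functors, is canonically identified with $^{(r)}V$ in the trigrading.

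The main residual task, which is however purely bookkeeping, consists in matching the trigrading of the theorem against the FFSS grading of the generators. The three shifts $s\in\{0,1,2\}$ in $V_{s,r}$ reflect the symmetric, exterior/Koszul and divided-power character of the corresponding resolutions used in each column; the type (symmetric, exterior or divided) of the algebra on the right-hand side is then forced by the parity of the homological degrees of the generators together with the strict polynomial Koszul duality between $\Gamma$ and $S$. I expect no genuine obstacle beyond this indexing exercise, as no new homological input beyond Theorem \ref{thm-infinite-perfect-field-Ext} is required.
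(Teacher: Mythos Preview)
Your proposal is correct and follows essentially the same approach as the paper. The paper's proof is a one-liner: the generic-$\Ext$ version is obtained by letting $s\to\infty$ in \cite[Thm~15.1]{TouzeBar}, and the $k[\Vect_k]$-version then follows via the convention set up just before the theorem (which invokes theorem~\ref{thm-infinite-perfect-field-Ext}); you unpack the same reduction more explicitly and cite \cite{FFSS,Chalupnik,TouzeUnivNew,Touze-Survey} instead of the single reference \cite{TouzeBar} for the generic computation.
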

\begin{proof}
The result is obtained by letting $s\to\infty$ in \cite[Thm 15.1]{TouzeBar}. 
\end{proof}
\begin{rk}
The pairs $(\Lambda^{*\,(r)},\Gamma^*_V)$, $(S^{*\,(r)},\Gamma^*_V)$ and $(S^{*\,(r)},\Lambda^*_V)$ do not appear in theorem \ref{thm-FFSS-infinite}. The reader may obtain the corresponding generic $\Ext$ by letting $s\to \infty$ in \cite[Thm 15.2 and 15.3]{TouzeBar}; we don't reproduce them here to save space. Additional computations of the same flavour, involving more general Hopf algebras, can be retrieved from \cite[Thm 7.1]{TouzeIMRN}.
\end{rk}

There is a general formula computing extensions between twisted strict polynomial functors, see \cite{Chalupnik}, \cite{TouzeUnivNew} and \cite{Touze-Survey}. Namely, if $V$ is a finite-dimensional graded $k$-vector space and $G$ is a strict polynomial functor, the functor with parameter $G_V(-):=G(V\otimes_k -)$ inherits a grading. It is the unique grading natural with respect to $G$ and $V$, which coincides with the usual grading on symmetric powers of a graded vector space if $G$ is a standard injective, see \cite[Section 2.5]{TouzeENS} and \cite[Section 4.2]{Touze-Survey}. Let $\mathbb{E}_r$ be the graded vector space
which equals $k$ in degrees $2i$ for $0\le i<p^r$ and which is zero in the other degrees. Then we have a graded isomorphism, where the degree on the right-hand side is obtained by totalizing the $\Ext$-degree with the degree of the functor $G_{\mathbb{E}_r}$ (that is, if $G_{\mathbb{E}_r}^j$ denotes the component of degree $j$ then the summand $\Ext^i_{\Pp_k}(F,G_{\mathbb{E}_r}^j)$ is placed in degree $i+j$):
\[\Ext^*_{\Pp_k}(F^{(r)},G^{(r)})\simeq \Ext^*_{\Pp_k}(F,G_{\mathbb{E}_r})\;.\]

We extend the parametrization of $G$ to infinite-dimensional graded vector spaces $v$ by letting 
$G_V:=\colim G_U$, where the colimit is taken over the poset of all finite-dimensional graded vector spaces $U\subset V$. By taking the colimit over $r$ in the previous isomorphism, and by using theorem \ref{thm-infinite-perfect-field-Ext} we obtain the following result (in which no Frobenius twist appear in the $\Ext$ of the right-hand side!).
\begin{thm}\label{thm-calcul-kProjk}
Let $k$ be an infinite perfect field of positive characteristic. Let $\mathbb{E}_\infty$ be the graded vector space equal to $k$ in even degrees and to $0$ in odd degrees. There is a graded isomorphism, natural with respect to the strict polynomial functors $F$ and $G$, and where the degree on the right-hand side is computed by totalizing the $\Ext$-degree with the degree of the functor $G_{\mathbb{E}_\infty}$:
\[\Ext^*(F,G)\simeq \Ext^*_{\Pp_k}(F,G_{\mathbb{E}_\infty})\;.\]
\end{thm}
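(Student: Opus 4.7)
The plan is to chain the infinite-field strong comparison theorem with the twisted-functor formula recalled just before the statement and then pass to the limit in $r$. First, Theorem~\ref{thm-infinite-perfect-field-Ext} identifies $\Ext^*_{k[\Vect_k]}(F,G)$ with $\Ext^*_\gen(F,G)$, so it suffices to produce a natural graded isomorphism
\[\Ext^*_\gen(F,G)\simeq \Ext^*_{\Pp_k}(F,G_{\mathbb{E}_\infty}),\]
where the grading on the right totalizes the $\Ext$-degree with the internal grading coming from $\mathbb{E}_\infty$.

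Fix a cohomological degree $n$. By Proposition-Definition~\ref{pdef-gen-Ext}, one has $\Ext^n_\gen(F,G)=\Ext^n_{\Pp_k}(F^{(r)},G^{(r)})$ for all $r$ sufficiently large (depending on $n$). The twisted-functor formula provides, for each such $r$, a graded isomorphism
\[\Ext^n_{\Pp_k}(F^{(r)},G^{(r)})\simeq\bigoplus_{i+j=n}\Ext^i_{\Pp_k}(F,G_{\mathbb{E}_r}^j),\]
where $G_{\mathbb{E}_r}^j$ denotes the component of internal degree $j$ of the graded strict polynomial functor $G_{\mathbb{E}_r}$.

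The next step is to observe that, for each fixed $j$, the functor $G_{\mathbb{E}_r}^j$ stabilizes in $r$. Indeed, if $\deg G=d$, the polynomial decomposition of $G$ on a direct sum expresses the internal-degree-$j$ piece of $G_V$ for a graded vector space $V$ as a finite direct sum indexed by tuples $(d_\ell)_\ell$ with $\sum d_\ell=d$ and $\sum \ell\, d_\ell=j$; these constraints force $d_\ell=0$ for $\ell>j$, so only the internal-degree-$\le j$ part of $V$ contributes. Since $\mathbb{E}_r$ and $\mathbb{E}_\infty$ agree in internal degrees $\le 2(p^r-1)$, we conclude $G_{\mathbb{E}_r}^j=G_{\mathbb{E}_\infty}^j$ as soon as $p^r>j/2$. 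Because total degree $n$ involves only finitely many pairs $(i,j)$, one may choose a single $r$ large enough for all of them simultaneously and deduce
\[\Ext^n_\gen(F,G)\simeq\bigoplus_{i+j=n}\Ext^i_{\Pp_k}(F,G_{\mathbb{E}_\infty}^j),\]
which is the total-degree-$n$ part of $\Ext^*_{\Pp_k}(F,G_{\mathbb{E}_\infty})$. Assembling these degreewise isomorphisms yields the graded isomorphism of the theorem, and naturality in $F$ and $G$ is inherited from the naturality of all the maps involved.

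The main point to verify is the compatibility of the twisted-functor formula with the colimit structure defining $\Ext^*_\gen$: the transition map given by precomposition with $I^{(1)}$ should correspond, under the isomorphism, to the map induced by the natural inclusion $\mathbb{E}_r\hookrightarrow\mathbb{E}_{r+1}$. This naturality of the formula in the parameter vector space is built into the characterization of the grading on $G_V$ referenced in the discussion preceding the theorem; I would verify it by reducing via a standard spectral sequence argument (as in the proofs of Theorems~\ref{thm-strong-comparison} and~\ref{thm-infinite-perfect-field-Ext}) to the case where $G$ is a standard injective $S^{e,s}$, for which the grading on $S^{e,s}_V$ is the tautological symmetric-power grading on a graded vector space and the compatibility with the Frobenius twist becomes a direct check.
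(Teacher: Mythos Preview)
Your argument is correct and follows essentially the same route as the paper: the proof there is the short paragraph preceding the theorem, which simply takes the colimit over $r$ in the twisted-functor formula $\Ext^*_{\Pp_k}(F^{(r)},G^{(r)})\simeq \Ext^*_{\Pp_k}(F,G_{\mathbb{E}_r})$ and invokes Theorem~\ref{thm-infinite-perfect-field-Ext}. You have supplied more detail than the paper does, in particular the degreewise stabilization of $G_{\mathbb{E}_r}^j$ and the compatibility with the transition maps, but the strategy is identical.
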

\begin{rk}
If $G$ is $d$-homogeneous and $F=\Gamma^{d}$, the right-hand side in theorem \ref{thm-calcul-kProjk} is zero in positive $\Ext$-degrees, and in $\Ext$-degree zero it can be computed by \cite[Lm 2.10]{TouzeENS}. One obtains a graded isomorphism:
\[\Ext^*(\Gamma^{d},G)\simeq G_{\mathbb{E}_\infty}(k)=G({\mathbb{E}_\infty})\;.\]
\end{rk}

Duality between $\Ext$ and $\Tor$ allows to convert the results of this section into $\Tor$ computations. To be more specific, let $\Tor_*$ denote $\Tor_*^\gen$ or $\Tor_*^{k[\Vect_k]}$. If $F^*$ and $G^*$ are two graded strict polynomial functors over $k$, we denote by $\mathrm{T}_*(F^*,G^*)$ the trigraded vector space defined by 
\[\mathrm{T}_*(F^*,G^*)=\bigoplus_{i,d,e\ge 0}\Tor_i(F^{d\, \vee},G^e)\;,\]
where $F^{d\,\vee}$ is the contravariant strict polynomial functor associated to $F^{d\,\vee}$. 
\begin{cor}\label{cor-FFSS-infinite}
Let $k$ be an infinite perfect field of positive characteristic $p$, let $V$ be a $k$-vector space, and let $r$ be a nonnegative integer. Let
\[V_{s,r}=\bigoplus_{i\ge 0}{}^{(r)}V[2ip^r+s(p^r-1),1,p^r]\]
be the trigraded $k$-vector space where each $^{(r)}V[\ell,m,n]$ denotes a copy of $^{(r)}V$ placed in tridegree $(\ell,m,n)$. There are isomorphisms of trigraded vector spaces, natural with respect to $V$:

\begin{align*}
&\mathrm{T}_*(\Gamma^*_V,\Gamma^{*(r)})\simeq \Gamma(V_{0,r})\;,
&&
\mathrm{T}_*(\Lambda^*_V,\Gamma^{*(r)})\simeq \Lambda(V_{1,r})
\;, \\
&\mathrm{T}_*(\Gamma^*_V,\Lambda^{*(r)})\simeq \Lambda(V_{0,r})\;,
&&
\mathrm{T}_*(\Lambda^*_V,\Lambda^{*(r)})\simeq S(V_{1,r})
\;, \\
&\mathrm{T}_*(\Gamma^*_V,S^{*(r)})\simeq S(V_{0,r})\;,
&&
\mathrm{T}_*(S^*_V,\Gamma^{*(r)})\simeq S(V_{2,r})
\;.
\end{align*}
\end{cor}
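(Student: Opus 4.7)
The plan is to derive the corollary directly from theorem \ref{thm-FFSS-infinite} by applying the $\Ext$--$\Tor$ duality. Since $k$ is a field, the natural isomorphism
\[\Hom_k(\Tor_i(E, F), k) \simeq \Ext^i(F, D_k E)\]
holds, and every component of the trigraded spaces involved in the theorem and the corollary is finite dimensional when $V$ is finite dimensional (the functors involved are homogeneous and take finite-dimensional values, so the relevant $\Ext$ and $\Tor$ reduce to computations over finite-dimensional Schur algebras). Hence dualization is lossless and reversible.

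First I would treat finite-dimensional $V$. The classical finite-dimensional duality identifications between symmetric, divided, and exterior powers of a finite-dimensional vector space $W$, namely $(\Gamma^d W)^\vee \simeq S^d(W^\vee)$, $(S^d W)^\vee \simeq \Gamma^d(W^\vee)$, and $(\Lambda^d W)^\vee \simeq \Lambda^d(W^\vee)$, combined with the identification $(V \otimes v^\vee)^\vee \simeq V^\vee \otimes v$, yield
\[D_k((\Gamma^d_V)^\vee) \simeq S^d_{V^\vee}, \quad D_k((S^d_V)^\vee) \simeq \Gamma^d_{V^\vee}, \quad D_k((\Lambda^d_V)^\vee) \simeq \Lambda^d_{V^\vee}.\]
Applying the duality termwise and summing over tridegrees, each of the six formulas in the corollary becomes the $k$-linear dual of an instance of theorem~\ref{thm-FFSS-infinite} evaluated at $V^\vee$. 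For instance
\[\mathrm{T}_*(\Gamma^*_V, \Gamma^{*(r)})^\vee \simeq \mathrm{E}^*(\Gamma^{*(r)}, S^*_{V^\vee}) \simeq S\bigl((V^\vee)_{0,r}\bigr),\]
and a second dualization, using $(S W)^\vee \simeq \Gamma(W^\vee)$ on each finite-dimensional tridegree summand and the natural isomorphism $(V^\vee)_{s,r}^\vee \simeq V_{s,r}$ (which is valid because $^{(r)}(-)$ commutes with $k$-linear duality on finite-dimensional spaces), yields $\mathrm{T}_*(\Gamma^*_V, \Gamma^{*(r)}) \simeq \Gamma(V_{0,r})$. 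The systematic pattern is that duality exchanges $S \leftrightarrow \Gamma$ and fixes $\Lambda$, and a direct case-by-case check confirms that each of the six pairs of the corollary matches a pair in theorem~\ref{thm-FFSS-infinite} after this dualization.

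Finally I would extend to arbitrary $V$ by a filtered colimit argument: writing $V = \colim_U U$ over its finite-dimensional subspaces, the left-hand side of each isomorphism preserves such colimits because $\Tor$ commutes with filtered colimits in each variable and the assignment $V \mapsto (F^d_V)^\vee$ is itself a filtered colimit of the $(F^d_U)^\vee$ (the parameter $V$ enters covariantly through the tensor product); the right-hand side preserves them because $V_{s,r}$ is the filtered colimit of the $U_{s,r}$ and the left Kan extensions $\overline{\Gamma^n}, \overline{S^n}, \overline{\Lambda^n}$ preserve filtered colimits by construction. I do not anticipate any serious obstacle: the only non-routine step is the careful bookkeeping of duals, Frobenius twists and the symbol $V_{s,r}$, which amounts to verifying that the substitutions $V \mapsto V^\vee$ introduced by the duality can be eliminated from the final answer.
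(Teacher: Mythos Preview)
Your proposal is correct and follows essentially the same approach as the paper: deduce the finite-dimensional case from theorem~\ref{thm-FFSS-infinite} via $\Ext$--$\Tor$ duality, then pass to arbitrary $V$ by filtered colimits over finite-dimensional subspaces. The paper's proof is just a two-sentence sketch of exactly this argument, so your more detailed bookkeeping of the dualities is a welcome expansion rather than a deviation.
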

\begin{proof}
If $V$ has finite dimension, the result is deduced from theorem \ref{thm-FFSS-infinite} by duality between $\Ext$ and $\Tor$. Each of the obtained isomorphisms extends to the case of infinite-dimensional $V$ by taking filtered colimits over the finite dimensional vector subspaces of $V$. 
\end{proof}

\begin{cor}\label{cor-calcul-kProjk}
Let $k$ be an infinite perfect field of positive characteristic. Let $\mathbb{T}_\infty$ be the graded vector space equal to $k$ in even degrees and to $0$ in odd degrees. There is a graded isomorphism, natural with respect to the strict polynomial functors $F$ and $G$, and where the degree on the right-hand side is computed by totalizing the $\Tor$-degree with the degree of the functor $F_{\mathbb{T}_\infty}^\vee$:
\[\Tor_*(F^\vee,G)\simeq \Tor_*^{\Pp_k}((F_{\mathbb{T}_\infty})^\vee,G)\;.\]
\end{cor}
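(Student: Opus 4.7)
The plan is to follow the same overall strategy as the proof of Theorem~\ref{thm-calcul-kProjk}: first reduce the computation to $\Pp_k$ by the comparison result of Corollary~\ref{cor-infinite-perfect-field}, then apply a $\Tor$-analogue of the Chalupnik--Touz\'e twist formula, and finally pass to the limit in the Frobenius twist number $r$.

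Concretely, by Corollary~\ref{cor-infinite-perfect-field} the strong comparison map identifies $\Tor_*^{k[\Vect_k]}(F^\vee, G)$ with $\Tor_*^\gen(F^\vee, G)$, which by Proposition-Definition~\ref{pdef-gen-Tor} equals $\Tor_i^{\Pp_k}(F^{(r)\vee}, G^{(r)})$ for $r\gg 0$ in each fixed degree $i$. The key input is then a $\Tor$-version of Touz\'e's twist formula, namely a natural isomorphism of graded vector spaces
\[\Tor_i^{\Pp_k}(F^{(r)\vee}, G^{(r)})\simeq \Tor_i^{\Pp_k}((F_{\mathbb{T}_r})^\vee, G),\]
where $\mathbb{T}_r$ is the finite graded vector space equal to $k$ in degrees $0, 2,\dots, 2(p^r-1)$ and $0$ elsewhere, and the $\Tor$-degree on the right is totalized with the internal grading inherited from $\mathbb{T}_r$. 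This is dual (via the $\Ext$-$\Tor$ duality of section~\ref{subsec-str-pol}) to the Chalupnik formula $\Ext^*_{\Pp_k}(F^{(r)},G^{(r)})\simeq \Ext^*_{\Pp_k}(F, G_{\mathbb{E}_r})$ used in the proof of Theorem~\ref{thm-calcul-kProjk}; it can be extracted from \cite{Chalupnik,TouzeUnivNew,Touze-Survey}, or derived directly from that $\Ext$ statement by the $\Ext$-$\Tor$ duality, after reducing to $F$ with finite-dimensional values so that all relevant vector spaces are finite-dimensional in each total degree.

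Finally, I would pass to the limit $r\to\infty$. The canonical inclusions $\mathbb{T}_r\hookrightarrow \mathbb{T}_{r+1}$ assemble into a filtered system exhausting $\mathbb{T}_\infty$, hence $\colim_r (F_{\mathbb{T}_r})^\vee = (F_{\mathbb{T}_\infty})^\vee$. Since $\Tor^{\Pp_k}$ commutes with filtered colimits in each variable, and in each fixed total degree $n$ only the components of $(F_{\mathbb{T}_r})^\vee$ of internal degree $\le n$ contribute (and these are already attained for $r$ with $2(p^r-1)\ge n$), the right-hand side stabilizes in each total degree at $\Tor_i^{\Pp_k}((F_{\mathbb{T}_\infty})^\vee, G)$. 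Combined with the stabilization of $\Tor^\gen$ on the left-hand side, this yields the desired isomorphism, which one extends to arbitrary (not necessarily finite-dimensional) $F$ by filtered colimits. The step requiring most care is the verification of the $\Tor$-twist formula, specifically the bookkeeping of graded conventions under the $\Ext$-$\Tor$ duality, including the identification of $\mathbb{T}_r$ with (a suitable regrading of) the dual of $\mathbb{E}_r$; this is a finite-dimensional exercise at each $r$ that stabilizes correctly as $r\to \infty$ by naturality.
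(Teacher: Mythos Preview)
Your proposal is correct and follows essentially the same approach as the paper. The paper states this corollary without an explicit proof, the implicit argument being to dualize Theorem~\ref{thm-calcul-kProjk} via the $\Ext$-$\Tor$ duality (in the same way Corollary~\ref{cor-FFSS-infinite} is deduced from Theorem~\ref{thm-FFSS-infinite}); your route---applying Corollary~\ref{cor-infinite-perfect-field}, then the $\Tor$-dual of the Cha\l upnik--Touz\'e twist formula for each $r$, and finally passing to the colimit---is just this dualization carried out step by step rather than at the end.
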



\subsection{Calculations over $k[\Proj_R]$ and homology of $\GL_n(R)$}\label{subsec-ex-GL}
The next result, combined with the explicit computations of generic $\Tor$ from the previous section, yields many  explicit computations of $\Tor$ over $k[\Proj_R]$.
\begin{thm}\label{thm-calculs-kProjR}
Assume that $R$ is an algebra over an infinite perfect field $k$ of positive characteristic. Let $\ell$ and $m$ be two positive integers, and let $\pi$ and $\rho$ be the additive functors:
\[
\begin{array}{cccc}
\pi: &\Proj_R^\op & \to & k\Md\\
& v & \mapsto & \Hom_R(v,R^\ell)
\end{array}\;,\qquad
\begin{array}{cccc}
\rho: &\Proj_R & \to & k\Md\\
& v & \mapsto & \Hom_R(R^m,v)
\end{array}\;.
\]
There is a graded isomorphism, natural with respect to $F$, $G$, $R^\ell$ and $R^m$:
\[ \Tor_*^{k[\Proj_R]}(\pi^*F,\rho^*G)\simeq\Tor_*^{\gen}((F_{\Hom_R(R^m,R^\ell)})^\vee,G)\;.
\]
\end{thm}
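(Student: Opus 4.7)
The plan is to apply Theorem \ref{thm-F-lin-case} with the base field $\FF=k$, the additive $k$-linear category $\A=\Proj_R$, and the $k$-linear functors $\pi$ and $\rho$. Since $k$ is infinite, the cardinality hypothesis on $\deg F$ and $\deg G$ is automatic, so only two things need checking: the identification of the auxiliary functor $D_{\pi,\rho}^*F$ with $(F_{\Hom_R(R^m,R^\ell)})^\vee$, and the $\Tor$-vanishing hypothesis $\Tor^{k\otimes_\mathbb{Z}\Proj_R}_j(\pi,\rho)=0$ for every $j>0$, so that Theorem \ref{thm-F-lin-case} applies in every homological degree.

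For the first identification, observe that $\rho=\Hom_R(R^m,-)=\Proj_R(R^m,-)$ is the additive Yoneda functor at $R^m$, so the coend defining the tensor product collapses to
\[
\pi\otimes_{k[\Proj_R]}\rho \;\simeq\; \pi(R^m) \;=\; \Hom_R(R^m,R^\ell)\,=:\,W\;.
\]
Then $D_{\pi,\rho}(v)=\Hom_k(v,W)$. The canonical evaluation morphism $W\otimes_k{}^\vee v\to\Hom_k(v,W)$ is a natural isomorphism for finite-dimensional $v$, and after applying the left Kan extension procedure of Definition \ref{defi-comp-strict} to handle the possibly infinite-dimensional $W$, it yields a natural isomorphism of strict polynomial functors $(F_W)^\vee\simeq D_{\pi,\rho}^*F$.

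For the $\Tor$-vanishing, an Eilenberg-Watts argument (in the spirit of the proof of Corollary \ref{cor-infinite-perfect-field}) identifies the category of additive functors $\Proj_R\to k\Md$ with modules over $R\otimes_\mathbb{Z}k$, via evaluation at the generator $R$; under this equivalence $\pi$ and $\rho$ correspond to $R^\ell$ and $R^m$, whose $(R\otimes_\mathbb{Z}k)$-module structures both factor through the multiplication surjection $R\otimes_\mathbb{Z}k\twoheadrightarrow R$, $r\otimes\lambda\mapsto r\lambda$ (since the $k$-action and the $R$-action come from multiplication by $k\subset R$). Hence
\[
\Tor_*^{k\otimes_\mathbb{Z}\Proj_R}(\pi,\rho)\;\simeq\;\Tor_*^{R\otimes_\mathbb{Z}k}(R^\ell,R^m)\;.
\]
Writing $R\otimes_\mathbb{Z}k\simeq R\otimes_k(k\otimes_\mathbb{Z}k)$ as a $k$-algebra and invoking the Künneth formula over the field $k$ yields
\[
\Tor_*^{R\otimes_\mathbb{Z}k}(R^\ell,R^m)\;\simeq\;\Tor_*^R(R^\ell,R^m)\otimes_k\Tor_*^{k\otimes_\mathbb{Z}k}(k,k)\;=\;R^{\ell m}\otimes_k\mathrm{HH}_*(k)\;,
\]
and perfectness of $k$ gives $\mathrm{HH}_j(k)=0$ for $j>0$ (as already invoked in the proof of Corollary \ref{cor-infinite-perfect-field}), yielding the required vanishing. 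With both ingredients in hand, Theorem \ref{thm-F-lin-case} produces the stated isomorphism, natural in $F$, $G$, $R^m$ and $R^\ell$. I expect the main obstacle will be executing the Eilenberg-Watts reduction with care: the two distinct $k$-actions in play -- the one internal to $R$ as a $k$-algebra, and the external one coming from the target $k\Md$ -- have to be kept properly separated, and one must verify that the Morita equivalence of module categories lifts to an isomorphism of the relevant derived functors.
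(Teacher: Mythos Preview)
Your proposal is correct and follows the same strategy as the paper: apply Theorem~\ref{thm-F-lin-case} with $\FF=k$ and $\A=\Proj_R$, then verify the $\Tor$-vanishing via an Eilenberg--Watts reduction to $\Tor_*^{R\otimes_\mathbb{Z}k}(R^\ell,R^m)\simeq R^{\ell m}\otimes_k\mathrm{HH}_*(k)$, which vanishes in positive degrees by perfectness of $k$. The paper reduces first to $\ell=m=1$ by additivity and then carries out the computation by an explicit base-change of resolutions, whereas you invoke the K\"unneth isomorphism over $R\otimes_\mathbb{Z}k\simeq R\otimes_k(k\otimes_\mathbb{Z}k)$ directly; these are two presentations of the same calculation.
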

\begin{proof}
The result will follow from theorem \ref{thm-F-lin-case}, once we have proved that the graded vector space $\Tor_*^{k\otimes_\mathbb{Z}\Proj_R}(\pi,\rho)$ is equal to zero in positive degrees, and to $\Hom_R(R^m,R^\ell)$ in degree zero.
By additivity of $\Tor$, it suffices to prove this for $\ell=m=1$. Then, the Eilenberg-Watts theorem yields an isomorphism:
\[\Tor_*^{k\otimes_\mathbb{Z}\Proj_R}(\pi,\rho)\simeq \Tor_*^{k\otimes_\mathbb{Z}R}(R,R)\;.\]
If $P\to k$ is a free resolution in the category of $(k,k)$-bimodules, then by applying base change along the flat morphism $k\to R$ on the left action of $k$ we obtain a free resolution $R\otimes_k P \to R\otimes_k k=R$ in the category of $(R,k)$-bimodules. Thus $\Tor_*^{k\otimes_\mathbb{Z}R}(R,R)$ is the homology of the complex:
\[ R\otimes_{R\otimes_\mathbb{Z}k}(R\otimes_k P)\simeq  R\otimes_k(k\otimes_{k\otimes_\mathbb{Z}k}P)\;.\]
Hence $\Tor_*^{k\otimes_\mathbb{Z}R}(R,R)=R\otimes_k\mathrm{HH}_*(k)$ by the universal coefficient theorem, where $\mathrm{HH}_*(k)$ is the Hochschild homology of $k$. The latter is equal to $k$ in degree zero and it is zero in higher degrees since $k$ is a perfect field (see e.g. the proof of  \cite[Lm 6.1]{DT-add} for a detailed argument). Whence the result.
\end{proof}

The infinite general linear group is defined as $\GL_\infty(R)=\bigcup_{n\ge 1}\GL_n(R)$, where each group $\GL_n(R)$ is viewed as the subgroup of $\GL_{n+1}(R)$ of block matrices of the form $\begin{bsmallmatrix} M & 0\\ 0& 1\end{bsmallmatrix}$.

Every functor $T:\Proj_R\to k\Md$ yields a left $k$-linear representation $T_\infty$ of $\GL_\infty(R)$. To be more specific, as a $k$-vector space $T_\infty= \bigcup_{n\ge 1}T(R^n)$, where each $T(R^n)$ is seen as a subspace of $T(R^n)$ via the map $T(\iota_n)$ where $\iota_n$ is the standard inclusion of $R^n$ as the first $n$ coordinates of $R^{n+1}$. The action of $\GL_\infty(R)$ is the unique action such that each $g\in \GL_n(R)$ acts as $T(g)$ on $T(R^n)$. Similarly, every $T':\Proj_R^\op\to k\Md$ yields a left $k$-linear representation $T'_\infty$ of $\GL_\infty(R)$, which equals the increasing union of the vector spaces $T'(R^n)$ (where each $T'(R^n)$ is now seen as a subspace of $T'(R^{n+1})$ via $T'(p_n)$ with $p_n$ the canonical projection of $R^{n+1}$ onto its first $n$ coordinates), with action of $\GL_\infty(R)$ such that each $g\in \GL_n(R)$ acts as $T'(g^{-1})$ on $T'(R^n)$.

If we take $\pi$ and $\rho$ as in theorem \ref{thm-calculs-kProjR}, then $\rho_\infty$ is the direct sum of $m$ copies of the $R$-module with infinite countable basis $R^\infty$ (viewed as a $k$-vector space by restriction along $k\to R$), each of these copies endowed with the action of $\GL_\infty(R)$ by matrix multiplication: $g\cdot v := gv$, while $\pi_\infty$ is the direct sum of $\ell$ copies of $R^\infty$ with action of $\GL_\infty(R)$ given by multiplication by inverse transpose: $g\cdot v:= (g^{-1})^{\mathrm{T}}v$. Moreover, for all strict polynomial functors $F$ and $G$ the representations $(\pi^*F)_\infty$ and $(\rho^*G)_\infty$ are respectively equal to $\overline{F}(\pi_\infty)$ and $\overline{G}(\rho_\infty)$. The next result follows from \cite[Thm 5.6]{DjaR} (due to Scorichenko, unpublished) and theorem \ref{thm-calculs-kProjR}.
\begin{cor}\label{cor-HGL}
Under the hypotheses of theorem \ref{thm-calculs-kProjR}, there is a graded isomorphism, natural with respect to $F$, $G$, $R^\ell$ and $R^m$:
\begin{align*}
\rmH_*(\GL_\infty(R);\overline{F}(\pi_\infty)\otimes_k \overline{G}(\rho_\infty))\simeq \rmH_*(\GL_\infty(R);k)\otimes_k \Tor_*^{\gen}((F_{\Hom_R(R^m,R^\ell)})^\vee,G)\;.
\end{align*}
\end{cor}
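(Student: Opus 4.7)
The plan is to combine two ingredients already at our disposal: Scorichenko's decomposition theorem (recalled as equation (2) in the introduction, and justified by \cite[Thm 5.6]{DjaR}), and Theorem \ref{thm-calculs-kProjR} proved just above. The proof is essentially a two-step reduction with almost no new content beyond checking that the identifications match up naturally.

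First, I would invoke Scorichenko's theorem to get the isomorphism
\[
\rmH_*(\GL_\infty(R);(\pi^*F)'_\infty \otimes_k (\rho^*G)_\infty) \simeq \rmH_*(\GL_\infty(R);k) \otimes_k \Tor_*^{k[\Proj_R]}(\pi^*F,\rho^*G)
\]
valid for any pair of functors $T' = \pi^*F$ in $\Mdd k[\Proj_R]$ and $T = \rho^*G$ in $k[\Proj_R]\Md$. This is the content of equation (2) of the introduction, and no further rank hypothesis on $R$ is needed since the isomorphism is between stable objects. The key point is that $\pi^*F$ and $\rho^*G$ are indeed of the form required by this theorem (compositions of an additive functor with a strict polynomial functor, where the additive functor may take infinite-dimensional values, which is exactly the setting accommodated by definition \ref{defi-comp}).

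Next, I would use the explicit descriptions of the representations appearing on the left-hand side. As noted in the paragraph preceding the corollary, one has equalities of $k$-linear $\GL_\infty(R)$-representations
\[
(\pi^*F)'_\infty = \overline{F}(\pi_\infty), \qquad (\rho^*G)_\infty = \overline{G}(\rho_\infty),
\]
with $\pi_\infty$ and $\rho_\infty$ being the direct sums of $\ell$ and $m$ copies of the countable dual and standard representations, respectively. These equalities are built into how the left Kan extensions $\overline{F}$ and $\overline{G}$ are evaluated on the colimit representations. Thus the left-hand side of the Scorichenko isomorphism coincides with the left-hand side of the claimed formula.

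Finally, I would apply Theorem \ref{thm-calculs-kProjR} to the $\Tor$-factor on the right-hand side, obtaining
\[
\Tor_*^{k[\Proj_R]}(\pi^*F,\rho^*G) \simeq \Tor_*^{\gen}((F_{\Hom_R(R^m,R^\ell)})^\vee, G),
\]
naturally in $F$, $G$, $R^\ell$ and $R^m$. Splicing this into the Scorichenko decomposition yields exactly the asserted isomorphism. Since both steps produce natural isomorphisms with respect to the stated data, the composite is natural as well. There is no genuine obstacle here: the entire content of the corollary has been packaged into Theorem \ref{thm-calculs-kProjR} and the Scorichenko/Djament--Vespa machinery, and this proof is essentially a bookkeeping statement combining the two.
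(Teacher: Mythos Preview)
Your proposal is correct and follows exactly the same approach as the paper, which simply states that the result follows from \cite[Thm 5.6]{DjaR} and Theorem \ref{thm-calculs-kProjR}. Your write-up is more detailed than the paper's one-line justification, but the two ingredients and their combination are identical; one minor notational quibble is that the prime in $(\pi^*F)'_\infty$ is superfluous, since $\pi^*F$ is already a contravariant functor and the paper simply writes $(\pi^*F)_\infty$.
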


Concrete computations of the generic $\Tor$ appearing in the right-hand side of corollary \ref{cor-HGL} are given in section \ref{subsec-Pk}. For example, the decomposition $\Lambda^d(\pi_\infty\oplus\rho_\infty)=\bigoplus_{i+j=d}\Lambda^i(\pi_\infty)\otimes_k\Lambda^j(\rho_\infty)$, together with corollary \ref{cor-HGL} and the $\Tor$-computation of corollary \ref{cor-FFSS-infinite} give the computation of example \ref{ex-intro-1} from the introduction.

\section{Rational versus discrete cohomology}\label{sec-rat-disc}
Let $G$ be an algebraic group over an infinite perfect field $k$ of positive characteristic $p$, let $\mathbf{Mod}_G$ denote the category of all $k$-linear representations of the discrete group $G$, and let $\mathbf{Rat}_G$ denote the full subcategory of $\mathbf{Mod}_G$ on the rational representations as in \cite{Jantzen}. 
Extensions between two rational representations $V$ and $W$ can be computed in $\mathbf{Rat}_G$ or $\mathbf{Mod}_G$. In the sequel, we let 
\begin{align*}
&\EExt_G^*(V,W):=\Ext^*_{\mathbf{Rat}_G}(V,W)\;,&&&\HH^*(G;W):=\EExt_G^*(k,W)\;,\\
&\Ext_G(V,W):=\Ext^*_{\mathbf{Mod}_G}(V,W)\;,&&&\rmH^*(G;W):=\Ext_G^*(k,W)\;.
\end{align*}
There is a canonical morphism:
\begin{align*}
\EExt^*_G(V,W)\to \Ext_G^*(V,W) 
\end{align*}
which is far from being an isomorphism in general. 
An important difference between the source and the target of the canonical morphism is the behaviour of Frobenius morphisms. Namely assume that $G$ is one of the classical groups $\GL_n(k)$, $\Sp_{2n}(k)$ or $\orth_{n,n}(k)$ (and if $k$ has characteristic $\ne 2$ in the latter case), let $\phi:G\to G$, $[a_{ij}]\mapsto [a_{ij}^p]$, denote the morphism of algebraic groups induced by the Frobenius endomorphism of $k$, and let $V^{[r]}$ denote the restriction of $V$ along $\phi^r$. We have a commutative ladder whose horizontal arrows are induced by restriction along $\phi$:
\[
\begin{tikzcd}[column sep=small]
\EExt^i_G(V,W)\ar{r}\ar{d}&\cdots \ar{r}&\EExt^i_G(V^{[r]},W^{[r]})\ar{d}\ar{r}&\EExt^i_G(V^{[r+1]},W^{[r+1]})\ar{r}\ar{d}&\cdots\\
\Ext^i_G(V,W)\ar{r}&\cdots \ar{r}&\Ext^i_G(V^{[r]},W^{[r]})\ar{r}&\Ext^i_G(V^{[r+1]},W^{[r+1]})\ar{r}&\cdots\;.
\end{tikzcd}
\]
Since $k$ is perfect, $\phi$ has an inverse $\phi^{-1}([a_{ij}])=[a_{ij}^{-p}]$ so that the morphisms in the bottom row are all isomorphisms. However $\phi$ has no inverse in the sense of algebraic groups; instead it is known \cite[II 10.14]{Jantzen} that the morphisms in the top row are all injective (and that they are isomorphisms for $r\gg 0$ only). Hence, the ladder yields canonical maps:
\begin{align}
\Phi_{k,G}:\EExt^*_{G}(V^{[r]},W^{[r]})\to \Ext^*_G(V,W)\;.
\label{eqn-compare-gen}
\end{align}

Embed $G=\GL_n(k)$, $\Sp_{2n}(k)$ or $\orth_{n,n}(k)$ into the multiplicative monoid of matrices $\M_m(k)$ in the usual way (here $m=n$ for $\GL_n(k)$ and $m=2n$ for the symplectic or orthogonal groups). Then a finite-dimensional representation $V$ of $G$ is called \emph{polynomial of degree less or equal to $d$} if it is the restriction to $G$ of a representation of $\M_m(k)$, such that the coordinate maps of the action morphism
$$\rho_V: \M_m(k)\to \End_k(V)\simeq \M_{\dim V}(k)$$
are polynomials of degree less or equal to $d$ of the $m^2$ entries of $[a_{ij}]\in \M_m(k)$.  The first main result of the section is the following theorem.
\begin{thm}\label{thm-comp-GL}
Assume that the perfect field $k$ is infinite, and that $V$ and $W$ are two polynomial representations of $G=\GL_n(k)$ of degrees less or equal to $d$. Let $r$ be a nonnegative integer such that $n\ge \max\{dp^r,4p^r+2d+1\}$. Then the map \eqref{eqn-compare-gen} is an isomorphism in degrees $i<2p^r$ and it is injective in degree $i=2p^r$.
\end{thm}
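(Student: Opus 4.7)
The plan is to translate both sides of the comparison map \eqref{eqn-compare-gen} into functor-homological terms and then invoke Theorem \ref{thm-infinite-perfect-field-Ext}. Since $n\ge d$, the equivalence $\Pp_{d,k}\simeq S(n,d)\Md$ from section \ref{subsec-str-pol} lets us write $V=F(k^n)$ and $W=G(k^n)$ for strict polynomial functors $F,G$ of degree $\le d$; the twisted representations $V^{[r]}$, $W^{[r]}$ are then $F^{(r)}(k^n)$, $G^{(r)}(k^n)$, polynomial of degree $\le dp^r$. Because $n\ge dp^r$, the same Schur-algebra equivalence (now at degree $dp^r$) gives
\[
\EExt^*_{G}(V^{[r]},W^{[r]})\simeq \Ext^*_{\Pp_k}(F^{(r)},G^{(r)}).
\]
By Proposition-Definition \ref{pdef-gen-Ext}, the further restriction along $I^{(1)}$ identifies this with $\Ext^*_{\gen}(F,G)$ in degrees $<2p^r$ and injects it into $\Ext^*_{\gen}(F,G)$ in degree $2p^r$. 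Finally, Theorem \ref{thm-infinite-perfect-field-Ext} identifies $\Ext^*_{\gen}(F,G)$ with $\Ext^*_{k[\Vect_k]}(F,G)$.

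For the right-hand side of $(*)$, I would rewrite $\Ext^*_{G}(V,W)=\rmH^*(\GL_n(k);F^\vee(k^n)\otimes_k G(k^n))$, viewing $F^\vee\otimes G$ as a bifunctor on $\Vect_k$ of total degree $2d$. Then one combines the homological stability theorem of \cite{Dwyer,vdK-stab,RWW} for $\GL_n(k)$ with polynomial coefficients (whose explicit range, linear in $n$, forces the threshold $n\ge 4p^r+2d+1$ in order to cover degrees $i\le 2p^r$) with the cohomological incarnation of the Djament--Scorichenko identification \eqref{eq-iso-motiv} to obtain an isomorphism
\[
\Ext^*_{\GL_n(k)}(F(k^n),G(k^n))\simeq \Ext^*_{k[\Vect_k]}(F,G)
\]
in the range $i\le 2p^r$. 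Taking $\pi={}^\vee-$ and $\rho=\id$ in \eqref{eq-iso-motiv} is the natural choice for identifying the polynomial-coefficient part.

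The last step is a compatibility check: under the identifications of the two previous paragraphs, the map $\Phi_{k,G}$ of \eqref{eqn-compare-gen} corresponds to the strong comparison map $\Phi_k$ of Definition \ref{defi-strong-comp-map}. This is a naturality diagram chase, using that Frobenius stabilization on the rational side is compatible with the stability maps on the discrete side, and that restriction along $\phi^r$ on rational $\Ext$ corresponds to precomposition with $I^{(r)}$ on $\Pp_k$. Once this is verified, Theorem \ref{thm-comp-GL} follows directly from the fact that $\Phi_k$ is a graded isomorphism. The main obstacle is the careful setup of the cohomological version of \eqref{eq-iso-motiv} with an \emph{explicit} stability range, together with the compatibility check, as the extra factor $\rmH^*(\GL_\infty(k);k)$ that appears in the homological form of \eqref{eq-iso-motiv} must be tracked or shown not to obstruct the direct cohomological identification in the low-degree range dictated by $n\ge 4p^r+2d+1$.
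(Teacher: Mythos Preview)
Your outline matches the paper's proof almost exactly: the same commutative square relating $\Ext^*_{\Pp_k}(F^{(r)},G^{(r)})$, $\Ext^*_{k[\Vect_k]}(F^{(r)},G^{(r)})$, $\EExt^*_{\GL_n}$ and $\Ext^*_{\GL_n}$, with the three non-bottom maps handled respectively by the Friedlander--Suslin equivalence ($n\ge dp^r$), by Theorem~\ref{thm-infinite-perfect-field-Ext} together with the stabilization bound of Proposition-Definition~\ref{pdef-gen-Ext}, and by homological stability (Randal-Williams--Wahl, with $\delta=2d$ giving exactly $n\ge 4p^r+2d+1$) combined with Scorichenko's theorem. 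The compatibility check you worry about is automatic: the square commutes because all four maps are induced by evaluation on $k^n$ and the forgetful functor, so no delicate diagram chase is needed.

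The one point you correctly flag as unresolved is the \emph{actual} gap in your proposal: the extra factor $\rmH_*(\GL_\infty(k);k)$ in the Scorichenko formula \eqref{eq-iso-motiv} does not merely need to be ``tracked'' --- it must vanish in positive degrees, or else the map $(c)$ from $\Ext^*_{k[\Vect_k]}$ to discrete $\Ext^*_{\GL_n}$ fails to be an isomorphism in any range. The paper supplies this as a separate lemma (Lemma~\ref{lm-vanish-HGL}): for an infinite perfect field $k$ of characteristic $p$, the mod~$p$ homology of $\GL_\infty(k)$ vanishes in positive degrees. The proof is $K$-theoretic: Kratzer shows that the homotopy groups of $B\GL(k)^+$ are uniquely $p$-divisible, and then the $p$-local Hurewicz theorem gives the vanishing. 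This is a genuinely external input that your proposal does not supply; once you add it, the argument is complete and coincides with the paper's.
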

The second main result of the section is an analogue of theorem \ref{thm-comp-GL} for orthogonal and symplectic groups. Here we take $V=k$ hence $V^{[r]}=k$ and the comparison map \eqref{eqn-compare-gen} can be rewritten as a map
\begin{align}
\Phi_{k,G}:\HH^*(G;W^{[r]})\to \rmH^*(G;W)\;.
\label{eqn-compare-osp}
\end{align}
\begin{thm}\label{thm-comp-OSp}
Let $k$ be an infinite perfect field of odd characteristic $p$, let $G=\Sp_{2n}(k)$ or $\orth_{n,n}(k)$ and let $W$ be a polynomial representation of degree less or equal to $d$. Assume that $2n\ge \max\{dp^r, 8p^r+4+2d\}$.
Then the comparison map \eqref{eqn-compare-osp} is an isomorphism in degrees $i<2p^r$ and it is injective in degree $i=2p^r$.
\end{thm}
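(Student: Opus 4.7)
The plan is to parallel the proof of theorem \ref{thm-comp-GL}, identifying both sides of the comparison map \eqref{eqn-compare-osp} stably with $\Ext$-groups that are matched by our main comparison theorem \ref{thm-infinite-perfect-field-Ext}. The argument decomposes into a rational stabilization step, a discrete stabilization step, and the application of the comparison theorem, together with a careful tracking of ranges.

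On the rational side, let $\mathcal{W}$ denote the strict polynomial functor corresponding to the polynomial representation $W$ of degree $\le d$. By Touz\'e's functorial description of the stable cohomology of classical algebraic groups (suitably twisted by Frobenius), when $2n\ge dp^r$ the rational cohomology $\HH^*(G;W^{[r]})$ is isomorphic to an explicit $\Ext$-group of the form $\Ext^*_{\Pp_k}(\Gamma^0, \mathcal{W}^{(r)}\circ\beta)$, where $\beta=S^2$ in the symplectic case and $\beta=\Lambda^2$ in the orthogonal case (the hypothesis that $p$ is odd ensures these are the relevant direct summands of $\otimes^2$). By definition-proposition \ref{pdef-gen-Ext}, the canonical map from this $\Pp_k$-$\Ext$ into its generic $\Ext$ version is an isomorphism in degrees $i<2p^r$ and injective in degree $i=2p^r$.

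On the discrete side, the stability theorems of Charney (orthogonal) and Mirzaii--van der Kallen (symplectic) imply, under our hypothesis $2n\ge 8p^r+4+2d$, that the canonical map $\rmH^*(G;W)\to \rmH^*(G_\infty;W_\infty)$ is an isomorphism in degrees $i\le 2p^r$. The stable discrete cohomology is then described by a symplectic/orthogonal analogue of the isomorphism \eqref{eq-iso-motiv}: it factors as the trivial-coefficients cohomology tensored with an $\Ext$-group in $k[\A]\Md$, where $\A$ is an $\Fp$-linear additive category encoding the symplectic or orthogonal form on $k^{2n}$. Applying theorem \ref{thm-infinite-perfect-field-Ext} to this $k[\A]$-$\Ext$ group (via the appropriate classical-group analogue of the $\pi$, $\rho$ associated to the bilinear form) matches it with the generic $\Pp_k$-$\Ext$ obtained on the rational side.

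Tracking the naturality of all these identifications with respect to the Frobenius restriction and the rational-to-discrete comparison identifies the composite with $\Phi_{k,G}$, yielding the claimed isomorphism and injectivity ranges. The main obstacle is the careful book-keeping of the stability ranges: one must verify that the constants coming from Charney/Mirzaii--van der Kallen stability, from the Frobenius stabilization in \ref{pdef-gen-Ext}, and from the rational identification of Touz\'e combine to give exactly $2n\ge\max\{dp^r,\,8p^r+4+2d\}$, and that the functor-cohomology description of the stable discrete side, which relies on a generalization of \cite{DjaR} to classical groups, is indeed available in the required range.
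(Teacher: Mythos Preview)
Your overall three-step strategy---rational identification via Touz\'e, discrete identification via a Djament--Vespa type result, then application of the strong comparison---is the same as the paper's. However, several of the specific steps are incorrect or incomplete in ways that matter.

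First, two small corrections. You have swapped the characteristic functors: the paper takes $X=S^2$ for the orthogonal group and $X=\Lambda^2$ for the symplectic group (the invariant form $\omega$ is symmetric, resp.\ alternating). Also, your formula $\Ext^*_{\Pp_k}(\Gamma^0,\mathcal{W}^{(r)}\circ\beta)$ cannot be right: $\Gamma^0=k$ has degree $0$, so all such $\Ext$-groups between functors of different degrees vanish. The correct rational identification (from \cite{TouzeClassical}) is $\HH^*(G;F^{(r)}(k^{2n}))\simeq \Ext^*_{\Pp_k}(\Gamma^{dp^r/2}X,F^{(r)})$ when $d$ is even and $2n\ge dp^r$, and $0$ when $d$ is odd.

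The more serious gap is on the discrete side. There is no auxiliary ``$\Fp$-linear additive category $\A$ encoding the form'' in the paper's argument, and theorem~\ref{thm-infinite-perfect-field-Ext} applies only to $k[\Vect_k]$, not to a general $k[\A]$. What actually happens is that \cite[Cor~5.4]{DjaR} together with homological stability \cite[Thm~5.15]{RWW} identify $\rmH^*(G;F^{(r)}(k^{2n}))$, in the stated range, with $\Ext^*_{k[\Vect_k]}(k[X],F^{(r)})$, still over $\Vect_k$. For this step to give exactly that (rather than something tensored with $\rmH^*(G_\infty;k)$), one needs the nontrivial input that $\rmH_*(\Sp_\infty(k);\Fp)$ and $\rmH_*(\orth_{\infty,\infty}(k);\Fp)$ vanish in positive degrees; the paper extracts this from Karoubi's hermitian $K$-theory and the computation of Witt groups in odd characteristic (lemma~\ref{lm-vanish-HOSp}). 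You do not mention this vanishing, and without it the identification fails.

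Finally, even once the two sides are $\Ext^*_{\Pp_k}(\Gamma^{dp^r/2}X,F^{(r)})$ and $\Ext^*_{k[\Vect_k]}(k[X],F^{(r)})$, you cannot apply theorem~\ref{thm-infinite-perfect-field-Ext} directly: the functor $k[X]$ is \emph{not} strict polynomial. The paper bridges this by the natural transformation $k[X]\to\Gamma^{dp^r/2}X$, $x\mapsto x^{\otimes dp^r/2}$, and proves (proposition~\ref{pr-PF-OSp}) that the induced map is $2p^r$-connected. This is not formal: one passes to $\otimes^2$ as a retract (using $p$ odd), then by sum--diagonal adjunction to bifunctors, then by separable resolution to $\Ext^*(F_1^{\sharp(r)},F_2^{(r)})$ for one-variable functors, where theorem~\ref{thm-infinite-perfect-field-Ext} finally applies. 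This reduction is the substantive new content for the orthogonal/symplectic case, and your proposal does not supply it.
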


The remainder of the section is devoted to the proof of theorems \ref{thm-comp-GL} and \ref{thm-comp-OSp}. 

\subsection{Proof of theorem \ref{thm-comp-GL}}
Let us say that a map between cohomologically graded vector spaces is \emph{$e$-connected} if it is an isomorphism in degrees less than $e$ and if it is injective in degree $e$. Thus, we have to show that the canonical map 
\[\EExt^*_{\GL_n(k)}(V^{[r]},W^{[r]})\to \Ext_{\GL_n(k)}^*(V^{[r]},W^{[r]})\]
is $2p^r$-connected. 
Since $n>dp^r$, we know from \cite[Lm 3.4]{FS} that there are uniquely determined strict polynomial functors $F$ and $G$ with degrees less or equal to $d$ and with finite-dimensional values, such that $F^{(r)}(k^n)\simeq V^{[r]}$ and $G^{(r)}(k^n)=W^{[r]}$. 
We consider the commutative diagram, in which the vertical arrows are induced by evaluation on $k^n$ and the horizontal arrows are the canonical maps (induced by forgetting the `scheme structure'):
\[
\begin{tikzcd}
\Ext^*_{\Pp_k}(F^{(r)},G^{(r)})\ar{d}{(a)}\ar{r}{(b)}& \Ext^*_{k[\Vect_k]}(F^{(r)},G^{(r)})\ar{d}{(c)}\\
\EExt^*_{\GL_n(k)}(F^{(r)}(k^n),G^{(r)}(k^n))\ar{r}&\Ext_{\GL_n(k)}^*(F^{(r)}(k^n),G^{(r)}(k^n))
\end{tikzcd}\;.
\]
The map $(a)$ is a graded isomorphism by \cite[Cor 4.3.1]{FS} since $n\ge dp^r$. The map $(b)$ is $2p^r$-connected by theorem \ref{thm-infinite-perfect-field-Ext} and by the bound of stabilization for generic $\Ext$ given in proposition-definition \ref{pdef-gen-Ext}. Thus, the following proposition finishes the proof of theorem \ref{thm-comp-GL}.
\begin{pr}\label{pr-FH-GL}
For all strict polynomial functors $F$ and $G$ of degree less or equal to $d$ with finite-dimensional values, if $n\ge 2e+2d+1$, then the map $(c)$ is $e$-connected.
\end{pr}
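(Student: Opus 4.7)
The plan is to factor $(c)$ through the stable cohomology of $\GL_\infty(k)$ and treat the two resulting pieces separately, using a homological stability theorem for $\GL_n(k)$ with polynomial bifunctor coefficients and Scorichenko's identification of stable $\GL$-cohomology with functor cohomology (in the form \eqref{eq-iso-motiv}). First I would reduce to the case $r=0$: since $k$ is perfect the Frobenius is an automorphism of the discrete group $\GL_n(k)$ and ${}^{(r)}-$ is a self-equivalence of $\Vect_k$, and the induced identifications commute with $(c)$. Then I would factor $(c)$ as
\[\Ext^*_{k[\Vect_k]}(F,G)\xrightarrow{\;\sigma\;}\Ext^*_{\GL_\infty(k)}(F_\infty,G_\infty)\xrightarrow{\;\tau\;}\Ext^*_{\GL_n(k)}(F(k^n),G(k^n))\]
with $F_\infty=\colim_n F(k^n)$, $G_\infty=\colim_n G(k^n)$, and $\tau$ induced by restriction along $\GL_n(k)\hookrightarrow \GL_\infty(k)$.

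For the map $\tau$, I would rewrite both sides as group cohomology with coefficients in the polynomial bifunctor $B(v,w):=F^\vee(v)\otimes_k G(w)$ on $\Vect_k$, which is of bidegree $(d,d)$. Applying van der Kallen's homological stability theorem \cite{vdK-stab} for $\GL_n(k)$ with polynomial bifunctor coefficients should give the $e$-connectedness of $\tau$ in the range $n\ge 2e+2d+1$, since the bound matches the typical stability bound $i\le (n-2d-1)/2$ for such bifunctor coefficients.

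For the map $\sigma$, I would apply the Scorichenko/DjaR identification \eqref{eq-iso-motiv} with $R=k$, $T'=F^\vee$ and $T=G$, obtaining
\[\rmH_*(\GL_\infty(k);F_\infty^\vee\otimes_k G_\infty)\simeq \rmH_*(\GL_\infty(k);k)\otimes_k \Tor_*^{k[\Vect_k]}(F^\vee,G).\]
Since $k$ is infinite perfect of positive characteristic $p$, Suslin--Geisser--Levine type vanishings for the $K$-theory of fields of characteristic $p$ imply that $BGL(k)^+$ is $\Fp$-acyclic, hence $\rmH_*(\GL_\infty(k);k)$ is concentrated in degree zero. The Scorichenko decomposition then collapses, and dualizing via $\Ext$-$\Tor$ duality (legitimate because $F(k^n)$ and $G(k^n)$ are finite-dimensional) identifies $\sigma$ as an isomorphism. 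Combining with the $e$-connectedness of $\tau$ yields the $e$-connectedness of $(c)$.

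The hard part will be pinning down the exact stability range in van der Kallen's theorem for bifunctor coefficients of bidegree $(d,d)$, so as to match precisely the bound $n\ge 2e+2d+1$; a secondary technical point is to justify the $\Fp$-acyclicity of $BGL(k)^+$ for an arbitrary infinite perfect field of characteristic $p$, not only for algebraically closed ones, and to verify that the Scorichenko--DjaR isomorphism is natural enough to pass to $\Ext$ through the required duality.
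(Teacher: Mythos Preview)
Your approach is essentially the same as the paper's: factor through $\GL_\infty$, use Scorichenko/\cite{DjaR} together with the vanishing of $\rmH_*(\GL_\infty(k);k)$ in positive degrees (which the paper obtains as a separate lemma from the unique $p$-divisibility of $K_*(k)$ due to Kratzer, plus the $p$-local Hurewicz theorem) for one piece, and a homological stability theorem for the other. The paper works throughout on the $\Tor$ side and only dualizes to $\Ext$ at the very end, and it cites \cite[Thm~5.11]{RWW} rather than van der Kallen, which directly yields the range $n\ge 2e+\delta+1$ with $\delta\le 2d$ the polynomial degree of the coefficient system; this resolves both of the technical concerns you flagged.
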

\begin{proof}
Let $T$ and $T'$ be two strict polynomial functors, respectively covariant and contravariant. Then $T$ defines left $k[\GL_n(k)]$-modules $T(k^n)$ (where $g\in \GL_n(k)$ acts as $T(g)$ on $T(k^n)$) for all positive $n$, and the colimit of these representations yields a left $k[\GL_\infty]$-module $T_\infty$ (as explained in section \ref{subsec-ex-GL}). Similarly, $T'$ defines right $k[\GL_n(k)]$-modules $T'(k^n)$ and a right $k[\GL_\infty]$-module $T'_\infty$. 
We consider the commutative triangle
\[
\begin{tikzcd}
{\begin{array}{c}
\Tor_*^{k[\GL_\infty(R)]}(T'_\infty,T_\infty)=\\
\colim_{n}\Tor_*^{k[\GL_n(R)]}(T'(k^n),T(k^n))
\end{array}
}\ar{rd}{(**)}\\
\Tor_*^{k[\GL_n(R)]}(T'(k^n),T(k^n))\ar{r}\ar{u}{(*)}&\Tor_*^{k[\Vect_k]}(T',T)&
\end{tikzcd}
\] 
in which the vertical arrow $(*)$ is the canonical arrow of the colimit, the horizontal arrow is induced by evaluation on $k^n$ and the last arrow $(**)$ is the one given by the universal property of colimits.


The homology $\rmH_*(\GL_\infty(k);k)$ is zero in positive degrees by lemma \ref{lm-vanish-HGL}, hence it follows from \cite[Thm 5.6]{DjaR} that the map $(**)$ is a graded isomorphism. 

Now strict polynomial functors yield split coefficient systems of finite degree in the sense of \cite[Def 4.10]{RWW}, thus it follows from \cite[Thm 5.11]{RWW} that the vertical map $(*)$ is an isomorphism in degrees less than $e$ and an epimorphism in degree $e$ provided $n\ge 2e+\delta +1$, where $\delta$ is the degree of the coefficient system $T'(k^n)\otimes_k T(k^n)$, $n\ge 1$. 
Using duality between $\Ext$ and $\Tor$, we deduce that the map induced by evaluation on $k^n$:
\[\Ext^*_{k[\Vect_k]}(T,D_kT')\to \Ext^*_{\GL_n(k)}(T(k^n),D_kT'(k^n))\]
is $e$-connected, provided $n\ge 2e+\delta +1$. 

Now take $T=F^{(r)}$ and $T'=D_kG^{(r)}$. Then $G^{(r)}=D_kT'$ since $G$ has finite dimensional values, and the proposition follows from the fact that if $F$ and $G$ are both strict polynomial functors of degree less or equal to $d$, then the degree of the coefficient system $T'(k^n)\otimes_k T(k^n)$, $n\ge 1$, is less or equal to $2d$. 
\end{proof}

\begin{lm}\label{lm-vanish-HGL}
Let $k$ be an infinite perfect field of positive characteristic $p$. The mod $p$ homology of $\GL_\infty(k)$ is zero in positive degrees.
\end{lm}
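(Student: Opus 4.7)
The plan is to pass through Quillen's plus construction and then reduce the statement to a well-known vanishing of mod-$p$ Quillen $K$-theory of perfect fields of characteristic $p$. Concretely, I will first invoke the standard identity
\[\rmH_*(\GL_\infty(k);\Fp) = \rmH_*(B\GL_\infty(k)^+;\Fp),\]
whose target is the mod-$p$ homology of a connected nilpotent H-space with homotopy groups $K_n(k)$ in positive degrees.

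The heart of the argument will be the fact that for any perfect field $k$ of characteristic $p$, the Quillen $K$-group $K_n(k)$ is uniquely $p$-divisible for every $n\geq 1$ (equivalently, $K_n(k;\Fp)=0$ for $n\geq 1$). For $n=1$ this is elementary: $K_1(k)=k^\times$ is $p$-divisible since $k$ is perfect, and has no $p$-torsion since $\mathrm{char}(k)=p$. For $n\geq 2$ this is a theorem of Geisser--Levine; together with the motivic spectral sequence it reduces $K_*(k)/p$ to mod-$p$ Milnor $K$-theory $K^M_*(k)/p$, which vanishes in positive degrees because the Frobenius acts on $K^M_n(k)$ simultaneously as an automorphism (by perfectness) and as multiplication by $p^n$.

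With the mod-$p$ homotopy of $B\GL_\infty(k)^+$ vanishing in positive degrees, a standard Postnikov-tower argument will conclude the vanishing of mod-$p$ homology in positive degrees: each Eilenberg--MacLane stage $K(A,n)$ with $A$ uniquely $p$-divisible is $\Fp$-acyclic above degree $0$ (immediate for $n=1$ from the Künneth computation of $\rmH_*(BA;\Fp)$, and by induction via the path-loop Serre spectral sequence for $n\geq 2$), and the Serre spectral sequence of each Postnikov fibration then propagates the vanishing to the total space.

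The main obstacle will be the appeal to the Geisser--Levine vanishing, which is a substantial input and somewhat outside the self-contained algebraic style of the rest of the paper. A more elementary route using only the Frobenius automorphism of $\GL_\infty(k)$ and its action on $\rmH_*(\GL_\infty(k);\Fp)$ seems possible but awkward; the $K$-theoretic route above is the shortest and aligns with the standard references.
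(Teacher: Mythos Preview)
Your proof is correct and follows the same outline as the paper's: pass to $B\GL_\infty(k)^+$, establish that its homotopy groups $K_n(k)$ are uniquely $p$-divisible for $n\ge 1$, and deduce the vanishing of mod-$p$ homology. The one substantive difference is the input for unique $p$-divisibility. The paper cites Kratzer (1980), whose argument goes through the $\lambda$-ring structure and Adams operations on algebraic $K$-theory together with the identification of $\psi^p$ with the map induced by the Frobenius on the $K$-theory of an $\Fp$-algebra; this is far lighter than Geisser--Levine and is in fact close in spirit to the ``more elementary route using only the Frobenius'' that you mention but set aside. So your concern about invoking heavy motivic machinery is well founded, and the paper simply sidesteps it by appealing to the older result. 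For the homotopy-to-homology step, the paper quotes the $p$-local Hurewicz theorem from Neisendorfer, which is exactly the packaged form of your Postnikov-tower argument.
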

\begin{proof}
The result follows from the $p$-local Hurewicz theorem \cite[Thm 1.8.1]{Neisendorfer} and the unique $p$-divisibility of the homotopy groups of  $B\GL(k)^+$ over an infinite perfect field of characteristic $p$ \cite[Lm 5.2 and Cor 5.5]{Kratzer}. 
\end{proof}

\subsection{Proof of theorem \ref{thm-comp-OSp}}
The proof of theorem \ref{thm-comp-OSp} follows the same idea as the $\GL_n(k)$ case. Since $2n>dp^r$, we know from \cite[Lm 3.4]{FS} that there is a uniquely determined strict polynomial functor $F$ with degree less or equal to $d$ and with finite-dimensional values such that $F^{(r)}(k^{2n})=W^{[r]}$. Thus we have to show that the canonical map
\[\HH^*(G;F^{(r)}(k^{2n}))\to  \rmH^*(G;F^{(r)}(k^{2n}))\]
is $2p^r$-connected. Since every strict polynomial functor splits as a direct sum of homogeneous strict polynomial functors, we may assume further that $F$ is homogeneous of degree $d$.

Assume that $G=\orth_{n,n}(k)$ or $G=\Sp_{2n}(k)$. 
We associate to $G$ a `characteristic functor' $X:\Vect_k\to \Vect_k$, namely $X=S^2$ in the orthogonal case  $X=\Lambda^2$ in the symplectic case. Let $k[X]$ denote the functor which sends a vector space $v$ to the vector space with basis $X(v)$ and let $\Gamma^{d}X$ denote the functor such that $\Gamma^{d}X(v)=\Gamma^{d}(X(v))$. 
We set:
\begin{align*}
&\rmH^*_X(k[\Vect_k];F^{(r)}):=\Ext^*_{k[\Vect_k]}(k[X],F^{(r)}), \\
&\rmH^*_X(\Pp_k;F^{(r)}):=\begin{cases}
\Ext^*_{\Pp_k}(\Gamma^{p^{r}d/2}X,F^{(r)}) & \text{ if $d$ is even,}\\
0 &\text{ if $d$ is odd.}
\end{cases}
\end{align*}
We consider the commutative square
\[
\begin{tikzcd}
\rmH^*_{X}(\Pp_k;F^{(r)})\ar{d}{(a)}\ar{r}{(b)}& \rmH^*_{X}(k[\Vect_k];F^{(r)})\ar{d}{(c)}\\
\HH^*(G;F^{(r)}(k^{2n}))\ar{r}{\Phi_{k,G}}&\rmH^*(G;F^{(r)}(k^{2n}))
\end{tikzcd}
\]
in which maps $(a)$, $(b)$ and $(c)$ are defined as follows. The bilinear form defining $G$ yields an element $\omega\in X(k^{2n})$ invariant under the action of $G$, hence a $G$-equivariant morphism 
$\lambda:k\to k[X(k^{2n})]$ such that $\lambda(1)=\omega$. The map $(c)$ is the composition
\[\Ext^*_{k[\Vect_k]}\left(k[X],F^{(r)}\right)\to \Ext^*_{G}\left(k[X(k^{2n})],F^{(r)}(k^{2n})\right)\to\Ext^*_{G}\left(k,F^{(r)}(k^{2n})\right) \]
where the first map is given by evaluation on $k^{2n}$, and the second one is pullback along $\lambda$.
Similarly, the map $(a)$ is zero if $d$ is odd, and if $d$ is even, it is induced by evaluation on $k^{2n}$ and pullback along the $G$-equivariant morphism $\underline{\lambda}:k\to \Gamma^{d/2}(X(k^{2n}))$ such that $\underline{\lambda}(1)=\omega^{\otimes d/2}$.
Finally the map $(b)$ is zero if $d$ is odd, and if $d$ is even, it equals the composition
\[\rmH^*_{X}(\Pp_k;F^{(r)})\to \Ext^*_{k[\Vect_k]}(\Gamma^{p^rd/2}X,F^{(r)})\to  \rmH^*_{X}(k[\Vect_k];F^{(r)}) \]
where the first map is induced by the forgetful functor $\Pp_k\to k[\Vect_k]\Md$ and the second one is induced by pullback along the natural transformation $k[X]\to \Gamma^{p^rd/2}X$ which sends every element $x\in X(v)$ to $x^{\otimes p^rd/2}\in\Gamma^{p^rd/2}(X(v))$. 
 
Since $2n\ge dp^r$, the map $(a)$ is an isomorphism by 
\cite[Thm 3.17]{TouzeClassical} or \cite[Thm 3.24]{TouzeClassical}. The map $(b)$ is $2p^r$-connected by proposition \ref{pr-PF-OSp} below, and the map $(c)$ is $2p^r$-connected by proposition \ref{pr-FH-OSp} below. Thus $\Phi_{k,G}$ is $2p^r$-connected, which finishes the proof of theorem \ref{thm-comp-OSp}.

\begin{pr}\label{pr-PF-OSp}
Let $k$ be a field of odd characteristic. For all strict polynomial functors $F$ with finite-dimensional values, the map $(b)$ is $2p^r$-connected.
\end{pr}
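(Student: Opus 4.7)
When $d$ is odd, map $(b)$ is zero by definition, so the statement is vacuous; assume $d$ is even. The plan is to factor $(b)$ through an intermediate $\Ext$ group whose two arguments both carry a Frobenius twist of order $r$, so that the required $2p^r$-connectivity follows from theorem \ref{thm-infinite-perfect-field-Ext} combined with the stabilization range for generic $\Ext$ given in proposition-definition \ref{pdef-gen-Ext}.

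Concretely, I would construct a strict polynomial morphism $\delta\colon(\Gamma^{d/2}X)^{(r)}\to\Gamma^{p^rd/2}X$ in $\Pp_k$, assembled from the canonical subfunctor inclusion $I^{(r)}\hookrightarrow\Gamma^{p^r}$ (the $p^r$-th divided power) together with the multiplicative structure of the divided power algebra. One then checks that $\alpha\colon k[X]\to\Gamma^{p^rd/2}X$, $x\mapsto x^{[p^rd/2]}$, factors (up to a nonzero scalar coming from Lucas-type multinomial identities in $\Fp$) as
\[
k[X]\xrightarrow{\zeta}(\Gamma^{d/2}X)^{(r)}\xrightarrow{\delta}\Gamma^{p^rd/2}X,
\]
where $\zeta$ sends $x\in X(v)$ to $x^{[d/2]}\in\Gamma^{d/2}(X({}^{(r)}v))=(\Gamma^{d/2}X)^{(r)}(v)$ under the set-level identification $X({}^{(r)}v)=X(v)$. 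Correspondingly, map $(b)$ decomposes as
\[
\Ext^*_{\Pp_k}(\Gamma^{p^rd/2}X,F^{(r)})\xrightarrow{\delta^*}\Ext^*_{\Pp_k}((\Gamma^{d/2}X)^{(r)},F^{(r)})\xrightarrow{(*)}\Ext^*_{k[\Vect_k]}((\Gamma^{d/2}X)^{(r)},F^{(r)})\xrightarrow{\zeta^*}\Ext^*_{k[\Vect_k]}(k[X],F^{(r)}).
\]
The middle arrow $(*)$ is the forgetful map for a pair of strict polynomial functors whose Frobenius twists match at order $r$: by theorem \ref{thm-infinite-perfect-field-Ext} together with the Frobenius equivalence of $\Vect_k$, its target identifies with $\Ext^*_\gen(\Gamma^{d/2}X,F)$, and proposition-definition \ref{pdef-gen-Ext} then shows that $(*)$ is $2p^r$-connected.

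The main obstacle is proving that the outer maps $\delta^*$ and $\zeta^*$ are isomorphisms. For $\delta^*$, the strategy is to decompose $\Gamma^{p^rd/2}X$ into summands indexed by ``Frobenius weight'' in $\Pp_k$, verify that $\delta$ identifies $(\Gamma^{d/2}X)^{(r)}$ with the top-weight summand, and then apply a standard vanishing statement for $\Ext$ between strict polynomial functors of mismatched Frobenius weights to kill the contributions of the other summands when mapping to $F^{(r)}$. For $\zeta^*$, reduce by a spectral sequence argument (using an injective resolution of $F$ in $\Pp_k$) to the case of a standard injective $F=S^{e,V}$ with $V$ finite dimensional, and then compute both $\Ext$ groups explicitly in terms of set-valued natural transformations on $\Vect_k$: the key input is that over the infinite perfect field $k$, the set-valued natural transformations from $X$ into a target of Frobenius weight divisible by $p^r$ come exactly from strict polynomial natural transformations out of $(\Gamma^{d/2}X)^{(r)}$. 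Special care is required in small characteristic (when $(d/2)!$ vanishes modulo $p$), where the scalar in $\delta\circ\zeta=\mathrm{const}\cdot\alpha$ can vanish and a refined variant of $\delta$ or an enlarged intermediate target is needed to salvage the factorization.
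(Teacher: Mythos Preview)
Your treatment of the odd case is wrong. When $d$ is odd the source $\rmH^*_X(\Pp_k;F^{(r)})$ is declared to be $0$ and the map $(b)$ is the zero map, but ``$2p^r$-connected'' requires an isomorphism in degrees below $2p^r$, hence you must still prove that the target $\Ext^*_{k[\Vect_k]}(k[X],F^{(r)})$ vanishes in those degrees. This is the actual content of the odd case in the paper, which establishes the vanishing in all degrees via the same reduction machinery as in the even case (replace $X$ by $\otimes^2$, pass to bifunctors, reduce to separable type $F_1\boxtimes F_2$, and use that $d_1\ne d_2$ forces the resulting one-variable $\Ext$ to vanish). So the odd case is not vacuous and your proposal leaves it entirely open.

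For the even case your approach is genuinely different from the paper's and runs into obstacles you already partly anticipate. The paper never factors through $(\Gamma^{d/2}X)^{(r)}$; instead it exploits that $p$ is odd to make $X$ a $\Si_2$-retract of $\otimes^2$, so it suffices to treat the analogue of $(b)$ with $X$ replaced by $\otimes^2$. The sum--diagonal adjunction then turns this into a bifunctor comparison, which after reduction to separable type and dualization of one variable becomes precisely the forgetful map $\Ext^*_{\Pp_k}(F_1^{\sharp\,(r)},F_2^{(r)})\to\Ext^*_{k[\Vect_k]}(F_1^{\sharp\,(r)},F_2^{(r)})$, handled directly by theorem~\ref{thm-infinite-perfect-field-Ext} and the stabilization bound. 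Your route, by contrast, needs $\delta^*$ and $\zeta^*$ to be isomorphisms. For $\delta^*$ there is no direct-sum ``Frobenius weight'' decomposition of $\Gamma^{p^rd/2}X$ in $\Pp_k$ of the kind you invoke (one has only filtrations, and the relevant $\Ext$-vanishing for the other graded pieces against $F^{(r)}$ is not a standard fact). For $\zeta^*$ your sketch is essentially a statement about polynomial maps over an infinite field that you do not prove, and you yourself flag the failure of the factorization $\alpha=\mathrm{const}\cdot\delta\circ\zeta$ when $p\mid(d/2)!$. These are not cosmetic issues; the paper's bifunctor route was designed precisely to avoid them.
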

\begin{proof}
Assume first that $d$ is even. Let $\otimes^2$ be the second tensor power, i.e. $\otimes^2(v)=v^{\otimes 2}$. We consider the map 
\begin{align}
\Ext^*_{\Pp_k}(\Gamma^{p^rd/2}\otimes^2,F^{(r)})
\to \Ext^*_{k[\Vect_k]}(k[\otimes^2],F^{(r)})\label{eqn-HT2}
\end{align}
induced by the forgetful functor $\Pp_k\to k[\Vect_k]\Md$ and by pullback along the natural transformation $k[\otimes^2]\to \Gamma^{p^rd/2}\otimes^2$ which sends every element $x\in X(v)$ to $x^{\otimes  p^rd/2}\in\Gamma^{p^rd/2}(X(v))$. Consider the action of the symmetric group $\Si_2$ on $\otimes^2$ such that $(\otimes^2)^{\Si_2}=X$. Then  the action of $\Si_2$ on $\otimes^2$ induces an action of $\Si_2$ on the source and the target of \eqref{eqn-HT2}. Since $p$ is odd, $\Si_2$-fixed points are  direct summands, hence the map $(b)$ is a retract of the map \eqref{eqn-HT2}. Thus it suffices to show that the map \eqref{eqn-HT2} is $2p^r$-connected.

Let $\Pp_k(2)$ denote the category of strict polynomial bifunctors with two covariant variables. Let $\boxtimes^2$ be the bifunctor such that $\boxtimes^2(v,w)=v\otimes_k w$, let $B$ be the strict polynomial bifunctor such that $B(v,w)=F(v\oplus w)$, and let $B^{(r)}$ denote the bifunctor obtained by precomposing each variable of $B$ by the Frobenius twist $I^{(r)}$, that is $B^{(r)}(v,w)=B({}^{(r)}v,{}^{(r)}w)$.
By using adjunction between sum and diagonal as in \cite[(1.7.1) p. 672]{FFSS}, the map \eqref{eqn-HT2} identifies with the map
\begin{align}
\Ext^*_{\Pp_k(2)}(\Gamma^{p^rd/2}\boxtimes^2,B^{(r)})
\to \Ext^*_{k[\Vect_k\times\Vect_k]}(k[\boxtimes^2],B^{(r)})\label{eqn-HT2bis}
\end{align} 
induced by the forgetful functor $\Pp_k(2)\to k[\Vect_k\times\Vect_k]\Md$ and by pullback along the natural transformation $k[\boxtimes^2]\to \Gamma^{p^rd/2}\boxtimes^2$ wich sends every basis element $x\in v\otimes_k w$ to $x^{\otimes p^rd/2}\in\Gamma^{p^rd/2}(v\otimes_k w)$. 
Thus, it suffices to prove that the map \eqref{eqn-HT2bis} is $2p^r$-connected for all homogeneous strict polynomial bifunctors $B$ of degree $d$, with finite dimensional values.

Every such $B$ has a coresolution by strict polynomial bifunctors of separable type, that is, of the form $(F_1\boxtimes F_2)(v,w)=F_1(v)\otimes_kF_2(w)$, where $F_1$ and $F_2$ are homogeneous strict polynomial functors of degrees $d_1$ and $d_2$ with $d_1+d_2=d$ and with finite dimensional values. Thus by a standard spectral sequence argument, it suffices to prove that the map \eqref{eqn-HT2bis} is $2p^r$-connected when $B=F_1\boxtimes F_2$. 
In this case, by dualizing the first variable and by using \cite[thm 1.5]{FF} and its equivalent for ordinary functors, The map \eqref{eqn-HT2bis} identifies with the map 
\begin{align}\Ext^*_{\Pp_k}(F_1^{\sharp \,(r)},F_2^{(r)})\to \Ext^*_{k[\Vect_k]}(F_1^{\sharp \,(r)},F_2^{(r)})
\label{eqn-HT2ter}
\end{align}
induced by the forgetful map $\Pp_k\to k[\Vect_k]\Md$, and where $F_1^\sharp$ refers to the Kuhn dual of $F_1$ \cite[Prop 2.6]{FS}. 
Finally, the map \eqref{eqn-HT2ter} is $2p^r$-connected as a consequence of theorem \ref{thm-infinite-perfect-field-Ext} and of the bound of stabilization for generic $\Ext$ given in proposition-definition \ref{pdef-gen-Ext}, which finishes the proof of proposition \ref{pr-PF-OSp} for $d$ even. 

Assume now that $d$ is odd. Then proposition \ref{pr-PF-OSp} is equivalent to the vanishing of $\rmH^*(k[X];F^{(r)})$. This vanishing is proved with the same reduction steps as in the case $d$ even. Namely, $\rmH^*(k[X];F^{(r)})$ is a direct summand of $\Ext^*_{k[\Vect_k]}(k[\otimes^2],F^{(r)})$, which is isomorphic to $\Ext^*_{k[\Vect_k\times\Vect_k]}(k[\boxtimes^2],B^{(r)})$ with $B(v,w)=F(v\oplus w)$. Thus, it suffices to prove that the latter is zero for all homogeneous strict polynomial bifunctors $B$ of degree $d$. The case  $B=F_1\boxtimes F_2$ is sufficient, where $F_1$ and $F_2$ are homogeneous strict polynomial functors of degrees $d_1$ and $d_2$ with $d_1+d_2=d$ and with finite dimensional values, and in that case, $\Ext^*_{k[\Vect_k\times\Vect_k]}(k[\boxtimes^2],B^{(r)})$ is isomorphic to $\Ext^*_{k[\Vect_k]}(F_1^{\sharp (r)},F_2^{(r)})$. We observe that $d_1\ne d_2$ because $d$ is odd. Thus these extensions are zero by the same argument as in the proof of lemma \ref{lm-homogeneity-vanish}.
\end{proof}

\begin{pr}\label{pr-FH-OSp}
Let $k$ be a perfect field of odd characteristic.
For all strict polynomial functors $F$ of degree less or equal to $d$, the map $(c)$ is $e$-connected provided $2n> 4e + 2d+ 4$.
\end{pr}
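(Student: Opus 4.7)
The proof should parallel that of Proposition~\ref{pr-FH-GL}, with the main complication coming from the presence of the invariant form $\omega$. My plan is to factor $(c)$ through the stable (co)homology of the infinite classical group $G_\infty = \bigcup_n G_n$, controlling the unstable-to-stable comparison by Randal-Williams--Wahl stability and identifying the stable value as a functor-Ext via a Scorichenko-type theorem for classical groups.

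\emph{Step 1 (Reduction and dualization).} By a standard spectral sequence argument on an injective resolution of $F$ by standard injectives $S^{d,s}$ (which have finite-dimensional values), I reduce to the case where $F$ has finite-dimensional values. Setting $M := D_k F^{(r)}$, viewed as a contravariant functor $\Vect_k \to k\Md$, and applying Ext-Tor duality (using that $F^{(r)}(k^{2n})$ is now finite-dimensional), the proposition becomes the dual statement that the composite
\[
\rmH_*(G; M(k^{2n})) \xrightarrow{\lambda_*} \Tor_*^{G}(M(k^{2n}), k[X(k^{2n})]) \xrightarrow{\mathrm{eval}} \Tor_*^{k[\Vect_k]}(M, k[X])
\]
is $e$-connected (isomorphism in degrees $< e$, surjective in degree $e$) for $2n > 4e + 2d + 4$.

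\emph{Step 2 (Triangle through $G_\infty$).} As in the proof of Proposition~\ref{pr-FH-GL}, I set up the commutative triangle whose top edge is the composite above, the left edge is the canonical map
\[
(\ast)\colon\quad \rmH_*(G;M(k^{2n})) \to \rmH_*(G_\infty; M_\infty),
\]
and the right edge is a natural map
\[
(\ast\ast)\colon\quad \rmH_*(G_\infty; M_\infty) \to \Tor_*^{k[\Vect_k]}(M, k[X]).
\]
The point is that $\mathrm{eval}\circ\lambda_*$ factors as $(\ast\ast)\circ(\ast)$ up to the splitting of $\lambda$ (which plays here the role of the invariant form serving as the ``stable vacuum vector'').

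\emph{Step 3 (Stable identification $(\ast\ast)$).} The key input is a symplectic/orthogonal analogue of \cite[Thm~5.6]{DjaR}, asserting $\rmH_*(G_\infty; M_\infty) \simeq \rmH_*(G_\infty;k) \otimes_k \Tor_*^{k[\Vect_k]}(M, k[X])$, combined with the vanishing of $\rmH_*(G_\infty;k)$ in positive degrees. The vanishing follows, exactly as in Lemma~\ref{lm-vanish-HGL}, from the $p$-local Hurewicz theorem \cite[Thm~1.8.1]{Neisendorfer} and the unique $p$-divisibility of the homotopy groups of $B\Sp(k)^+$ and $B\orth(k)^+$ over an infinite perfect field of characteristic $p$ \cite[Lm~5.2 and Cor~5.5]{Kratzer}.

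\emph{Step 4 (Homological stability $(\ast)$).} The stabilization map $(\ast)$ is $e$-connected for $2n > 4e + 2d + 4$ by applying the Randal-Williams--Wahl stability theorem \cite[Thm~5.11]{RWW} to the coefficient system $n \mapsto M(k^{2n})$ on the symmetric monoidal groupoid of symplectic or orthogonal groups. Since $M = D_k F^{(r)}$ is the evaluation of a polynomial functor of strict polynomial degree at most $d$, the coefficient system has RWW-polynomial degree $\leq d$, which yields the required stability bound in the numerical form $n \geq 2e + d + 3$ (i.e., $2n > 4e + 2d + 4$).

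\emph{Main obstacle.} The principal difficulty is Step~3: establishing the Scorichenko-type identification for the classical groups. While for $\GL$ this is precisely \cite[Thm~5.6]{DjaR}, in the symplectic/orthogonal case one must work with the stabilization category incorporating the invariant form, which is why $k[X]$ appears in the target rather than just $k$. The statement should follow either from a direct adaptation of Scorichenko's polynomiality argument to the Hermitian stabilization setup (work of Djament and collaborators on stable homology of classical groups provides the framework) or from a careful orbit-decomposition argument exploiting the splitting $k[X(k^{2n})] = k \oplus \ker(\epsilon)$ at each finite stage together with the fact that only the ``$\omega$-orbit'' contributes to the stable limit. Steps 1, 2, and 4 are formal consequences of general principles once Step~3 is in hand.
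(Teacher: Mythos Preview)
Your overall architecture is correct and matches the paper's approach: dualize, factor through the stable homology of $G_\infty$, identify the stable value via a Scorichenko-type theorem, and control the unstable-to-stable map by Randal-Williams--Wahl. Two points deserve correction, however.

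First, your Step~3 vanishing argument has a genuine gap. The results of Kratzer you cite concern the homotopy groups of $B\GL(k)^+$, i.e.\ algebraic $K$-theory; they say nothing directly about $B\Sp(k)^+$ or $B\orth(k)^+$. The paper establishes the required vanishing of $\rmH_*(\Sp_\infty(k);\Fp)$ and $\rmH_*(\orth_{\infty,\infty}(k);\Fp)$ in a separate lemma, using hermitian $K$-theory: after inverting $2$ and using that Witt groups over a field of odd characteristic are $2$-primary torsion, Karoubi's results identify the relevant homology with that of a retract of a localization of $B\GL_\infty(k)^+$, and only then does Kratzer's $p$-divisibility for $\GL$ apply. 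This reduction via hermitian $K$-theory is the actual new ingredient beyond Proposition~\ref{pr-FH-GL}.

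Second, you have misplaced the ``main obstacle''. The Scorichenko-type identification for orthogonal and symplectic groups is not open: it is exactly \cite[Cor~5.4]{DjaR}, which the paper simply cites. So Step~3's functor-homology identification is available off the shelf; the work lies in the vanishing lemma just described. A minor correction in Step~4: for the symplectic/orthogonal stabilization the paper invokes \cite[Thm~5.15]{RWW} rather than \cite[Thm~5.11]{RWW}.
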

\begin{proof}
We proceed exactly in the same way as for the proof of proposition \ref{pr-FH-GL}, using lemma \ref{lm-vanish-HOSp} below for the vanishing of $\rmH_*(G;k)$, the stable homology computations of \cite[Cor 5.4]{DjaR} and the homological stabilization result of \cite[Thm 5.15]{RWW}.
\end{proof}
\begin{lm}\label{lm-vanish-HOSp}
Let $k$ be a perfect field of odd characteristic $p$. Then the mod $p$ homology of the groups $\Sp_\infty(k)$ and $\orth_{\infty,\infty}(k)$ is zero in positive degrees.
\end{lm}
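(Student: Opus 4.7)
The plan is to mirror Lemma~\ref{lm-vanish-HGL}. The stable groups $\Sp_\infty(k)$ and $\orth_{\infty,\infty}(k)$ are perfect (using $\mathrm{char}(k) \neq 2$ in the orthogonal case), so Quillen's plus construction applies to their classifying spaces and preserves integral homology. By the mod-$p$ Hurewicz theorem \cite[Thm~1.8.1]{Neisendorfer}, the claim reduces to showing that the homotopy groups of $B\Sp(k)^+$ and $B\orth(k)^+$---that is, the hermitian $K$-theory groups $KSp_i(k)$ and $KO_i(k)$---are uniquely $p$-divisible for $i>0$.

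To establish this, I will invoke Karoubi's fundamental theorem of hermitian $K$-theory: after inverting $2$, both $KSp(k)$ and $KO(k)$ become direct summands of $K(k)[1/2]$, namely the $+1$-eigenspace of the duality involution $\tau$, which splits off via the averaging projector $(1+\tau)/2$. By Kratzer's theorem \cite[Lm~5.2, Cor~5.5]{Kratzer} (already used in Lemma~\ref{lm-vanish-HGL}), $K_i(k)$ is uniquely $p$-divisible for $i > 0$, a property inherited by localizations at primes $\neq p$ and by direct summands. Hence $KSp_i(k)[1/2]$ and $KO_i(k)[1/2]$ are uniquely $p$-divisible for $i > 0$; since $p$ is odd, this is equivalent to unique $p$-divisibility of $KSp_i(k)$ and $KO_i(k)$ themselves.

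The main obstacle is the clean invocation of Karoubi's fundamental theorem in this generality. A hands-on alternative, which I would use if the reference is unavailable, exploits the hyperbolic map $H \colon K(k) \to KQ(k)$ and the forgetful map $F \colon KQ(k) \to K(k)$ for $Q \in \{\Sp, \orth\}$: the identity $F \circ H = 1 + \tau$ on $K_*(k)$, together with the analogous relation for $H \circ F$, produces the required retract after inverting $2$ and yields the same conclusion via Kratzer and the Hurewicz theorem.
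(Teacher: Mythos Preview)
Your approach is essentially the alternative route sketched in the remark attributed to Calm\`es immediately after the paper's proof: work with the homotopy groups of $BG^+$ (hermitian $K$-theory), split them after inverting $2$ into a $K$-theory piece and a Witt piece, and invoke Kratzer for the $K$-theory piece. The paper itself argues differently: it stays on the homology side throughout, using Karoubi's identification of $\rmH_*(G;\mathbb{Z}[1/2])$ with the homology of a retract of (a localization of) $B\GL_\infty(k)^+$, and then appeals to lemma~\ref{lm-vanish-HGL}. Both strategies ultimately rest on the same two ingredients---Kratzer's unique $p$-divisibility of $K_*(k)$ and the vanishing of the Witt contribution---but the paper's homological route sidesteps the $\pi_1$ bookkeeping below.

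Two points in your write-up need attention. First, $\orth_{\infty,\infty}(k)$ is \emph{not} perfect: the determinant surjects onto $\{\pm 1\}$, and the spinor norm gives a further quotient onto $k^\times/(k^\times)^2$. The plus construction still exists (the elementary subgroup is perfect and normal), but $\pi_1(B\orth^+)$ is a nontrivial $2$-torsion abelian group, so the direct appeal to the $p$-local Hurewicz theorem for simply connected spaces needs a word of justification (e.g.\ pass to the commutator subgroup, whose index is prime to $p$, or invert $2$ first). Second, and more substantively, what you call ``Karoubi's fundamental theorem'' does not say that $KQ_*(k)[1/2]$ is a summand of $K_*(k)[1/2]$. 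The relevant statement is the splitting $KQ_*(k)[1/2]\cong K_*(k)[1/2]^{C_2}\oplus W_*(k)[1/2]$ (this is the content of \cite[Rk 7.8]{Schlichting} cited in the remark), and one must then observe that the Witt groups of a field of odd characteristic are $2$-primary torsion, so the second summand vanishes. The paper makes this step explicit (citing \cite[Chap.~2, Thm~6.4]{Scharlau}); your sketch omits it. Your hands-on alternative has the same gap: $F\circ H=1+\tau$ only exhibits the $\tau$-invariants of $K[1/2]$ as a retract of $KQ[1/2]$, not the other way round; controlling $\ker F$ and $\mathrm{coker}\,H$ after inverting $2$ is precisely the Witt-vanishing input.
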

\begin{proof}
Let $G=\Sp_\infty(k)$ or $\orth_{\infty,\infty}(k)$. By the universal coefficient theorem, it is equivalent to prove that $\rmH_i(G;\mathbb{Z})$ is uniquely $p$-divisible for $i>0$. 
If $A$ is an abelian group, we let $A[1/2]$ denote the tensor product $A\otimes_{\mathbb{Z}}\mathbb{Z}[1/2]$. Since $p$ is odd, $A$ is uniquely $p$-divisible if and only if $A[1/2]$ is uniquely $p$-divisible. And since $\mathbb{Z}[1/2]$ is flat we have $\rmH_*(G;\mathbb{Z})[1/2]=\rmH_*(G;\mathbb{Z}[1/2])$. Thus the statement of the lemma is equivalent to $\rmH_i(G;\mathbb{Z}[1/2])$ being uniquely $p$-divisible for $i>0$.

Since $k$ is a field of odd characteristic, the Witt groups $W(k)$ are $2$-primary torsion groups \cite[Chap. 2, Thm 6.4]{Scharlau}. Thus by \cite[Thm 3.18]{Karoubi}
$\rmH_*(G;\mathbb{Z}[1/2])$ is equal to $T_*(k)$, that is, to the homology of a space $\mathcal{C}(k)$ which is a retract of the localized classifying space $({B\mathcal{P}_k'}^+)_{(2)}$, see \cite[p. 253]{Karoubi} for the latter point. Since the integral homology of $({B\mathcal{P}_k'}^+)_{(2)}$ is equal to $\rmH_*({B\mathcal{P}_k'}^+;\mathbb{Z})[1/2]$, the lemma will be proved if we can prove that ${B\mathcal{P}_k'}^+$ has uniquely $p$-divisible positive integral homology groups.

But ${B\mathcal{P}_k'}^+$ has the weak homotopy type of $K_{0}(k)\times B\GL_\infty(k)^+$, which is uniquely $p$-divisible by lemma \ref{lm-vanish-HGL} and by the universal coefficient theorem.
\end{proof}

\begin{rk}[B. Calm\`es]
Instead of relying on \cite{Karoubi}, one could prove lemma \ref{lm-vanish-HOSp} by using the formula of \cite[Rk 7.8]{Schlichting}, which says that after tensoring by $\mathbb{Z}[1/2]$, the hermitian $K$-theory (hence \cite[App A]{Schlichting} the homotopy groups of $BG^+$ for $G=\Sp_\infty(k)$ or $\orth_{\infty,\infty}(k)$) is the direct sum of a term computed from the $K$-theory of $k$ and a term given by Balmer's Witt groups of $k$ tensored with $\mathbb{Z}[1/2]$. 
\end{rk}

\bibliographystyle{plain}

\bibliography{biblio-DT-Homology-part3.bib}

\end{document}